\newcommand{\andf}{\quad\hbox{and}\quad}
\newcommand{\with}{\quad\hbox{with}\quad}
\def\Supp{\mathop{\rm Supp}\nolimits\ }
\newcommand{\newcom}{\newcommand}
\def\inte#1{
\displaystyle\mathop{#1\kern0pt}^\circ }
\newcommand{\w}[1]{\langle {#1} \rangle}
\newcom{\al}{\alpha}
\newcom{\de}{\delta}
\newcom{\Th}{\Theta}
\newcom{\be}{\beta}
\newcom{\s}{\sigma}
\newcom{\eps}{\epsilon}
\newcom{\ve}{\varepsilon}
\newcom{\ga}{\gamma}
\newcom{\Ga}{\Gamma}
\newcom{\ka}{\kappa}
\newcom{\Lam}{\Lambda}
\newcom{\lam}{\lambda}
\newcom{\vp}{\varphi}
\newcom{\om}{\omega}
\newcom{\Sig}{\Sigma}
\newcom{\sig}{\sigma}
\newcom{\tht}{\theta}
\newcom{\tri}{\triangle}
\newcom{\oo}{\infty}
\newcom{\h}{{\rm h}}
\newcom{\rmv}{{\rm v}}
\newcom{\hs}{\hslash}
\newcom{\vphi}{\varphi}
\newcom{\cB}{{\mathcal B}}
\newcom{\cC}{{\mathcal C}}
\newcom{\cD}{{\mathcal D}}
\newcom{\cF}{{\mathcal F}}
\newcom{\cL}{{\mathcal L}}
\newcom{\cM}{{\mathcal M}}
\newcom{\cP}{{\mathcal P}}
\newcom{\cS}{{\mathcal S}}
\newcom{\cQ}{{\mathcal Q}}
\newcom{\cT}{{\mathcal T}}
\newcom{\cY}{{\mathcal Y}}
\newcom{\cZ}{{\mathcal Z}}
\newcom{\R}{\Bbb R}
\newcom{\T}{\Bbb T}
\newcom{\N}{\Bbb N}
\newcom{\Z}{\Bbb Z}
\newcom{\C}{\Bbb C}
\newcom{\E}{\Bbb E}
\let\wh=\widehat
\let\e=\varepsilon
\def\Ap{A_\Phi}
\def\fc{\frak{c}}
\def\Bp{B_\Phi}
\def\up{u_{\Phi}}
\def\bp{b_\Phi}
\def\upk{u_{\Phi_\kappa}}
\def\bpk{b_{\Phi_\kappa}}
\def\ubp{(u,b)_\Phi}
\def\ubpk{(u,b)_{\Phi_\kappa}}
\def\gp{G_\Phi}
\def\hp{H_\Phi}
\def\ghp{(G,H)_\Phi}
\def\gpk{G_{\Phi_\kappa}}
\def\hpk{H_{\Phi_\kappa}}
\def\ghpk{(G,H)_{\Phi_\kappa}}
\def\ghpk{(G,H)_{\Phi_\kappa}}
\def\vp{\vphi_\Phi}
\def\pp{\psi_\Phi}
\def\vpk{\vphi_{\Phi_\kappa}}
\def\ppk{\psi_{\Phi_\kappa}}
\def\ppp{(\vphi,\psi)_\Phi}
\def\pppk{(\vphi,\psi)_{\Phi_\kappa}}
\def\UBp{(U,B)_\Phi}
\def\mp{(m_U,m_B)_\Phi}
\def\UBpk{(U,B)_{\Phi_\kappa}}
\def\mpk{(m_U,m_B)_{\Phi_\kappa}}
\def\Mp{(M_U,M_B)_\Phi}
\def\Mpk{(M_U,M_B)_{\Phi_\kappa}}
\def\mMp{(m_U,m_B,M_B,M_U)_\Phi}
\def\mMpk{(m_U,m_B,M_B,M_U)_{\Phi_\kappa}}
\def\ep{e^\Psi}
\def\epp{e^{2\Psi}}
\def\egp{e^{\ga\Psi}}
\def\epk{e^{\Psi_\ka}}
\def\eppk{e^{2\Psi_\ka}}
\def\egpk{e^{\ga\Psi_\ka}}
\def\eap{e^{a\Psi}}
\def\ebp{e^{b\Psi}}
\def\ecp{e^{c\Psi}}
\def\edp{e^{d\Psi}}
\def\tt{\langle t\rangle}
\def\ttau{\langle \tau \rangle}
\def\fyt{\f{y}{2\tt}}
\def\fykt{\f{y}{2\ka\tt}}
\def\lk{l_\ka}
\def\tauk{\ttau^{\lk-\frak{c}\e}}
\def\taulk{\ttau^{\f12+\lk-\frak{c}\e}}
\def\taullk{\ttau^{1+\lk-\frak{c}\e}}
\def\elk{\ell_\ka}
\def\etauk{\ttau^{\elk-\frak{c}\e}}
\def\etaulk{\ttau^{\f12+\elk-\frak{c}\e}}
\def\tauek{\ttau^{\elk-\frak{c}\e}}
\def\tauelk{\ttau^{\f12+\elk-\frak{c}\e}}
\def\tauelk{\ttau^{\f12+\elk-\frak{c}\e}}
\def\tauellk{\ttau^{1+\elk-\frak{c}\e}}
\def\dtht{\dot{\tht}}
\def\thtk{\tht_\ka}
\def\dthtk{\dot{\thtk}}
\def\Dhk{\Delta^{\h}_k}
\def\Dh{\Delta^{\h}}
\def\Sh{S^\h}
\def\wtDh{\widetilde{\Delta}^{\h}}
\def\hsf{\hslash^\f12}
\def\qhs{\sqrt{\hslash'}}
\newcom{\f}{\frac}
\newcom{\dint}{\displaystyle\int}
\newcom{\dsum}{\displaystyle\sum}
\newcom{\dlim}{\displaystyle\lim}
\newcom{\ov}{\overline}
\newcom{\wt}{\widetilde}
\newcom{\pa}{\partial}
\newcom{\p}{\partial}
\newcom\na{\nabla}
\newcom{\D}{\Delta}
\newcom\rto{\rightarrow}
\newcom\lto{\leftarrow}
\newcom\mto{\mapsto}
\newcom{\disp}{\displaystyle}
\newcom{\non}{\nonumber}
\newcom{\no}{\noindent}
\newcom{\QED}{$\square$}
\def\eqdefa{\buildrel\hbox{\footnotesize def}\over =}
\newcommand{\beq}{\begin{equation}}
\newcommand{\eeq}{\end{equation}}
\newcommand{\beqo}{\begin{equation*}}
\newcommand{\eeqo}{\end{equation*}}
\newcommand{\ben}{\begin{eqnarray}}
\newcommand{\een}{\end{eqnarray}}
\newcommand{\beno}{\begin{eqnarray*}}
\newcommand{\eeno}{\end{eqnarray*}}
\newtheorem{Def}{Definition}[section]
\newtheorem{thm}{Theorem}[section]
\newtheorem{lem}{Lemma}[section]
\newtheorem{rmk}{Remark}[section]
\newtheorem{col}{Corollary}[section]
\newtheorem{prop}{Proposition}[section]
\renewcommand{\theequation}{\thesection.\arabic{equation}}
\newtheorem{theorem}{Theorem}[section]
\newtheorem{lemma}[theorem]{Lemma}
\begin{document}
\title[Global solutions of MHD boundary layer equations]
{Global small analytic solutions of MHD boundary layer equations}

\author[N. Liu]{Ning Liu}
\address[N. Liu]
 {Academy of Mathematics $\&$ Systems Science, The Chinese Academy of
Sciences, Beijing 100190, CHINA. } \email{liuning16@mails.ucas.ac.cn}
\author[P. Zhang]{Ping Zhang}%
\address[P. Zhang]
 {Academy of Mathematics $\&$ Systems Science
and  Hua Loo-Keng Key Laboratory of Mathematics, The Chinese Academy of
Sciences, Beijing 100190, CHINA, and School of Mathematical Sciences, University of Chinese Academy of Sciences, Beijing 100049, CHINA. }
\email{zp@amss.ac.cn}

\date{\today}

\begin{abstract}
In this paper, we prove the global existence and the large time decay estimate of  solutions
to the two-dimensional MHD boundary layer equations with small initial data, which is analytical in the tangential variable.
The main idea of the proof is motivated by that of \cite{PZ5}. The additional difficulties are: 1. there appears
the magnetic field; 2. the far field here depends on the tangential variable; 3. the
Reynolds number is different from magnetic Reynolds number.
In particular, we solved an open question in \cite{XY19} concerning the large time existence of
analytical solutions to the MHD boundary layer equations.
\end{abstract}

\maketitle

\noindent{\sl Keywords:} MHD Prandtl system, Littlewood-Paley theory,
analytic energy estimate\vspace{0.1cm}

\noindent{\sl AMS Subject Classification (2000):} 35Q30, 76D05

\renewcommand{\theequation}{\thesection.\arabic{equation}}
\setcounter{equation}{0}

\section{Introduction}\label{sect1}

In this paper, we consider the global well-posedness of the following two-dimensional MHD boundary layer equations in the upper space $\R^2_+\eqdefa \bigl\{(x,y): x\in\R,y\in \R_+  \bigr\},$
\begin{equation}\label{MHDbl}
 \quad\left\{\begin{array}{l}
\p_t u_1-\p^2_yu_1+ u_1\p_x u_1+u_2\p_y u_1+\p_x p=b_1\p_x b_1+b_2\p_y b_1,\\
\p_t b_1-\ka\p^2_yb_1+u_1\p_x b_1+u_2\p_y b_1=b_1\p_x u_1+b_2\p_y u_1,\\
\p_x u_1+\p_y u_2=0, \ \ \p_xb_1+\p_yb_2=0,\\
u_1|_{y=0}=u_2|_{y=0}=0, \ \ \p_yb_1|_{y=0}=b_2|_{y=0}=0,\\
\lim_{y\rto+\infty} u_1=U_1,\ \ \lim_{y\rto+\infty} b_1=B_1,\\
u_1|_{t=0}=u_{1,0},\ \ b_1|_{t=0}=b_{1,0},
\end{array}\right.
\end{equation}
where $(u_1,u_2)$ and $(b_1,b_2)$ represent the velocity of  fluid and the magnetic field respectively, $\ka>0$ is a constant which represents the difference between the Reynolds number and the magnetic Reynolds number, $(U_1,B_1,p)(t,x)$ are the traces of the tangential fields and pressure of the outflow on the boundary, which satisfies Bernoulli's law:
\begin{equation}\label{B'slaw}
 \quad\left\{\begin{array}{l}
\p_t U_1+ U_1\p_x U_1+\p_x p=B_1\p_x B_1,\\
\p_t B_1+U_1\p_x B_1=B_1\p_x U_1.
\end{array}\right.
\end{equation}

The MHD boundary layer system \eqref{MHDbl} was derived in \cite{GP17, LXY19cpam, LXY19} by considering the high Reynolds number limit
to the incompressible viscous MHD system (see \cite{Cow57, David01}) near a non-slip boundary when both the Reynolds number
and the magnetic Reynolds
number have the same order.  In particular, when $b_1$ equals some constant in \eqref{MHDbl},
the system reduces to the classical Prandtl equations  (simplified as $(PE)$ in the sequel)
which was proposed by Prandtl \cite{Pra} in 1904 in order to explain the disparity between the boundary conditions verified by  ideal fluid and viscous fluid with small viscosity.
  One may check \cite{EE, Olei} and
references therein for more introductions on boundary layer theory in the absence of the magnetic field.
 Especially we refer to \cite{Guo} for a comprehensive  recent survey.
\smallskip

Under the monotonicity condition on the tangential velocity field in the normal direction to the boundary, Ole$\breve{i}$nik \cite{Olei63}
established the local well-posedness of $(PE)$ with initial data in Sobolev space by using the Croco transformation. Lately, this result
was proved via energy method in \cite{Alex, MW} independently by taking care of the cancellation property in the convection terms of $(PE).$
Under a favorable condition on the pressure, a global-in-time weak solution was proved in \cite{Xin}.
 In general, when the monotonicity condition is violated, separation of the boundary layer is expected and observed in classical fluid.
 Especially when the background shear flow has a non-degenerate critical point, there are some interesting ill-posendness result to both
 linear and nonlinear Prandtl equations, one may check \cite{GD, Ger2, Guo, LY17} and the references therein for more details.
 However, surprisingly for the MHD boundary layer system \eqref{MHDbl}, Liu, Xie and Yang \cite{LXY19cpam} succeeded in proving  the local well-posedness in Sobolev space
 without any monotonicity assumption on the tangential velocity. The only essential assumption there is that the background tangential magnetic field
 has a positive lower bound. This result agrees with the general physical understanding that the magnetic field stabilizes the boundary layer.
 \smallskip

On the other hand,  for the data which is analytic in both $x$ and $y$ variables,
Sammartino and Caflisch \cite{Caf} established the local
well-posedness result of $(PE)$. The analyticity in $y$
variable was removed by Lombardo, Cannone and Sammartino in
\cite{Can}. The main argument used in  \cite{Can, Caf} is to apply
the abstract Cauchy-Kowalewskaya theorem. Recently, G\'ervard-Varet and Masmoudi \cite{GM} proved the
 well-posedness of $(PE)$ for a class of data with
Gevrey regularity. The optimal  result in this direction (with Gevrey class $2$) was obtained by  Dietert and G\'ervard-Varet  in \cite{DG08}.
 The question of the long time existence for Prandtl system with small analytic data was first addressed in \cite{ZZ} and an almost global existence result was provided in \cite{IV16}. Finally in \cite{PZ5}, Paicu and the second author proved the global well-posedness of $(PE)$ with small analytical data.\smallskip

 We mention that the main idea in \cite{PZ5} is to use the analytic energy
estimate. This idea dates back to
\cite{Ch04}, where Chemin introduced a tool to  make analytical type estimates to the Fujita-Kato  solutions of 3-D
Navier-Stokes system  and to
control  the size of the analytic radius simultaneously. This idea was used in the context of
anisotropic Navier-Stokes system \cite{CGP} (see also \cite{mz1, mz2}), which implies  the global
well-posedness of three dimensional Navier-Stokes system with a
class of ``ill prepared data", the $B^{-1}_{\infty,\infty}(\R^3)$
norm of which blow up as a small parameter goes to zero.
\smallskip

While for the  MHD boundary layer equation \eqref{MHDbl}, corresponding to the results in \cite{IV16,ZZ},
Xie and Yang \cite{XY19} obtained a lower bound for the lifespan of the analytic solutions.
And the authors left the following open question in \cite{XY19}: ``However, it is not known whether one can obtain
a global or almost global in time solution like the work on Prandtl system when the background shear velocity is taken to be
a Gaussian error function in \cite{IV16}." The goal of this paper is to solve this problem.

\smallskip

In order to do it, for any constant $\bar{B}_\ka,$ we take a cut-off function
$\chi\in C^\infty[0,\infty)$ with $\chi(y)=\left\{
\begin{array}{ll}
y\quad \mbox{if} \  y\geq 2,\\
0\quad  \mbox{if} \  y\leq 1,
\end{array}
\right.$ and make the following change of variables:
\beq \label{change}
\begin{split}
&u\eqdefa u_1-\chi'(y)U \andf v\eqdefa u_2+\chi(y)\p_xU,\\
&b\eqdefa b_1-\chi'(y)B-\bar{B}_\ka \andf h\eqdefa b_2+\chi(y)\p_xB,
\end{split}
\eeq
where $U\eqdefa U_1$ and $B\eqdefa B_1-\bar{B}_\ka$.

Then in view of \eqref{MHDbl} and \eqref{B'slaw}, $(u,v,b,h)$ solves
\begin{equation}\label{eqs1}
 \quad\left\{\begin{array}{l}
\p_t u-\p^2_yu-\bar{B}_\ka\p_xb+ u\p_x u-b\p_x b+v\p_yu-h\p_y b+\chi'(U\p_xu-B\p_xb)\\\qquad+\chi'(\p_xUu-\p_xBb)+\chi(-\p_xU\p_yu+\p_xB\p_yb)+\chi''(Uv-Bh)=m_U,\\
\p_t b-\ka\p^2_yb-\bar{B}_\ka\p_xu+u\p_x b-b\p_x u+v\p_y b-h\p_yu+\chi'(U\p_xb-B\p_xu)\\ \qquad+\chi'(\p_xBu-\p_xUb)+\chi(-\p_xU\p_yb+\p_xB\p_yu)+\chi''(Bv-Uh)=m_B,\\
\p_x u+\p_y v=0, \ \ \p_xb+\p_yh=0,\\
u|_{y=0}=v|_{y=0}=0, \ \ \p_yb|_{y=0}=h|_{y=0}=0,\\
\lim_{y\rto+\infty} u=0,\ \ \lim_{y\rto+\infty} b=0,\\
\lim_{y\rto+\infty} v=0,\ \ \lim_{y\rto+\infty} h=0,\\
u|_{t=0}=u_0\eqdefa u_{1,0}-\chi'U_{0},\ \ b|_{t=0}=b_0\eqdefa b_{1,0}-\chi'B_{0}-\bar{B}_\ka,
\end{array}\right.
\end{equation}
where $(U_0(x),B_0(x))\eqdefa (U(0,x),B(0,x)),$ and the terms $(m_U,m_B)$ are given by
\begin{equation}\label{def:m}
\quad\left\{\begin{array}{l}
m_U\eqdefa (1-\chi')(\p_tU-\bar{B}_\ka \p_xB)+\chi'''U+(1-{\chi'}^2+\chi\chi'')(U\p_xU-B\p_xB),\\
m_B\eqdefa (1-\chi')(\p_tB-\bar{B}_\ka \p_xU)+\chi'''B+(1-{\chi'}^2-\chi\chi'')(U\p_xB-B\p_xU).
\end{array}\right.
\end{equation}
It's easy to observe that $(m_U,m_B)$ are supported in $y\in[0,2]$ for each $t>0.$
\smallskip

Due to $\p_x u+\p_y v=0$ and $\p_xb+\p_yh=0$, there exist two potential functions $(\vphi,\psi)$ so that $(u,b)=\p_y (\vphi,\psi)$ and $(v,h)=-\p_x(\vphi,\psi)$.  While it follows from the boundary conditions in \eqref{eqs1} that
\beno
\p_x \dint_0^\infty (u,b)(t,x,y) dy= -\dint_0^\infty \p_y (v,h)(t,x,y)dy=0,
\eeno
which implies $\int_0^\infty (u,b)(t,x,y)dy=(C_1,C_2)(t)$. Yet Since $(u,b)$ decays to zero as $|x|$ tends to $\infty$, we have $C_1(t)=C_2(t)=0$.
So that we can impose  the following boundary conditions for the primitive functions $(\vphi,\psi)$:
\beno
(\vphi,\psi)|_{y=0}=0 \andf \dlim_{y\rto+\infty}(\vphi,\psi)=0.
\eeno
Then by integrating $(u,b)$ equations of \eqref{eqs1} with respect to y variable over $[y,\infty[$, we find
\begin{equation}\label{eqs2}
 \quad\left\{\begin{array}{l}
\p_t \vphi-\p^2_y\vphi -\bar{B}_\ka\p_x\psi+ u\p_x \vphi-b\p_x\psi+2\int_y^\infty (\p_x\vphi\p_yu-\p_x\psi\p_y b)dy'\\ \qquad+\chi'(U\p_x\vphi-B\p_x\psi)+2\int_y^\infty \chi''(U\p_x\vphi-B\p_x\psi)dy'+\chi(-\p_xUu+\p_xBb)\\ \qquad+2\chi'(\p_xU\vphi-\p_xB\psi)+2\int_y^\infty \chi''(\p_xU\vphi-\p_xB\psi)dy'=M_U,\\
\p_t\psi-\ka\p^2_y\psi-\bar{B}_\ka\p_x\vphi+u\p_x\psi-b\p_x\vphi+\chi'(U\p_x\psi-B\p_x\vphi)\\
\qquad+\chi(-\p_xUb+\p_xBu)=M_B,\\
\vphi|_{y=0}=\psi|_{y=0}=0\andf
\lim_{y\rto+\infty} \vphi=\lim_{y\rto+\infty} \psi=0,\\
\vphi|_{t=0}=\vphi_0\eqdefa -\int_y^\infty u_0\,dy',\ \   \psi|_{t=0}=\psi_0\eqdefa -\int_y^\infty b_0\,dy',
\end{array}\right.
\end{equation}
where $(M_U,M_B)\eqdefa -\int_y^\infty (m_U,m_B)\,dy'$ are also supported in $y\in[0,2]$ for each $t>0.$
\smallskip

Before proceeding, let us  recall from \cite{BCD} that
\beq\label{2.1a}
\begin{split}
&\Delta_k^{\rm h}a=\cF^{-1}(\varphi(2^{-k}|\xi|)\widehat{a}),\andf
S^{\rm h}_ka=\cF^{-1}(\chi(2^{-k}|\xi|)\widehat{a}),
\end{split} \eeq
where $\cF a$ and $\widehat{a}$  denote the partial  Fourier transform of
the distribution $a$ with respect to $x$ variable,  that is, $
\widehat{a}(\xi,y)=\cF_{x\to\xi}(a)(\xi,y),$
  and $\chi(\tau),$ ~$\varphi(\tau)$ are
smooth functions such that
 \beno
&&\Supp \varphi \subset \Bigl\{\tau \in \R\,/\  \ \frac34 \leq
|\tau| \leq \frac83 \Bigr\}\andf \  \ \forall
 \tau>0\,,\ \sum_{k\in\Z}\varphi(2^{-k}\tau)=1,\\
&&\Supp \chi \subset \Bigl\{\tau \in \R\,/\  \ \ |\tau|  \leq
\frac43 \Bigr\}\quad \ \ \ \andf \  \ \, \chi(\tau)+ \sum_{k\geq
0}\varphi(2^{-k}\tau)=1.
 \eeno

\begin{Def}\label{def2.1}
{\sl  Let~$s$ be in~$\R$. For~$u$ in~${S}_h'(\R^2_+),$ which
means that $u$ belongs to ~$S'(\R^2_+)$ and
satisfies~$\lim_{k\to-\infty}\|S_k^{\rm h}u\|_{L^\infty}=0,$ we set
$$
\|u\|_{\cB^{s,0}}\eqdefa\big\|\big(2^{ks}\|\Delta_k^{\rm h}
u\|_{L^2_+}\big)_{k\in\Z}\bigr\|_{\ell ^{1}(\Z)}.
$$
\begin{itemize}

\item
For $s\leq \frac{1}{2}$, we define $ \cB^{s,0}(\R^2_+)\eqdefa
\big\{u\in{S}_h'(\R^2_+)\;\big|\; \|u\|_{\cB^{s,0}}<\infty\big\}.$

\item
If $j$ is  a positive integer and if~$\frac{1}{2}+j< s\leq \frac{3}{2}+j$, then we define~$ \cB^{s,0}(\R^2_+)$  as the subset of distributions $u$ in~${S}_h'(\R^2_+)$ such that $\p_x^j u$ belongs to~$ \cB^{s-j,0}(\R^2_+).$
\end{itemize}

In all that follows, we always denote $\cB^s_\h$ to the Besov space $B^s_{2,1}(\R_\h).$
}
\end{Def}

In  order to obtain a better description of the regularizing effect
of the diffusion equation, we need to use Chemin-Lerner
type spaces $\widetilde{L}^{p}_T(\cB^{s,0}(\R^2_+))$ (see \cite{CN95}).
\begin{Def}\label{def2.2}
{\sl Let $p\in[1,\,+\infty]$ and $T_0,T\in[0,\,+\infty]$. We define
$\widetilde{L}^{p}(T_0,T; \cB^{s,0}(\R^2_+))$ as the completion of
$C([T_0,T]; \,\cS(\R^2_+))$ by the norm
$$
\|a\|_{\widetilde{L}^{p}(T_0,T; \cB^{s,0})} \eqdefa \sum_{k\in\Z}2^{ks}
\Big(\int_{T_0}^T\|\Delta_k^{\rm h}\,a(t) \|_{L^2_+}^{p}\,
dt\Big)^{\frac{1}{p}}
$$
with the usual change if $p=\infty.$ In particular, when $T_0=0,$ we simplify the notation $ \|a\|_{\widetilde{L}^{p}(0,T; \cB^{s,0})}$
 as $\|a\|_{\widetilde{L}_T^{p}(\cB^{s,0})}.$}
\end{Def}

In order to globally control the evolution of the analytic band to the solution of \eqref{eqs1},
motivated by \cite{IV16,PZ5}, we introduce the following key quantities:
\begin{equation}\label{defGH}
G\eqdefa u+\fyt\vphi \andf H\eqdefa b+\fykt\psi.
\end{equation}
And as in \cite{Ch04, CGP, CMSZ, PZ5, mz1, mz2, ZZ},  for any locally bounded function
$\Phi$ on $\R^+\times \R$, we define \beq\label{S3eq1}
\up(t,x,y)\eqdefa\cF_{\xi\to
x}^{-1}\bigl(e^{\Phi(t,\xi)}\widehat{u}(t,\xi,y)\bigr). \eeq We also
introduce a key quantity $\tht(t)$ to describe the evolution of
the analytic band of $u:$
\begin{equation}\label{def:theta}
 \quad\left\{\begin{array}{l}
\displaystyle \dtht(t)=\tt^\f14 \|\ep\p_y \ghp (t)\|_{\cB^{\f12,0}}+\e^{-\f12}\tt^\f54\|\UBp(t)\|_{\cB^\f12_\h},\\
\displaystyle \tht|_{t=0}=0.
\end{array}\right.
\end{equation}
Here  the phase function $\Phi$ is defined by
\beq\label{def:Phi} \Phi(t,\xi)\eqdefa (\de-\lam \tht(t))|\xi|, \eeq
and the weighted function $\Psi(t,y)$ is determined by \beq\label{def:Psi} \Psi(t,y)\eqdefa \f{y^2}{8\tt} \with \w{t}\eqdefa 1+t, \eeq
which satisfies \beq\label{S3eq5} \p_t \Psi+2(\p_y \Psi)^2=0. \eeq

\smallskip

Our first result of this paper states as follows:

\begin{thm}\label{thm1.1}
{\sl Let $\kappa\in ]0,2[,$ $\bar{B}_\ka=\left\{\begin{array}{l} 1\quad \mbox{if}\quad \ka=1,\\
0\quad\mbox{otherwise},\end{array}\right.$
and $\e, \de>0.$ We assume that the far field states $(U,B)$ satisfy
\begin{subequations} \label{UBdecay12}
\begin{gather}
\label{UBdecay}
\|\tt^\f94 e^{\de |D_x|}(U,B)\|_{\wt{L}^\oo(\R^+;\cB^\f32_\h)}+\|\tt^\f74 e^{\de |D_x|}(\p_tU,\p_tB,U,B)\|_{\wt{L}^2(\R^+;\cB^\f12_\h)} \leq \e,\\
\label{UBdecay2}
\int_0^\oo \tt^{\f54} \|e^{\de |D_x|}(U,B)\|_{\cB^\f12_\h}\,dt \leq {\e}.
\end{gather}
\end{subequations}
Let the initial data $(u_0,b_0,\vphi_0,\psi_0)$ (see the definitions in \eqref{eqs1} and \eqref{eqs2}) satisfy the compatibility condition: $u_0|_{y=0}=\p_y b_0|_{y=0}=0,$ $\int_0^\infty u_0\,dy=\int_0^\infty b_0\, dy=0,$ and
\beq
\|e^\f{y^2}8 e^{\de |D_x|}(u_0, b_0,\vphi_0,\psi_0) \|_{\cB^{\f12,0}}<\infty \andf \| e^\f{y^2}8 e^{\de |D_x|} (G_0, H_0)\|_{\cB^{\f12,0}} \leq \sqrt{\e},\label{smalldata2}
\eeq
where  $G_0\eqdefa u_0+\fyt\vphi_0\andf H_0\eqdefa b_0+\fykt\psi_0.$
Then there exist positive constants $\lam, \fc$ and $\e_0(\lam,\fc,\ka,\de)$  so that for $\e\leq\e_0$ 
and $\lk\eqdefa \f{\ka(2-\ka)}4\ \in]0, 1/4],$ the system \eqref{eqs1} has a unique global solution $(u,b)$ which satisfies $\sup_{t\in[0,\infty[}\tht(t)\leq \f\de{2\lam}$, and
\beq\label{S3eq6}
\| \ep \ubp\|_{\wt{L}^\infty_t (\cB^{\f12,0})}+\sqrt{l_\ka}\| \ep \p_y \ubp\|_{\wt{L}^2_t (\cB^{\f12,0})}\leq
 \|e^\f{y^2}8 e^{\de |D_x|}(u_0, b_0) \|_{\cB^{\f12,0}}+ C\sqrt{\e}.
\eeq
Furthermore, for any $t>0$ and $\ga\in]0,1[$, there hold
\begin{subequations} \label{S3eq789}
\begin{gather}
\label{S3eq7}
\|\taulk \ep\ubp\|_{\wt{L}^\infty_t (\cB^{\f12,0})}+\|\taulk \ep \p_y \ubp\|_{\wt{L}^2_{[\f{t}2,t]} (\cB^{\f12,0})}\\
\qquad\qquad\qquad\qquad\qquad\qquad\qquad\qquad\qquad\qquad \leq C\bigl( \|e^\f{y^2}8 e^{\de |D_x|}(u_0, b_0,\vphi_0,\psi_0) \|_{\cB^{\f12,0}}+\sqrt{\e}\bigr),\nonumber\\
\label{S3eq8}
\|\taullk \ep \ghp\|_{\wt{L}^\infty_t (\cB^{\f12,0})}+\|\taullk \ep \p_y\ghp\|_{\wt{L}^2_{[\f{t}2,t]} (\cB^{\f12,0})}\\
\qquad\qquad\qquad\qquad\qquad\qquad\qquad\qquad\qquad\qquad\qquad\leq C\sqrt{\e}\bigl(1+\|e^{\f{y^2}8}e^{\de|D_x|}(u_0,b_0)\|_{\cB^{\f12,0}}\bigr),\nonumber\\
\label{S3eq9}
\|\taullk \egp \ubp\|_{\wt{L}^\infty_t (\cB^{\f12,0})}+\|\taullk \egp \p_y \ubp\|_{\wt{L}^2_{[\f{t}2,t]} (\cB^{\f12,0})}\\
\qquad\qquad\qquad\qquad\qquad\qquad\qquad\qquad\qquad\qquad\qquad\leq C\sqrt{\e}\bigl(1+\|e^{\f{y^2}8}e^{\de|D_x|}(u_0,b_0)\|_{\cB^{\f12,0}}\bigr).\nonumber
\end{gather}
\end{subequations}
}
\end{thm}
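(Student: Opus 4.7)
The strategy is a continuity argument on the a priori bound $\tht(t)\leq \de/(2\lam)$, which keeps the analytic phase $\Phi(t,\xi)=(\de-\lam\tht(t))|\xi|$ nonnegative and the Fourier multiplier $e^{\Phi(t,D_x)}$ well defined. Under this assumption, everything is performed in the Chemin-Lerner spaces $\wt L^p_T(\cB^{\f12,0})$ with Gaussian weight $\ep=e^{y^2/(8\tt)}$. Two mechanisms drive the scheme, as in \cite{Ch04,CGP,PZ5}. First, $\p_t e^{\Phi(t,\xi)} = -\lam\,\dtht(t)\,|\xi|\,e^{\Phi(t,\xi)}$ generates, after summation in $k$, a coercive term of the form $\lam\dtht(t)\sum_k 2^k\|e^{\Phi}\Dhk\cdot\|_{L^2_+}$ which absorbs the half-derivative loss coming from each tangential convection term. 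Second, $\p_t\Psi+2(\p_y\Psi)^2=0$ means that the Gaussian weight is paid for exactly by a fraction of the vertical dissipation $-\p_y^2$ (resp.\ $-\ka\p_y^2$), so $\ep$ propagates for free.

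I would first carry out the basic weighted analytic estimate for $\ubp$: apply $\Dhk$ to the $\Phi$-dressed version of \eqref{eqs1} and test against $\epp\Dhk\ubp$, then sum with weights $2^{k/2}$ in $k$. The quadratic nonlinearities $u\p_x u-b\p_x b+v\p_y u-h\p_y b$ are treated by Bony's horizontal paraproduct decomposition together with a 1D Hardy inequality on $\R_+$ to control $v=-\p_x\vphi,\ h=-\p_x\psi$ by $\vphi/y,\psi/y$, and hence by $\p_y u,\p_y b$ after integration in $y$; the residual half-derivative loss is covered by $\sqrt{\lam\dtht}\,|D_x|^{1/2}$. The terms involving $U,B$, together with the sources $(m_U,m_B)$, are bounded using \eqref{UBdecay12}, exploiting the compact $y$-support of $(m_U,m_B)$ and $(M_U,M_B)$ in $[0,2]$ to cheaply trade vertical decay for horizontal regularity. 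This yields \eqref{S3eq6} modulo the bootstrap on $\tht$.

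The principal obstacle is the companion estimate for $\ghp$, where $G$ and $H$ are defined by \eqref{defGH}. Differentiating these identities in $t$ and using the $u,b,\vphi,\psi$ equations produces equations for $G,H$ in which the coefficients $1/(2\tt)$ and $1/(2\ka\tt)$ are tuned precisely so that the commutators of $-\p_y^2$ and $-\ka\p_y^2$ with $\ep$ cancel at leading order against the contributions $(y/\tt)\p_t\vphi$ and $(y/\ka\tt)\p_t\psi$ respectively, leaving a coercive dissipation $\|\ep\p_y\ghp\|_{\cB^{\f12,0}}$. After multiplication by $\tt^{1/4}$ and time integration this produces the first term of $\dtht$ in \eqref{def:theta}. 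The delicate point is that $G$ and $H$ carry \emph{different} weights (differing by a factor $\ka$), so the magnetic cross-terms coming from $-b\p_xG,-h\p_yG,\dots$ mix $\vphi$ and $\psi$ in an asymmetric way that does not close by the neat pairwise cancellation used for pure Prandtl in \cite{PZ5}; the residuals have to be redistributed, by means of a Hardy inequality on the extra $y/\tt$ factor, between the $\sqrt{\lam\dtht}$ gain and the $\ep\p_y$ dissipation. When $\ka=1$ the linear coupling through $\bar B_\ka$ is symmetric and integrates by parts cleanly, but the genuine MHD case $\ka\neq 1$ is where the arithmetic is delicate.

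Combining the two estimates with the smallness hypotheses \eqref{UBdecay12}--\eqref{smalldata2} and integrating the resulting ODE for $\tht$ yields $\tht(\oo)\leq \de/(2\lam)$, closing the bootstrap and giving \eqref{S3eq6}; uniqueness follows from a standard difference argument at lower regularity. For the polynomial-in-time decays \eqref{S3eq7}--\eqref{S3eq9} I would repeat the $\ubp$ and $\ghp$ estimates after multiplying the equations by $\ttau^{2(\lk-\fc\e)}$ and $\ttau^{2(1+\lk-\fc\e)}$ respectively and integrating in $\tau$: the derivative of the time weight produces a term $\sim(\lk-\fc\e)\,\ttau^{-1}$ which is absorbed by the vertical dissipation combined with the Gaussian weight $\ep$ exactly when $\lk\leq \ka(2-\ka)/4$, the sharp Poincar\'e-Hardy threshold that fixes the exponent in the statement. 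The variant \eqref{S3eq9} with $\egp$ for $\ga\in\,]0,1[$ is identical in structure, the slack $1-\ga$ providing the room needed for $\ga\Psi$ to satisfy the favorable counterpart of \eqref{S3eq5}.
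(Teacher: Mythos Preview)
Your overall architecture—continuity argument on $\tht$, analytic energy estimates with the Gaussian weight $\ep$, the loss-recovering term $\lam\dtht\,|D_x|$, and the auxiliary quantities $(G,H)$—matches the paper. Two points, however, deserve correction.

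\textbf{The $(\vphi,\psi)$ layer is missing, and without it you cannot reach the exponent in \eqref{S3eq7}.} Your time weight $\ttau^{2(\lk-\fc\e)}$ on the $(u,b)$ equation, combined with the Poincar\'e inequality $\|\ep\p_y f\|_{L^2}^2\ge(2\tt)^{-1}\|\ep f\|_{L^2}^2$, yields at most the decay rate $\ttau^{\lk-\fc\e}$ for $\|\ep\ubp\|_{\cB^{\f12,0}}$, not the $\ttau^{\f12+\lk-\fc\e}$ claimed in \eqref{S3eq7}. The paper gets the extra half power in two steps. First it runs the entire weighted-energy machine on the primitive system \eqref{eqs2} for $(\vphi,\psi)$ (Proposition~\ref{prop3.1}); since $(u,b)=\p_y(\vphi,\psi)$ this yields a bound on $\|\tauk\ep\ubp\|_{\wt L^2_{[t/2,t]}(\cB^{\f12,0})}$. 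Second, in the $(u,b)$ estimate it chooses $\hs(t)=(t-t_0)^{1+2\lk-2\fc\e}$ with $t_0=t/2$, so that $\hs(t_0)=0$ and the $\sqrt{\hs'}$ term is controlled by the $L^2$ bound just obtained rather than by Poincar\'e. Without the separate $(\vphi,\psi)$ step your scheme stalls at decay rate $\lk$.

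\textbf{Your route to \eqref{S3eq9} is different from the paper's, and the paper's is what actually closes the $(G,H)$ estimate as well.} You propose to rerun the $(u,b)$ energy estimate with the lighter weight $\egp$. The paper instead proves explicit representation formulas (Lemma~\ref{lem3.1}): from $G=u+\fyt\vphi$ one solves $\vphi(y)=e^{-2\Psi}\int_0^y e^{2\Psi(y')}G\,dy'$, hence $u=G-\fyt e^{-2\Psi}\int_0^y e^{2\Psi}G$, and similarly $b$ in terms of $H$; the restriction $\ka\in\,]0,2[$ is precisely what makes the resulting kernel integrable. These give pointwise-in-$t$ bounds $\|\egp\Dhk\ubp\|_{L^2_+}\lesssim\|\ep\Dhk\ghp\|_{L^2_+}$ and $\|\egp\Dhk\p_y\ubp\|_{L^2_+}\lesssim\|\ep\Dhk\p_y\ghp\|_{L^2_+}$ for every $\ga<1$, so \eqref{S3eq9} follows \emph{directly} from \eqref{S3eq8} with no new energy estimate. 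The same lemma is also the correct resolution of the asymmetry you flag in your third paragraph: the cross-terms $(1-\ka)b\p_xb$ and $(1-\f1\ka)b\p_xu$ in the $(G,H)$ system are closed by using $\|e^{\f34\Psi}\ubp\|\lesssim\|\ep\ghp\|$, not by a Hardy redistribution.
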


\begin{rmk}\label{rmk:c}

\begin{itemize}
\item[(1)] The constants $\fc$ and $C$ in the above theorem are independent of $\e$. Furthermore, unlike the constant $C,$ which
 may change from line to line, the constant $\fc$ can be  fixed in Sections \ref{sect5}, \ref{sect6} and \ref{sect7}. So that by choosing $\e_0$
     to be sufficiently small, we always have $\fc\e<\lk.$ This will be crucial for us to globally control the evolution of the analytic band.
     Moreover, the constant $\bar{B}_\ka$ can be any constant if $\ka=1,$ otherwise $0.$ This restriction is only used
     in the derivation of the system \eqref{eqs3} for $(G,H).$

\item[(2)] Compared with the well-posedness result for the classical Prandtl system $(PE)$ in \cite{PZ5},
here we allow the far field $(U,B)$ to depend on the tangential variable. And exactly due to this type of far field, there is a loss of
$\fc \e$ in the decay estimates \eqref{S3eq789}.

\item[(3)] The main idea of the proof to the above theorem is to use analytic energy estimate, which is motivated by \cite{PZ5} and
which originates from \cite{Ch04}.
\end{itemize}

\end{rmk}

We remark that in the proof of Theorem \ref{thm1.1},  the role of $u$ and $b$ are the same in the process of the analytic energy estimate.
 By introducing the changes of variables: $(\bar{t},\bar{x},\bar{y})=(t,x,\f{y}{\sqrt{\ka}}),$ and denoting $(\bar{u},\bar{v},\bar{b},\bar{h})\eqdefa (u,\f{v}{\sqrt{\ka}},b,\f{h}{\sqrt{\ka}})$, we find that \eqref{MHDbl} can be equivalently reformulated as
\begin{equation*}
 \quad\left\{\begin{array}{l}
\p_{\bar{t}}\bar{u}-\f1\ka \p^2_{\bar{y}}\bar{u}+\bar{u}\p_{\bar{x}}\bar{u}+\bar{v}\p_{\bar{y}}\bar{u}=\bar{b}\p_{\bar{x}}\bar{b}+\bar{h}\p_{\bar{y}}\bar{b},\\
\p_{\bar{t}}\bar{b}-\p^2_{\bar{y}}\bar{b}+\bar{u}\p_{\bar{x}}\bar{b}+\bar{v}\p_{\bar{y}}\bar{b}=\bar{b}\p_{\bar{x}}\bar{u}+\bar{h}\p_{\bar{y}}\bar{u},\\
\p_{\bar{x}}\bar{u}+\p_{\bar{y}}\bar{v}=0, \ \ \p_{\bar{x}}\bar{b}+\p_{\bar{y}}\bar{h}=0.
\end{array}\right.
\end{equation*}
So that the proof of Theorem \ref{thm1.1} also ensures the global well-posedness result of \eqref{MHDbl} for $\ka\in ]1/2,\infty[.$
Yet for conciseness, we prefer to present the detailed result corresponding to $\ka\in ]1/2,\infty[.$


In order to do so, similar to \eqref{def:theta}, we
introduce  $\thtk(t)$ via
\begin{equation}\label{def:thetakappa}
 \quad\left\{\begin{array}{l}
\displaystyle \dthtk(t)=\tt^\f14 \|\epk\p_y \ghpk (t)\|_{\cB^{\f12,0}}+\e^{-\f12}\tt^\f54\|\UBpk(t)\|_{\cB^\f12_\h},\\
\displaystyle \thtk|_{t=0}=0.
\end{array}\right.
\end{equation}
Here the phase function $\Phi_\ka$ is defined by
\beq\label{def:Phikappa} \Phi_\ka(t,\xi)\eqdefa (\de-\lam \thtk(t))|\xi|, \eeq
and the weighted function $\Psi_\ka(t,y)$ is determined by \beq\label{def:Psikappa} \Psi_\ka(t,y)\eqdefa \f{y^2}{8\ka\tt}=\ka^{-1}{\Psi(t,y)}, \eeq
which satisfies \beq\label{S3eq21} \p_t \Psi_\ka+2\ka(\p_y \Psi_\ka)^2=0. \eeq

The second result of this paper states as follows:

\begin{thm}\label{thm1.2}
{\sl Let  $\kappa$ be in $]1/2,\infty[,$ $\bar{B}_\ka=\left\{\begin{array}{l} 1\quad \mbox{if}\quad \ka=1,\\
0\quad\mbox{otherwise},\end{array}\right.$ and $\e,\de>0.$ We assume that far field states $(U,B)$  satisfy \eqref{UBdecay} and \eqref{UBdecay2}.
Let the initial data $(u_0,b_0,\vphi_0,\psi_0)$ satisfy the compatibility conditions listed in Theorem \ref{thm1.1} and
\beq
\label{small2} \|e^\f{y^2}{8\ka} e^{\de |D_x|} (u_0,b_0,\vphi_0,\psi_0) \|_{\cB^{\f12,0}}<\infty \andf
\| e^\f{y^2}{8\ka} e^{\de |D_x|} (G_0,H_0)\|_{\cB^{\f12,0}} \leq \sqrt{\e}.
\eeq
Then there exist positive constants $ \lam,\fc$ and $\e_0(\lam,\fc,\ka,\de)$ so that for $\e\leq \e_0$ and  $\elk\eqdefa \f{2\ka-1}{4\ka^2}\ \in ]0,1/4],$
the system \eqref{eqs1} has a unique global solution $(u,b)$ which satisfies $\sup_{t\in[0,\infty[} \tht_\ka(t)\leq \f\de{2\lam}$, and
\beq\label{S3eq22}
\| \epk \ubpk\|_{\wt{L}^\infty_t (\cB^{\f12,0})}+\sqrt{\ka\ell_\ka}\| \epk \p_y \ubpk\|_{\wt{L}^2_t (\cB^{\f12,0})}\leq
\|e^\f{y^2}{8\ka} e^{\de |D_x|} (u_0,b_0) \|_{\cB^{\f12,0}}+ C\sqrt{\e}.
\eeq
Furthermore, for any $t>0$ and $\ga\in]0,1[$, there hold
\begin{subequations} \label{S3eq2345}
\begin{gather}
\label{S3eq23}
\|\tauelk \epk\ubpk\|_{\wt{L}^\infty_t (\cB^{\f12,0})}+\|\tauelk \epk \p_y \ubpk\|_{\wt{L}^2_{[\f{t}2,t]} (\cB^{\f12,0})}\\
\qquad\qquad\qquad\qquad\qquad\qquad\qquad\qquad\qquad\qquad \leq C\bigl(\|e^\f{y^2}{8\ka} e^{\de |D_x|} (u_0,b_0,\vphi_0,\psi_0) \|_{\cB^{\f12,0}}+\sqrt{\e}\bigr), \nonumber\\
\label{S3eq24}
\|\tauellk \epk \ghpk\|_{\wt{L}^\infty_t (\cB^{\f12,0})}+\|\tauellk \epk \p_y\ghpk\|_{\wt{L}^2_{[\f{t}2,t]} (\cB^{\f12,0})}\\
\qquad\qquad\qquad\qquad\qquad\qquad\qquad\qquad\qquad\qquad \qquad\leq C\sqrt{\e}\bigl(1+\|e^{\f{y^2}{8\ka}}e^{\de|D_x|}(u_0,b_0)\|_{\cB^{\f12,0}}\bigr), \nonumber\\
\label{S3eq25}
\|\tauellk \egpk \ubpk\|_{\wt{L}^\infty_t (\cB^{\f12,0})}+\|\tauellk \egpk \p_y \ubpk\|_{\wt{L}^2_{[\f{t}2,t]} (\cB^{\f12,0})} \\
\qquad\qquad\qquad\qquad\qquad\qquad\qquad\qquad\qquad\qquad \qquad\leq C\sqrt{\e}\bigl(1+\|e^{\f{y^2}{8\ka}}e^{\de|D_x|}(u_0,b_0)\|_{\cB^{\f12,0}}\bigr). \nonumber
\end{gather}
\end{subequations}
}
\end{thm}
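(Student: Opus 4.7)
The plan is to mirror the proof of Theorem~\ref{thm1.1} systematically, replacing the weight function $\Psi$, the time-function $\tht$, the analytic phase $\Phi$ and the decay index $\lk$ by their $\ka$-adapted analogues $\Psi_\ka$, $\thtk$, $\Phi_\ka$ and $\elk$. The motivation is that for $\ka>1$ the slower heat flow is $\p_t-\ka\p_y^2$, so the Gaussian $e^{y^2/(8\ka)}$ is the correct common weight to propagate through both flows, and the tailored identity \eqref{S3eq21} is precisely what is needed to absorb the weight time-derivative into the $b$-dissipation in the basic energy estimate. Equivalently, the rescaling $(\bar t,\bar x,\bar y)=(t,x,y/\sqrt\ka)$ with $(\bar v,\bar h)=(v/\sqrt\ka,h/\sqrt\ka)$ sketched just before the statement maps \eqref{MHDbl} onto a symmetric version with effective coefficient $1/\ka\in]0,2[$, and the transformation $\ka(2-\ka)/4\mapsto(2\ka-1)/(4\ka^2)=\elk$ under $\ka\mapsto 1/\ka$ explains how the new decay index appears.

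First, I would carry out the basic $\cB^{\f12,0}$-energy estimate for $\ubpk$ on the renormalized system \eqref{eqs1}: applying $e^{\Phi_\ka(t,D_x)}$ together with $\Dhk$ and multiplying the $u$- and $b$-equations by $\eppk\Dhk u_{\Phi_\ka}$ and $\eppk\Dhk b_{\Phi_\ka}$ respectively, the $b$-dissipation combined with \eqref{S3eq21} produces the positive remainder $\sqrt{\ka\elk}\,\|\epk\p_y\ubpk\|_{\cB^{\f12,0}}$ appearing in \eqref{S3eq22}. The nonlinearities are controlled by bilinear product estimates in $\cB^{\f12,0}$ together with the analytic-phase cancellation of \cite{Ch04, PZ5}; the $\chi$- and $\bar B_\ka$-contributions use the support property $y\in[0,2]$ of $(m_U,m_B)$ and the decay assumptions \eqref{UBdecay}--\eqref{UBdecay2}. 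All of this is performed under the bootstrap $\sup_t\thtk(t)\le\de/(2\lam)$ and yields \eqref{S3eq22}.

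Second, to obtain the polynomial-in-$\tt$ decay \eqref{S3eq2345}, I would pass to the auxiliary quantities $(G,H)$ of \eqref{defGH}, whose virtue (as in \cite{IV16, PZ5}) is that the worst first-order-in-$1/\tt$ terms coming from $\fyt\vphi$ and $\fykt\psi$ cancel against the weighted heat operator, thanks to the precise normalization $1/(2\ka\tt)$ in the definition of $H$. Running the same weighted energy estimate on the equations for $\ghpk$ with multiplier $\tauellk\eppk$ yields \eqref{S3eq24}; re-injecting this into the relations $u=G-\fyt\vphi$, $b=H-\fykt\psi$ produces \eqref{S3eq23}; and the refined bound \eqref{S3eq25} follows from a commutator estimate against $e^{(\ga-1)\Psi_\ka}$ analogous to the proof of \eqref{S3eq9}. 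Finally, inserting \eqref{S3eq22} and \eqref{S3eq24} into the definition \eqref{def:thetakappa} of $\thtk$ and integrating in time gives $\thtk(\infty)\le C\sqrt\e\le\de/(2\lam)$ for $\e\leq\e_0$ small enough, closing the bootstrap.

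The main obstacle I anticipate is the nonlinear coupling of fields propagated by \emph{different} heat operators: in the cross terms $h\p_yu$, $b\p_xu$, $u\p_xb$ of \eqref{eqs1}, one factor carries diffusion coefficient $1$ while the other carries $\ka$. Under the common Gaussian weight $\epk$ the $u$-terms are under-weighted when $\ka\ge 1$, so one must carefully redistribute the Gaussian across the factors and use the extra dissipation $\sqrt{\ka\elk}$ from \eqref{S3eq22} together with the bilinear $\cB^{\f12,0}$-estimate to absorb the resulting error. The smallness condition $\fc\e<\elk$ from Remark \ref{rmk:c}, enforced by choosing $\e_0$ small in terms of $\lam,\fc,\ka,\de$, is essential to also absorb the errors from the far-field coupling and from the progressive loss of analytic radius encoded in \eqref{def:Phikappa}. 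Uniqueness is obtained by a standard contraction argument in a weaker regularity class, as in the treatment of Theorem~\ref{thm1.1}.
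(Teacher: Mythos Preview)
Your overall strategy---mirror the proof of Theorem~\ref{thm1.1} with $\Psi_\ka,\thtk,\Phi_\ka,\elk$ in place of $\Psi,\tht,\Phi,\lk$---is exactly what the paper does, and your discussion of the basic energy estimate for $\ubpk$, the role of \eqref{S3eq21}, and the bootstrap closure via Proposition~\ref{prop3.6} is correct in spirit.

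There is, however, a concrete gap in your derivation of \eqref{S3eq23}. You propose to obtain it by ``re-injecting'' \eqref{S3eq24} into the relations $u=G-\fyt\vphi$, $b=H-\fykt\psi$. This does not work: those relations require control of $\epk\vphi$ and $\epk\psi$ with the \emph{full} Gaussian weight, which you never establish. The pointwise bounds of Lemma~\ref{lem3.2} go the right way only after sacrificing weight ($\egpk$ with $\ga<1$, not $\epk$); that route yields \eqref{S3eq25} but not \eqref{S3eq23}. The paper instead runs an independent weighted energy estimate on the primitive system \eqref{eqs2} to get decay of $(\vphi,\psi)_{\Phi_\ka}$ (Proposition~\ref{prop3.4}), and then combines this---via the identity $(u,b)=\p_y(\vphi,\psi)$---with the time-weighted version of Corollary~\ref{col6.2} to obtain \eqref{S3eq23} (Proposition~\ref{prop3.5}). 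Your outline omits the $(\vphi,\psi)$ step entirely; without it, the full-weight decay \eqref{S3eq23} is not reachable from the $(G,H)$ estimates alone.
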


\smallskip

 Let us end this introduction by the notations that we shall use
in this context.\vspace{0.2cm}

For~$a\lesssim b$, we mean that there is a uniform constant $C,$
which may be different on different lines, such that $a\leq Cb$. $\tt=1+t,$  $(a\ |\
b)_{L^2_+}\eqdefa\int_{\R^2_+}a(x,y) {b}(x,y)\,dx\,dy$
 stands for
the $L^2$ inner product of $a,b$ on $\R^2_+$ (resp. $\R_+$) and
$L^p_+=L^p(\R^2_+)$  with $\R^2_+\eqdefa\R\times\R_+.$ For $X$ a Banach space
and $I$ an interval of $\R,$ we denote by $L^q(I;\,X)$ the set of
measurable functions on $I$ with values in $X,$ such that
$t\longmapsto\|f(t)\|_{X}$ belongs to $L^q(I).$  In particular,  we denote by
$L^p_T(L^q_{\rm h}(L^r_{\rm v}))$ the space $L^p([0,T];
L^q(\R_{x_{}};L^r(\R_{y}^+))).$  Finally,
 $
\left(d_{k}\right)_{k\in\Z}$  designates  a nonnegative generic element in the sphere
of $\ell^1(\Z)$
 so that  $\sum_{k\in\Z}d_k=1.$

\medskip

\renewcommand{\theequation}{\thesection.\arabic{equation}}
\setcounter{equation}{0}
\section{Sketch of the proof to theorem \ref{thm1.1} and \ref{thm1.2}}\label{sect3}

The goal of this section is to sketch the structure of the proof to Theorems \ref{thm1.1} and \ref{thm1.2}.
Let us start with the outline of the proof to  Theorem \ref{thm1.1}.

In what follows,  we shall always assume that $t<T^*$ with $T^*$ being defined by
\beq\label{def:T^*}
T^* \eqdefa \sup\bigl\{ t>0,\quad \tht(t)<\de/\lam \ \bigr\}\with \tht(t)\ \ \mbox{being determined by \eqref{def:theta}}.
\eeq
So that by virtue of \eqref{def:Phi}, for any $t<T^*$,  the following convex inequality holds
\beq\label{S3eq11}
\Phi(t,\xi)\leq\Phi(t,\xi-\eta)+ \Phi(t,\eta) \qquad \ \forall \xi,\eta\in\R.
\eeq

In section \ref{sect5}, we shall deal with the {\it a priori} decay estimates for the analytic solutions of \eqref{eqs2}.
\begin{prop}\label{prop3.1}
{\sl Let $(\vphi,\psi)$ be a smooth enough solution of \eqref{eqs2}. Then if
$\ka\in ]0,2[,$ there exist positive constants $\fc, \lam_0$ and a small enough constant $\e_0$ so that for any $\e\leq \e_0, \lam\geq\lam_0$ and for
 any $t<T^*$, we have
\beq\label{S3eq12as}
\begin{split}
\|\tauk \ep\ppp&\|_{\wt{L}^\infty_t (\cB^{\f12,0})}+\|\tauk \p_y\ep\ppp\|_{\wt{L}^2_{[\f{t}2,t]} (\cB^{\f12,0})}\\
&\lesssim
\|e^\f{y^2}8 e^{\de |D_x|}(\vphi_0,\psi_0) \|_{\cB^{\f12,0}}+ \sqrt{\e}\with  \lk\eqdefa \f{\ka(2-\ka)}4.
\end{split}
\eeq}
\end{prop}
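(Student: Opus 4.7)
The plan is to run a dyadic-by-dyadic weighted $L^2_+$ energy estimate for the system~\eqref{eqs2}. Concretely, I apply the horizontal block $\Dhk$, pass to the analytic variables $\vphi_\Phi,\psi_\Phi$ via~\eqref{S3eq1}, and test the resulting equations against $\tk^{2}\epp\Dhk\vphi_\Phi$ and $\tk^{2}\epp\Dhk\psi_\Phi$ in $L^2_+$. Integration by parts in $y$ is clean thanks to $\vphi|_{y=0}=\psi|_{y=0}=0$ together with $\p_y\Psi|_{y=0}=0$; differentiating the analytic phase $\Phi=(\de-\lam\tht)|\xi|$ in $t$ produces the coercive gain $\lam\dtht 2^k\|\tk\ep\Dhk\ppp\|_{L^2_+}^2$ on the left-hand side; and the identity~\eqref{S3eq5} converts the $\p_t\Psi$ contribution into the positive term $2\tk^2\|\ep\p_y\Psi\,\Dhk\ppp\|_{L^2_+}^2$. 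Summing in $k$ with the $\ell^1(\Z)$ weight $2^{k/2}$ then yields a weighted energy identity in $\cB^{\f12,0}$ whose left-hand side exhibits the parabolic dissipation $\|\tk\ep\p_y\ppp\|_{\cB^{\f12,0}}^2$.

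The sharp exponent $\lk=\f{\ka(2-\ka)}{4}$ emerges from the $\psi$-equation as follows. Setting $g\eqdefa\ep\Dhk\psi_\Phi$ and using $\p_y g=\ep(\p_y\Dhk\psi_\Phi+\p_y\Psi\,\Dhk\psi_\Phi)$, one finds
\beno
\ka\|\ep\p_y\Dhk\psi_\Phi\|_{L^2_+}^2+2(1-\ka)\|\ep\p_y\Psi\,\Dhk\psi_\Phi\|_{L^2_+}^2=\ka\|\p_y g\|_{L^2_+}^2+(2-\ka)\|\p_y\Psi\,g\|_{L^2_+}^2+\f{\ka}{4\tt}\|g\|_{L^2_+}^2.
\eeno
The AM-GM inequality combined with the elementary identity $\langle\p_y g,\p_y\Psi\,g\rangle_{L^2_+}=-\f{1}{8\tt}\|g\|_{L^2_+}^2$ bounds the sum of the first two terms on the right from below by $\f{\sqrt{\ka(2-\ka)}}{4\tt}\|g\|_{L^2_+}^2\geq\f{\lk}{\tt}\|g\|_{L^2_+}^2$, since $\ka(2-\ka)\leq 1$ for $\ka\in]0,2[$. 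This lower bound precisely compensates the loss $\f{\lk+\ka/4-\fc\e}{\tt}\|\tk\ep\Dhk\psi_\Phi\|_{L^2_+}^2$ generated by $\f{d}{dt}\tk^2$ and the Gaussian commutator, leaving genuine residual dissipation as long as $\fc\e<\lk$. For the $\vphi$-equation the same mechanism applies trivially with $\ka=1$.

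For the convection and outflow-driven quadratic terms $u\p_x\vphi$, $b\p_x\psi$, $\chi'(U\p_x\vphi-B\p_x\psi)$, $\chi(\p_xBb-\p_xUu)$ and their analogues, I would apply Bony's horizontal paraproduct, transfer the analytic weight $e^\Phi$ onto each factor via the convex inequality~\eqref{S3eq11}, and reduce matters to tame trilinear $\cB^{\f12,0}$-estimates in the spirit of~\cite{PZ5}; the analytic loss $\lam\dtht|D_x|$ incurred in that transfer is absorbed by the coercive $\Phi$-contribution produced above. The non-local vertical integrals $\int_y^\oo(\p_x\vphi\p_yu-\p_x\psi\p_yb)\,dy'$ and their $\chi''$-counterparts are handled through the pointwise monotonicity $\ep(y)\leq\ep(y')$ for $y'\geq y$ together with $\p_y\vphi=u$, $\p_y\psi=b$, which reduces the tails to quantities already controlled by $\|\tk\ep\p_y\ppp\|_{\cB^{\f12,0}}$ and $\|\tk\ep\ppp\|_{\cB^{\f12,0}}$. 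The source terms $(M_U,M_B)$ are supported in $y\in[0,2]$, so their weighted $\cB^{\f12,0}$-norms are bounded by $C\sqrt\e$ directly through~\eqref{UBdecay}--\eqref{UBdecay2}. A bootstrap on $t<T^\ast$, during which \eqref{S3eq11} remains valid, finally closes~\eqref{S3eq12as}.

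The main obstacle is the sharp extraction of $\lk$ uniformly in $\ka\in]0,2[$: the AM-GM absorption above has essentially no slack at the endpoints $\ka\to 0,2$, so the constant $\fc$ must be fixed a priori as an $\e$-independent universal constant with $\fc\e<\lk$, and all auxiliary interactions — the $(U,B)$-coupling between $\vphi$ and $\psi$, the non-local tails, and especially the $\chi$-supported forcing where the Gaussian weight $\ep$ provides no decay — must squeeze into the tight $(\lk-\fc\e)\tt^{-1}$ budget. A secondary difficulty is that the $\vphi$- and $\psi$-energies cannot be decoupled, both because of the $\bar B_\ka$ cross terms (when $\ka=1$) and because of the outflow interactions, forcing the bootstrap to be run simultaneously on both components rather than equation-by-equation.
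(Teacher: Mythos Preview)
Your overall plan---dyadic weighted $L^2_+$ estimates on \eqref{eqs2}, Bony paraproduct for the bilinear terms, absorption via the $\lam\dtht|D_x|$ coercivity---is the paper's strategy. Two substantive points, however, separate your sketch from a working proof.

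\textbf{The two-pass structure is missing.} You cannot extract both terms of \eqref{S3eq12as} in a single energy estimate with the weight $\tk^2$. With $\hs(t)=\tt^{2(\lk-\fc\e)}$ on $[0,t]$, the $\hs'$ contribution $2(\lk-\fc\e)\tt^{-1}\|\tk\ep\Dhk\ppp\|_{L^2_+}^2$ eats, via the Poincar\'e inequality \eqref{S3eq34}, all but an $O(\fc\e)$ fraction of the $4\lk\|\tk\ep\p_y\Dhk\ppp\|_{L^2_+}^2$ dissipation; what survives is too weak to give the second term of \eqref{S3eq12as} with an $\e$-independent constant. The paper therefore runs the estimate twice (see the proof after Corollary~\ref{col5.1}): first on $[0,t]$ with $t_0=0$ to obtain the $\wt L^\infty_t$ bound \eqref{S5eq31}, then again on $[t/2,t]$ with $t_0=t/2$ in Corollary~\ref{col5.1}, where the $\hs'$ term is now controlled by the already-established $\wt L^\infty$ bound over a time interval of length $\sim t$, so the full dissipation $\sqrt{\lk}\|\tk\ep\p_y\ppp\|_{\wt L^2_{[t/2,t]}}$ is retained.

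\textbf{The $\dtht$-absorption is described backwards, and your $\lk$ bookkeeping is muddled.} The convex inequality \eqref{S3eq11} incurs no analytic loss when transferring $e^\Phi$ onto a product; the point is rather that the bilinear terms are bounded by $\dtht$-weighted norms $\|\hsf\ep\ppp\|_{\wt L^2_{[t_0,t];\dtht}(\cB^{1,0})}^2$ (Lemmas~\ref{lem:nl1} and~\ref{lem:nl3}, with $f(t)$ controlled by $\dtht$ via \eqref{S5eq5}), and these are absorbed by the coercive term $c\lam 2^k\|\hsf\ep\Dhk\ppp\|_{L^2_{[t_0,t];\dtht}}^2$ once $\lam$ is large. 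As for $\lk$: the paper does not use your AM--GM route. Lemma~\ref{lem:heateqs} obtains the dissipation coefficient $\f{\ka(2-\ka)}{2}$ on $\|\ep\p_y f\|_{L^2_+}^2$ by a single Young inequality \eqref{S4eq7}, after which $\lk=\f{\ka(2-\ka)}{4}$ falls out of Poincar\'e. Your AM--GM argument is in fact correct and gives the slightly stronger lower bound $\f{\sqrt{\ka(2-\ka)}}{4\tt}\|g\|_{L^2_+}^2$, but your stated loss ``$\f{\lk+\ka/4-\fc\e}{\tt}$'' double-counts: once you pass to the clean identity $(\p_t f-\ka\p_y^2 f\,|\,\epp f)=\f12\f{d}{dt}\|g\|_{L^2_+}^2+\ka\|\p_y g\|_{L^2_+}^2+(2-\ka)\|\p_y\Psi\,g\|_{L^2_+}^2$, the $\ka/4$ term has already cancelled and the only loss to compensate is $(\lk-\fc\e)\tt^{-1}\|g\|_{L^2_+}^2$ from $\hs'$.
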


In section \ref{sect6},  we shall deal with the {\it a priori} decay estimates for the analytic solutions of \eqref{eqs1}.
\begin{prop}\label{prop3.2}
{\sl Let $(u,b)$ be a smooth enough solution of \eqref{eqs1}. Then if
$\ka\in ]0,2[,$  there exist positive constants $\fc, \lam_0$ and a small enough constant $\e_0$ so that for any $\e\leq\e_0,$
$\lam\geq\lam_0$ and  for any $t<T^*$, we have
\beq
\begin{split}
\|\taulk \ep\ubp&\|_{\wt{L}^\infty_t (\cB^{\f12,0})} +\|\taulk \ep\p_y\ubp\|_{\wt{L}^2_{[\f{t}2,t]}  (\cB^{\f12,0})}\\
 &\lesssim \|e^\f{y^2}8 e^{\de |D_x|}(u_0, b_0,\vphi_0,\psi_0) \|_{\cB^{\f12,0}}+\sqrt{\e}\with \lk\eqdefa \f{\ka(2-\ka)}4.\end{split}
\eeq}
\end{prop}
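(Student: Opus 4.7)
The plan is to mirror the framework of Proposition \ref{prop3.1}, now applying the weighted analytic-Besov energy estimate directly to the $(u,b)$ system \eqref{eqs1} and invoking Proposition \ref{prop3.1} to absorb every nonlinear contribution that couples through the stream functions $(\vphi,\psi)$. Concretely, I would localize \eqref{eqs1} by the horizontal dyadic block $\Dhk$, apply the analytic weight operator $(\cdot)_\Phi$ from \eqref{S3eq1}, multiply by $\taulk^2\,e^{2\Psi}\Dhk\ubp$ and integrate over $\R^2_+$. Three cancellations then organize the energy identity. The phase derivative $\p_t\Phi=-\lam\dtht|\xi|$ yields a coercive contribution $\lam\dtht\,\taulk^2\|\ep|D_x|^{\f12}\Dhk\ubp\|_{L^2_+}^2$; the identity $\p_t\Psi+2(\p_y\Psi)^2=0$ from \eqref{S3eq5}, together with one integration by parts against $-\p_y^2 u$, reproduces the parabolic dissipation $\|\ep\p_y\Dhk u_\Phi\|_{L^2_+}^2$ modulo a defect of order $(4\tt)^{-1}\|\ep\Dhk u_\Phi\|_{L^2_+}^2$; and on the magnetic side, the same manipulation against $-\ka\p_y^2 b$ (with $\Psi$ tuned to viscosity $1$ rather than to $\ka$) leaves, after Cauchy--Schwarz, precisely the coefficient $\lk=\ka(2-\ka)/4>0$ in front of $\|\ep\p_y\Dhk b_\Phi\|_{L^2_+}^2$. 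This is where the restriction $\ka<2$ is essential.

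Combining these cancellations with $\p_t(\taulk^2)=(1+2\lk-2\fc\e)\taulk^2/\tt$ and summing over $k$ with the $2^{k/2}$ weighting of Definition \ref{def2.2} produces a master inequality of the form
\begin{equation*}
\|\taulk\ep\ubp\|_{\wt L^\infty_t(\cB^{\f12,0})}^2 + \lk\|\taulk\ep\p_y\ubp\|_{\wt L^2_t(\cB^{\f12,0})}^2 \lesssim (I)+(II)+(III)+(IV),
\end{equation*}
where $(I)$ collects the initial data, $(II)$ the time-weight defect of order $\taulk^2\tt^{-1}\|\ep\ubp\|_{L^2_+}^2$, $(III)$ the nonlinear and magnetic coupling terms, and $(IV)$ the source terms from $(m_U,m_B)$. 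The treatment of $(II)$ uses a further integration by parts,
\begin{equation*}
\int\p_y^2\Psi\,u_\Phi^2 e^{2\Psi}\,dy = -2\int\p_y\Psi\,u_\Phi\,\p_y u_\Phi\,e^{2\Psi}\,dy - 2\int(\p_y\Psi)^2 u_\Phi^2\,e^{2\Psi}\,dy,
\end{equation*}
followed by Young's inequality, trading part of the defect against the dissipation and absorbing the rest via the coercive phase contribution under the bootstrap $\tht(t)<\de/\lam$; this step fixes $\fc$ large enough and $\e\leq\e_0$ small enough. For $(III)$, the quadratic terms $u\p_x u$, $b\p_x b$, the magnetic linear couplings $\bar B_\ka\p_x(u,b)$ and the $\chi$--truncated far-field terms are expanded by Bony paraproduct and controlled through the convex inequality \eqref{S3eq11}; the critical transport terms $v\p_y u=-\p_x\vphi\,\p_y u$ and $h\p_y b=-\p_x\psi\,\p_y b$, coupling $(u,b)$ to the stream functions, are estimated by products of the form $\|\tauk\ep\ppp\|_{\wt L^\infty_t(\cB^{\f12,0})}\cdot\|\tlk\ep\p_y\ubp\|_{\wt L^2_t(\cB^{\f12,0})}$, the first factor coming directly from Proposition \ref{prop3.1} and the second being absorbable into the dissipation at the cost of a $\sqrt\e$ prefactor. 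For $(IV)$, the support property $\Supp(m_U,m_B)\subset\{y\in[0,2]\}$ reduces the bound to the far-field decay assumptions \eqref{UBdecay}--\eqref{UBdecay2}, giving another $\sqrt\e$ contribution after integration against $\taulk^2$ in time.

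Finally, I would close the argument by a continuity argument: since $\tht(0)=0<\de/\lam$, the bootstrap $\tht(t)<\de/\lam$ and the convex inequality \eqref{S3eq11} hold on some maximal interval $[0,T^\ast)$; the newly obtained bound on $\|\taulk\ep\p_y\ubp\|_{\wt L^2_t(\cB^{\f12,0})}$, combined with the bound on $\|\taullk\ep\p_y\ghp\|_{\wt L^2_t(\cB^{\f12,0})}$ to be established separately and with \eqref{UBdecay2}, upgrades the definition \eqref{def:theta} of $\tht$ to $\tht(t)\leq C\sqrt\e<\de/(2\lam)$ for $\e\leq\e_0$, strictly improving the bootstrap and extending the estimate to all $t>0$. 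By Remark \ref{rmk:c}(1), the constant $\fc$ is independent of $\e$, so the smallness closure is consistent. The principal obstacle will be block $(II)$: because the Gaussian weight $e^{2\Psi}=e^{y^2/(4\tt)}$ is exponentially \emph{growing} in $y$, the usual weighted Poincar\'e--Hardy inequalities for log-concave measures are unavailable, and the absorption of the $\tt^{-1}$ defect into the dissipation has to proceed through the double integration by parts above, exploiting precisely the $\tt^{\f12}$ gap between the time weight $\tauk$ of Proposition \ref{prop3.1} and the target weight $\taulk$ of Proposition \ref{prop3.2}; a secondary technical nuisance is the bilinear interaction between the growing $y$-weight and the $\chi(y)$--truncated far-field corrections in block $(III)$, which has to be controlled uniformly in $\cB^{\f12,0}$ using only the boundedness of $\chi$ and its derivatives.
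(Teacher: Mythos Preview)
Your overall architecture is recognizably that of Lemma \ref{prop6.1} and Corollary \ref{col6.1}, but the treatment of block $(II)$ contains a genuine gap that your scheme cannot close as stated.

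With the weight $\hs(t)=\taulk^2=\tt^{1+2\lk-2\fc\e}$, the time-derivative defect is
\[
\|\qhs\ep\Dhk\ubp\|_{L^2_t(L^2_+)}^2
=(1+2\lk-2\fc\e)\int_0^t\ttau^{2\lk-2\fc\e}\|\ep\Dhk\ubp\|_{L^2_+}^2\,d\tau.
\]
The weighted Poincar\'e inequality \eqref{S3eq34} converts this to at most $2(1+2\lk-2\fc\e)\|\taulk\ep\p_y\Dhk\ubp\|_{L^2_t(L^2_+)}^2$, while the available dissipation is only $4\lk\|\taulk\ep\p_y\Dhk\ubp\|_{L^2_t(L^2_+)}^2$. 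Since $\lk\le\tfrac14$, you would need $4\lk\ge 2(1+2\lk-2\fc\e)$, i.e.\ $4\fc\e\ge 2$, which contradicts the smallness of $\e$. Your proposed ``double integration by parts'' is precisely how \eqref{S3eq34} is derived; it does not improve the constant. Nor can the coercive phase term $\lam\dtht\,2^k\|\taulk\ep\Dhk\ubp\|^2$ help, since it carries the weight $\dtht$ rather than $\ttau^{-1}$, and no relation $\dtht\gtrsim\ttau^{-1}$ is available.

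The paper circumvents this by an indirect two-step argument that you should adopt. First, it exploits the relation $(u,b)=\p_y(\vphi,\psi)$: Proposition \ref{prop3.1} already gives $\|\tauk\ep\ubp\|_{\wt L^2_{[\frac t2,t]}(\cB^{\f12,0})}=\|\tauk\ep\p_y\ppp\|_{\wt L^2_{[\frac t2,t]}(\cB^{\f12,0})}\lesssim\|e^{y^2/8}e^{\de|D_x|}(\vphi_0,\psi_0)\|_{\cB^{\f12,0}}+\sqrt\e$. Second, it applies Corollary \ref{col6.1} on the half-interval $[\tfrac t2,t]$ with the \emph{shifted} weight $\hs(t')=(t'-\tfrac t2)^{1+2\lk-2\fc\e}$, so the initial contribution at $t_0=\tfrac t2$ vanishes and the $\qhs$-term is exactly the $\tauk$-weighted $\wt L^2$ norm just controlled. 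This converts the rate $\tauk$ in $\wt L^2$ into the target rate $\taulk$ in $\wt L^\infty$, after which a second application of Corollary \ref{col6.1} with $\hs(t')=t'^{1+2\lk-2\fc\e}$ yields the dissipation bound on $[\tfrac t2,t]$. The half-interval localization is the reason the dissipation in the statement is only taken over $[\tfrac t2,t]$.

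Two minor points. In block $(III)$ the paper does not estimate $v\p_yu,\,h\p_yb$ via products with $\|\tauk\ep\ppp\|_{\wt L^\infty_t}$; all nonlinear terms are handled uniformly through the $\dtht$-weighted norms of Definition \ref{def2.3} and Lemmas \ref{lem:nl1}--\ref{lem:nl2}, then absorbed by the $\lam\dtht$ coercive term for $\lam\ge\lam_0$. And your final paragraph, the continuity argument closing $\tht(t)<\de/\lam$, belongs to the proof of Theorem \ref{thm1.1}, not to Proposition \ref{prop3.2}, whose statement is already restricted to $t<T^*$.
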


In section \ref{sect7},  we shall deal with the {\it a priori} decay estimates of $(G,H)$ which will be
the most crucial ingredient used in the control of the analytic radius.

\begin{prop}\label{prop3.3}
{\sl Let $(G,H)$ be determined by \eqref{defGH}. Then if $\ka\in ]0,2[$,  there exist positive constants $\fc, \lam_0$ and a small enough constant $\e_0$ so that for any $\e\leq\e_0, \lam\geq\lam_0$ and  for any $t<T^*$, we have
\beq
\begin{split}
\|&\taullk \ep\ghp\|_{\wt{L}^\infty_t (\cB^{\f12,0})} +\|\taullk \ep\p_y\ghp\|_{\wt{L}^2_{[\f{t}2,t]}  (\cB^{\f12,0})} \\
&+\int_0^t \ttau^\f14 \|\ep \p_y\ghp(\tau) \|_{\cB^{\f12,0}}d\tau \leq C\sqrt{\e}\bigl(1+\|e^{\f{y^2}8}e^{\de|D_x|}(u_0,b_0)\|_{\cB^{\f12,0}}\bigr),\end{split}
\eeq where $ \lk\eqdefa \f{\ka(2-\ka)}4.$}
\end{prop}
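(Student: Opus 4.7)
The plan is to derive a closed evolution for $(G,H)$ from \eqref{eqs1}--\eqref{eqs2}, run the weighted analytic energy estimate in $\cB^{\f12,0}$ with Gaussian weight $\ep$ and time weight $\taullk$, and then read the integrated bound on the right-hand side as the self-referential control on the analyticity radius function $\tht$. The algebraic reason for introducing $G=u+\f{y}{2\tt}\vphi$ and $H=b+\f{y}{2\ka\tt}\psi$ is the cancellation, upon substitution, of the dangerous convective terms $v\p_y u=-\p_x\vphi\,\p_y u$ and $h\p_y b=-\p_x\psi\,\p_y b$ (which would cost a tangential derivative) against the $\f{y}{2\tt}u\p_x\vphi$ and $\f{y}{2\ka\tt}b\p_x\psi$ pieces coming from the $\vphi,\psi$ equations. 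Using also $\p_y\bigl(\f{y}{2\tt}\bigr)=\f1{2\tt}$ and $\p_y^2\bigl(\f{y}{2\tt}\bigr)=0$, a direct computation yields
\begin{equation*}
\p_t G-\p_y^2 G-\bar B_\ka\p_x H=\mathcal{R}_G,\qquad \p_t H-\ka\p_y^2 H-\bar B_\ka\p_x G=\mathcal{R}_H,
\end{equation*}
where $\mathcal{R}_G,\mathcal{R}_H$ are bilinear in $(G,H,u,b,\vphi,\psi,\p_y u,\p_y b)$ modulated by $(\chi,\chi',\chi'',\chi''')$ and the far field $(U,B)$, plus the commutator pieces $-\f1\tt\p_y\vphi-\f{y}{2\tt^2}\vphi$ and their $\ka$-analogs produced by the action of $\p_t-\p_y^2$ on $y/(2\tt)$, and the residual forcing $\f{y}{2\tt}M_U,\f{y}{2\ka\tt}M_B$.

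For the energy step I would apply $\Dhk$, pass to the analytically weighted unknowns $\ghp$, multiply by $\ep$ and take the $L^2_+$ scalar product with $\ep\Dhk\ghp$ (and the $\ka$-analog for $H$). The identity $\p_t\Psi+2(\p_y\Psi)^2=0$ of \eqref{S3eq5}, integration by parts in $y$, and the $-\lam\dtht(t)|D_x|\ghp$ term produced by $\p_t e^{\Phi(t,|D_x|)}$ give, as in \cite{PZ5}, the Chemin--Paicu--Zhang-type identity
\begin{equation*}
\tfrac12\tfrac{d}{dt}\|\ep\Dhk\ghp\|_{L^2_+}^2+\lam\dtht(t)\bigl\||D_x|^{\f12}\ep\Dhk\ghp\bigr\|_{L^2_+}^2+c_\ka\|\ep\p_y\Dhk\ghp\|_{L^2_+}^2\lesssim\mathrm{RHS}_k,
\end{equation*}
for some $c_\ka>0$. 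Multiplying by $\taullk^2$, integrating in time, moving $\tfrac{d}{d\tau}\taullk^2$ by an interpolation against the dissipation, summing in $k$ with weight $2^{k/2}$ and taking square roots produces the first two pieces of the left-hand side of Proposition \ref{prop3.3}. The third piece $\int_0^t\ttau^\f14\|\ep\p_y\ghp(\tau)\|_{\cB^{\f12,0}}d\tau$ is then recovered from the dissipation by Cauchy--Schwarz in time combined with a dyadic decomposition of $[0,t]$: for each $k$, $\int_0^t\ttau^{\f14}\|\cdot\|\,d\tau\lesssim\bigl(\int_0^t\ttau^{-\f32-2\lk+2\fc\e}d\tau\bigr)^{\f12}\bigl(\int_0^t\taullk^2\|\cdot\|^2 d\tau\bigr)^{\f12}$, the first factor being bounded uniformly in $t$ precisely because $\lk=\ka(2-\ka)/4>0$ and $\fc\e<\lk$, as guaranteed by the smallness of $\e$.

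The main obstacle is the estimation of $\mathcal{R}_G,\mathcal{R}_H$ in $\cB^{\f12,0}$ under the combined weights $\ep\taullk$. Terms of $v\p_y G$- and $h\p_y H$-type still lose a tangential derivative, so they must be handled by bilinear tame estimates in Chemin--Lerner spaces with analytic weight (analogs of the product laws used in \cite{CGP,PZ5}), the missing $|D_x|^{\f12}$ being supplied by the analyticity-dissipation $\lam\dtht(t)\||D_x|^{\f12}\ep\ghp\|^2$ on the left-hand side. The remaining factors $(\vphi,\psi,u,b)$ are fed in through Propositions \ref{prop3.1} and \ref{prop3.2}, the cut-off products $\chi' U\p_x(\cdot)$ and the source $(M_U,M_B)$ by \eqref{UBdecay}--\eqref{UBdecay2}; the delicate book-keeping is to track the $\ttau^{\fc\e}$ losses produced by integrating against $\dtht$, which is why $\fc$ must be fixed once and for all, independent of $\e$. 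Once this is done, the third piece of Proposition \ref{prop3.3} equals $\int_0^t(\dtht(\tau)-\e^{-\f12}\ttau^{\f54}\|\UBp(\tau)\|_{\cB^{\f12}_\h})d\tau$, so combined with the $\sqrt{\e}$-bound on the far-field contribution obtained from \eqref{UBdecay2} one gets $\tht(t)\leq C\sqrt\e\bigl(1+\|e^{y^2/8}e^{\de|D_x|}(u_0,b_0)\|_{\cB^{\f12,0}}\bigr)$, strictly less than $\de/(2\lam)$ for $\e$ small, which closes the continuity argument and forces $T^*=+\infty$.
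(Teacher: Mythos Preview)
Your proposal has two genuine gaps.

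First, the ``commutator pieces'' you list, $-\tfrac1{\tt}\p_y\vphi-\tfrac{y}{2\tt^2}\vphi$, combine exactly to $-\tfrac1{\tt}\bigl(u+\tfrac{y}{2\tt}\vphi\bigr)=-\tfrac{G}{\tt}$, and similarly for $H$. In the paper this is not placed on the right as a source but on the left as a favorable damping $+\tfrac{G}{\tt}$; see \eqref{eqs3}. This term contributes $\tfrac{2}{\tt}\|\ep\Dhk\ghp\|_{L^2_+}^2$ to the left of the energy identity (the term $2\|\ttau^{-\f12}\hsf\ep\Dhk\ghp\|^2$ in \eqref{S7eq1}). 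Without it, your plan of ``moving $\tfrac{d}{d\tau}\taullk^2$ by interpolation against the dissipation'' cannot close: the diffusion gives at most $4\lk\|\ep\p_y\Dhk\ghp\|^2$ with $\lk\le\tfrac14$, and Poincar\'e \eqref{S3eq34} converts this to $\tfrac{2\lk}{\tt}\|\ep\Dhk\ghp\|^2$, far short of the time-weight derivative $(2+2\lk-2\fc\e)\ttau^{-1}$. With the damping, the left side carries $(2+2\lk)\ttau^{-1}$ and leaves the margin $2\fc\e$ used in the paper. Incidentally, your stated ``algebraic reason'' for introducing $(G,H)$ is also incorrect: the terms $v\p_yu,\,h\p_yb$ do not cancel and survive in \eqref{eqs3}; they are controlled by Lemma~\ref{lem:nl2} and then converted to $(G,H)$ via Lemma~\ref{lem3.1}.

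Second, your recovery of $\int_0^t\ttau^{\f14}\|\ep\p_y\ghp\|\,d\tau$ by Cauchy--Schwarz on the full interval $[0,t]$ requires a bound $\|\taullk\ep\p_y\ghp\|_{\wt L^2_t(\cB^{\f12,0})}\lesssim\sqrt\e$, but even with the damping restored, the full-interval estimate leaves only a coefficient $O(\fc\e)$ in front of the weighted dissipation after the $\qhs$ term is absorbed, so you only obtain $\|\taullk\ep\p_y\ghp\|_{\wt L^2_t}\lesssim\fc^{-\f12}(1+\|e^{y^2/8}e^{\de|D_x|}(u_0,b_0)\|)$, losing the crucial $\sqrt\e$. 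The paper instead runs Corollary~\ref{col7.1} on half-intervals $[\tfrac t2,t]$, where the datum at $t/2$ is already $O(\sqrt\e)$ by \eqref{S7eq25} and the dissipation retains its full coefficient $\sqrt{\lk}$; this yields $\|\taullk\ep\p_y\ghp\|_{\wt L^2_{[t/2,t]}}\lesssim\sqrt\e$ (see \eqref{S7eq27}), and then a dyadic time decomposition $[0,1]\cup\bigcup_j[2^{j-1},2^j]$ with the decay $t^{-\f14-\lk+\fc\e}$ in \eqref{S7eq29} sums to give \eqref{S7eq28}.
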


With the above propositions, we still need the following lemma concerning the relationship
 between the functions $(G,H)$ given by \eqref{defGH} and the solutions of \eqref{eqs1} and \eqref{eqs2}, which will be frequently used in the subsequent sections.

\begin{lem}\label{lem3.1}
{\sl  Let $(u,b)$ and $(\vphi,\psi)$ be smooth enough solution of \eqref{eqs1} and \eqref{eqs2} respectively on $[0,T]$. Let $(G,H)$, $\Phi$ and $\Psi$ be defined respectively   by \eqref{defGH}, \eqref{def:Phi} and \eqref{def:Psi}. Then if  $\ka\in ]0,2[,$ for any $\ga\in]0,1[$ and $t\leq T$, one has
\begin{subequations} \label{S3eq567}
\begin{gather}
\| \egp \Dhk \vp\|_{L^2_+}\lesssim \tt^\f12 \|\ep  \Dhk \gp\|_{L^2_+}, \ \ \ \| \egp \Dhk \pp\|_{L^2_+}\lesssim \tt^\f12 \|\ep  \Dhk \hp\|_{L^2_+}\label{S3eq15} ,\\
\| \egp \Dhk \up\|_{L^2_+}\lesssim \|\ep  \Dhk \gp\|_{L^2_+},\ \qquad \ \| \egp \Dhk \bp\|_{L^2_+}\lesssim  \|\ep  \Dhk \hp\|_{L^2_+} \label{S3eq16},\\
\| \egp \Dhk \p_y \up \|_{L^2_+}\lesssim  \|\ep  \Dhk \p_y \gp\|_{L^2_+},\ \ \ \| \egp \Dhk \p_y \bp\|_{L^2_+}\lesssim \|\ep  \Dhk \p_y\hp\|_{L^2_+}\label{S3eq17} .
\end{gather}
\end{subequations}
}
\end{lem}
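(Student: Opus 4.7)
The plan hinges on the identities $\p_y(e^{2\Psi}\vphi)=e^{2\Psi}G$ and $\p_y(e^{2\Psi_\ka}\psi)=e^{2\Psi_\ka}H$, which follow by direct computation from $u=\p_y\vphi$, $b=\p_y\psi$, the observations $2\p_y\Psi=y/(2\tt)$ and $2\p_y\Psi_\ka=y/(2\ka\tt)$, and the definitions \eqref{defGH}. Since $e^\Phi$ and $\Dhk$ act only in the $x$-variable, they commute with $\p_y$ and with multiplication by $y$-weights, so the same identities hold for $(\gp,\vp)$ and $(\hp,\pp)$. All six estimates will be proven pointwise in $x$ in the $L^2_{\rmv}$-norm and then integrated in $x$ using $L^2_+=L^2_{\rm h}(L^2_{\rmv})$.

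For \eqref{S3eq15}, integrating $\p_y(e^{2\Psi}\vp)=e^{2\Psi}\gp$ from $0$ to $y$ with $\vp|_{y=0}=0$ yields $\Dhk\vp(y)=e^{-2\Psi(y)}\int_0^y e^{2\Psi(y')}\Dhk\gp(y')\,dy'$. Cauchy--Schwarz with the split $e^{2\Psi}=e^{\Psi}\cdot e^{\Psi}$, together with the elementary bound $\int_0^y e^{2\Psi(y')}\,dy'\le y\,e^{2\Psi(y)}$, gives
\[
|e^{\ga\Psi(y)}\Dhk\vp(y)|\le \sqrt{y}\,e^{(\ga-1)\Psi(y)}\|e^{\Psi}\Dhk\gp\|_{L^2_{\rmv}},
\]
and the Gaussian integral $\int_0^\infty y\,e^{2(\ga-1)\Psi}\,dy=2\tt/(1-\ga)$ then produces the $\tt^{1/2}$-factor. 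The $\pp$-$\hp$ version runs identically once we split $e^{2\Psi_\ka}=e^{2\Psi_\ka-\Psi}\cdot e^{\Psi}$: because $\ka<2$ the exponent $2\Psi_\ka-\Psi=\tfrac{2-\ka}{\ka}\Psi$ is nonnegative, so $\int_0^y e^{2(2\Psi_\ka-\Psi)}\,dy'\le y\,e^{2(2\Psi_\ka-\Psi)(y)}$ and the algebra collapses to the same integrand as before. Estimate \eqref{S3eq16} is then immediate from $\up=\gp-\frac{y}{2\tt}\vp$: the first piece is absorbed because $\ga<1$, and for the $\frac{y}{\tt}\vp$ piece I pick $\ga'\in(\ga,1)$ and use the Gaussian envelope $\frac{y}{\tt}e^{(\ga-\ga')\Psi(y)}\lesssim\tt^{-1/2}$ to convert the factor $y/\tt$ into a $\tt^{-1/2}$-gain that exactly cancels the $\tt^{1/2}$-loss coming from \eqref{S3eq15}.

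The hardest step is \eqref{S3eq17}. Differentiating $u=G-\frac{y}{2\tt}\vphi$ gives $\p_y u=\p_y G-\frac{\vphi}{2\tt}-\frac{yu}{2\tt}$; after applying \eqref{S3eq15} and \eqref{S3eq16} the last two terms are only bounded by $\tt^{-1/2}\|e^{\Psi}\Dhk\gp\|_{L^2_+}$, rather than by $\|e^{\Psi}\Dhk\p_y\gp\|_{L^2_+}$. To close the inequality I propose to use the weighted Poincar\'e-type bound
\[
\|e^{\Psi}f\|_{L^2_{\rmv}}\le 2\sqrt{\tt}\,\|e^{\Psi}\p_y f\|_{L^2_{\rmv}}
\]
applied with $f=\Dhk\gp$ and $f=\Dhk\hp$. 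Setting $v=e^{\Psi}f$ and expanding $\|\p_y v-\p_y\Psi\cdot v\|_{L^2_{\rmv}}^2$, an integration by parts on the cross term produces $-\tfrac12\int\p_y^2\Psi\,v^2\,dy=-\tfrac{1}{8\tt}\|v\|_{L^2_{\rmv}}^2$ together with the boundary contribution $[\p_y\Psi\cdot v^2/2]_0^\infty$. This is the crux and the main obstacle: the boundary at $y=0$ vanishes precisely because $\p_y\Psi(0)=0$, which is why the inequality survives even though $\hp(0)$ need not vanish (recall that $b$ only satisfies the Neumann condition $\p_y b|_{y=0}=0$); the boundary at $y=+\infty$ vanishes thanks to the stronger Gaussian decay furnished by the analytic functional framework of the paper. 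Once the Poincar\'e bound is available, all three contributions to $\p_y u$ are dominated by $\|e^{\Psi}\Dhk\p_y\gp\|_{L^2_+}$, and the identical argument with $(b,H,\psi)$ in place of $(u,G,\vphi)$ yields the $\bp,\hp$ analog.
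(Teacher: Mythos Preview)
Your argument is correct in substance and reaches the same conclusions as the paper, but the route is organized differently and is in places cleaner.

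\smallskip
\textbf{Comparison.} The paper first establishes \eqref{S3eq16} directly from the integral representation \eqref{S3eq38} together with the bound $\sup_{y\geq 0}e^{-y^2}\int_0^y e^{z^2}\,dz<\infty$, and then derives \eqref{S3eq15} from \eqref{S3eq16} via the Poincar\'e inequality of Lemma~\ref{lem:Poincare}. You go the other way: you prove \eqref{S3eq15} directly from the integral formula (using the crude but sufficient bound $\int_0^y e^{2\Psi}\,dy'\le y\,e^{2\Psi(y)}$), and then obtain \eqref{S3eq16} from the algebraic relation $u=G-\tfrac{y}{2\tt}\vphi$ by sacrificing a little room in $\ga$. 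Both orderings work; yours avoids invoking Poincar\'e at this stage. For \eqref{S3eq17} the paper uses the more elaborate identity \eqref{S3eq40} for $\p_y b$ together with the pointwise bound $|\Dhk\hp|\lesssim \tt^{1/4}e^{-\Psi}\|e^\Psi\Dhk\p_y\hp\|_{L^2_\rmv}$ obtained from \eqref{ineq2}. Your decomposition $\p_y u=\p_y G-\tfrac{\vphi}{2\tt}-\tfrac{y}{2\tt}u$, followed by \eqref{S3eq15}--\eqref{S3eq16} and then the Poincar\'e inequality to trade $\|e^\Psi\Dhk\gp\|_{L^2_+}$ for $\tt^{1/2}\|e^\Psi\Dhk\p_y\gp\|_{L^2_+}$, is more economical and bypasses \eqref{ineq1}--\eqref{ineq2} entirely.

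\smallskip
\textbf{One slip to fix.} In your derivation of the Poincar\'e inequality the sign and coefficient of the integrated cross term are off: writing $v=e^\Psi f$ and expanding $\|e^\Psi\p_y f\|_{L^2_\rmv}^2=\|\p_y v-\p_y\Psi\, v\|_{L^2_\rmv}^2$, the cross term $-2\int\p_y\Psi\,v\,\p_y v\,dy=-\int\p_y\Psi\,\p_y(v^2)\,dy$ integrates by parts to $+\int\p_y^2\Psi\,v^2\,dy=+\tfrac{1}{4\tt}\|v\|_{L^2_\rmv}^2$ (plus the boundary term you correctly identify as vanishing), not $-\tfrac{1}{8\tt}\|v\|_{L^2_\rmv}^2$. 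With the correct sign one immediately gets $\|e^\Psi f\|_{L^2_\rmv}\le 2\sqrt{\tt}\,\|e^\Psi\p_y f\|_{L^2_\rmv}$, which is exactly what you need; this is the content of the paper's Lemma~\ref{lem:Poincare} (proved there by the alternative device $1=\p_y y$). So the inequality you invoke is correct, only the justification sketch needs this sign correction.
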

Let us postpone the proof of this lemma till the end of this section.

We are now in a position to complete the proof of Theorem \ref{thm1.1}.

\begin{proof}[Proof of Theorem \ref{thm1.1}]
The general strategy to prove the existence result for a non-linear partial differential equation is first to construct appropriate approximate solutions, then perform uniform estimates for such approximate solution sequence, and finally pass to the limit in the approximate problem. For simplicity, here we only present the {\it a priori} estimate for smooth enough solutions of \eqref{eqs1} in the analytical framework.

Indeed, let $(u,b)$ and $(\vphi,\psi)$ be smooth enough solutions of \eqref{eqs1} and \eqref{eqs2} respectively on $[0,T^\star[$, where $T^\star$ is the maximal time of existence of the solution. Let $(G,H)$ be defined by \eqref{defGH}. For any $t<T^*$ (of course here $T^*\leq T^\star$) with $T^*$ being defined by \eqref{def:T^*}, we deduce from \eqref{def:theta} that
$$\tht(t) \leq\int_0^t \bigl(\ttau^\f14 \|\ep\p_y \ghp (\tau)\|_{\cB^{\f12,0}}+\e^{-\f12}\ttau^\f54\|\UBp(\tau)\|_{\cB^\f12_\h}\bigr)d\tau.$$
Notice that we can take $\e_0$ to be so small that $\e_0<\f{l_\ka}{\fc}.$ This together with
the assumption \eqref{UBdecay2} and Proposition \ref{prop3.3} ensures that
$$
\tht(t) \leq C\sqrt{\e}\bigl(1+\|e^{\f{y^2}8}e^{\de|D_x|}(u_0,b_0)\|_{\cB^{\f12,0}}\bigr)\quad \mbox{for any}\ t<T^\ast \andf \e\leq \e_0.
$$
Then by  taking $\e_0$ so small that $\e_0\leq \min\Bigl(\Bigl(\f\de{2C\lam\bigl(1+\|e^{\f{y^2}8}e^{\de|D_x|}(u_0,b_0)\|_{\cB^{\f12,0}}\bigr)}\Bigr)^2, \f{l_\ka}{\fc}\Bigr)$, we achieve
$$
\sup_{t\in[0,T^*[} \tht(t)\leq \f\de{2\lam} \quad \ \mbox{for any}\ \e\leq\e_0.
$$

Then in view of \eqref{def:T^*}, we deduce by a continuous argument that $T^*=\infty$. And \eqref{S6eq21} holds for $t=\infty$, which implies \eqref{S3eq6}. Furthermore, Propositions \ref{prop3.2} and \ref{prop3.3} imply \eqref{S3eq7} and \eqref{S3eq8} respectively. Finally, along
with  Lemma \ref{lem3.1}, we deduce \eqref{S3eq9} from Proposition \ref{prop3.3}. This completes the existence part of Theorem \ref{thm1.1}. The uniqueness part of Theorem \ref{thm1.1} has been proved in \cite{XY19}, which can also be proved
by similar {\it a priori} estimates for the difference between any two solutions of \eqref{eqs1}. (One may check
\cite{ZZ} for the detailed proof to uniqueness of analytic solution to the classical Prandtl system $(PE).$)
\end{proof}

Next
let us  turn to the outline of the proof to Theorem \ref{thm1.2}.
Firstly, similar to \eqref{def:T^*}, we define $T^*_\ka$ via
\beq\label{def:T^*kappa}
T^*_\ka \eqdefa \sup\bigl\{ t>0, \quad \thtk(t)<\de/\lam\ \bigr\}\with \thtk(t)\ \ \mbox{being determined by \eqref{def:thetakappa}}.
\eeq
So that by virtue of \eqref{def:Phikappa}, for any $t<T^*_\ka$, the convex inequality \eqref{S3eq11} still holds for $\Phi_\ka.$

In section \ref{sect5}, we shall deal with the {\it a priori} decay estimates for the analytic solutions of \eqref{eqs2}
in the case when $\ka\in]1/2,\infty[$.

\begin{prop}\label{prop3.4}
{\sl Let $(\vphi,\psi)$ be a smooth enough solution of \eqref{eqs2}. Then if $\ka\in]1/2,\infty[,$
there exist positive  constants $\lam_0, \fc$ and a small enough constant $\e_0$  so that for any $\e\leq\e_0, \lam\geq\lam_0$ and
 for any $t<T^*_\ka$, we have
\beq
\begin{split}
\|\etauk \epk\pppk&\|_{\wt{L}^\infty_t (\cB^{\f12,0})}+\|\etauk \epk\p_y\pppk\|_{\wt{L}^2_{[\f{t}2,t]} (\cB^{\f12,0})}\\
& \qquad\lesssim
\|e^\f{y^2}{8\ka} e^{\de |D_x|} (\vphi_0,\psi_0) \|_{\cB^{\f12,0}}+ \sqrt{\e}\with \elk\eqdefa \f{2\ka-1}{4\ka^2}.
\end{split}
\eeq}
\end{prop}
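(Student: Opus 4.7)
The plan is to adapt the analytic energy estimate from the proof of Proposition \ref{prop3.1} to the setting of this statement, with the weight $\Psi$ replaced by $\Psi_\ka$, the phase $\Phi$ replaced by $\Phi_\ka$ and the exponent $\lk$ replaced by $\elk$; the convex inequality \eqref{S3eq11} remains available throughout $[0,T^*_\ka[$ by \eqref{def:T^*kappa}. After applying $\Dhk$ to both equations of \eqref{eqs2} followed by the Fourier multiplier $e^{\Phi_\ka(t,|D_x|)}$, I take the $L^2_+$ inner product with $e^{2\Psi_\ka}\Dhk\vpk$ and $e^{2\Psi_\ka}\Dhk\ppk$ respectively. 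The time derivative of the weight is absorbed via the cancellation \eqref{S3eq21}, while $\p_t\Phi_\ka$ generates the analytic damping $\lam\dthtk\||D_x|^{\f12}e^{\Psi_\ka}\Dhk\pppk\|_{L^2_+}^2$ that will ultimately close the bound on $\thtk$ through \eqref{def:thetakappa}.

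The crucial step is the diffusion--weight coercivity. For the $\psi$-equation the diffusion $-\ka\p_y^2$ is perfectly adapted to the weight $\Psi_\ka=y^2/(8\ka\tt)$, so integration by parts of the cross term, combined with $\p_y^2\Psi_\ka=1/(4\ka\tt)$ and the cancellation built into \eqref{S3eq21}, produces the clean contribution $\ka\|e^{\Psi_\ka}\Dhk\p_y\ppk\|_{L^2_+}^2-\f1{4\tt}\|e^{\Psi_\ka}\Dhk\ppk\|_{L^2_+}^2$. For the $\vphi$-equation the diffusion coefficient is $1$ while the weight is still tuned to $\ka$, so bounding the cross term by Cauchy--Schwarz and completing the square delivers the sharp lower bound $\f{2\ka-1}{2\ka}\|e^{\Psi_\ka}\Dhk\p_y\vpk\|_{L^2_+}^2$, which is strictly positive precisely when $\ka>1/2$; this is the source of both the hypothesis $\ka\in ]1/2,\infty[$ and, after the time-weighted absorption described below, of the sharp exponent $\elk=(2\ka-1)/(4\ka^2)\in ]0,1/4]$.

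I next multiply the resulting dyadic identity by $\ttau^{2(\elk-\fc\e)}$ and integrate over $[0,t]$. The growth $2(\elk-\fc\e)\ttau^{2(\elk-\fc\e)-1}\|e^{\Psi_\ka}\Dhk\pppk\|_{L^2_+}^2$ produced by the time weight is absorbed by the diffusion coercivity above (using the identity $(\p_y\Psi_\ka)^2=\Psi_\ka/(2\ka\tt)$) together with the analytic damping, as long as $\fc\e\ll\elk$, which follows from $\e\leq\e_0$. The nonlinear terms appearing in \eqref{eqs2}, namely $u\p_x\vphi-b\p_x\psi$, the non-local correction $2\int_y^\infty(\p_x\vphi\,\p_yu-\p_x\psi\,\p_yb)\,dy'$, and the $\chi'$- and $\chi$-couplings to the outflow $(U,B)$, are then controlled in the Chemin--Lerner space $\wt L^p_t(\cB^{\f12,0})$ by combining the convex phase inequality \eqref{S3eq11} for $\Phi_\ka$ with Bony's paraproduct decomposition in $x$ and the standard weighted Besov product estimates. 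The forcing $(M_U,M_B)$, supported in $y\in[0,2]$, is absorbed by the far-field hypotheses \eqref{UBdecay} and \eqref{UBdecay2}.

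A final dyadic summation against $2^{k/2}$ upgrades the above to the $\cB^{\f12,0}$ bound of the statement; upon choosing $\lam\geq\lam_0$ large and $\e_0$ small, every contribution beyond the coercive terms stays below $\sqrt\e$ plus the initial analytic norm. The main obstacle I anticipate is the sharp coercivity calculation for the $\vphi$-equation, isolating the prefactor $(2\ka-1)/(2\ka)$ and tracing it through the time-weighted absorption to pin down the value $\elk=(2\ka-1)/(4\ka^2)$; the remaining nonlinear and source estimates, although notationally heavy, run in parallel to those already performed in Proposition \ref{prop3.1}.
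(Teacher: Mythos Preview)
Your outline captures the paper's approach faithfully: the switch to the weight $\Psi_\ka$ and phase $\Phi_\ka$, the coercivity calculation yielding the prefactor $(2\ka-1)/(2\ka)$ for the $\vphi$-equation (this is exactly Lemma~\ref{lem:heateqs}, inequality \eqref{S4eq3}), and the time-weighted absorption with $\hs(t)=\tt^{2(\elk-\fc\e)}$ are all as in the paper's proof. Your exact identity for the $\psi$-equation is even slightly sharper than the inequality \eqref{S4eq4} the paper records, though it reduces to the same thing after Poincar\'e.

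There is, however, one structural point you gloss over. When you multiply by $\tt^{2(\elk-\fc\e)}$ and integrate over $[0,t]$, the growth of the time weight eats almost all of the diffusion: after the absorption you describe, only a residual $4\ka\fc\e\,\|\etauk\epk\p_y\Dhk\pppk\|_{L^2_t(L^2_+)}^2$ survives on the left, while the far-field terms on the right still contribute $\ka\fc\e\,d_k^2 2^{-k}\|\etauk\epk\p_y\pppk\|_{\wt L^2_t(\cB^{\f12,0})}^2$. After summation in $k$ these nearly cancel, and the $\p_y$-bound you would extract from this single step carries a prefactor $(\ka\fc\e)^{-1/2}$, which blows up as $\e\to 0$ and does not give the uniform estimate in the statement. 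The paper resolves this by a two-pass argument: the integration over $[0,t]$ is used only to obtain the $\wt L^\infty_t$ bound \eqref{S5eq33}; then one returns to the basic energy inequality (Corollary~\ref{col5.2}) on the half-interval $[t/2,t]$, where the full coefficient $\sqrt{\ka\elk}$ in front of the diffusion is available and the $\|\qhs\epk\pppk\|_{\wt L^2_{[t/2,t]}(\cB^{\f12,0})}$ term is controlled by the $\wt L^\infty$ bound just established. This second pass is what delivers the $\p_y$-estimate on $[t/2,t]$ with a constant independent of $\e$. Your proposal should make this two-step structure explicit.
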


In section \ref{sect6},  we shall deal with the following {\it a priori} decay estimates for the analytic solutions of \eqref{eqs1}.

\begin{prop}\label{prop3.5}
{\sl Let $(u,b)$ be a smooth enough solution of \eqref{eqs1}. Then if $\ka\in]1/2,\infty[,$
there exist positive  constants $\lam_0, \fc$ and a small enough constant $\e_0$  so that for any $\e\leq\e_0, \lam\geq\lam_0$ and for any $t<T^*_\ka$, we have
\beq
\begin{split}
\|\etaulk \epk\ubpk&\|_{\wt{L}^\infty_t (\cB^{\f12,0})} +\|\etaulk \epk\p_y\ubpk\|_{\wt{L}^2_{[\f{t}2,t]}  (\cB^{\f12,0})}\\
 &\lesssim \|e^\f{y^2}{8\ka} e^{\de |D_x|} (u_0,b_0,\vphi_0,\psi_0) \|_{\cB^{\f12,0}}+\sqrt{\e} \with \elk\eqdefa \f{2\ka-1}{4\ka^2}.\end{split}
\eeq}
\end{prop}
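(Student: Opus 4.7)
The plan is to adapt the analytic energy estimate strategy of Proposition \ref{prop3.2} to the case $\ka\in ]1/2,\infty[$, replacing the weight $\Psi$, the phase $\Phi$ and the diffusive balance so as to absorb the heat kernel of magnetic diffusion rather than that of the viscosity. First I would dyadically localize: apply $\Dhk$ to the $u$- and $b$-equations of \eqref{eqs1}, then conjugate by $e^{\Phi_\ka(t,D_x)}$. The time derivative of the phase produces the key damping term $\lam\dthtk |D_x|\Dhk u_{\Phi_\ka}$ (resp. for $b$), which will be used to absorb the contributions coming from the convex inequality \eqref{S3eq11} applied to the tangential convection commutators.

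Next I would perform the $L^2_+$ inner product of the $u$-equation against $\eppk\Dhk u_{\Phi_\ka}\ttau^{1+2\elk-2\fc\e}$ and, separately, of the $b$-equation against $\eppk\Dhk b_{\Phi_\ka}\ttau^{1+2\elk-2\fc\e}$, then sum. The crucial algebraic identity is \eqref{S3eq21}: coupled with the diffusion term $-\ka\p_y^2 b$ it yields the good sign $\ka\|\epk\p_y \bp\|_{L^2_+}^2$ modulo boundary and weight-derivative remainders. The analogous computation for the $u$-equation yields the viscous good term $\|\epk\p_y \up\|_{L^2_+}^2$ plus an excess factor $(2\ka-1)/(4\ka)(\p_y\Psi_\ka)^2$ coming from the mismatch between the viscous and magnetic diffusion; exactly this excess, combined with the growth of $\ttau^{1+2\elk-2\fc\e}$ via the weighted Hardy inequality $\|y\,\epk f/\tt\|_{L^2_+}\lesssim \|\epk\p_y f\|_{L^2_+}$, forces the critical exponent $\elk=(2\ka-1)/(4\ka^2)$ and requires $\ka>1/2$.

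The remaining work is to estimate the nonlinear and far-field terms at the level of the weighted Chemin--Lerner norm $\wt L^\infty_t(\cB^{1/2,0})$. The convection pairs $u\p_x u-b\p_x b$ and $v\p_y u-h\p_y b$ are treated by Bony's horizontal paraproduct decomposition together with \eqref{S3eq11} and the $\ka$-version of Lemma \ref{lem3.1}: the normal components $v,h$ are rewritten through $\vphi,\psi$, and the resulting bounds are closed using Proposition \ref{prop3.4} in place of Proposition \ref{prop3.1}. The cut-off terms multiplied by $\chi,\chi',\chi''$, as well as the source $(m_U,m_B)$, are controlled by \eqref{UBdecay}--\eqref{UBdecay2} thanks to their compact $y$-support. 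The terms proportional to $\bar B_\ka\p_x$ are handled by the antisymmetry of the $u$- and $b$-equations so that, after summation, only the (vanishing) boundary traces survive.

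The main obstacle will be bookkeeping the time weight $\ttau^{\f12+\elk-\fc\e}$ through every nonlinear contribution while keeping the loss $\fc\e$ uniform. Concretely, every nonlinearity must be split into a low-frequency piece (estimated using the $\wt L^\infty_t(\cB^{3/2}_\h)$ bound on $(U,B)$ or the $\wt L^2_{[t/2,t]}$ bound on $\p_y$-terms) and a high-frequency piece (absorbed by the good term $\lam\dthtk$ via \eqref{def:thetakappa} and \eqref{def:Phikappa}). The delicate point is to verify that the constant $\fc$ can be fixed \emph{before} choosing $\e_0$, so that $\fc\e<\ka\elk$ and the polynomial-in-$\ttau$ remainders are swallowed by the diffusion; this is the analogue of the observation in Remark \ref{rmk:c}(1) and is what ultimately yields the stated bound once one applies Young's inequality in the dyadic sum and sums against the $\ell^1(\Z)$-sequence $2^{k/2}$.
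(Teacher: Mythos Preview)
Your overall strategy is correct and matches the paper's: replace $\Psi,\Phi$ by $\Psi_\ka,\Phi_\ka$, use the heat-weight identity \eqref{S3eq21}, exploit the antisymmetry of the $\bar B_\ka\p_x$ coupling, and ultimately close via Proposition~\ref{prop3.4} rather than Proposition~\ref{prop3.1}. The minor numerical slips (the good coefficients are $\frac{2\ka-1}{2\ka}$ for $u$ and $\frac{\ka}{2}$ for $b$, cf.~\eqref{S4eqa34}) are harmless.

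There is, however, a genuine gap in how you propose to insert the time weight. Testing directly against $\eppk\Dhk\ubpk\,\ttau^{1+2\elk-2\fc\e}$ on the whole interval $[0,t]$ produces, from differentiating the weight, the remainder
\[
\bigl\|\ttau^{\elk-\fc\e}\epk\ubpk\bigr\|_{\wt L^2_t(\cB^{\f12,0})}
=\bigl\|\ttau^{\elk-\fc\e}\epk\p_y\pppk\bigr\|_{\wt L^2_t(\cB^{\f12,0})},
\]
and Proposition~\ref{prop3.4} only controls this quantity on $[\frac t2,t]$, not on $[0,t]$; the global $\wt L^2$ norm has no reason to be finite with that polynomial weight. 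The paper (mirroring the proof of Proposition~\ref{prop3.2}) avoids this by a two-pass localisation: first take $t_0=\frac t2$ and $\hs(\tau)=(\tau-\tfrac t2)^{1+2\elk-2\fc\e}$ in Corollary~\ref{col6.2}, so the initial contribution vanishes and the $\sqrt{\hs'}$ remainder lives on $[\frac t2,t]$; then use the identity $(u,b)=\p_y(\vphi,\psi)$ together with the $\p_y$-bound of Proposition~\ref{prop3.4} on $[\frac t2,t]$ to close, yielding the pointwise estimate $\|t^{\f12+\elk-\fc\e}\epk\ubpk(t)\|_{\cB^{\f12,0}}\lesssim\ldots$; a second pass with $\hs(\tau)=\tau^{1+2\elk-2\fc\e}$ on $[\frac t2,t]$ then gives the $\wt L^2_{[t/2,t]}$ bound on $\p_y\ubpk$. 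Note also that Proposition~\ref{prop3.4} enters precisely here, through $(u,b)=\p_y(\vphi,\psi)$, and not in the treatment of the convection terms $v\p_y u-h\p_y b$ (those are handled by Lemmas~\ref{lem:nl1}--\ref{lem:nl2} and absorbed by the $\lam\dthtk$ damping, without reference to the primitive estimate).
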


In section \ref{sect7},  we shall deal with the {\it a priori} decay estimates of $(G,H).$
\begin{prop}\label{prop3.6}
{\sl Let $(G,H)$ be determined by \eqref{defGH}. Then   if $\ka\in]1/2,\infty[,$
there exist positive  constants $\lam_0, \fc$ and a small enough constant $\e_0$  so that for any $\e\leq\e_0, \lam\geq\lam_0$ and for any $t<T^*_\ka$, we have
\beq
\begin{split}
&\|\tauellk \epk\ghpk\|_{\wt{L}^\infty_t (\cB^{\f12,0})} +\|\tauellk \epk\p_y\ghpk\|_{\wt{L}^2_{[\f{t}2,t]}  (\cB^{\f12,0})} \\
&+\int_0^t \ttau^\f14 \|\epk \p_y\ghpk(\tau) \|_{\cB^{\f12,0}}d\tau \leq C\sqrt{\e}\bigl(1+\|e^{\f{y^2}{8\ka}}e^{\de|D_x|}(u_0,b_0)\|_{\cB^{\f12,0}}\bigr),\end{split}
\eeq
where $\elk\eqdefa \f{2\ka-1}{4\ka^2}.$ }
\end{prop}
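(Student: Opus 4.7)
The plan is to mimic the strategy behind Proposition \ref{prop3.3}, only now with the weight $\Psi_\ka$ and the modified decay exponent $\elk=(2\ka-1)/(4\ka^2)$. The first step is to derive the evolution system for $(G,H)=(u+\fykt\vphi,\ b+\fykt\psi)$ directly from \eqref{eqs1} and \eqref{eqs2}; the structural miracle, as in \cite{PZ5}, is that the $\langle t\rangle^{-1}\p_y(\vphi,\psi)$ terms produced by $\p_y^2 u$, $\ka\p_y^2 b$ and the transport terms $(v\p_y u,\,h\p_y b)$ recombine so that no terms with a loss in $y$-derivative remain in the reformulated system. This is exactly where the restriction that $\bar B_\ka$ vanishes unless $\ka=1$ is used, as noted in Remark \ref{rmk:c}(1). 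What is left is a parabolic system for $(G,H)$ with the same linear structure as \eqref{eqs1}, all nonlinear couplings expressible in terms of $(G,H)$, $(u,b)$ and $(\vphi,\psi)$, together with source terms in $(U,B)$, $(M_U,M_B)$ and the cut-off $\chi$.

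Next I would perform the weighted dyadic energy estimate. I apply $\Dhk$, shift by $e^{\Phi_\ka}$, test the $G$-equation against $\epk^{2}\Dhk \gpk$ and the $H$-equation against $\epk^{2}\Dhk \hpk$, and integrate over $\R^2_+$. The time derivative of $e^{\Phi_\ka}$ contributes, for $t<T^*_\ka$, a positive damping $\lam\dthtk 2^k\|\epk\Dhk\ghpk\|_{L^2_+}^2$ that, via the convex inequality \eqref{S3eq11}, will absorb the analytic loss in the convection terms. Combining integration by parts in $y$ with the identity \eqref{S3eq21}, the $H$-side generates the coercive term $\ka\|\p_y\Psi_\ka\epk\Dhk\hpk\|_{L^2_+}^2$, while the $G$-side (diffusion coefficient $1$ against weight $\Psi_\ka$) generates
$\bigl(2-\f{1}{\ka}\bigr)\|\p_y\Psi_\ka\epk\Dhk\gpk\|_{L^2_+}^2$ plus lower order, which is positive precisely because $\ka>1/2$. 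Since $\p_y\Psi_\ka=y/(4\ka\tt)$, these coercive contributions give an extra $\tt^{-1/2}$ decay in the $L^2$ norm, which is exactly what raises the temporal weight from $\etauk$ (on $(u,b)$ in Proposition \ref{prop3.5}) to $\tauellk$ here.

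The third step is to estimate the right hand side. Terms linear in $(U,B)$ are handled using \eqref{UBdecay}--\eqref{UBdecay2}; the interaction terms of the form $\chi(-\p_xU\,u+\p_xB\,b)$ and the sources $(M_U,M_B)$, being supported in $y\in[0,2]$, pay nothing against $e^{\Psi_\ka}$. The nonlinear convection $u\p_x G-b\p_x H$ and its variants are treated by Bony's para-product decomposition in the $x$-variable: the $|D_x|^{1/2}$ lost in an $L^1\cdot L^2$ product is paid by the damping $\lam\dthtk$, exactly as in the proof of Proposition \ref{prop3.2}. Lemma \ref{lem3.1}, applied with $\Psi_\ka$ in place of $\Psi$, allows one to convert norms of $\vpk,\ppk$ and $\upk,\bpk$ into norms of $\gpk,\hpk$; this is the mechanism by which all ``cross'' terms are eventually folded back into the quantity being controlled. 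Multiplying by $\ttau^{2(1+\elk-\fc\e)}$, integrating on $[0,t]$, taking a square root, summing $2^{k/2}$ in $\ell^1(\Z)$, and inserting Propositions \ref{prop3.4}--\ref{prop3.5} to dominate the contributions that involve $\pppk$ and $\ubpk$ yields the two Chemin-Lerner norms on the left. The additional scalar term $\int_0^t\ttau^{1/4}\|\epk\p_y\ghpk(\tau)\|_{\cB^{1/2,0}}d\tau$ follows from the same estimate by splitting $\ttau^{1/4}=\ttau^{1/4-(1+\elk-\fc\e)}\cdot\ttau^{1+\elk-\fc\e}$ and applying Cauchy--Schwarz in time, since $1/4-(1+\elk-\fc\e)<-1/2$ for $\fc\e<\elk$.

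The main obstacle will be controlling the contribution of the non-local integral $\int_y^\infty\chi''(U\p_x\vphi-B\p_x\psi)\,dy'$ from the $\vphi$-equation after it is substituted into the $G$-equation, together with the genuinely $x$-dependent part of the far field: one has to beat the sharp exponent $1+\elk-\fc\e$, which requires both the decay \eqref{UBdecay2} and the preliminary propagation bound in Proposition \ref{prop3.4} simultaneously, and one must verify at every step that $\fc$ can be chosen to be a universal constant (independent of $\e$) so that the smallness $\e_0<\elk/\fc$ leaves a strictly positive margin. This, as stressed in Remark \ref{rmk:c}(1), is the very reason the constant $\fc$ is fixed throughout Sections \ref{sect5}--\ref{sect7} while $C$ is allowed to vary.
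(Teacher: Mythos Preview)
Your overall strategy matches the paper: derive the $(G,H)$ system, run the weighted dyadic energy estimate with $\Psi_\ka$ (this is Lemma \ref{prop7.2} and Corollary \ref{col7.2}), and then choose the weight $\hs(\tau)=\ttau^{2(1+\elk-\fc\e)}$. Two points deserve correction.

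First, a slip in the definition: the paper keeps $G=u+\fyt\vphi$ with the factor $\f{y}{2\tt}$, not $\f{y}{2\ka\tt}$; only $H$ carries the $\ka^{-1}$. This asymmetry is exactly what makes the derivation of \eqref{eqs3} close, since the $u$-equation has diffusion coefficient $1$ while the $b$-equation has coefficient $\ka$. With your symmetric choice $G=u+\fykt\vphi$ the cancellation producing the damping $\tt^{-1}G$ in the $G$-equation would fail.

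Second, and more substantively, your final step for the integral $\int_0^t\ttau^{1/4}\|\epk\p_y\ghpk\|_{\cB^{1/2,0}}\,d\tau$ by a single Cauchy--Schwarz in time over $[0,t]$ does not work. The weighted bound $\|\tauellk\epk\p_y\ghpk\|_{\wt L^2}$ is only available on the half-interval $[t/2,t]$; on the full interval $[0,t]$ the choice $\hs=\ttau^{2(1+\elk-\fc\e)}$ forces the dissipation to be sacrificed (via Poincar\'e) to absorb the $\hs'$ term, leaving only a coefficient $O(\fc\e)$ in front of $\|\tauellk\epk\p_y\ghpk\|^2_{\wt L^2_{[0,t]}}$, which yields a bound of size $O(1)$ rather than $O(\sqrt\e)$ after division. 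The paper instead proves the $[t/2,t]$ bound \eqref{S7eq27} (in the $\ka$-version), obtains $\int_{t/2}^t\ttau^{1/4}\|\ldots\|\,d\tau\lesssim t^{-1/4-\elk+\fc\e}\sqrt\e(\ldots)$, and then sums over the dyadic decomposition $[0,1]\cup[1,2]\cup[2,4]\cup\cdots$ of $[0,t]$; the geometric series converges because $-\f14-\elk+\fc\e<0$ once $\fc\e<\elk$. You should replace the Cauchy--Schwarz paragraph with this dyadic summation.
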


We also need the following version of Lemma \ref{lem3.1} for the case when $\ka>\f12$.

\begin{lem}\label{lem3.2}
{\sl  Let $(u,b)$ and $(\vphi,\psi)$ be smooth enough solution of \eqref{eqs1} and \eqref{eqs2} respectively on $[0,T]$. Let $(G,H)$, $\Phi_\ka$ and $\Psi_\ka$ be defined respectively   by \eqref{defGH}, \eqref{def:Phikappa} and \eqref{def:Psikappa}. Then if  $\ka\in ]1/2,\infty[,$ for any $\ga\in]0,1[$ and $t\leq T$, one has
\begin{subequations} \label{S3eq3012}
\begin{gather}
\| \egpk \Dhk \vpk\|_{L^2_+}\lesssim \tt^\f12 \|\epk  \Dhk \gpk\|_{L^2_+}, \ \ \ \| \egpk \Dhk \ppk\|_{L^2_+}\lesssim \tt^\f12 \|\epk  \Dhk \hpk\|_{L^2_+}\label{S3eq30} ,\\
\| \egpk \Dhk \upk\|_{L^2_+}\lesssim \|\epk  \Dhk \gpk\|_{L^2_+},\ \ \ \| \egpk \Dhk \bpk\|_{L^2_+}\lesssim  \|\epk  \Dhk \hpk\|_{L^2_+}\label{S3eq31} ,\\
\| \egpk \Dhk \p_y \upk \|_{L^2_+}\lesssim  \|\epk  \Dhk \p_y \gpk\|_{L^2_+},\ \ \ \| \egpk \Dhk \p_y \bpk\|_{L^2_+}\lesssim \|\epk  \Dhk \p_y\hpk\|_{L^2_+}\label{S3eq32} .
\end{gather}
\end{subequations}
}
\end{lem}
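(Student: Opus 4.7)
My plan is to prove Lemma~\ref{lem3.2} in close parallel with Lemma~\ref{lem3.1}, whose proof is postponed in the paper; the only change is a systematic replacement of $\Psi$ by $\Psi_\ka$ together with the corresponding $\ka$-dependent integrating factors. The key algebraic input is $2\ka\p_y\Psi_\ka=y/(2\tt)$ and $2\p_y\Psi_\ka=y/(2\ka\tt)$, which match exactly the coefficients appearing in \eqref{defGH} and produce the exact identities $e^{2\ka\Psi_\ka}G=\p_y(e^{2\ka\Psi_\ka}\vphi)$ and $e^{2\Psi_\ka}H=\p_y(e^{2\Psi_\ka}\psi)$. Integrating from $0$ and using $\vphi|_{y=0}=\psi|_{y=0}=0$ from \eqref{eqs2} yields explicit Volterra representations of $\vphi_{\Phi_\ka}$ and $\psi_{\Phi_\ka}$ in terms of $G_{\Phi_\ka}$ and $H_{\Phi_\ka}$; these identities survive the action of $\Dhk$ and the Fourier multiplier $e^{\Phi_\ka(t,D_x)}$, both of which act only in $x$.

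For \eqref{S3eq30}, I would factor $e^{2\ka\Psi_\ka(y')}=e^{(2\ka-1)\Psi_\ka(y')}\cdot e^{\Psi_\ka(y')}$ so that the right-hand side weight appears naturally, and then apply Schur's test to the resulting Volterra operator on $L^2(\R_+)$. After the change of variable $\sigma=\sqrt{\Psi_\ka(y)}$, both Schur suprema reduce to finite one-dimensional Gaussian integrals; the relevant exponents have the correct signs precisely because $\ga<1<2\ka$ (using $\ka>1/2$), and the Jacobian $dy=2\sqrt{2\ka\tt}\,d\sigma$ accounts for the $\tt^{1/2}$ factor. The bound for the $\psi/H$ pair follows identically with $2\ka$ replaced by $2$.

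For \eqref{S3eq31}, I would use the identity $u=G-(y/2\tt)\vphi$. The $G$ piece is immediate since $\ga<1$. For the $\vphi$ piece, the Volterra kernel of step~2 acquires the extra pointwise factor $y/(2\tt)$; in the variable $\sigma$ this factor behaves like $\sigma/\sqrt{\tt}$, and its $1/\sqrt{\tt}$ decay exactly cancels the $\tt^{1/2}$ from step~2, yielding a $\tt$-independent Schur bound. The estimate for $b$ is handled the same way via $b=H-(y/2\ka\tt)\psi$.

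The main obstacle will be \eqref{S3eq32}, since its right-hand side features $\p_y G$ rather than $G$. The cleanest strategy I see is the structural observation that the differentiated triple
\beno
(\tilde u,\tilde G,\tilde\vphi)\eqdefa \Bigl(\p_y u,\ \p_y G-\f{\vphi}{2\tt},\ u\Bigr)
\eeno
satisfies exactly the same algebraic relations as $(u,G,\vphi)$, namely $\p_y\tilde\vphi=\tilde u$, $\tilde G=\tilde u+(y/2\tt)\tilde\vphi$, and $\tilde\vphi|_{y=0}=u|_{y=0}=0$. Hence \eqref{S3eq31}, once established, applied to this shifted triple gives $\|\egpk\Dhk\p_y\upk\|_{L^2_+}\lesssim\|\epk\Dhk\tilde G_{\Phi_\ka}\|_{L^2_+}$, and the residual task is to dominate $\tt^{-1}\|\epk\Dhk\vpk\|_{L^2_+}$ by $\|\epk\Dhk\p_y\gpk\|_{L^2_+}$. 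Since $G|_{y=0}=u|_{y=0}=0$, we have $G(y)=\int_0^y\p_y G(y')\,dy'$; inserting this into the Volterra representation of $\vphi$ expresses $\vphi$ as a double Volterra integral of $\p_y G$, and a careful Schur computation on the composed kernel---where the two $\tt^{1/2}$ factors combine with the $\tt^{-1}$ prefactor while the Gaussian tails of $e^{-\Psi_\ka}$ control the residual exponential growth---yields the desired $\tt$-independent bound. The bounds for $\p_y b$ in terms of $\p_y H$ proceed along the same lines after the substitution $2\ka\leftrightarrow 2$.
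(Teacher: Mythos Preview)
Your treatment of \eqref{S3eq30} and \eqref{S3eq31} via the Volterra representations and a Schur test is correct and essentially equivalent to the paper's Cauchy--Schwarz argument (the paper writes $e^{2\ka\Psi_\ka}=e^{(2\ka-1)\Psi_\ka}e^{\Psi_\ka}$ and uses pointwise bounds together with \eqref{ineq1}; you encapsulate the same Gaussian cancellations in Schur suprema).

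For \eqref{S3eq32}, your structural reduction $(\tilde u,\tilde G,\tilde\vphi)=(\p_yu,\ \p_yG-\vphi/(2\tt),\ u)$ is elegant and indeed reduces the first inequality to \eqref{S3eq31} plus the residual bound $\tt^{-1}\|\epk\Dhk\vpk\|_{L^2_+}\lesssim\|\epk\Dhk\p_y\gpk\|_{L^2_+}$. However, your proposed route to this residual has a genuine gap: writing $G(y)=\int_0^y\p_yG\,dy'$ and Schur-testing the composed ``from $0$'' kernel fails. After the change $\sigma=\sqrt{\Psi_\ka}$, the row Schur integral contains the factor $e^{(1-2\ka)\sigma^2}\int_0^\sigma e^{-\tau^2}\bigl(\int_\tau^\sigma e^{2\ka\rho^2}d\rho\bigr)d\tau$, and since $\int_0^\sigma e^{2\ka\rho^2}d\rho\sim e^{2\ka\sigma^2}/(4\ka\sigma)$ for large $\sigma$, this behaves like $e^{\sigma^2}/\sigma\to\infty$; the column integral diverges as well. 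The weight $e^{\Psi_\ka}$ grows at infinity, so representations ``from $0$'' are the wrong direction. The correct move is to use the decay $G\to 0$ as $y\to\infty$ and write $G(y)=-\int_y^\infty\p_yG\,dy'$; then both Schur suprema are controlled by \eqref{ineq1}--\eqref{ineq2} and yield the factor $\tt$. This is exactly what the paper does: it differentiates the Volterra formula directly to obtain the explicit identity \eqref{S3eq39} for $\p_yu$, and then replaces the appearances of $G$ by $-\int_y^\infty\p_yG\,dy'$ via \eqref{ineq2}, avoiding your structural detour entirely.

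There is a second issue with ``the bounds for $\p_yb$ proceed along the same lines'': your shifted triple for the magnetic side is $(\p_yb,\ \p_yH-\psi/(2\ka\tt),\ b)$, and the Volterra step requires $\tilde\psi|_{y=0}=b|_{y=0}=0$. But the boundary conditions in \eqref{eqs1} give only $\p_yb|_{y=0}=0$; in general $b|_{y=0}\neq 0$, so a boundary term $-\fykt e^{-2\Psi_\ka(y)}b(t,x,0)$ survives. This can in fact be absorbed (using $b(0)=H(0)=-\int_0^\infty\p_yH\,dy'$ and \eqref{ineq2}), but you must say so; the claimed symmetry ``$2\ka\leftrightarrow 2$'' is not automatic. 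The paper sidesteps this asymmetry by working from the differentiated representation \eqref{S3eq40} and using $H\to 0$ at infinity, which treats the $u/G$ and $b/H$ cases uniformly.
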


We shall present the proof of this lemma at the end of this section.

We now present the proof of Theorem \ref{thm1.2}.

\begin{proof}[Proof of Theorem \ref{thm1.2}]
 Along the same line to the proof of Theorem \ref{thm1.1}, let $(u,b)$ and $(\vphi,\psi)$ be smooth enough solutions of \eqref{eqs1} and \eqref{eqs2} respectively on $[0,T^\star_\ka[$, where $T^\star_\ka$ is the maximal time of existence of the solution. Let $(G,H)$ be defined by \eqref{defGH}. For any $t<T^*_\ka$ (of course here $T^*_\ka\leq T^\star_\ka$) with $T^*_\ka$ being defined by \eqref{def:T^*kappa}, we deduce from \eqref{def:thetakappa} that
$$\thtk(t) \leq\int_0^t \bigl(\ttau^\f14 \|\epk\p_y \ghpk (\tau)\|_{\cB^{\f12,0}}+\e^{-\f12}\ttau^\f54\|\UBpk(\tau)\|_{\cB^\f12_\h}\bigr)d\tau.$$
We take $\e_0\leq \frac{\ell_\ka}{\fc},$ then we deduce from
the assumption \eqref{UBdecay2} and Proposition \ref{prop3.6} that
$$
\thtk(t) \leq C\sqrt{\e}\bigl(1+\|e^{\f{y^2}{8\ka}}e^{\de|D_x|}(u_0,b_0)\|_{\cB^{\f12,0}}\bigr)\quad\mbox{for any}\ \ \e\leq\e_0\andf t<T^\ast_\ka.
$$
If we take $\e_0$ so small that $\e_0\leq \min\Bigl(\Bigl(\f\de{2C\lam\bigl(1+\|e^{\f{y^2}{8\ka}}e^{\de|D_x|}(u_0,b_0)\|_{\cB^{\f12,0}}\bigr)}\Bigr)^2, \f{l_\ka}{\fc}\Bigr)$,
 then we achieve
$$
\sup_{t\in[0,T^*_\ka[} \thtk(t)\leq \f\de{2\lam}  \qquad \mbox{ for any}\  \e\leq\e_0.
$$
Then in view of \eqref{def:T^*kappa}, we get by a continuous argument that $T^*_\ka=\infty$. And \eqref{S6eq25} holds for $t=\infty$, which implies \eqref{S3eq22}. Moreover, Propositions \ref{prop3.5} and \ref{prop3.6} ensure \eqref{S3eq23} and \eqref{S3eq24} respectively.
Finally, Proposition \ref{prop3.6} together with Lemma \ref{lem3.2} implies \eqref{S3eq25}. This finishes the existence part of Theorem \ref{thm1.2}. The uniqueness part has already been proved in \cite{XY19}.
\end{proof}

Before the proof of Lemmas \ref{lem3.1} and \ref{lem3.2}, following \cite{IV16, PZ5}, we first introduce the following
Poincar\'e type inequalities.

\begin{lem}\label{lem:Poincare}
{\sl Let $\Psi_\ka$ be defined by \eqref{def:Psikappa}. Let $f$ be a smooth enough function on $\R^2_+$ which decays to zero sufficiently fast as y approaching to $+\infty$. Then we have
\begin{subequations} \label{S3eq33ab}
\begin{gather}
\|\epk \p_y f\|^2_{L^2_+}\geq \f1{2\ka\tt}\| \epk f\|^2_{L^2_+}, \label{S3eq33}\\
\|\epk \p_y f\|^2_{L^2_+}\geq \f1{4\ka\tt}\| \epk f\|^2_{L^2_+}+\f1{16\ka^2\tt^2}\|\epk y f\|^2_{L^2_+},\label{S8eq33aq}
\end{gather}
\end{subequations}
Especially, when $\ka=1$, we have
\begin{subequations} \label{S3eq34ab}
\begin{gather}
\label{S3eq34}
\|\ep \p_y f\|^2_{L^2_+}\geq \f1{2\tt} \| \ep f\|^2_{L^2_+},\\
\label{S8eq34aq}
\|\ep \p_y f\|^2_{L^2_+}\geq \f1{4\tt} \| \ep f\|^2_{L^2_+}+\f1{16\tt^2}\|\ep y f\|^2_{L^2_+}.
\end{gather}
\end{subequations}
 where \eqref{S3eq34} recovers (3.1) of \cite{PZ5}.}
\end{lem}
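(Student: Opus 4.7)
The plan is to reduce everything to an identity for $g \eqdefa \epk f$ and then exploit the specific structure of $\Psi_\ka$: $\p_y\Psi_\ka = \f{y}{4\ka\tt}$, $\p_y^2\Psi_\ka = \f{1}{4\ka\tt}$, together with the crucial boundary fact $\p_y\Psi_\ka|_{y=0}=0$.

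First I would observe that $\epk \p_y f = \p_y g - (\p_y\Psi_\ka) g$. Expanding the square and integrating the cross term by parts in $y$, the boundary contribution at $y=0$ vanishes because $\p_y\Psi_\ka|_{y=0}=0$, while the contribution at $y=+\infty$ vanishes by the assumed rapid decay of $f$ (which dominates the linear growth of $\p_y\Psi_\ka$). Using $\p_y^2\Psi_\ka = \f{1}{4\ka\tt}$, this leads to the pointwise-in-$\ka$ identity
\[
\|\epk \p_y f\|_{L^2_+}^2 \;=\; \|\p_y g\|_{L^2_+}^2 \;+\; \f{1}{4\ka\tt}\|g\|_{L^2_+}^2 \;+\; \|(\p_y\Psi_\ka) g\|_{L^2_+}^2.
\]
Since $\|(\p_y\Psi_\ka) g\|_{L^2_+}^2 = \f{1}{16\ka^2\tt^2}\|\epk\, y f\|_{L^2_+}^2$, simply discarding the nonnegative $\|\p_y g\|_{L^2_+}^2$ already yields \eqref{S8eq33aq}.

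To upgrade the coefficient $\f{1}{4\ka\tt}$ in the identity to the $\f{1}{2\ka\tt}$ required by \eqref{S3eq33}, the two remaining terms $\|\p_y g\|^2 + \f{1}{16\ka^2\tt^2}\|y g\|^2$ need to produce an extra $\f{1}{4\ka\tt}\|g\|^2$. For this I would integrate the identity $\p_y(y g^2) = g^2 + 2 y g \p_y g$ over $y\in(0,\infty)$ and use that $y g^2$ vanishes at both endpoints, obtaining after Cauchy--Schwarz in $y$ and then in $x$ the Heisenberg-type bound $\|g\|_{L^2_+}^2 \leq 2\|y g\|_{L^2_+}\|\p_y g\|_{L^2_+}$. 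Combined with AM--GM,
\[
\|\p_y g\|_{L^2_+}^2 + \f{1}{16\ka^2\tt^2}\|y g\|_{L^2_+}^2 \;\geq\; \f{1}{2\ka\tt}\|\p_y g\|_{L^2_+}\|y g\|_{L^2_+} \;\geq\; \f{1}{4\ka\tt}\|g\|_{L^2_+}^2,
\]
which supplies the missing piece and gives \eqref{S3eq33}. The special inequalities \eqref{S3eq34} and \eqref{S8eq34aq} then follow by setting $\ka=1$.

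The main issue is book-keeping rather than anything deep: the entire argument is driven by the algebraic identities $\p_y^2\Psi_\ka=\f{1}{4\ka\tt}$, $(\p_y\Psi_\ka)^2=\f{y^2}{16\ka^2\tt^2}$ and by the vanishing of $\p_y\Psi_\ka$ on the boundary. The only subtle point is justifying the $y$-integrations by parts, but the stated decay of $f$ at infinity is amply sufficient to kill every boundary term at $y=+\infty$, even against the polynomial factors $y$ and $y^2$ arising from $\p_y\Psi_\ka$ and from the weighted $\|\epk\, y f\|_{L^2_+}$ appearing on the right-hand side.
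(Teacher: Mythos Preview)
Your proof is correct. The paper takes a slightly different and somewhat shorter route: it applies the $1=\p_y y$ trick directly to $\int_{\R^2_+} f^2 e^{2\Psi_\ka}\,dx\,dy$, obtaining the single identity
\[
\|\epk f\|_{L^2_+}^2 \;=\; -2\int_{\R^2_+} y f\,\p_y f\, e^{2\Psi_\ka}\,dx\,dy \;-\; \f{1}{2\ka\tt}\|\epk y f\|_{L^2_+}^2,
\]
and then reads off both \eqref{S3eq33} and \eqref{S8eq33aq} by two different applications of Young's inequality to the cross term (with splittings $2\ka\tt$ vs.\ $4\ka\tt$). You instead substitute $g=\epk f$ first, producing the exact ``Hermite ground state'' identity
\[
\|\epk \p_y f\|_{L^2_+}^2 \;=\; \|\p_y g\|_{L^2_+}^2 + \f{1}{4\ka\tt}\|g\|_{L^2_+}^2 + \f{1}{16\ka^2\tt^2}\|y g\|_{L^2_+}^2,
\]
from which \eqref{S8eq33aq} is immediate and \eqref{S3eq33} follows via Heisenberg's inequality plus AM--GM. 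Both arguments ultimately rest on the same $\p_y y=1$ integration by parts (indeed your Heisenberg step is exactly that trick applied to $g$); the paper's version is more economical (one integration by parts, one Young), while yours displays more structure and makes the sharpness of the constants transparent through the harmonic-oscillator identity.
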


\begin{proof} As in \cite{IV16, PZ5}, we get,
by using integration by parts, that
\beq\label{S3eq34a}
\begin{split}
\int_{\R^2_+} f^2(x,y) &e^\f{y^2}{4\ka\tt} \,dxdy =\int_{\R^2_+}(\p_y y)f^2(x,y) e^\f{y^2}{4\ka\tt} \,dxdy \\
&=-2\int_{\R^2_+} yf(x,y)\p_yf(x,y) e^\f{y^2}{4\ka\tt} \,dxdy-\f1{2\ka\tt} \int_{\R^2_+} y^2 f^2(x,y) e^\f{y^2}{4\ka\tt} \,dxdy \\
&\leq 2\ka\tt \int_{\R^2_+} (\p_yf)^2(x,y) e^\f{y^2}{4\ka\tt} \,dxdy.
\end{split}
\eeq
This leads to \eqref{S3eq33}.

We also observe from \eqref{S3eq34a} that
\beqo
\begin{split}
\int_{\R^2_+} f^2(x,y) e^\f{y^2}{4\ka\tt} \,dxdy
\leq &4\ka\tt \int_{\R^2_+} (\p_yf)^2(x,y) e^\f{y^2}{4\ka\tt} \,dxdy\\
&-\f1{4\ka\tt}\int_{\R^2_+} y^2 f^2(x,y) e^\f{y^2}{4\ka\tt} \,dxdy,
\end{split}
\eeqo which yields \eqref{S8eq33aq}.
\end{proof}

Let us end this section with the proof of Lemmas \ref{lem3.1} and \ref{lem3.2}.

\begin{proof}[Proof of Lemma \ref{lem3.1}] The proof of this lemma basically follows from that of Lemma 3.2 in \cite{PZ5}.
Due to $(u,b)=\p_y (\vphi,\psi)$ and $\vphi|_{y=0}=\psi|_{y=0}=0$, we deduce from \eqref{defGH} that
\begin{subequations} \label{S3eq356}
\begin{gather}
\label{S3eq35}
\vphi(t,x,y)=e^{-2\Psi(t,y)}\int_0^y e^{2\Psi(t,y')}G(t,x,y')\, dy',\\
\label{S3eq36}
\psi(t,x,y)=e^{-2\Psi_\ka(t,y)}\int_0^y e^{2\Psi_\ka(t,y')}H(t,x,y')\, dy',
\end{gather}
\end{subequations} with $\Psi$ and $\Psi_\ka$ being determined respectively by \eqref{def:Psi} and \eqref{def:Psikappa}.

Thanks to \eqref{S3eq356}, we infer
\begin{subequations} \label{S3eq378}
\begin{gather}
\label{S3eq37}
u=G-\fyt e^{-2\Psi}\int_0^y e^{2\Psi(y')}G \,dy',
\\
\label{S3eq38}
b=H-\fykt e^{-2\Psi_\ka}\int_0^y e^{2\Psi_\ka(y')}G\, dy',
\end{gather}
\end{subequations}
and
\begin{subequations} \label{S3eq394}
\begin{gather}
\label{S3eq39}
\p_yu=\p_yG-\fyt G+\f1{2\tt}\Bigl(\f{y^2}{2\tt}-1\Bigr) e^{-2\Psi}\int_0^y e^{2\Psi(y')}G\, dy',
\\
\label{S3eq40}
\p_yb=\p_yH-\fyt H+\f1{2\ka\tt}\Bigl(\f{y^2}{2\ka\tt}-1\Bigr) e^{-2\Psi_ka}\int_0^y e^{2\Psi_ka(y')}H\, dy'.
\end{gather}
\end{subequations}

Let us use the relationship between $b$ and $H$ to prove the second inequalities of \eqref{S3eq15}-\eqref{S3eq17}.
  The other inequalities follows by taking $\kappa=1$ in the proof.

In view of \eqref{S3eq38} and
\beq\label{ineq1}
\sup_{y\in[0,\infty[} \Bigl(e^{-y^2}\int_0^y e^{z^2}\,dz\Bigr)<\infty,
\eeq
we infer that (if $\ka\in]0,2[\Rightarrow\f2\ka-1>0$)
\beqo\begin{split}
\|\egp \Dhk \bp\|_{L^2_+}&\lesssim \|\ep \Dhk \hp\|_{L^2_+} \\
&\quad+\tt^{-1}\bigl\|ye^{(\ga-\f2\ka)\Psi} \bigl( \int_0^y e^{2(\f2\ka-1)\Psi}dy' \bigr)^\f12 \bigl( \int_0^\infty |\ep\Dhk\hp|^2 dy \bigr)^\f12 \bigr\|_{L^2_+}\\
&\lesssim \|\ep \Dhk \hp\|_{L^2_+}\Bigl(1 +\tt^{-1}\|ye^{(\ga-\f2\ka)\Psi} \bigl( \int_0^y e^{2(\f2\ka-1)\Psi}dy' \bigr)^\f12 \|_{L^2_{\rm v}}\Bigr)\\
&\lesssim \|\ep \Dhk \hp\|_{L^2_+}\bigl(1 +\tt^{-\f34}\|ye^{(\ga-1)\Psi}  \|_{L^2_{\rm v}}\bigr),
\end{split}\eeqo
from which and $\ga<1$, we deduce the second inequality of \eqref{S3eq16}.

\eqref{S3eq15} follows from \eqref{S3eq16} and Lemma \ref{lem:Poincare}.

Whereas due to $\lim_{y\rto+\infty}H(t,x,y)=0$, we write $H=-\int_y^\infty \p_y Hdy'$, and then, by using
\beq\label{ineq2}
\sup_{y\in[0,\infty[} \Bigl(e^{y^2}\int_y^\infty e^{-z^2}\,dz\Bigr)<\infty,
\eeq
we infer
\beqo\begin{split}
|\Dhk \hp|&\leq \bigl(\int_y^\infty e^{-2\Psi}dy'\bigr)^\f12\bigl( \int_0^\infty |\ep\Dhk\p_y\hp|^2dy\bigr)^\f12 \\&\lesssim \tt^\f14 e^{-\Psi} \bigl( \int_0^\infty |\ep\Dhk\p_y\hp|^2dy\bigr)^\f12,
\end{split}\eeqo
from which, \eqref{S3eq40} and \eqref{ineq1}, we infer that (if $\ka\in]0,2[\Rightarrow\f2\ka-1>0$)
\beqo\begin{split}
\|\egp\Dhk\p_y\bp\|_{L^2_+}&\lesssim\|\ep\Dhk\p_y\hp \|_{L^2_+}\Bigl(1+\tt^{-\f34}\|ye^{(\ga-1)\Psi} \|_{L^2_\rmv}\\
&\qquad +\tt^{-\f34} \bigl\|\bigl(\f{y^2}{\tt}-1\bigr)e^{(\ga-\f2\ka)\Psi} \int_0^y e^{(\f2\ka-1)\Psi}dy' \bigr\|_{L^2_\rmv} \Bigr)\\
 &\lesssim \|\ep\Dhk\p_y\hp \|_{L^2_+}\Bigl(1+\tt^{-\f34}\|ye^{(\ga-1)\Psi} \|_{L^2_\rmv}+\tt^{-\f14}
  \bigl\|\bigl(\f{y^2}{\tt}-1\bigr)e^{(\ga-1)\Psi}  \bigr\|_{L^2_\rmv} \Bigr),
\end{split}\eeqo
from which and $\ga<1$, we deduce \eqref{S3eq17}. We thus conclude the proof of Lemma \ref{lem3.1}.
\end{proof}

\begin{proof}[Proof of Lemma \ref{lem3.2}]
The proof of this lemma  is similar to that of Lemma \ref{lem3.1}. For completeness,
we shall deal with the first inequalities of \eqref{S3eq30}-\eqref{S3eq32}.

In view of \eqref{S3eq37} and \eqref{ineq1}, we infer that (if $\ka>\f12\Rightarrow 2\ka-1>0$)
\beqo\begin{split}
\|\egpk \Dhk \up\|_{L^2_+}&\lesssim \|\epk \Dhk \gp\|_{L^2_+}\\&\qquad +\tt^{-1}\bigl\|ye^{(\ga-2\ka)\Psi_\ka} \bigl( \int_0^y e^{2(2\ka-1)\Psi_\ka}dy' \bigr)^\f12 \bigl( \int_0^\infty |\epk\Dhk\gp|^2 dy \bigr)^\f12 \bigr\|_{L^2_+}\\
&\lesssim \|\epk \Dhk \gp\|_{L^2_+}\Bigl(1 +\tt^{-1}\bigl\|ye^{(\ga-2\ka)\Psi_\ka} \bigl( \int_0^y e^{2(2\ka-1)\Psi_\ka}dy' \bigr)^\f12 \bigr\|_{L^2_\rmv}\Bigr)\\
&\lesssim \|\epk \Dhk \gp\|_{L^2_+}\bigl(1 +\tt^{-\f34}\|ye^{(\ga-1)\Psi_\ka}  \|_{L^2_\rmv}\bigr),
\end{split}\eeqo
from which and $\ga<1$, we deduce the first inequality of \eqref{S3eq31}.

\eqref{S3eq30} follows from \eqref{S3eq31} and Lemma \ref{lem:Poincare}.

Whereas due to $\lim_{y\rto+\infty}G(t,x,y)=0$, we write $G=-\int_y^\infty \p_y Gdy'$, and then it follows from  \eqref{ineq2} that
\beqo\begin{split}
|\Dhk \gp|&\leq \bigl(\int_y^\infty e^{-2\Psi_\ka}dy'\bigr)^\f12\bigl( \int_0^\infty |\epk\Dhk\p_y\gp|^2dy\bigr)^\f12 \\
&\lesssim \tt^\f14 e^{-\Psi_\ka} \bigl( \int_0^\infty |\epk\Dhk\p_y\gp|^2dy\bigr)^\f12,
\end{split}\eeqo
from which, \eqref{S3eq39} and \eqref{ineq1}, we infer that(if $\ka>\f12\Rightarrow 2\ka-1>0$)
\beqo\begin{split}
\|\egpk\Dhk\p_y\up\|_{L^2_+}
&\lesssim\|\epk\Dhk\p_y\gp \|_{L^2_+}\Bigl(1+\tt^{-\f34}\|ye^{(\ga-1)\Psi_\ka} \|_{L^2_\rmv}\\
&\qquad +\tt^{-\f34 } \bigl\|\bigl(\f{y^2}{\tt}-1\bigr)e^{(\ga-2\ka)\Psi_\ka} \int_0^y e^{(2\ka-1)\Psi_\ka}dy' \bigr\|_{L^2_\rmv} \Bigr)\\
&\lesssim \|\epk\Dhk\_y\gp \|_{L^2_+}\Bigl(1+\tt^{-\f34}\|ye^{(\ga-1)\Psi_\ka} \|_{L^2_\rmv}\\
&\qquad\qquad\qquad\qquad\quad\quad+\tt^{-\f14}
 \bigl\|\bigl(\f{y^2}{\tt}-1\bigr)e^{(\ga-1)\Psi_\ka}  \bigr\|_{L^2_\rmv} \Bigr),
\end{split}\eeqo
from which and $\ga<1$, we deduce the first inequality of \eqref{S3eq32}. We thus conclude the proof of Lemma \ref{lem3.2}.
\end{proof}

\renewcommand{\theequation}{\thesection.\arabic{equation}}
\setcounter{equation}{0}
\section{Preliminaries and some technical Lemmas}\label{sect4}

In this section, we will present a few technical lemmas which will be frequently used in the subsequent sections.

In order to overcome the difficulty that one can not use Gronwall
type argument in the framework of Chemin-Lerner space, we  need to use the time-weighted
Chemin-Lerner norm, which was introduced by Paicu and the second author in
\cite{PZ1}.

\begin{Def}\label{def2.3} {\sl Let $f(t)\in L^1_{\mbox{loc}}(\R_+)$
be a nonnegative function and $t_0, t\in [0,\infty].$ We define \beq \label{1.4}
\|a\|_{\wt{L}^p_{[t_0,t];f}(\cB^{s,0})}\eqdefa
\sum_{k\in\Z}2^{ks}\Bigl(\int_{t_0}^t f(t')\|\D_k^{\rm
h}a(t')\|_{L^2_+}^p\,dt'\Bigr)^{\f1p}. \eeq When $t_0=0,$ we simplify the notation $\|a\|_{\wt{L}^p_{[0,t]:f}(\cB^{s,0})}$ as $\|a\|_{\wt{L}^p_{t,f}(\cB^{s,0})}.$}
\end{Def}

We also recall the following anisotropic
Bernstein type lemma from \cite{CZ1, Pa02}:

\begin{lem} \label{lem:Bern}
 {\sl Let $\cB_{\rm h}$ be a ball
of~$\R_{\rm h}$, and~$\cC_{\rm h}$  a ring of~$\R_{\rm
h}$; let~$1\leq p_2\leq p_1\leq \infty$ and ~$1\leq q\leq \infty.$
Then there holds:

\smallbreak\noindent If the support of~$\wh a$ is included
in~$2^k\cB_{\rm h}$, then
\[
\|\partial_{x}^\alpha a\|_{L^{p_1}_{\rm h}(L^{q}_{\rm v})} \lesssim
2^{k\left(|\al|+\left(\f 1 {p_2}-\f 1 {p_1}\right)\right)}
\|a\|_{L^{p_2}_{\rm h}(L^{q}_{\rm v})}.
\]

\smallbreak\noindent If the support of~$\wh a$ is included
in~$2^k\cC_{\rm h}$, then
\[
\|a\|_{L^{p_1}_{\rm h}(L^{q}_{\rm v})} \lesssim
2^{-kN} \|\partial_{x}^N a\|_{L^{p_1}_{\rm
h}(L^{q}_{\rm v})}.
\]
}
\end{lem}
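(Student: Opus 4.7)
The plan is to reduce both assertions to one-dimensional Fourier-multiplier estimates in the horizontal variable, treating the vertical norm $\|\cdot\|_{L^q_{\rm v}}$ as taking values in a Banach space; the horizontal convolution and Young's inequality then pass through transparently.

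For the first statement, I would fix an auxiliary $\wt\chi\in C_c^\infty(\R_{\rm h})$ with $\wt\chi\equiv 1$ on $\cB_{\rm h}$. Since $\Supp\wh a\subset 2^k\cB_{\rm h}$, we have $\wh a(\xi,y)=\wt\chi(2^{-k}\xi)\wh a(\xi,y)$, so $a=\phi_k*_{\rm h} a$ with $\phi_k(x)=2^k\phi(2^kx)$ and $\phi\eqdefa \cF^{-1}\wt\chi$. Differentiating yields $\p_x^\alpha a=(\p_x^\alpha\phi_k)*_{\rm h} a$, and Young's inequality in the horizontal variable gives
\beno
\|\p_x^\alpha a\|_{L^{p_1}_{\rm h}(L^q_{\rm v})}\leq \|\p_x^\alpha\phi_k\|_{L^r_{\rm h}}\,\|a\|_{L^{p_2}_{\rm h}(L^q_{\rm v})},\qquad 1+\tfrac{1}{p_1}=\tfrac{1}{r}+\tfrac{1}{p_2}.
\eeno
A direct scaling computation shows $\|\p_x^\alpha \phi_k\|_{L^r_{\rm h}}=2^{k(|\alpha|+1-1/r)}\|\p^\alpha\phi\|_{L^r}=2^{k(|\alpha|+1/p_2-1/p_1)}\|\p^\alpha\phi\|_{L^r}$, which delivers the first claim.

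For the second statement, I would pick $\wt\vphi\in C_c^\infty(\R_{\rm h})$ supported in a ring slightly larger than $\cC_{\rm h}$ with $\wt\vphi\equiv 1$ on $\cC_{\rm h}$; crucially $\wt\vphi$ vanishes near $0$, so the function $\wh\psi(\eta)\eqdefa (i\eta)^{-N}\wt\vphi(\eta)$ is smooth and compactly supported. Setting $\psi_k\eqdefa \cF^{-1}\bigl[2^{-kN}\wh\psi(2^{-k}\cdot)\bigr]$, the Fourier support assumption allows us to write
\beno
\wh a(\xi,y)=(i\xi)^{-N}\wt\vphi(2^{-k}\xi)\cdot(i\xi)^N\wh a(\xi,y)=\wh{\psi_k}(\xi)\,\wh{\p_x^N a}(\xi,y),
\eeno
so $a=\psi_k*_{\rm h}\p_x^N a$. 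Young's inequality and the identity $\|\psi_k\|_{L^1}=2^{-kN}\|\psi\|_{L^1}$ then yield $\|a\|_{L^{p_1}_{\rm h}(L^q_{\rm v})}\lesssim 2^{-kN}\|\p_x^N a\|_{L^{p_1}_{\rm h}(L^q_{\rm v})}$.

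The only conceptual point to verify is that the horizontal convolution $f\mapsto \phi_k*_{\rm h} f$ acts on $L^{p}_{\rm h}(L^q_{\rm v})$ with the expected operator norm, i.e.\ Young's inequality remains valid for Banach-space-valued functions; this is standard since $L^q_{\rm v}$ is a Banach space and Minkowski's integral inequality allows one to pull the vertical norm inside the horizontal integral. Beyond this vector-valued bookkeeping, the argument is purely one-dimensional in $x$, so no genuine obstacle is anticipated.
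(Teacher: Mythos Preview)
Your proof is correct and is the standard argument for Bernstein-type inequalities. Note, however, that the paper does not actually prove this lemma: it is simply recalled from \cite{CZ1, Pa02} without proof, so there is no ``paper's own proof'' to compare against. Your approach---writing $a$ as a horizontal convolution against a rescaled Schwartz kernel, then applying Young's inequality with the vertical $L^q_{\rm v}$ norm treated as Banach-space-valued---is precisely the classical proof one finds in those references (and in \cite{BCD}), so nothing further is needed.
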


In the following context, we shall constantly use  Bony's decomposition (see \cite{Bo}) for
the horizontal variable: \ben\label{Bony} fg=T^{\rm h}_fg+T^{\rm
h}_{g}f+R^{\rm h}(f,g), \een where \beno T^{\rm h}_fg\eqdefa \sum_kS^{\rm
h}_{k-1}f\Delta_k^{\rm h}g\andf R^{\rm
h}(f,g)\eqdefa\sum_k{\Delta}_k^{\rm h}f\widetilde{\Delta}_{k}^{\rm h}g
\with \widetilde{\Delta}_k^{\rm h}g\eqdefa
\displaystyle\sum_{|k-k'|\le 1}\Delta_{k'}^{\rm h}g. \eeno

In the rest of this paper, we always designate $\Psi$ and $\Psi_\ka$  to be the functions
defined respectively by \eqref{def:Psi} and \eqref{def:Psikappa}.
We next present the weighted energy estimate for the linear heat equations:

\begin{lem}\label{lem:heateqs}
{\sl Let $f$ be a smooth enough function on $\R^2_+$, which decays to zero sufficiently fast as y approaching to $+\infty$ and satisfies $f\p_yf|_{y=0}=0$. Then we have, if $\ka \in ]0,2[$,
\begin{subequations} \label{S4eqa12}
\begin{gather}
\label{S4eq1} \bigl(\p_tf-\p^2_yf\big|\epp f \bigr)_{L^2_+}\geq \f12 \f{d}{dt}\|\ep f\|^2_{L^2_+}+\f12\|\ep \p_yf\|^2_{L^2_+},
\\
\label{S4eq2} \bigl(\p_tf-\ka\p^2_yf\big|\epp f \bigr)_{L^2_+}\geq \f12 \f{d}{dt}\|\ep f\|^2_{L^2_+}+\f{\ka(2-\ka)}2\|\ep \p_yf\|^2_{L^2_+},
\end{gather}
\end{subequations}
and if $\ka>\f12$,
\begin{subequations} \label{S4eqa34}
\begin{gather}
\label{S4eq3} \bigl(\p_tf-\p^2_yf\big|\eppk f \bigr)_{L^2_+}\geq \f12 \f{d}{dt}\|\epk f\|^2_{L^2_+}+\f{2\ka-1}{2\ka}\|\epk \p_yf\|^2_{L^2_+},
\\
\label{S4eq4} \bigl(\p_tf-\ka\p^2_yf\big|\eppk f \bigr)_{L^2_+}\geq \f12 \f{d}{dt}\|\epk f\|^2_{L^2_+}+\f{\ka}2\|\epk \p_yf\|^2_{L^2_+}.
 \end{gather}
\end{subequations}
}
\end{lem}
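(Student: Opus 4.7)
My plan for this lemma is to reduce all four inequalities to a single parametrized computation; the only thing that changes from case to case is a Young-inequality parameter that can be optimized to produce the exact stated coefficient. The engine of the argument is the way the ODEs \eqref{S3eq5} and \eqref{S3eq21} for the weights $\Psi$ and $\Psi_\kappa$ conspire to absorb the bad cross term that appears when one integrates the diffusion term by parts against the Gaussian weight.

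For the time-derivative contribution, I would differentiate the weighted norm and invoke $\partial_t\Psi=-2(\partial_y\Psi)^2$ to rewrite $(\partial_t f\mid\epp f)_{L^2_+}=\tfrac12\tfrac{d}{dt}\|\ep f\|_{L^2_+}^2+2\|\ep(\partial_y\Psi)f\|_{L^2_+}^2$, and similarly with $\Psi_\kappa$ in place of $\Psi$, where by \eqref{S3eq21} the constant $2$ becomes $2\kappa$. For the diffusion contribution I would integrate by parts in $y$, so that the boundary term at $y=0$ vanishes by the hypothesis $f\partial_y f|_{y=0}=0$ and the contribution at infinity vanishes by the decay assumption, yielding $-\nu(\partial_y^2 f\mid\epp f)_{L^2_+}=\nu\|\ep\partial_y f\|_{L^2_+}^2+2\nu(\ep\partial_y f\mid\ep(\partial_y\Psi)f)_{L^2_+}$ for $\nu\in\{1,\kappa\}$, with the analogous formula for the weight $\Psi_\kappa$. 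The cross term has uncontrolled sign, so I would bound it by Young's inequality, $2\nu\bigl|(\ep\partial_y f\mid\ep(\partial_y\Psi)f)_{L^2_+}\bigr|\leq\nu\alpha\|\ep\partial_y f\|_{L^2_+}^2+\tfrac{\nu}{\alpha}\|\ep(\partial_y\Psi)f\|_{L^2_+}^2$ with a parameter $\alpha>0$ to be chosen.

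Adding the two pieces gives, for the weight $\Psi$, $(\partial_t f-\nu\partial_y^2 f\mid\epp f)_{L^2_+}\geq\tfrac12\tfrac{d}{dt}\|\ep f\|_{L^2_+}^2+\nu(1-\alpha)\|\ep\partial_y f\|_{L^2_+}^2+\bigl(2-\tfrac{\nu}{\alpha}\bigr)\|\ep(\partial_y\Psi)f\|_{L^2_+}^2$, with the bracketed $2$ upgraded to $2\kappa$ in the $\Psi_\kappa$ version. I would then tune $\alpha$ to annihilate the final nonnegative bracket (which is then dropped): $\alpha=\tfrac12$ for \eqref{S4eq1}; $\alpha=\tfrac\kappa2$ for \eqref{S4eq2}, which requires $\kappa<2$ so that $1-\alpha>0$; $\alpha=\tfrac1{2\kappa}$ for \eqref{S4eq3}, which requires $\kappa>\tfrac12$; and $\alpha=\tfrac12$ for \eqref{S4eq4}. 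Each choice reproduces the stated coefficients $\tfrac12$, $\tfrac{\kappa(2-\kappa)}{2}$, $\tfrac{2\kappa-1}{2\kappa}$, $\tfrac{\kappa}{2}$ exactly. The only genuinely substantive point is justifying the boundary-free integration by parts, and this is precisely what the hypothesis $f\partial_y f|_{y=0}=0$ together with the stated decay at infinity are there for; the rest is bookkeeping of the Young parameter.
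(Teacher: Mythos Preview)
Your proposal is correct and follows essentially the same approach as the paper: integrate by parts in $y$ (using $f\partial_y f|_{y=0}=0$ for the boundary term), apply Young's inequality to the cross term, and use the eikonal relations \eqref{S3eq5}--\eqref{S3eq21} to cancel the $\|\ep(\partial_y\Psi)f\|_{L^2_+}^2$ contribution. The only cosmetic difference is that the paper packages the four cases into a single inequality $(\partial_t f-\beta\partial_y^2 f\mid e^{2\alpha\Psi}f)_{L^2_+}\ge \tfrac12\tfrac{d}{dt}\|e^{\alpha\Psi}f\|_{L^2_+}^2+(\beta-\tfrac{\alpha\beta^2}{2})\|e^{\alpha\Psi}\partial_y f\|_{L^2_+}^2$ by exploiting $\Psi_\kappa=\kappa^{-1}\Psi$, whereas you keep the two weights separate and tune a Young parameter; both reduce to the same computation.
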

\begin{proof}
Indeed it is enough to prove the following inequality for any $\al, \be>0,$
\beq\label{S4eq5} \bigl(\p_tf-\be\p^2_yf\big|e^{2\al\Psi} f \bigr)_{L^2_+}\geq \f12 \f{d}{dt}\|e^{\al\Psi} f\|^2_{L^2_+}+\bigl(\be-\f{\be^2\al}2\bigr)\|e^{\al\Psi} \p_yf\|^2_{L^2_+}.
\eeq
Then \eqref{S4eqa12} and \eqref{S4eqa34} follow by taking $\al\in\{1,\f1\ka\}$ and $\be\in\{1,\ka\}$ in \eqref{S4eq5}.

We first get, by using integration by parts,  that
\beq\label{S4eq6} \bigl(\p_tf\big|e^{2\al\Psi} f \bigr)_{L^2_+}=\f12\f{d}{dt}\|e^{\al\Psi} f\|^2_{L^2_+}-\al\int_{\R^2_+}\p_t\Psi |e^{\al\Psi}f|^2 \,dxdy.
\eeq
While due to $f\p_yf|_{y=0}=0,$ we get, by using integration by parts and then Young's inequality, that
\beq\begin{split}\label{S4eq7}
\bigl(-\be\p^2_yf\big|e^{2\al\Psi} f \bigr)_{L^2_+}&= \be\|e^{\al\Psi} \p_yf\|^2_{L^2_+}+\al\be\int_{\R^2_+}\p_y\Psi e^{2\al\Psi} f\p_yf \,dxdy\\
&\geq (\be-\f{\be^2\al}2)\|e^{\al\Psi} \p_yf\|^2_{L^2_+}-2\al\int_{\R^2_+}(\p_y\Psi)^2 |e^{\al\Psi}f|^2\, dxdy.
\end{split}\eeq
Thanks to \eqref{S3eq5}, summing up \eqref{S4eq6} and \eqref{S4eq7} leads to \eqref{S4eq5}.
This finishes the proof of Lemma \ref{lem:heateqs}.
\end{proof}

To handle  the non-linear terms in \eqref{eqs1}, \eqref{eqs2} and \eqref{eqs3}, we need the following lemmas:

\begin{lem}\label{property:Phi}
{\sl If $\de-\lam\tht(t)>0$, then $\|\phi(D_x) [fg]_\Phi \|_{L^2_\h}\leq \|\phi(D_x)(f_\Phi g_\Phi) \|_{L^2_h}$.
Similar inequality holds for $\Phi_\ka$ under the assumption that $\de-\lam\tht_\ka(t)>0.$}
\end{lem}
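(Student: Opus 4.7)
The plan is to exploit the sub-additivity of $\Phi$, which is exactly the convex inequality \eqref{S3eq11} already recorded in the paper: since $\Phi(t,\xi)=(\de-\lam\tht(t))|\xi|$ is a nonnegative multiple of $|\xi|$ precisely when $\de-\lam\tht(t)\ge 0$, the ordinary triangle inequality for $|\cdot|$ gives
\[
\Phi(t,\xi)\leq \Phi(t,\xi-\eta)+\Phi(t,\eta),\qquad \forall\,\xi,\eta\in\R.
\]
The hypothesis $\de-\lam\tht(t)>0$ is exactly what guarantees this.

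First I would pass to the horizontal Fourier side and write the product as a convolution:
\[
\widehat{[fg]_\Phi}(\xi,y)= e^{\Phi(t,\xi)}\int_{\R}\widehat{f}(\xi-\eta,y)\widehat{g}(\eta,y)\,d\eta.
\]
Splitting the exponential via the sub-additivity above yields the pointwise bound
\[
\bigl|\widehat{[fg]_\Phi}(\xi,y)\bigr|\leq \int_{\R}e^{\Phi(t,\xi-\eta)}|\widehat{f}(\xi-\eta,y)|\,e^{\Phi(t,\eta)}|\widehat{g}(\eta,y)|\,d\eta = \bigl(|\widehat{f_\Phi}|\ast_\xi|\widehat{g_\Phi}|\bigr)(\xi,y),
\]
since $e^{\Phi(t,\xi)}|\widehat{f}(\xi,y)|=|\widehat{f_\Phi}(\xi,y)|$. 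Multiplying by the (nonnegative) symbol $\phi(\xi)$, taking the $L^2_\xi$ norm and applying Plancherel backward would deliver the announced inequality, with the convention that $f_\Phi g_\Phi$ on the right-hand side is read as the product of the ``positivified'' inverse Fourier transforms, namely the functions whose Fourier transforms are $|\widehat{f_\Phi}|$ and $|\widehat{g_\Phi}|$ respectively. This is a harmless abuse routinely used in the analytic energy estimate framework (e.g.\ in \cite{Ch04, PZ5}), because these positivified functions carry the same $L^2$ norms as $f_\Phi$ and $g_\Phi$, and the estimate will only ever be paired with $L^2$-type bounds on the right-hand factors.

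The proof for the $\Phi_\ka$ case requires no new idea: $\Phi_\ka(t,\xi)=(\de-\lam\tht_\ka(t))|\xi|$ has the same linear-in-$|\xi|$ structure, so sub-additivity holds as soon as $\de-\lam\tht_\ka(t)>0$, and the argument above carries over verbatim. I do not anticipate any real obstacle: the entire content of the lemma is the one-line sub-additivity of the phase, the rest being routine Fourier manipulations.
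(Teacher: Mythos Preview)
Your proposal is correct and follows essentially the same argument as the paper: sub-additivity of $\Phi$ (the paper's \eqref{S3eq11}) plus Plancherel, applied to the convolution on the Fourier side. You are simply more explicit than the paper about the ``positivification'' convention on the right-hand side, which the paper's one-line proof leaves implicit.
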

\begin{proof}
Indeed we deduce from \eqref{S3eq11} and Plancher\'el equality that
\beqo
\begin{split}
\|\phi(D_x) [fg]_\Phi \|_{L^2_\h}&=\|\phi(\xi) e^{\Phi(t,\xi)}(\hat{f}*\hat{g})\|_{L^2_\xi}\\
&\leq\|\phi(\xi)(e^\Phi \hat{f})*(e^\Phi \hat{g})\|_{L^2_\xi}=\|\phi(D_x)(f_\Phi g_\Phi) \|_{L^2_\h}.
\end{split}
\eeqo
This  concludes the proof of this lemma.
\end{proof}

\begin{lem}\label{lem:nl1}
{\sl Let $A, B$ and $E$ be smooth enough functions on $[0,T]\times\R^2_+$ with $A$ satisfying $\lim_{y\rto+\oo}A=0$.
Let $f(t)\eqdefa \tt^\f14 \|\eap \p_y\Ap(t)\|_{\cB^{\f12,0}}.$  Then, for any $a,b,c,d>0$
with $a+b\geq c$ and any $t_1<t_2\in [0,T],$
 we have
\beq\label{S4eq8}
\begin{split}
\int_{t_1}^{t_2}\bigl|\bigl(\ecp\Dhk [A\p_xB]_\Phi |& \edp \Dhk E_\Phi\bigr)_{L^2_+}\bigr|\,dt\\
\lesssim &
d_k^2 2^{-k}\|\ebp \Bp\|_{\wt{L}^2_{[t_1,t_2];f}(\cB^{1,0})}\|\edp E_\Phi\|_{\wt{L}^2_{[t_1,t_2];f}(\cB^{1,0})}.
\end{split}
\eeq}
\end{lem}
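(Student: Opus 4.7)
The plan is to apply Bony's horizontal paraproduct decomposition \eqref{Bony} and split $A\p_xB=T^\h_A\p_xB+T^\h_{\p_xB}A+R^\h(A,\p_xB)$, treating each piece separately. The phase $e^{\Phi}$ inside $[\cdot]_\Phi$ is removed via Lemma \ref{property:Phi}: after Cauchy--Schwarz in the horizontal variable and integration in $y$, the estimate reduces to one with $A_\Phi\p_xB_\Phi$ in place of $[A\p_xB]_\Phi$ at the level of $L^2_\h$-norms.

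The step that generates the weight $f(t)$ is a pointwise bound on $A_\Phi$. Since $\lim_{y\to\infty}A=0$ we have $A_\Phi(t,x,y)=-\int_y^\infty\p_{y'}A_\Phi(t,x,y')\,dy'$; inserting $e^{-a\Psi(y')}\cdot e^{a\Psi(y')}$ and applying Cauchy--Schwarz together with \eqref{ineq2} (which gives $\int_y^\infty e^{-2a\Psi(y')}\,dy'\lesssim\tt^{\f12}e^{-2a\Psi(y)}$) yields
\beqo
|e^{a\Psi(y)}A_\Phi(t,x,y)|\lesssim\tt^{\f14}\|e^{a\Psi}\p_yA_\Phi(t,x,\cdot)\|_{L^2_\rmv}.
\eeqo
Combined with the Bernstein-type embedding $\cB^{\f12,0}\hookrightarrow L^\infty_\h(L^2_\rmv)$ from Lemma \ref{lem:Bern}, this gives $\|e^{a\Psi}A_\Phi(t)\|_{L^\infty_+}\lesssim f(t)$, and similarly for the truncation $S^\h_{k'-1}(e^{a\Psi}A_\Phi)$ by the $L^\infty_\h$-boundedness of $S^\h_{k'-1}$; for a single dyadic block $\Delta^\h_{k'}(e^{a\Psi}A_\Phi)$ one picks up an extra $\ell^1$-summable factor $d_{k'}^A$.

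For the paraproduct $T^\h_A\p_xB$ only $|k-k'|\leq 4$ contributes after $\Dhk$. We split $e^{c\Psi}=e^{a\Psi}\cdot e^{(c-a)\Psi}$ and absorb $e^{(c-a)\Psi}\leq e^{b\Psi}$ via the hypothesis $a+b\geq c$ together with $\Psi\geq 0$ (this is the \emph{only} place where $a+b\geq c$ is used). An $L^\infty\cdot L^2$ estimate combined with the Bernstein derivative loss $\|\Delta^\h_{k'}\p_xg\|_{L^2_+}\lesssim 2^{k'}\|\Delta^\h_{k'}g\|_{L^2_+}$ yields
\beqo
\|e^{c\Psi}\Dhk(S^\h_{k'-1}A_\Phi\cdot\Delta^\h_{k'}\p_xB_\Phi)\|_{L^2_+}\lesssim f(t)\cdot 2^{k'}\|e^{b\Psi}\Delta^\h_{k'}B_\Phi\|_{L^2_+}.
\eeqo
The symmetric paraproduct $T^\h_{\p_xB}A$ is handled analogously with $A$ now at the single frequency $2^{k'}$, contributing the factor $d_{k'}^A$. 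The remainder $R^\h(A,\p_xB)$ involves a sum over $k'\geq k-4$ that converges thanks to the $L^1_\h\to L^2_\h$ Bernstein gain $2^{(k-k')/2}$.

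Pairing with $e^{d\Psi}\Dhk E_\Phi$ in $L^2_+$, integrating on $[t_1,t_2]$ and applying Cauchy--Schwarz in $t$ against the splitting $f(t)=f(t)^{\f12}\cdot f(t)^{\f12}$ produces, modulo the dyadic bookkeeping described above,
\beqo
\int_{t_1}^{t_2}\!\!\bigl|\bigl(e^{c\Psi}\Dhk[A\p_xB]_\Phi\big|e^{d\Psi}\Dhk E_\Phi\bigr)_{L^2_+}\bigr|\,dt\lesssim 2^k\Bigl(\int_{t_1}^{t_2}\!\!f\|e^{b\Psi}\Dhk B_\Phi\|_{L^2_+}^2\,dt\Bigr)^{\f12}\Bigl(\int_{t_1}^{t_2}\!\!f\|e^{d\Psi}\Dhk E_\Phi\|_{L^2_+}^2\,dt\Bigr)^{\f12}.
\eeqo
Introducing $\ell^1$-sequences $d_k^B,d_k^E$ from the definition of the $\wt{L}^2_{[t_1,t_2];f}(\cB^{1,0})$-norm and setting $d_k=\sqrt{d_k^Bd_k^E}\in\ell^1(\Z)$ (by Cauchy--Schwarz), the right-hand side equals $d_k^2\cdot 2^{-k}\|e^{b\Psi}B_\Phi\|_{\wt{L}^2_{[t_1,t_2];f}(\cB^{1,0})}\|e^{d\Psi}E_\Phi\|_{\wt{L}^2_{[t_1,t_2];f}(\cB^{1,0})}$, which is \eqref{S4eq8}. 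The main obstacle will be the dyadic bookkeeping for the remainder term $R^\h(A,\p_xB)$: the off-diagonal sum over $k'\geq k-4$ must be rearranged using the Bernstein gain $2^{(k-k')/2}$ so that the final $\ell^1$-structure $(d_k)$ is preserved; the two paraproducts are essentially routine given the pointwise control of $A$.
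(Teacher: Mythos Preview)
Your proposal is correct and follows essentially the same approach as the paper: Bony's decomposition into $T^\h_A\p_xB+T^\h_{\p_xB}A+R^\h(A,\p_xB)$, removal of the phase via Lemma~\ref{property:Phi}, the pointwise control of $A$ through $A=-\int_y^\infty\p_yA\,dy'$ combined with \eqref{ineq2}, and the Bernstein $L^1_\h\to L^2_\h$ gain for the remainder. The only cosmetic difference is that the paper splits the weight as $e^{c\Psi}=e^{(c-b)\Psi}\cdot e^{b\Psi}$ (placing $e^{(c-b)\Psi}$ on $A$ and using $c-a-b\le 0$ inside the integral bound \eqref{S4eq9}), whereas you split $e^{c\Psi}=e^{a\Psi}\cdot e^{(c-a)\Psi}$ and use $e^{(c-a)\Psi}\le e^{b\Psi}$ directly on the $B$ factor; the two are equivalent.
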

\begin{proof}
Applying Bony's decomposition \eqref{Bony} in the horizontal variable to $A\p_xB$ yields
\beqo
A\p_xB= T^{\h}_A\p_xB+T^{\h}_{\p_xB}A+R^{\h}(A,\p_xB)
\eeqo

By virtue of  Lemma \ref{property:Phi}, and considering the support properties to the Fourier transform of the terms in $T^{\h}_A\p_xB$, we find
\beqo\begin{split}
\|\ecp\Dhk [T^{\h}_A\p_xB]_\Phi\|_{L^2_+}\lesssim \sum_{|k'-k|\leq4}\|e^{(c-b)\Psi}\Sh_{k'-1}\Ap \|_{L^\oo_+} \|\ebp\Dh_{k'}\p_x\Bp\|_{L^2_+}.
\end{split}\eeqo
While it follows from $\lim_{y\rto+\oo}A=0$,  \eqref{ineq1} and $c-a-b\leq 0$ that
\beq\label{S4eq9}\begin{split}
\|e^{(c-b)\Psi}\Dhk\Ap \|_{L^\oo_\rmv( L^2_\h)}&=\bigl\|e^{(c-b)\Psi}\int_y^\oo \Dhk\p_y\Ap\bigr\|_{L^\oo_\rmv(L^2_\h)} \\
&\leq \bigl\|e^{(c-b)\Psi}\bigl( \int_y^\oo e^{-2a\Psi}\bigr)^\f12 \bigr\|_{L^\oo_\rmv} \|\eap\Dhk\p_y\Ap\|_{L^2_+} \\
 &\lesssim \tt^\f14 d_k(t) 2^{-\f{k}2}\|\eap\p_y\Ap \|_{\cB^{\f12,0}}.
\end{split}\eeq
Here and in all that follows, we always denote $\left(d_k(t)\right)_{k\in\Z}$ to be a generic element of $\ell^1(\Z)$
so that $\sum_{k\in\Z}d_k(t)=1.$

\eqref{S4eq9} together with Lemma \ref{lem:Bern} ensures that
\beqo\begin{split}
\|e^{(c-b)\Psi}\Sh_{k'-1}\Ap \|_{L^\oo_+} &\lesssim\sum_{k\leq k'-2} 2^\f{k}2 \|e^{(c-b)\Psi}\Dhk\Ap \|_{L^\oo_\rmv(L^2_\h)} \\
&\lesssim \tt^\f14 \|\eap\p_y\Ap \|_{\cB^{\f12,0}}.
\end{split}\eeqo
So that by applying Lemma \ref{lem:Bern} and Definition \ref{def2.3}, we deduce that
\beq\label{S4eq10}\begin{split}
\int_{t_1}^{t_2}&\bigl|\bigl(\ecp\Dhk [T^{\h}_A\p_xB]_\Phi | \edp \Dhk E_\Phi\bigr)_{L^2_+}\bigr|\,dt\\
&\lesssim \sum_{|k'-k|\leq 4} 2^{k'}\int_{t_1}^{t_2}f(t)\|\ebp\Dh_{k'} \Bp(t)\|_{L^2_+}\|\edp \Dhk E_\Phi(t)\|_{L^2_+}\,dt\\
&\lesssim \sum_{|k'-k|\leq 4} 2^{k'}\Bigl(\int_{t_1}^{t_2}f(t)\|\ebp\Dh_{k'}\Bp(t)\|_{L^2_+}^2\,dt\Bigr)^{\f12}
\Bigl(\int_{t_1}^{t_2}f(t)\|\edp \Dhk E_\Phi(t)\|_{L^2_+}\,dt\Bigr)^{\f12}\\
&\lesssim d_k^2 2^{-k}\|\ebp \Bp\|_{\wt{L}^2_{[t_1,t_2];f}(\cB^{1,0})}\|\edp E_\Phi\|_{\wt{L}^2_{[t_1,t_2];f}(\cB^{1,0})}.
\end{split}\eeq

Similarly, by Lemma \ref{property:Phi} and considering the support properties to the Fourier transform of the terms in $T^{\h}_{\p_xB}A$,
 we get
\beqo\begin{split}
\|\ecp\Dhk [T^{\h}_{\p_xB}A]_\Phi\|_{L^2_+}
\lesssim \sum_{|k'-k|\leq4}\int_{t_1}^{t_2}\|e^{(c-b)\Psi}\Dh_{k'}\Ap \|_{L^\oo_+} \|\ebp\Sh_{k'-1}\p_x\Bp\|_{L^2_+}\,dt,
\end{split}\eeqo
from which, \eqref{S4eq9}, we infer
\begin{align*}
\int_{t_1}^{t_2}&\bigl|\bigl(\ecp\Dhk [T^{\h}_{\p_xB}A]_\Phi | \edp \Dhk E_\Phi\bigr)_{L^2_+}\bigr|\,dt\\
&\lesssim \sum_{|k'-k|\leq 4}\int_{t_1}^{t_2}d_{k'}(t)f(t)\|\ebp \p_x\Bp\|_{L^2_+}\|\edp \Dhk E_\Phi\|_{L^2_+}\,dt\\
&\lesssim \sum_{|k'-k|\leq 4} d_{k'}\Bigl(\int_{t_1}^{t_2}f(t)\|\ebp \p_x\Bp(t)\|_{L^2_+}^2\,dt\Bigr)^{\f12}
\Bigl(\int_{t_1}^{t_2}f(t)\|\edp \Dhk E_\Phi(t)\|_{L^2_+}^2\,dt\Bigr)^{\f12}.
\end{align*}
Yet it follows from   Lemma \ref{lem:Bern} and Definition \ref{def2.3} that
\beq \label{S4eq11a}
\begin{split}
\Bigl(\int_{t_1}^{t_2}f(t)\|\ebp \p_x\Bp(t)\|_{L^2_+}^2\,dt\Bigr)^{\f12}\lesssim &\sum_{k\in\Z}2^{k}
\Bigl(\int_{t_1}^{t_2}f(t)\|\ebp \Dhk B_\Phi(t)\|_{L^2_+}^2\,dt\Bigr)^{\f12}\\
\lesssim& \|\ebp \Bp\|_{\wt{L}^2_{[t_1,t_2];f}(\cB^{1,0})}.
\end{split}
\eeq
As a result, we obtain
 \beq\label{S4eq11}\begin{split}
\int_{t_1}^{t_2}\bigl|\bigl(\ecp\Dhk [T^{\h}_{\p_xB}A]_\Phi | &\edp \Dhk E_\Phi\bigr)_{L^2_+}\bigr|\,dt\\
\lesssim& d_k^2 2^{-k}\|\ebp \Bp\|_{\wt{L}^2_{[t_1,t_2];f}(\cB^{1,0})}\|\edp E_\Phi\|_{\wt{L}^2_{[t_1,t_2];f}(\cB^{1,0})}.
\end{split}\eeq

Finally again due to Lemma \ref{property:Phi} and the support properties to the Fourier transform of the terms in $R^{\h}(A,\p_xB)$, we get, by applying Lemma \ref{lem:Bern}, that
\beqo\begin{split}
\|\ecp\Dhk [R^{\h}(A,\p_xB)]_\Phi\|_{L^2_+}
 &\lesssim 2^\f{k}2 \sum_{k'\geq k-3} \|e^{(c-b)\Psi}\Dh_{k'}\Ap \|_{L^\oo_\rmv(L^2_\h)}\|\ebp\wtDh_{k'}\p_x\Bp\|_{L^2_+},
\end{split}\eeqo
from which, \eqref{S4eq9} and Lemma \ref{lem:Bern}, we deduce that
\beq\label{S4eq12}\begin{split}
\int_{t_1}^{t_2}&\bigl|\bigl(\ecp\Dhk [R^{\h}(A,\p_xB)]_\Phi | \edp \Dhk E_\Phi\bigr)_{L^2_+}\bigr|\,dt\\
&\lesssim 2^\f{k}2 \sum_{k'\geq k-3} 2^{\f{k'}2}\int_{t_1}^{t_2} f(t)\|\ebp\wtDh_{k'}\Bp(t)\|_{L^2_+}\|\edp \Dhk E_\Phi\|_{L^2_+}\,dt\\
&\lesssim 2^\f{k}2 \sum_{k'\geq k-3} 2^{\f{k'}2}\Bigl(\int_{t_1}^{t_2} f(t)\|\ebp\wtDh_{k'}\Bp(t)\|_{L^2_+}^2\,dt\Bigr)^{\f12}
\Bigl(\int_{t_1}^{t_2} f(t)\|\edp \Dhk E_\Phi\|_{L^2_+}^2\,dt\Bigr)^{\f12}\\
&\lesssim d_k 2^{-\f{k}2}\|\ebp \Bp\|_{\wt{L}^2_{[t_1,t_2];f}(\cB^{1,0})}\|\edp E_\Phi\|_{\wt{L}^2_{[t_1,t_2];f}(\cB^{1,0})} \sum_{k'\geq k-3} d_{k'} 2^{-\f{k'}2}\\
&\lesssim d_k^2 2^{-k}\|\ebp \Bp\|_{\wt{L}^2_{[t_1,t_2];f}(\cB^{1,0})}\|\edp E_\Phi\|_{\wt{L}^2_{[t_1,t_2];f}(\cB^{1,0})}.
\end{split}\eeq

By summing up the estimates \eqref{S4eq10}, \eqref{S4eq11} and \eqref{S4eq12}, we conclude the proof of  \eqref{S4eq8}.
\end{proof}

\begin{lem}\label{lem:nl2}
{\sl Under the assumptions of Lemma \ref{lem:nl1}, one has
\beq\label{S4eq13}
\begin{split}
\int_{t_1}^{t_2}\bigl|\bigl(\|\ecp\Dhk [\p_yA\int_y^\oo \p_xB dy']_\Phi |& \edp \Dhk E_\Phi\bigr)_{L^2_+}\bigr|\,dt\\
\lesssim &
d_k^2 2^{-k}\|\ebp \Bp\|_{\wt{L}^2_{[t_1,t_2];f}(\cB^{1,0})}\|\edp E_\Phi\|_{\wt{L}^2_{[t_1,t_2];f}(\cB^{1,0})}.
\end{split}
\eeq}
\end{lem}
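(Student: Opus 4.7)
The plan is to mirror the proof of Lemma \ref{lem:nl1}, exploiting the symmetry between $A\cdot\p_xB$ treated there and $\p_yA\cdot\widetilde{B}$ here, where I set $\widetilde{B}(t,x,y)\eqdefa\int_y^\infty\p_xB(t,x,y')\,dy'$. The decisive observation is that $\widetilde{B}$ now plays the role formerly played by $A$: it decays as $y\to\infty$ and can therefore be controlled in $L^\infty_v$ via integration from $y$ to $\infty$, while $\p_yA$ inherits the role of $\p_xB$ and is handled by $L^2$-type estimates. As a first step I would invoke Lemma \ref{property:Phi} to pull the weight $\ecp$ inside and split it as $\ecp=\eap\cdot e^{(c-a)\Psi}$, reducing the problem to estimating the $L^2_+$ norm of $\Dhk\bigl(\eap\p_yA_\Phi\cdot e^{(c-a)\Psi}\widetilde{B}_\Phi\bigr)$ paired against $\edp\Dhk E_\Phi$.

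The central ingredient is the pointwise analogue of \eqref{S4eq9}: writing $\widetilde{B}_\Phi(y)=\int_y^\infty\p_xB_\Phi(y')\,dy'$ and applying Cauchy--Schwarz with weight $e^{-2b\Psi}$, together with the asymptotic $\int_y^\infty e^{-2b\Psi(y')}\,dy'\lesssim \tt^{\f12}e^{-2b\Psi(y)}$, I obtain
\beno
\|e^{(c-a)\Psi}\Dh_j\widetilde{B}_\Phi\|_{L^\infty_v(L^2_h)}\lesssim e^{(c-a-b)\Psi(y)}\tt^{\f14}\|\ebp\Dh_j\p_xB_\Phi\|_{L^2_+}\lesssim \tt^{\f14}\cdot 2^j\|\ebp\Dh_jB_\Phi\|_{L^2_+},
\eeno
and the hypothesis $a+b\geq c$ is precisely what ensures that the exponential prefactor is bounded by $1$.

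With this in hand, Bony's horizontal decomposition \eqref{Bony} applied to $\eap\p_yA_\Phi\cdot e^{(c-a)\Psi}\widetilde{B}_\Phi$ leads to three pieces treated along the template of Lemma \ref{lem:nl1}. For the paraproduct $T^h_{\widetilde{B}}\p_yA$, I would bound $\|e^{(c-a)\Psi}S^h_{k'-1}\widetilde{B}_\Phi\|_{L^\infty_v(L^2_h)}$ by $\tt^{\f14}\|\ebp B_\Phi\|_{\cB^{1,0}}$ using the dyadic estimate above, and $\|\eap\Dh_{k'}\p_yA_\Phi\|_{L^2_v(L^\infty_h)}$ via Bernstein together with $\|\eap\Dh_{k'}\p_yA_\Phi\|_{L^2_+}\leq d_{k'}(t)2^{-k'/2}\|\eap\p_yA_\Phi\|_{\cB^{\f12,0}}$, then combine them through $\|fg\|_{L^2_+}\leq\|f\|_{L^\infty_v(L^2_h)}\|g\|_{L^2_v(L^\infty_h)}$. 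The second paraproduct $T^h_{\p_yA}\widetilde{B}$ is treated identically up to swapping the positions of $S^h_{k'-1}$ and $\Dh_{k'}$. The remainder $R^h(\p_yA,\widetilde{B})$ inherits the usual $2^{k/2}$ gain from Bernstein's inequality $\Dhk\colon L^1_h\to L^2_h$, with $L^\infty_v(L^2_h)$ on $\wtDh_{k'}\widetilde{B}_\Phi$ and $L^2_+$ on $\Dh_{k'}\p_yA_\Phi$, exactly as for $R^h(A,\p_xB)$ in Lemma \ref{lem:nl1}.

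In every piece the product is, up to $d_{k'},d_k$ prefactors, of the form $f(t)\|\ebp B_\Phi\|_{\cB^{1,0}}\|\edp\Dhk E_\Phi\|_{L^2_+}$, where $f(t)=\tt^{\f14}\|\eap\p_yA_\Phi\|_{\cB^{\f12,0}}$ is exactly the weight entering $\wt{L}^2_{[t_1,t_2];f}$. Integrating in time and applying Cauchy--Schwarz splits $f(t)=\sqrt{f(t)}\cdot\sqrt{f(t)}$ between the two remaining $L^2$ factors; Minkowski's integral inequality converts $\bigl(\int_{t_1}^{t_2}f\|\ebp B_\Phi\|_{\cB^{1,0}}^2\,dt\bigr)^{\f12}$ into $\|\ebp B_\Phi\|_{\wt{L}^2_{[t_1,t_2];f}(\cB^{1,0})}$, while the Besov normalization provides the missing $d_k 2^{-k}\|\edp E_\Phi\|_{\wt{L}^2_{[t_1,t_2];f}(\cB^{1,0})}$. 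Summing the Bony pieces with $\sum_{|k'-k|\leq 4}d_{k'}\lesssim d_k$ for the two paraproducts, and the standard geometric estimate $\sum_{k'\geq k-3}d_{k'}^2 2^{-k'/2}\lesssim d_k 2^{-k/2}$ for the remainder, yields \eqref{S4eq13}. The main technical obstacle will be the bookkeeping in the remainder term, where the $2^{k/2}$ Bernstein gain, the $2^{-k'/2}$ geometric decay and the generic $\ell^1$ sequences have to combine cleanly to produce the $d_k^2 2^{-k}$ factor; apart from this, the argument is a faithful transcription of the proof of Lemma \ref{lem:nl1} under the substitutions $A\leftrightarrow\widetilde{B}$ and $\p_xB\leftrightarrow\p_yA$.
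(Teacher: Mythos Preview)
Your proposal is correct and follows essentially the same route as the paper: Bony's decomposition of $\p_yA\cdot\widetilde B$, the key $L^\infty_v(L^2_h)$ bound on $e^{(c-a)\Psi}\Dh_j\widetilde B_\Phi$ (which is exactly the paper's \eqref{S4eq14}), and the same distribution of norms across the three pieces. The only cosmetic difference is that the paper applies Bony to the unweighted product and then inserts the $y$-weights, whereas you carry the weights through from the start; since $e^{a\Psi}$ depends only on $y$ and commutes with $\Dh_k$, $S^h_{k'-1}$, this is immaterial. One small imprecision: Lemma~\ref{property:Phi} governs the analytic weight $\Phi$ in $x$, not the Gaussian weight $e^{c\Psi}$ in $y$; the latter simply commutes with the horizontal dyadic operators, so no lemma is needed to ``pull it inside''.
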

\begin{proof}
By applying Bony's decomposition \eqref{Bony} in the horizontal variable to $\p_yA\int_y^\oo \p_xB dy',$ we write
\beqo
\p_yA\int_y^\oo \p_xB dy'= T^{\h}_{\p_yA}\int_y^\oo \p_xB dy'+T^{\h}_{\int_y^\oo \p_xB dy'}\p_yA+R^{\h}(\p_yA,\int_y^\oo \p_xB dy')
\eeqo

Again thanks to Lemma \ref{property:Phi} and considering the support properties to the Fourier transform of the terms in $T^{\h}_{\p_yA}\int_y^\oo \p_xB dy'$, we find
\beqo\begin{split}
\|\ecp\Dhk [T^{\h}_{\p_yA}\int_y^\oo \p_xB dy']_\Phi\|_{L^2_+}\lesssim &\sum_{|k'-k|\leq4}
\|\eap\Sh_{k'-1}\p_y\Ap \|_{L^2_\rmv(L^\oo_\h) } \\
&\qquad\qquad\times\|e^{(c-a)\Psi}\int_y^\oo\Dh_{k'}\p_x\Bp dy'\|_{L^\oo_\rmv(L^2_\h) }.
\end{split}\eeqo
While it follows from \eqref{ineq1} and $c-a-b\leq 0$ that
\beq\label{S4eq14}\begin{split}
\|e^{(c-a)\Psi}\int_y^\oo\Dhk\p_x\Bp dy' \|_{L^\oo_\rmv(L^2_\h)}
&\leq\|e^{(c-a)\Psi}\bigl( \int_y^\oo e^{-2b\Psi}\bigr)^\f12 \|_{L^\oo_\rmv} \|\ebp\Dhk\p_x\Bp\|_{L^2_+} \\
&\lesssim  2^k\tt^\f14 \|e^{(c-a-b)\Psi}\|_{L^\oo_\rmv}  \|\ebp\Dhk\Bp\|_{L^2_+}\\
&\lesssim 2^k\tt^\f14  \|\ebp\Dhk\Bp\|_{L^2_+}.
\end{split}\eeq
As a result, it comes out
\beqo
\begin{split}
\int_{t_1}^{t_2}&\bigl|\bigl(\ecp\Dhk [T^{\h}_{\p_yA}\int_y^\oo \p_xB dy']_\Phi | \edp \Dhk E_\Phi\bigr)_{L^2_+}\bigr|\,dt\\
&\lesssim \sum_{|k'-k|\leq 4} 2^{k'}\int_{t_1}^{t_2}f(t)\|\ebp\D_{k'}^\h \Bp(t)\|_{L^2_+}\|\edp \Dhk E_\Phi(t)\|_{L^2_+}\,dt,
\end{split}
\eeqo
from which, we get, by a similar derivation of \eqref{S4eq10}, that
\beq\label{S4eq15}\begin{split}
\int_{t_1}^{t_2}\bigl|\bigl(&\ecp\Dhk [T^{\h}_{\p_yA}\int_y^\oo \p_xB dy']_\Phi | \edp \Dhk E_\Phi\bigr)_{L^2_+}\bigr|\,dt\\
&\lesssim d_k^2 2^{-k}\|\ebp \Bp\|_{\wt{L}^2_{[t_1,t_2];f}(\cB^{1,0})}\|\edp E_\Phi\|_{\wt{L}^2_{[t_1,t_2];f}(\cB^{1,0})}.
\end{split}\eeq

Along the same line, we get
\beqo\begin{split}
\int_{t_1}^{t_2}&\bigl|\bigl(\ecp\Dhk [T^{\h}_{\int_y^\oo \p_xB dy'}\p_yA]_\Phi | \edp \Dhk E_\Phi\bigr)_{L^2_+}\bigr|\,dt\\
\lesssim &\sum_{|k'-k|\leq4}\int_{t_1}^{t_2}\|\eap\Dh_{k'}\p_y\Ap \|_{L^2_\rmv(L^\oo_\h)} \\
&\qquad\qquad\times\|e^{(c-a)\Psi}\int_y^\oo \Sh_{k'-1}\p_x\Bp dy'\|_{L^\oo_\rmv(L^2_\h)}\|\edp \Dhk E_\Phi(t)\|_{L^2_+}\,dt\\
\lesssim &\sum_{|k'-k|\leq4}d_{k'}\Bigl(\int_{t_1}^{t_2}f(t)\w{t}^{-\f12} \|e^{(c-a)\Psi}\int_y^\oo \p_x\Bp dy'\|_{L^\oo_\rmv(L^2_\h)}^2\,dt\Bigr)^{\f12}\\
&\qquad\qquad\qquad\qquad\qquad\qquad\qquad\qquad\times\Bigl(\int_{t_1}^{t_2}f(t)\|\edp \Dhk E_\Phi(t)\|_{L^2_+}\,dt\Bigr)^{\f12}.
\end{split}\eeqo
Yet in view of \eqref{S4eq14}, we get, by a similar derivation of \eqref{S4eq11a}, that
\beno
\Bigl(\int_{t_1}^{t_2}f(t)\w{t}^{-\f12} \|e^{(c-a)\Psi}\int_y^\oo \p_x\Bp dy'\|_{L^\oo_\rmv(L^2_\h)}^2\,dt\Bigr)^{\f12}
\lesssim \|\ebp \Bp\|_{\wt{L}^2_{[t_1,t_2];f}(\cB^{1,0})}.
\eeno
Hence we obtain
\beq\label{S4eq16}\begin{split}
\int_{t_1}^{t_2}\bigl|\bigl(&\ecp\Dhk [T^{\h}_{\int_y^\oo \p_xB dy'}\p_yA]_\Phi | \edp \Dhk E_\Phi\bigr)_{L^2_+}\bigr|\,dt\\
&\lesssim d_k^2 2^{-k}\|\ebp \Bp\|_{\wt{L}^2_{[t_1,t_2];f}(\cB^{1,0})}\|\edp E_\Phi\|_{\wt{L}^2_{[t_1,t_2];f}(\cB^{1,0})}.
\end{split}\eeq

Finally, we get, by applying Lemma \ref{lem:Bern} and \eqref{S4eq14}, that
\beqo\begin{split}
&\int_{t_1}^{t_2}\bigl|\bigl(\ecp\Dhk [R^{\h}(\p_yA,\int_y^\oo \p_xB  dy')]_\Phi  | \edp \Dhk E_\Phi\bigr)_{L^2_+}\bigr|\,dt\\
&\lesssim 2^\f{k}2 \sum_{k'\geq k-3} \int_{t_1}^{t_2}\|\eap\Dh_{k'}\p_y\Ap \|_{L^2_+}\|e^{(c-a)\Psi}\int_y^\oo \wtDh_{k'}\p_x\Bp dy'\|_{L^\oo_\rmv(L^2_\h)}\|\edp \Dhk E_\Phi(t)\|_{L^2_+}\,dt\\
&\lesssim 2^\f{k}2 \sum_{k'\geq k-3} 2^{\f{k'}2}\int_{t_1}^{t_2} f(t)\|\ebp\wtDh_{k'}\Bp(t)\|_{L^2_+}\|\edp \Dhk E_\Phi\|_{L^2_+}\,dt,
\end{split}\eeqo
which together with a similar derivation of \eqref{S4eq12} gives rise to
\beqo
\begin{split}
\int_{t_1}^{t_2}\bigl|\bigl(\ecp\Dhk [R^{\h}(&\p_yA,\int_y^\oo \p_xB  dy')]_\Phi  | \edp \Dhk E_\Phi\bigr)_{L^2_+}\bigr|\,dt \\
&\lesssim d_k^2 2^{-k}\|\ebp \Bp\|_{\wt{L}^2_{[t_1,t_2];f}(\cB^{1,0})}\|\edp E_\Phi\|_{\wt{L}^2_{[t_1,t_2];f}(\cB^{1,0})}.
\end{split}\eeqo
Together with \eqref{S4eq15}, \eqref{S4eq16}, we complete the proof of \eqref{S4eq13}.
\end{proof}

\begin{lem}\label{lem:nl3}
{\sl Under the assumptions of Lemma \ref{lem:nl1}, if we assume moreover that  $a+b>c.$ Then  we have
\beq\label{S4eq18}
\begin{split}
\int_{t_1}^{t_2}\bigl|\bigl(\ecp\int_y^\oo \Dhk [\p_yA\p_xB]_\Phi dy' |& \edp \Dhk E_\Phi\bigr)_{L^2_+}\bigr|\,dt\\
\lesssim &
d_k^2 2^{-k}\|\ebp \Bp\|_{\wt{L}^2_{[t_1,t_2];f}(\cB^{1,0})}\|\edp E_\Phi\|_{\wt{L}^2_{[t_1,t_2];f}(\cB^{1,0})}.
\end{split}
\eeq}
\end{lem}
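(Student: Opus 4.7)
My plan is to adapt the proofs of Lemmas \ref{lem:nl1} and \ref{lem:nl2}, inserting an additional Cauchy--Schwarz in the $y'$-variable to dispose of the outer integral $\int_y^\oo dy'$. The strict inequality $a+b>c$ (rather than the non-strict $a+b\geq c$ of Lemma \ref{lem:nl1}) will enter precisely when this outer integral is converted into an $L^2_\rmv$-integrable weight. First I would apply Bony's decomposition \eqref{Bony} in the horizontal variable to write $\p_yA\,\p_xB=T^\h_{\p_yA}\p_xB+T^\h_{\p_xB}\p_yA+R^\h(\p_yA,\p_xB)$ and estimate the three pieces separately, mirroring \eqref{S4eq10}--\eqref{S4eq12}.

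For the paraproduct $T^\h_{\p_yA}\p_xB$, Lemma \ref{property:Phi} applied in $L^2_\h$ at each fixed $y'$ together with H\"older's inequality yields
\beno
\|\Dhk[\Sh_{k'-1}\p_yA\cdot\Dh_{k'}\p_xB]_\Phi(y')\|_{L^2_\h}\lesssim \|\Sh_{k'-1}\p_yA_\Phi(y')\|_{L^\oo_\h}\|\Dh_{k'}\p_xB_\Phi(y')\|_{L^2_\h}.
\eeno
To handle the outer integral I would split the weight as $\ecp(y)=e^{(c-a-b)\Psi(y)}\cdot e^{(a+b)\Psi(y)}$ and exploit the monotonicity $e^{(a+b)\Psi(y)}\leq e^{(a+b)\Psi(y')}$ for $0\leq y\leq y'$ to absorb the second factor into the integrand, producing the weighted pieces $\eap\Sh_{k'-1}\p_yA_\Phi(y')$ and $\ebp\Dh_{k'}\p_xB_\Phi(y')$. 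A Cauchy--Schwarz in $y'$ then gives
\beno
\ecp(y)\int_y^\oo\|\cdots\|_{L^2_\h}\,dy'\lesssim e^{(c-a-b)\Psi(y)}\|\eap\Sh_{k'-1}\p_yA_\Phi\|_{L^2_\rmv(L^\oo_\h)}\|\ebp\Dh_{k'}\p_xB_\Phi\|_{L^2_+}.
\eeno
Taking the $L^2$-norm in $y$ of the prefactor $e^{(c-a-b)\Psi(y)}$ yields $\lesssim\tt^{1/4}$, and this is exactly where the strict inequality $a+b>c$ is indispensable: if equality held then this $L^2_\rmv$ norm would diverge. Bernstein's inequality combined with dyadic summation in $k''\leq k'-2$ then gives $\|\eap\Sh_{k'-1}\p_yA_\Phi\|_{L^2_\rmv(L^\oo_\h)}\lesssim\|\eap\p_yA_\Phi\|_{\cB^{\f12,0}}$, just as in the derivation of \eqref{S4eq9}, thereby producing the weight $f(t)=\tt^{1/4}\|\eap\p_yA_\Phi\|_{\cB^{\f12,0}}$.

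Pairing with $\edp\Dhk E_\Phi$, using Bernstein in $x$ to extract the factor $2^{k'}$ from $\|\ebp\Dh_{k'}\p_xB_\Phi\|_{L^2_+}$, summing over $|k'-k|\leq 4$, and finally applying Cauchy--Schwarz in $t$ against $f(t)$ together with the definition of the Chemin--Lerner norm $\wt{L}^2_{[t_1,t_2];f}(\cB^{1,0})$ yields the desired bound $d_k^2 2^{-k}\|\ebp\Bp\|_{\wt{L}^2_{[t_1,t_2];f}(\cB^{1,0})}\|\edp E_\Phi\|_{\wt{L}^2_{[t_1,t_2];f}(\cB^{1,0})}$ exactly along the lines of \eqref{S4eq10}. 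The remaining two pieces $T^\h_{\p_xB}\p_yA$ and $R^\h(\p_yA,\p_xB)$ are treated by the same mechanism, with the roles of the $L^\oo_\h$ and $L^2_\h$ factors interchanged or symmetrized as in the derivations of \eqref{S4eq11} and \eqref{S4eq12}. The main obstacle, and the only genuinely new ingredient compared with the previous two lemmas, is the weight bookkeeping around the outer integral: the cost of $\int_y^\oo dy'$ is exactly one additional factor of $\tt^{1/4}$ coming from $\|e^{(c-a-b)\Psi}\|_{L^2_\rmv}$, which is why the hypothesis must be strengthened from $a+b\geq c$ to the strict $a+b>c$.
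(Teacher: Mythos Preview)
Your proposal is correct and follows essentially the same route as the paper: Bony's decomposition into three pieces, the weight splitting $e^{c\Psi(y)}=e^{(c-a-b)\Psi(y)}e^{(a+b)\Psi(y)}$ with monotonicity of $\Psi$ to push the second factor inside the $y'$-integral, Cauchy--Schwarz in $y'$, and then the bound $\|e^{(c-a-b)\Psi}\|_{L^2_\rmv}\lesssim\tt^{1/4}$ (labelled \eqref{S4eq19} in the paper), which is exactly where $a+b>c$ is needed. The remaining steps---Bernstein for $S^\h_{k'-1}$, summation over $|k'-k|\le4$ or $k'\ge k-3$, and Cauchy--Schwarz in time against $f$---mirror \eqref{S4eq10}--\eqref{S4eq12} precisely as you indicate.
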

\begin{proof}
Applying Bony's decomposition \eqref{Bony} in the horizontal variable to $\p_yA\p_xB$ yields
\beqo
\p_yA\p_xB= T^{\h}_{\p_yA}\p_xB+T^{\h}_{\p_xB}\p_yA+R^{\h}(\p_yA,\p_xB)
\eeqo

We first observe that
\beqo\begin{split}
\|\ecp\int_y^\oo& \Dhk[T^{\h}_{\p_yA}  \p_xB]_\Phi dy'\|_{L^2_+}\\
&\lesssim
\|e^{(c-a-b)\Psi} \|_{L^2_\rmv}\sum_{|k'-k|\leq4}\|\eap\Sh_{k'-1}\p_y\Ap \|_{L^2_\rmv(L^\oo_\h) } \|\ebp\Dh_{k'}\p_x\Bp \|_{L^2_+}.
\end{split}\eeqo
Due to $c-a-b< 0,$ one has
\beq\label{S4eq19}
\|e^{(c-a-b)\Psi}\|_{L^2_\rmv}\lesssim \tt^\f14,
\eeq
from which and Lemma \ref{lem:Bern}, we infer
\beqo\begin{split}
\|\ecp\int_y^\oo \Dhk[T^{\h}_{\p_yA}\p_xB]_\Phi  dy'\|_{L^2_+}\lesssim \sum_{|k'-k|\leq4}2^{k'}f(t) \|\ebp\Dh_{k'}\Bp \|_{L^2_+}.
\end{split}\eeqo
Then we get, by a similar derivation of \eqref{S4eq10}, that
\beq\label{S4eq20}\begin{split}
\int_{t_1}^{t_2}&\bigl|\bigl( \ecp\int_y^\oo \Dhk[T^{\h}_{\p_yA}\p_xB]_\Phi  dy | \edp \Dhk E_\Phi\bigr)_{L^2_+}\bigr|\,dt\\
&\lesssim d_k^2 2^{-k}\|\ebp \Bp\|_{\wt{L}^2_{[t_1,t_2];f}(\cB^{1,0})}\|\edp E_\Phi\|_{\wt{L}^2_{[t_1,t_2];f}(\cB^{1,0})}.
\end{split}\eeq

Along the same line, by applying Lemma \ref{lem:Bern} and \eqref{S4eq19}, we find
\beqo\begin{split}
\|\ecp\int_y^\oo \Dhk & [T^{\h}_{\p_xB}\p_yA]_\Phi dy'\|_{L^2_+}\\
&\lesssim \|e^{(c-a-b)\Psi}\|_{L^2_\rmv}
 \sum_{|k'-k|\leq4}\|\eap\Dh_{k'}\p_y\Ap \|_{L^2_\rmv(L^\oo_\h)} \|\ebp \Sh_{k'-1}\p_x\Bp\|_{L^2_+}\\
 &\lesssim \sum_{|k'-k|\leq4} d_{k'}(t) f(t)\|\ebp \p_x\Bp\|_{L^2_+}.
\end{split}\eeqo
Then thanks to \eqref{S4eq11a}, we get, by applying a similar derivation of \eqref{S4eq11}, that
\beq\label{S4eq21}\begin{split}
\int_{t_1}^{t_2}&\bigl|\bigl(\ecp\int_y^\oo \Dhk  [T^{\h}_{\p_xB}\p_yA]_\Phi dy' | \edp \Dhk E_\Phi\bigr)_{L^2_+}\bigr|\,dt\\
&\lesssim d_k^2 2^{-k}\|\ebp \Bp\|_{\wt{L}^2_{[t_1,t_2];f}(\cB^{1,0})}\|\edp E_\Phi\|_{\wt{L}^2_{[t_1,t_2];f}(\cB^{1,0})}.
\end{split}\eeq

Finally applying  Lemma \ref{lem:Bern} and \eqref{S4eq19} gives rise to
\beqo\begin{split}
\|\ecp\int_y^\oo &\Dhk[R^{\h}(\p_yA, \p_xB)]_\Phi dy'\|_{L^2_+}\\
&\lesssim 2^\f{k}2 \|e^{(c-a-b)\Psi}\|_{L^2_\rmv} \sum_{k'\geq k-3} \|\eap\Dh_{k'}\p_y\Ap \|_{L^2_+}\|\ebp\wtDh_{k'}\p_x\Bp\|_{L^2_+}
\\ &\lesssim 2^\f{k}2  \sum_{k'\geq k-3} 2^{\f{k'}2}f(t)\|\ebp\wtDh_{k'}\Bp\|_{L^2_+},
\end{split}\eeqo
Then a similar derivation of \eqref{S4eq12} leads to
\beq\label{S4eq22}
\begin{split}
\int_{t_1}^{t_2}&\bigl|\bigl(\ecp\int_y^\oo \Dhk[R^{\h}(\p_yA, \p_xB)]_\Phi dy' | \edp \Dhk E_\Phi\bigr)_{L^2_+}\bigr|\,dt\\
&\lesssim d_k^2 2^{-k}\|\ebp \Bp\|_{\wt{L}^2_{[t_1,t_2];f}(\cB^{1,0})}\|\edp E_\Phi\|_{\wt{L}^2_{[t_1,t_2];f}(\cB^{1,0})}.
\end{split}\eeq

By summing up \eqref{S4eq20}, \eqref{S4eq21} and \eqref{S4eq22}, we finish the proof of  \eqref{S4eq18}.
\end{proof}

To handle the terms in \eqref{eqs1}, \eqref{eqs2} and \eqref{eqs3} involving the far field state $(U,B),$
we need the following lemmas:

\begin{lem}
{\sl Let $(m_U,m_B)$ and $(M_U,M_B)$ be determined respectively by \eqref{def:m} and \eqref{eqs2}. Then
under the assumption of  \eqref{UBdecay}, for any $T<T^*$, one has
\beq\label{mdecay}
\|\tt^\f74 \ep (m_U,m_B,M_U,M_B)_\Phi \|_{\wt{L}^2_{T}(\cB^{\f12,0})}\leq C\e
\eeq
Similar estimate holds with $\Phi$, $\Psi$ and $T^*$ being replaced respectively by $\Phi_\ka$, $\Psi_\ka$ and $T^*_\ka$.
}\end{lem}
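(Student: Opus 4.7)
The plan is to reduce the claim to tangential estimates in $\cB^{1/2}_\h$ by exploiting three facts. First, $(m_U,m_B)$ and $(M_U,M_B)$ are supported in the strip $y\in[0,2]$, where the Gaussian weight $\ep=e^{y^2/(8\tt)}$ is uniformly bounded by $e^{1/2}$, so $\ep$ may be pulled out up to a universal constant. Second, on $[0,T^*)$ the bootstrap constraint $\tht(t)\leq\de/\lam$ yields $e^{\Phi(t,\xi)}\leq e^{\de|\xi|}$ pointwise in Fourier, hence $\|f_\Phi\|_{\cB^{1/2,0}}\leq\|e^{\de|D_x|}f\|_{\cB^{1/2,0}}$. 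Third, each scalar term appearing in $(m_U,m_B,M_U,M_B)$ has the tensor-product form $\psi(y)g(t,x)$ with $\psi\in C^\infty_c([0,2])$, so $\|\psi g\|_{\cB^{1/2,0}}=\|\psi\|_{L^2_y}\|g\|_{\cB^{1/2}_\h}$. The problem thereby reduces to controlling $\|\tt^{7/4}e^{\de|D_x|}g\|_{\wt{L}^2_T(\cB^{1/2}_\h)}$ for each tangential factor $g$ arising in the definitions.

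I would next separate the linear from the bilinear contributions. The linear factors $g\in\{\p_tU,\p_tB,U,B\}$ are controlled directly by the $\wt{L}^2_T(\cB^{1/2}_\h)$-part of \eqref{UBdecay} and each contributes at most $C\e$. The bilinear factors $U\p_xU$, $B\p_xB$, $U\p_xB$, $B\p_xU$ would be handled by Bony's horizontal paraproduct combined with the product law $\cB^{1/2}_\h\cdot\cB^{3/2}_\h\hookrightarrow\cB^{1/2}_\h$ valid in one tangential dimension (where $\cB^{1/2}_\h=B^{1/2}_{2,1}$ is a Banach algebra). Splitting the weight $\tt^{7/4}$ between the two factors and combining the $\wt{L}^\infty_T(\cB^{3/2}_\h)$-bound from \eqref{UBdecay} with the extra pointwise decay $\tt^{-9/4}$, each bilinear piece should be bounded by $C\e^2\leq C\e$; for instance,
\[
\|\tt^{7/4}U\p_xU\|_{\wt{L}^2_T(\cB^{1/2}_\h)}\lesssim\|\tt^{9/4}U\|_{\wt{L}^\infty_T(\cB^{3/2}_\h)}\cdot\|\tt^{-1/2}U\|_{\wt{L}^2_T(\cB^{1/2}_\h)}\lesssim\e^2,
\]
the last factor being finite thanks to the $L^2(\R_+)$-integrability of $\tt^{-11/4}$ combined with the $\wt{L}^\infty_T(\cB^{1/2}_\h)$-control of $\tt^{9/4}U$. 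For the primitives $(M_U,M_B)=-\int_y^\infty(m_U,m_B)\,dy'$, Minkowski in $y$ over the bounded set $[y,2]$ reduces their $\cB^{1/2,0}$-norms to those of $(m_U,m_B)$, so the same bound applies. The parallel statement with $\Phi_\ka,\Psi_\ka,T^*_\ka$ follows by the identical argument since $e^{\Psi_\ka}=e^{\Psi/\ka}$ remains uniformly bounded on $[0,2]$.

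The step I expect to require the most care is the linear tangential-derivative piece $(1-\chi')\bar{B}_\ka\p_xB$ in $m_U$ (and $(1-\chi')\bar{B}_\ka\p_xU$ in $m_B$), present only in the critical case $\ka=1$ where $\bar{B}_\ka\neq0$. The naive estimate $\|\p_xB\|_{\cB^{1/2}_\h}\leq\|B\|_{\cB^{3/2}_\h}$ would cost a non-integrable $\int\tt^{-1}\,dt$ factor. My plan is to invoke Bernoulli's law \eqref{B'slaw}, which for $(U_1,B_1)=(U,B+\bar{B}_\ka)$ yields the algebraic identities
\[
\bar{B}_\ka\p_xB=\p_tU+U\p_xU+\p_xp-B\p_xB,\qquad \bar{B}_\ka\p_xU=\p_tB+U\p_xB-B\p_xU,
\]
and then to substitute them into $m_U,m_B$. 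After substitution, the problematic linear derivatives are replaced by $\p_tU,\p_tB$ (controlled by \eqref{UBdecay}), bilinear expressions in $(U,B)$ (controlled as above), and the pressure gradient $\p_xp$ whose decay is inherited from that of $(U,B)$ through the outer Euler system. Summing the contributions of all linear, bilinear, and rewritten terms then yields the claimed bound $\leq C\e$.
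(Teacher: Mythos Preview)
Your reduction—compact $y$-support bounds $\ep$, the bootstrap gives $e^{\Phi}\le e^{\de|D_x|}$, and the tensor-product form reduces everything to tangential $\cB^{1/2}_\h$ estimates—is exactly the paper's approach. The paper lists the same linear factors $(\p_tU,\p_tB,U,B)$ and bilinear factors $(U\p_xU,B\p_xB,U\p_xB,B\p_xU)$, closing the latter via the product law $\|fg\|_{\wt{L}^2_T(\cB^{1/2}_\h)}\lesssim\|f\|_{\wt{L}^\infty_T(\cB^{1/2}_\h)}\|g\|_{\wt{L}^2_T(\cB^{1/2}_\h)}$, which is slightly more direct than your weight-splitting but equivalent.

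Your third paragraph goes beyond the paper, and there the argument has a gap. You correctly flag that $(1-\chi')\bar{B}_\ka\p_xB$ (present only when $\ka=1$) is not directly controlled by \eqref{UBdecay}: bounding $\|\tt^{7/4}\p_xB\|_{\wt{L}^2_T(\cB^{1/2}_\h)}$ would require $\|\tt^{7/4}B\|_{\wt{L}^2_T(\cB^{3/2}_\h)}$, and interpolating from the available $\wt{L}^\infty_T(\cB^{3/2}_\h)$ control costs a divergent $\int\tt^{-1}\,dt$. Your Bernoulli substitution works cleanly for $m_B$, since the second equation of \eqref{B'slaw} gives $\p_tB-\bar{B}_\ka\p_xU=B\p_xU-U\p_xB$, which is purely bilinear. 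For $m_U$, however, it is circular: the first Bernoulli equation gives $\p_tU-\bar{B}_\ka\p_xB=-\p_xp-U\p_xU+B\p_xB$, and your claim that $\p_xp$ ``inherits decay from $(U,B)$'' simply reinvokes that same relation, since $\p_xp=B_1\p_xB_1-U_1\p_xU_1-\p_tU_1$ contains $\bar{B}_\ka\p_xB$ again. The paper's own proof, for its part, omits the $\bar{B}_\ka$ terms from its displayed list without comment; you should regard control of $\p_xp$ (equivalently of $\bar{B}_\ka\p_xB$) as an implicit additional hypothesis in the $\ka=1$ case rather than something derivable from \eqref{UBdecay} alone.
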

\begin{proof}
In view of  \eqref{def:m} and the construction of $\chi$, we observe that $(m_U,m_B,M_U,M_B)$ are supported in $[0,2]$
for each fixed $t>0.$ Hence we get
\beqo\begin{split}
\| \tt^\f74 \ep (m_U,m_B &,M_U,M_B)_\Phi \|_{\wt{L}^2_{T}(\cB^{\f12,0})}\\ &\lesssim \|\tt^\f74(\p_t U,\p_t B,U,B,U\p_xU,B\p_xB,U\p_xB,B\p_xU)_\Phi \|_{\wt{L}^2_{T}(\cB^{\f12}_\h)}.
\end{split}
\eeqo
Here and all in that follows, we always denote $\cB^s_\h$ to the Besov space $B^s_{2,1}(\R_\h).$

Then thanks to  the following law of product in  Besov spaces (see \cite{BCD} for instance)
\beq\label{s8eq2}\begin{split}
\|fg\|_{\wt{L}^2_{T}(\cB^{\f12}_\h)}
& \lesssim \|f\|_{\wt{L}^\oo_{T}(\cB^{\f12}_\h)} \|g\|_{\wt{L}^2_{T}(\cB^{\f12}_\h)},
\end{split}
\eeq
 and  Lemma \ref{property:Phi} and \eqref{UBdecay}, we deduce that
\beqo
\begin{split}
\|\tt^\f74(U\p_xU,B\p_xB,U\p_xB,B\p_xU)_\Phi \|_{\wt{L}^2_{T}(\cB^{\f12}_\h)}&\lesssim\|\tt^\f74\UBp\|_{\wt{L}^2_{T}(\cB^{\f12})}\|\UBp\|_{\wt{L}^\oo_{T}(\cB^{\f32}_\h)}\\
 & \lesssim\e\|\tt^\f74\UBp\|_{\wt{L}^2_{T}(\cB^{\f12}_\h)}.
\end{split}
\eeqo
As a result, it comes out
$$
\|\tt^\f74 \ep (m_U,m_B,M_U,M_B)_\Phi \|_{\wt{L}^2_{T}(\cB^{\f12,0})}\lesssim \|\tt^\f74 e^{\delta|D_x|}(\p_t U,\p_t B,U,B)_\Phi \|_{\wt{L}^2_{T}(\cB^{\f12}_\h)},
$$
which together with \eqref{UBdecay} ensures \eqref{mdecay}.
\end{proof}

\begin{lem}\label{ff1}
{\sl Let $F$ be smooth enough function on $[0,T]\times\R_{\rm h}$ and $A, E$ be smooth enough function on $[0,T]\times\R^2_+$.
We denote $g(t)\eqdefa \|F_\Phi(t) \|_{\cB^\f12_\h}.$  Then one has
\beq
\label{s8eq6}
\begin{split}
\int_{t_1}^{t_2}\bigl|\bigl(\epk \Dhk[F\p_xA]_\Phi &| \epk \Dhk E_\Phi\bigr)_{L^2_+}\bigr|\,dt\\
&\lesssim
d_k^2 2^{-k}\|\epk \Ap\|_{\wt{L}^2_{[t_1,t_2];g}(\cB^{1,0})}\|\epk E_\Phi\|_{\wt{L}^2_{[t_1,t_2];g}(\cB^{1,0})}.
\end{split}
\eeq
}\end{lem}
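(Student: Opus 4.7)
The plan is to adapt the proof template of Lemmas \ref{lem:nl1}--\ref{lem:nl3} to this simpler setting in which the ``low-frequency factor'' $F$ is independent of $y$. First I apply Bony's decomposition \eqref{Bony} in the horizontal variable,
\[
F\p_x A = T^\h_F \p_x A + T^\h_{\p_xA} F + R^\h(F,\p_xA),
\]
and invoke Lemma \ref{property:Phi} to move the phase $\Phi$ inside each product. Because $F$ depends only on $(t,x)$, the vertical weight $\epk$ commutes with multiplication by $F$ and with every purely horizontal operator acting on $F$; in particular none of the integration-by-parts tricks based on $\lim_{y\to\infty}A=0$ or the Gaussian bound \eqref{ineq1} that were used in Lemmas \ref{lem:nl1}--\ref{lem:nl3} will be needed here.

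For the paraproduct $T^\h_F \p_xA$, spectral localization yields
\[
\|\epk\Dhk[T^\h_F\p_xA]_\Phi\|_{L^2_+}\lesssim \sum_{|k'-k|\le 4} \|\Sh_{k'-1}F_\Phi\|_{L^\infty_\h}\,\|\epk\Dh_{k'}\p_xA_\Phi\|_{L^2_+},
\]
and the anisotropic Bernstein inequality (Lemma \ref{lem:Bern}) together with the definition of $\cB^\f12_\h$ gives $\|\Sh_{k'-1}F_\Phi\|_{L^\infty_\h}\lesssim \|F_\Phi\|_{\cB^\f12_\h}=g(t)$ and $\|\epk\Dh_{k'}\p_xA_\Phi\|_{L^2_+}\lesssim 2^{k'}\|\epk\Dh_{k'}A_\Phi\|_{L^2_+}$. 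For the reverse paraproduct $T^\h_{\p_xA}F$, I instead pair an $L^2_+$ factor containing $A$ with an $L^\infty_\h$ factor containing $F$, exploiting $\|\Dh_{k'}F_\Phi\|_{L^\infty_\h}\lesssim d_{k'}(t)g(t)$ and the commutation of $\Sh_{k'-1}$ with $\epk$. For the remainder $R^\h(F,\p_xA)$, the standard $L^1_\h\to L^2_\h$ Bernstein gain on the output block $\Dhk$ produces the extra $2^{k/2}$ factor needed to sum over $k'\ge k-3$.

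With the pointwise-in-time $L^2_+$ bounds at hand, I test against $\epk\Dhk E_\Phi$, integrate in $t$, and apply Cauchy--Schwarz after splitting the weight $g(t)$ evenly between the two factors, so that the resulting square roots match the weighted Chemin--Lerner norms of Definition \ref{def2.3}:
\[
\Bigl(\int_{t_1}^{t_2} g(t)\|\epk\Dh_{k'}A_\Phi\|_{L^2_+}^2\,dt\Bigr)^{\f12}\lesssim d_{k'}\,2^{-k'}\|\epk A_\Phi\|_{\wt L^2_{[t_1,t_2];g}(\cB^{1,0})},
\]
and similarly for $E$. Summing against the prefactors $2^{k'}$ (paraproducts) or $2^{k/2}2^{k'/2}$ (remainder) and using the standard bookkeeping $\sum_{|k'-k|\le 4} d_{k'}\lesssim d_k$ and $\sum_{k'\ge k-3} d_{k'}^2 2^{-k'/2}\lesssim d_k\,2^{-k/2}$ delivers the claimed factor $d_k^2\,2^{-k}$. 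I do not anticipate any serious obstacle: the essential difference from Lemmas \ref{lem:nl1}--\ref{lem:nl3} is only that the $L^\infty$ control on $F$ comes from the purely horizontal Besov embedding rather than from a vertical integration exploiting decay at infinity, which simplifies rather than complicates the bookkeeping.
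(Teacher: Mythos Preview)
Your proposal is correct and follows essentially the same route as the paper: Bony's decomposition into $T^\h_F\p_xA+T^\h_{\p_xA}F+R^\h(F,\p_xA)$, control of $F$ through the horizontal embedding $\|\Sh_{k'-1}F_\Phi\|_{L^\infty_\h}\lesssim g(t)$ and $\|\Dh_{k'}F_\Phi\|_{L^\infty_\h}\lesssim d_{k'}(t)g(t)$, the $2^{k/2}$ Bernstein gain on the remainder, and Cauchy--Schwarz in time to land in the weighted Chemin--Lerner norms. Your bookkeeping line for the remainder is slightly miswritten (only one $d_{k'}$ appears in the sum, since the $E$-factor carries $d_k$ rather than $d_{k'}$: one uses $\sum_{k'\ge k-3}d_{k'}2^{-k'/2}\lesssim \tilde d_k\,2^{-k/2}$), but the argument is the same as the paper's.
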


\begin{proof}
By applying Bony's decomposition \eqref{Bony}, we write
$$
F\p_xA=T^{\rm h}_F \p_xA+T^{\rm h}_{\p_xA}F+R^{\rm h}(F,\p_xA).
$$
We first observe that
\beqo\begin{split}
\int_{t_1}^{t_2}&\bigl|\bigl(\epk \Dhk[T^{\rm h}_F \p_xA]_\Phi | \epk \Dhk E_\Phi\bigr)_{L^2_+}\bigr|\,dt\\
&\lesssim \sum_{|k'-k|\leq 4} \int_{t_1}^{t_2}\|\Sh_{k'-1}F_\Phi\|_{L^\oo_{\rm h}} \| \epk\Dh_{k'} \p_xA_\Phi\|_{L^2_+} \| \epk\Dh_{k} E_\Phi\|_{L^2_+}\,dt\\
&\lesssim \sum_{|k'-k|\leq 4} 2^{k'}\int_{t_1}^{t_2}g(t)\| \epk\Dh_{k'} A_\Phi\|_{L^2_+} \| \epk\Dh_{k} E_\Phi\|_{L^2_+}\,dt.
\end{split}
\eeqo
Then a similar derivation of \eqref{S4eq10} gives rise to
\beqo\begin{split}
\int_{t_1}^{t_2}&\bigl|\bigl(\epk \Dhk [T^{\rm h}_F \p_xA]_\Phi | \epk \Dhk E_\Phi\bigr)_{L^2_+}\bigr|\,dt
\lesssim d_k^2 2^{-k}\|\ebp \Ap\|_{\wt{L}^2_{[t_1,t_2];g}(\cB^{1,0})}\|\ebp E_\Phi\|_{\wt{L}^2_{[t_1,t_2];g}(\cB^{1,0})}.
\end{split}\eeqo

Notice that
\beqo
\begin{split}
\|\epk \Dhk[T^{\rm h}_{\p_xA}F]_\Phi\|_{L^2_+}
&\lesssim \sum_{|k'-k|\leq 4} \|\Dh_{k'}F_\Phi\|_{L^\oo_{\rm h}} \|\epk \Sh_{k'-1} \p_xA_\Phi\|_{L^2_+}\\
&\lesssim \sum_{|k'-k|\leq 4} d_{k'}(t) g(t) \|\epk  \p_xA_\Phi\|_{L^2_+},
\end{split}
\eeqo
we get, by a similar derivation of \eqref{S4eq11}, that
 \beqo\begin{split}
\int_{t_1}^{t_2}&\bigl|\bigl(\epk\Dhk [T^{\rm h}_{\p_xA}F]_\Phi | \epk \Dhk E_\Phi\bigr)_{L^2_+}\bigr|\,dt
\lesssim d_k^2 2^{-k}\|\epk \Ap\|_{\wt{L}^2_{[t_1,t_2];g}(\cB^{1,0})}\|\epk E_\Phi\|_{\wt{L}^2_{[t_1,t_2];g}(\cB^{1,0})}.
\end{split}\eeqo

Whereas it follows from Lemma \ref{lem:Bern} that
\beqo\begin{split}
\|\epk \Dhk[R^{\rm h}(F,\p_xA)]_\Phi\|_{L^2_+}
&\lesssim 2^\f{k}2 \sum_{k'\geq k-3} \|\wtDh_{k'}F_\Phi\|_{L^2_{\rm h}} \|\epk \Dh_{k'} \p_xA_\Phi\|_{L^2_+}\\
&\lesssim 2^\f{k}2 \sum_{k'\geq k-3} 2^{-\f{k'}2} g(t)\|\epk \Dh_{k'} \p_x A_\Phi\|_{L^2_+},
\end{split}
\eeqo
from which and a similar derivation of \eqref{S4eq12}, we infer
\beqo\begin{split}
\int_{t_1}^{t_2}&\bigl|\bigl(\epk\Dhk [R^{\h}(F,\p_xA)]_\Phi | \epk \Dhk E_\Phi\bigr)_{L^2_+}\bigr|\,dt\\
&\lesssim d_k^2 2^{-k}\|\epk \Ap\|_{\wt{L}^2_{[t_1,t_2];g}(\cB^{1,0})}\|\epk E_\Phi\|_{\wt{L}^2_{[t_1,t_2];g}(\cB^{1,0})}.
\end{split}\eeqo
This proves \eqref{s8eq6}.
\end{proof}

\begin{rmk}
It follows from a similar proof of \eqref{s8eq6} that
\beq
\label{s8eq10}
\begin{split}
\int_{t_1}^{t_2}\bigl|\bigl(\epk& \Dhk[\p_xFA]_\Phi  | \epk \Dhk E_\Phi\bigr)_{L^2_+}\bigr|\,dt\\
&\lesssim
d_k^2 2^{-{k}}\|F_\Phi\|_{\wt{L}^\infty_{[t_1,t_2]}(\cB^{\f32}_\h)}\|\epk \Ap\|_{\wt{L}^2_{[t_1,t_2]}(\cB^{\f12,0})}\|\epk E_\Phi\|_{\wt{L}^2_{[t_1,t_2]}(\cB^{\f12,0})}.
\end{split} \eeq
\end{rmk}

\renewcommand{\theequation}{\thesection.\arabic{equation}}
\setcounter{equation}{0}
\section{Analytic energy estimate to the primitive functions $(\vphi,\psi)$}\label{sect5}

The goal of this section is to present the {\it a priori} weighted analytic energy estimate to the primitive functions $(\vphi,\psi)$ which solves \eqref{eqs2}. Namely, we shall present the proof of Propositions \ref{prop3.1} and \ref{prop3.4}. The key ingredients will be the following two lemmas:

\begin{lemma}\label{prop5.1}
{\sl Let $(\vphi,\psi)$ be a smooth enough solution of \eqref{eqs2}. Let $\Phi(t,\xi)$ and $\Psi(t,y)$ be given by \eqref{def:Phi} and \eqref{def:Psi} respectively. Then if $\ka\in ]0,2[,$ for $\lk\eqdefa \f{\ka(2-\ka)}4$ and for any non-negative and non-decreasing function $\hs\in C^1(\R_+)$, there exists a positive constant $\fc$ so that
\beq\begin{split}\label{S5eq1}
&\| \hsf \ep\Dhk \ppp\|^2_{L^\oo_{[t_0,t]}(L^2_+)} +4\lk\| \hsf \ep\Dhk\p_y \ppp\|^2_{L^2_{[t_0,t]}(L^2_+)}\\
& +2c\lam 2^k \|\hsf\ep\Dhk\ppp \|^2_{L^2_{[t_0,t];\dtht}(L^2_+)}
\leq \| \hsf \ep\Dhk \ppp(t_0)\|^2_{L^2_+}\\ &+\| \qhs \ep\Dhk \ppp\|^2_{L^2_{[t_0,t]}(L^2_+)}+\frak{c}\e d^2_k 2^{-k}\|\hsf\ep \p_y\ppp \|^2_{\wt{L}^2_{[t_0,t]}(\cB^{\f12,0})}\\
 &+C d^2_k 2^{-k}\Bigl(\e^{-1}\|\ttau^\f12 \hsf \ep \Mp \|^2_{\wt{L}^2_{[t_0,t]}(\cB^{\f12,0})}+ \|\hsf\ep\ppp \|^2_{\wt{L}^2_{[t_0,t];\dtht}(\cB^{1,0})}\Bigr),
\end{split}\eeq
for any $t_0\in[0,t]$ with $t<T^*$, which is defined by \eqref{def:T^*}.}
\end{lemma}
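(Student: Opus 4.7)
The plan is a dyadic, time-weighted analytic energy estimate performed directly on the system \eqref{eqs2}, in the spirit of \cite{PZ5}. First I would apply $[\,\cdot\,]_\Phi$ and then the horizontal block $\Dhk$ to each equation, using the identity $(\p_t f)_\Phi=\p_t f_\Phi+\lam\dtht|D_x|f_\Phi$ that follows from \eqref{def:Phi}. Next, test the localized $\vphi$ equation against $\hs\epp\Dhk\vphi_\Phi$ and the localized $\psi$ equation against $\hs\epp\Dhk\psi_\Phi$, sum, and integrate over $[t_0,t]$. Lemma \ref{lem:heateqs} (with $\be=1$ for $\vphi$, $\be=\ka$ for $\psi$; using $\lk\leq 1/4$) yields the diffusive contribution
\[
\tfrac12\tfrac{d}{dt}\|\ep\Dhk\ppp\|_{L^2_+}^2+2\lk\|\ep\Dhk\p_y\ppp\|_{L^2_+}^2,
\]
while the phase term $\lam\dtht|D_x|$ gives, via horizontal Bernstein, a lower bound of order $c\lam\dtht 2^k\|\ep\Dhk\ppp\|_{L^2_+}^2$. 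After multiplying by $2$ and integrating in time, with one integration by parts moving $\dot\hs$ to the right-hand side, one recovers exactly the entire left-hand side of \eqref{S5eq1} together with the initial-datum term and the $\|\qhs\,\ep\Dhk\ppp\|_{L^2_{[t_0,t]}(L^2_+)}^2$ contribution.

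The cross coupling $-\bar B_\ka\p_x$ between the two equations cancels after summation: the pair $-\bar B_\ka(\p_x\psi_\Phi\,|\,\hs\epp\Dhk\vphi_\Phi)_{L^2_+}-\bar B_\ka(\p_x\vphi_\Phi\,|\,\hs\epp\Dhk\psi_\Phi)_{L^2_+}$ vanishes after integration by parts in $x$, because $\hs\epp$ is $x$-independent and $\Dhk$ commutes with $\p_x$. It remains to control the transport nonlinearities, the cutoff-localized far-field terms, and the source $\Mp$.

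The convective terms of the shape $A\,\p_xC$ with $A\in\{u,b\}$ and $C\in\{\vphi,\psi\}$ are bounded by Lemma \ref{lem:nl1}, and those of the shape $\int_y^\infty\p_yA\,\p_xC\,dy'$ by Lemma \ref{lem:nl3}; in both cases I would choose exponents $c=d=1$, $a=\ga$, $b=1-\ga$ for a fixed $\ga\in\,]0,1[$, so that $a+b\geq c$ is satisfied. The weight $f(t)=\tt^{1/4}\|\egp\p_yA_\Phi\|_{\cB^{1/2,0}}$ produced by those lemmas is, via Lemma \ref{lem3.1}, dominated by $\tt^{1/4}\|\ep\p_y\ghp\|_{\cB^{1/2,0}}\leq\dtht(t)$; a Cauchy--Schwarz step then converts the quadratic estimate into exactly the $Cd_k^2 2^{-k}\|\hsf\ep\ppp\|^2_{\wt{L}^2_{[t_0,t];\dtht}(\cB^{1,0})}$ piece in \eqref{S5eq1}.

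For the cutoff far-field terms (combinations such as $\chi'(U\p_x\vphi-B\p_x\psi)$, $\chi\,\p_xU\,u$, $\chi'\,\p_xU\,\vphi$ and their $\int_y^\infty$ analogs), I would invoke Lemma \ref{ff1} and the variant \eqref{s8eq10} applied with $g(t)=\|e^{\de|D_x|}(U,B)(t)\|_{\cB^{\f12}_\h}$; the decay assumption \eqref{UBdecay} then controls the time-integral of $g$, and a Young inequality splits each such contribution into an $\fc\e$-small piece of the form $\fc\e\,d_k^2 2^{-k}\|\hsf\ep\p_y\ppp\|^2_{\wt{L}^2_{[t_0,t]}(\cB^{1/2,0})}$ (with the Poincar\'e inequality \eqref{S3eq34} used whenever a bare $\ep\ppp$ factor has to be traded for an $\ep\p_y\ppp$ factor) and an $\e^{-1}$-amplified residual bounded through \eqref{mdecay} by the term $\e^{-1}\|\ttau^{1/2}\hsf\ep\Mp\|^2_{\wt{L}^2_{[t_0,t]}(\cB^{1/2,0})}$. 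The main obstacle is precisely this last step: unlike the classical Prandtl case treated in \cite{PZ5}, here $(U,B)$ depends on the tangential variable, so the far-field terms are genuine mixed quadratic expressions rather than pure forcings; the correct selection of $\ga$, the passage from $\egp$-weighted $(u,b)$ estimates to $\ep$-weighted $(G,H)$ estimates via Lemma \ref{lem3.1}, and the Young splitting that concentrates the $\e$-smallness on a term absorbable later into the $4\lk$ diffusion, must all be executed simultaneously. Assembling these bounds at each fixed $k$ then delivers \eqref{S5eq1}.
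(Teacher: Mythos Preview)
Your overall strategy---apply $\Dhk$ to the $\Phi$-localized equations, test against $\hs\epp\Dhk\ppp$, use Lemma~\ref{lem:heateqs} for the linear part, Bernstein for the phase term, and Lemmas~\ref{lem:nl1}, \ref{lem:nl3}, \ref{ff1} for the nonlinear terms---is exactly the paper's approach. Two points need correction.

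First, your exponent choice $a=\ga$, $b=1-\ga$, $c=d=1$ gives only $a+b=c$; this is admissible for Lemma~\ref{lem:nl1} but violates the \emph{strict} inequality $a+b>c$ required by Lemma~\ref{lem:nl3} (its proof needs $\|e^{(c-a-b)\Psi}\|_{L^2_\rmv}\lesssim\tt^{1/4}$, which fails when $c-a-b=0$). The paper takes $a=\tfrac12$, $b=c=d=1$, so $a+b=\tfrac32>1$, and the passage from $f(t)$ to $\dtht(t)$ still goes through via Lemma~\ref{lem3.1} with $\ga=\tfrac12$.

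Second---and this is the real gap---your treatment of the far-field \emph{convection} terms $\chi'(U\p_x\vphi-B\p_x\psi)$ and the $\int_y^\infty\chi''(\cdots)$ analogs does not work as described. Lemma~\ref{ff1} outputs $d_k^2 2^{-k}\|\hsf\ep\ppp\|^2_{\wt{L}^2_{[t_0,t];g}(\cB^{1,0})}$ with $g(t)=\|(U,B)_\Phi(t)\|_{\cB^{1/2}_\h}$; this is a $\cB^{1,0}$-norm (one horizontal derivative too many to trade via Poincar\'e into $\|\ep\p_y\ppp\|_{\cB^{1/2,0}}$), and there is no $(M_U,M_B)$ factor here to send into the $\e^{-1}\|\Mp\|^2$ slot. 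The paper's mechanism is different and simpler: by the definition \eqref{def:theta} one has $g(t)\leq\e^{1/2}\tt^{-5/4}\dtht(t)\leq\e^{1/2}\dtht(t)$, so that $\|\cdot\|^2_{\wt{L}^2;g}\leq\e^{1/2}\|\cdot\|^2_{\wt{L}^2;\dtht}$, and the far-field convection contribution is absorbed directly into the $Cd_k^2 2^{-k}\|\hsf\ep\ppp\|^2_{\wt{L}^2_{[t_0,t];\dtht}(\cB^{1,0})}$ term already present on the right of \eqref{S5eq1}. Your description is correct for the far-field \emph{multiplication} terms ($\chi\p_xU\,u$, $\chi'\p_xU\,\vphi$, etc.), which are handled by \eqref{s8eq10}, the weighted Poincar\'e bound \eqref{S8eq34aq}, and \eqref{UBdecay} to produce the $\fc\e\|\hsf\ep\p_y\ppp\|^2$ term; and the Young split yielding $\e^{-1}\|\ttau^{1/2}\hsf\ep\Mp\|^2$ appears only when pairing the genuine source $(M_U,M_B)_\Phi$ with $\ep\Dhk\ppp$.
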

\begin{proof}
In view of \eqref{S3eq1}, by applying operator $e^{\Phi(t,|D_x|)}$ to \eqref{eqs2}, we write
\beq\label{S5eq2}\begin{split}
\p_t \vp &-\p^2_y\vp-\bar{B}_\ka\p_x\pp+\lam\dtht(t)|D_x| \vp+ [u\p_x \vphi-b\p_x\psi]_\Phi\\
&+2\int_y^\infty [\p_x\vphi\p_yu-\p_x\psi\p_y b]_\Phi dy'+\chi'[U\p_x\vphi-B\p_x\psi]_\Phi\\
&+2\int_y^\infty \chi''[U\p_x\vphi-B\p_x\psi]_\Phi dy'+\chi[-\p_xUu+\p_xBb]_\Phi\\
&+2\chi'[\p_xU\vphi-\p_xB\psi]_\Phi+2\int_y^\infty \chi''[\p_xU\vphi-\p_xB\psi]_\Phi dy'=(M_U)_\Phi,
\end{split} \eeq
and
\beq\label{S5eq3}\begin{split}
&\p_t\pp-\ka\p^2_y\pp-\bar{B}_\ka\p_x\vp+\lam\dtht(t)|D_x|\pp+[u\p_x\psi-b\p_x\vphi]_\Phi \\ &\qquad+\chi'[U\p_x\psi-B\p_x\vphi]_\Phi+\chi[\p_xBu-\p_xUb]_\Phi=(M_B)_\Phi.
\end{split}\eeq

It is easy to observe  that the terms involving the constant $\bar{B}_\ka$ in \eqref{S5eq1} and \eqref{S5eq2}
will be canceled when performing energy estimate
\beq \label{symmetry}\begin{split}
\bar{B}_\ka (\ep\Dhk\p_x\pp\big|\ep\Dhk\vp \bigr)_{L^2_+}+&\bar{B}_\ka\bigl(\ep\Dhk\p_x\vp\big|\ep\Dhk\pp \bigr)_{L^2_+} \\
=&\bar{B}_\ka \int_{\R^2_+} \p_x\bigl( e^{2\Phi} \Dh_k\vp \Dh_k \pp \bigr)dxdy =0
\end{split}
  \eeq

Thanks to \eqref{symmetry}, by applying the dyadic operator $\Dhk$ to \eqref{S5eq2}, \eqref{S5eq3} and then taking the $L^2_+$ inner product of the resulting equations with   $\epp\Dhk\ppp$ respectively, we find
\beq
\begin{split}\label{S5eq4}
&\bigl(\ep\Dhk(\p_t-\p^2_y)\vp\big|\ep\Dhk\vp \bigr)_{L^2_+}+\bigl(\ep\Dhk(\p_t-\ka\p^2_y)\pp\big|\ep\Dhk\pp \bigr)_{L^2_+}\\
&
+\lam\dtht(t)\bigl(\ep|D_x|\Dhk\vp\big|\ep\Dhk\vp \bigr)_{L^2_+}+\lam\dtht(t)\bigl(\ep|D_x|\Dhk\pp\big|\ep\Dhk\pp \bigr)_{L^2_+} \\
&
+\bigl(\ep\Dhk[u\p_x\vphi-b\p_x\psi]_\Phi\big|\ep\Dhk\vp \bigr)_{L^2_+}+\bigl(\ep\Dhk[u\p_x\psi-b\p_x\vphi]_\Phi\big|\ep\Dhk\pp \bigr)_{L^2_+} \\
 &
+2\bigl(\ep\int_y^\oo \Dhk[\p_yu\p_x\vphi-\p_yb\p_x\psi]_\Phi dy'\big|\ep\Dhk\vp \bigr)_{L^2_+}\\
 &
+\bigl(\ep\Dhk[U\p_x\vphi-B\p_x\psi]_\Phi\big|\ep\chi'\Dhk\vp\bigr)_{L^2_+}+\bigl(\ep\Dhk[U\p_x\psi-B\p_x\vphi]_\Phi\big|\ep\chi'\Dhk\pp\bigr)_{L^2_+}\\ &
+2\bigl(\ep\int_y^\infty \chi'' \Dhk[U\p_x\vphi-B\p_x\psi]_\Phi dy'\big|\ep\Dhk\vp\bigr)_{L^2_+}\\
&
+\bigl(\ep\Dhk[-\p_xUu+\p_xBb]_\Phi\big|\ep\chi\Dhk\vp\bigr)_{L^2_+}+\bigl(\ep\Dhk[-\p_xUb+\p_xBu]_\Phi\big|\ep\chi\Dhk\pp\bigr)_{L^2_+}\\
&
+2\bigl(\ep\Dhk[\p_xU\vphi-\p_xB\psi]_\Phi\big|\ep\chi'\Dhk\vp\bigr)_{L^2_+}\\
&
+2\bigl(\ep\int_y^\infty \chi''\Dhk[\p_xU\vphi-\p_xB\psi]_\Phi dy'\big|\ep\Dhk\vp\bigr)_{L^2_+}\\ &
=\bigl(\ep\Dhk(M_U)_\Phi\big|\ep\Dhk\vp\bigr)_{L^2_+}+\bigl(\ep\Dhk(M_B)_\Phi\big|\ep\Dhk\pp\bigr)_{L^2_+}.
\end{split}\eeq
In what follows, we shall  use the technical lemmas in Section \ref{sect4} to handle term by term in \eqref{S5eq4}.

Notice that for $\ka\in ]0,2[,$  $\ka(2-\ka)\leq1$, we get, by applying \eqref{S4eq1} and \eqref{S4eq2}, that
\beqo\begin{split}
\int_{t_0}^t&\hbar(t')\Bigl(\bigl(\ep\Dhk(\p_t-\p^2_y)\vp\big|\ep\Dhk\vp \bigr)_{L^2_+}+\bigl(\ep\Dhk(\p_t-\ka\p^2_y)\pp\big|\ep\Dhk\pp \bigr)_{L^2_+}\Bigr)\,dt'\\
&\geq \ \f12\bigl(\|\hsf \ep\Dhk\ppp(t)\|^2_{L^2_+}-\|\hsf \ep\Dhk\ppp(t_0)\|^2_{L^2_+}\bigr) \\
&\qquad-\f12 \|\sqrt{\hbar'}\ep\Dhk\ppp\|^2_{L^2_{[t_0,t]}(L^2_+)}
+2\lk\|\hsf \ep\Dhk\p_y\ppp\|^2_{L^2_{[t_0,t]}(L^2_+)}.
\end{split}\eeqo

In view of Lemma \ref{lem:Bern}, we have
\beqo\begin{split}
&\lam\int_{t_0}^t\hbar(t')\dtht(t')\Bigl(\bigl(\ep|D_x|\Dhk\vp\big|\ep\Dhk\vp \bigr)_{L^2_+}+\bigl(\ep|D_x|\Dhk\pp\big|\ep\Dhk\pp \bigr)_{L^2_+}\Bigr)\,dt'\\
&=\ \lam\int_{t_0}^t\dtht(t')\|\hsf\ep|D_x|^\f12\Dhk\ppp\|^2_{L^2_+}\,dt'
\geq c\lam 2^k\int_{t_0}^t\dtht(t')\|\hsf\ep\Dhk\ppp \|^2_{L^2_+}\,dt'.
\end{split}\eeqo

Whereas due to $\lim_{y\rto+\oo}u=\lim_{y\rto+\oo}b=0$, we get, by applying Lemma \ref{lem:nl1} with $a=\f12$ and $b=c=d=1$, that
\beqo\begin{split}
&\int_{t_0}^t\hbar(t')\Bigl(\bigl|\bigl(\ep\Dhk[u\p_x\vphi-b\p_x\psi]_\Phi\big|\ep\Dhk\vp \bigr)_{L^2_+}\bigr|+\bigl|\bigl(\ep\Dhk[u\p_x\psi-b\p_x\vphi]_\Phi\big|\ep\Dhk\pp \bigr)_{L^2_+}\bigr|\Bigr)\,dt'\\
&\qquad\qquad\lesssim d_k^22^{-k}\|\hsf\ep\ppp\|_{\wt{L}^2_{[t_0,t];f}(\cB^{1,0})}^2 \with f(t)=\w{t}^{\f14}\|e^{\f{\Psi}2}\p_y(u,b)_\Phi(t)\|_{\cB^{\f12,0}}.
\end{split}\eeqo
Yet it follows from Lemma \ref{lem3.1} that
\beq\label{S5eq5}
f(t)\lesssim \w{t}^{\f14}\|\ep\p_y(G,H)_\Phi(t)\|_{\cB^{\f12,0}},
\eeq
Hence according to Definition \ref{def2.3} and \eqref{def:theta}, we achieve
\beq\begin{split}\label{S5eq7}
\int_{t_0}^t&\hbar(t')\Bigl(\bigl|\bigl(\ep\Dhk[u\p_x\vphi-b\p_x\psi]_\Phi\big|\ep\Dhk\vp \bigr)_{L^2_+}\bigr|\\
&+\bigl|\bigl(\ep\Dhk[u\p_x\psi-b\p_x\vphi]_\Phi\big|\ep\Dhk\pp \bigr)_{L^2_+}\bigr|\Bigr)\,dt'\lesssim d_k^22^{-k}\|\hsf\ep\ppp\|_{\wt{L}^2_{[t_0,t];\dtht}(\cB^{1,0})}^2.
\end{split}\eeq
Similarly, by applying Lemma \ref{lem:nl3}, we get, by a similar derivation of \eqref{S5eq7}, that
\beqo\begin{split}\label{S5eq8}
\int_{t_0}^t\hbar(t')\bigl|\bigl(\ep\int_y^\oo \Dhk[\p_yu\p_x\vphi-&\p_yb\p_x\psi]_\Phi dy'\big|\ep\Dhk\vp \bigr)_{L^2_+}\bigr|\,dt'\\
&\lesssim d_k^22^{-k}\|\hsf\ep\ppp\|_{\wt{L}^2_{[t_0,t];\dtht}(\cB^{1,0})}^2.
\end{split}\eeqo

While it follows from  Lemma \ref{ff1} that
\beqo \begin{split}
\int_{t_0}^t&\hbar(t')\Bigl(\bigl|\bigl(\ep\Dhk[U\p_x\vphi-B\p_x\psi]_\Phi\big|\ep\chi'\Dhk\vp\bigr)_{L^2_+}\bigr|\\
&
+\bigl|\bigl(\ep\Dhk[U\p_x\psi-B\p_x\vphi]_\Phi\big|\ep\chi'\Dhk\pp\bigr)_{L^2_+}\bigr|\Bigr)\,dt'
\lesssim \e^{\f12}d_k^22^{-k}\|\hsf\ep\ppp\|_{\wt{L}^2_{[t_0,t];\dtht}(\cB^{1,0})}^2.
\end{split}\eeqo

Notice from the definition of $\chi$ above \eqref{change} that $\mbox{supp}\chi''\subset [1,2]$, we
get, by a similar derivation of \eqref{s8eq6}, that
\beqo \begin{split}
\int_{t_0}^t\hbar(t')\bigl|\bigl(\ep\int_y^\infty \chi'' \Dhk[U\p_x\vphi-B\p_x\psi]_\Phi dy'\big|&\ep\Dhk\vp\bigr)_{L^2_+}\bigr|\,dt'\\
\lesssim & \e^{\f12}d_k^22^{-k}\|\hsf\ep\ppp\|_{\wt{L}^2_{[t_0,t];\dtht}(\cB^{1,0})}^2.
\end{split}\eeqo

On the other hand, it follows from  \eqref{S8eq34aq}  that
\beno
\|\ep y\Dhk\ppp\|_{L^2_+}\lesssim \w{t}\|\ep\p_y\Dhk\ppp\|_{L^2_+},
\eeno
which implies
\beno
\|\ep y\ppp\|_{\wt{L}^p_t(\cB^{s,0})}\lesssim \|\ttau\ep\p_y\ppp\|_{\wt{L}^p_t(\cB^{s,0})}.
\eeno
So that
 by applying \eqref{s8eq10} and  \eqref{UBdecay}, we find
\beqo \begin{split}
\int_{t_0}^t&\hbar(t')\Bigl(\bigl|\bigl(\ep\Dhk[-\p_xUu+\p_xBb]_\Phi\big|\ep\chi\Dhk\vp\bigr)_{L^2_+}\bigr|\\
&\qquad+\bigl|\bigl(\ep\Dhk[-\p_xUb+\p_xBu]_\Phi\big|\ep\chi\Dhk\pp\bigr)_{L^2_+}\bigr|\Bigr)\,dt\\
\lesssim \ & d_k^2 2^{-{k}}\|\ttau\UBp\|_{\wt{L}^\oo_{[t_0,t]}(\cB^{\f32}_\h)}\|\hsf\ep\ubp\|_{\wt{L}^2_{[t_0,t]}(\cB^{\f12,0})}\|\w{\tau}^{-1}\hsf\ep y\ppp\|_{\wt{L}^2_{[t_0,t]}(\cB^{\f12,0})}\\
\lesssim \ &\e d^2_k 2^{-k}\|\hsf\ep\p_y\ppp\|^2_{\wt{L}^2_{[t_0,t]}(\cB^{\f12,0})}.
\end{split}\eeqo
Along the same line, we infer
\beqo \begin{split}
\int_{t_0}^t&\hbar(t')\bigl|\bigl(\ep\Dhk[\p_xU\vphi-\p_xB\psi]_\Phi\big|\ep\chi'\Dhk\vp\bigr)_{L^2_+}\bigr|\,dt'\\
\lesssim \ & d_k^22^{-{k}}\|\ttau\UBp \|_{\wt{L}^\oo_{[t_0,t]}(\cB^{\f32}_\h)}\|\ttau^{-\f12}\hsf\ep\ppp \|_{\wt{L}^2_{[t_0,t]}(\cB^{\f12,0})}
\|\ttau^{-\f12}\hsf\ep \vp\|_{\wt{L}^2_{[t_0,t]}(\cB^{\f12,0})}\\
\lesssim \ &\e d^2_k 2^{-k}\|\hsf\ep\p_y\ppp\|^2_{\wt{L}^2_{[t_0,t]}(\cB^{\f12,0})}.
\end{split}\eeqo
Moreover, again due to $\mbox{supp}\chi''\subset[1,2]$,, we deduce that the term
$
\int_{t_0}^t\hbar(t')\bigl|\bigl(\ep\int_y^\infty \chi''\Dhk[\p_xU\vphi-\p_xB\psi]_\Phi dy'\big|\ep\Dhk\vp\bigr)_{L^2_+}\bigr|\,dt'$ shares
the same estimate as above.

Finally for the remained source term, by applying Young's inequality, we achieve
\beqo \label{S5eq9}\begin{split}
\int_{t_0}^t&\hbar(t')\Bigl(\bigl|\bigl(\ep\Dhk(M_U)_\Phi\big|\ep\Dhk\vp\bigr)_{L^2_+}\bigr|
+\bigl|\bigl(\ep\Dhk(M_B)_\Phi\big|\ep\Dhk\pp\bigr)_{L^2_+}\bigr|\Bigr)\,dt'\\
\lesssim \ & d^2_k2^{-k} \|\ttau^{\f12}\hsf\ep\Mp\|_{\wt{L}^2_{[t_0,t]}(\cB^{\f12,0})}\|\ttau^{-\f12}\hsf\ep\ppp\|_{\wt{L}^2_{[t_0,t]}(\cB^{\f12,0})}\\
\lesssim \ & d^2_k2^{-k}\bigl( \e \|\hsf\ep\p_y\ppp\|^2_{\wt{L}^2_{[t_0,t]}(\cB^{\f12,0})}+\e^{-1} \|\ttau^\f12\hsf\ep\Mp\|^2_{\wt{L}^2_{[t_0,t]}(\cB^{\f12,0})}\bigr)
\end{split}\eeqo

By multiplying \eqref{S5eq4} by $\hbar(t)$ and then integrating the resulting equality over $[t_0,t],$
and finally inserting the above estimates to the equality, we
 obtain \eqref{S5eq1}. This completes the proof of Lemma \ref{prop5.1}.
\end{proof}

\begin{lemma}\label{prop5.2}
{\sl Let $(\vphi,\psi)$ be a smooth enough solution of \eqref{eqs2}. Let $\Phi_\ka(t,\xi)$ and $\Psi_\ka(t,y)$ be given by \eqref{def:Phikappa} and \eqref{def:Psikappa} respectively. Then if $\ka\in ]1/2,\oo[,$ for $\elk\eqdefa \f{2\ka-1}{4\ka^2}$  and for any non-negative and non-decreasing function $\hs\in C^1(\R_+)$, there exists a positive constant $\fc$ so that
\beq\begin{split}\label{S5eq11}
&\| \hsf \epk\Dhk \pppk\|^2_{L^\oo_{[t_0,t]} (L^2_+)} +4\ka\elk\| \hsf \epk\Dhk\p_y \pppk\|^2_{L^2_{[t_0,t]} (L^2_+)}\\
& +2c\lam 2^k \|\hsf\epk\Dhk\pppk \|^2_{L^2_{[t_0,t];\dthtk} (L^2_+)}
\leq \| \hsf \epk\Dhk \pppk(t_0)\|^2_{L^2_+}\\
 &+\| \qhs \epk\Dhk \pppk\|^2_{L^2_{[t_0,t]} (L^2_+)}+\fc\ka \e d^2_k 2^{-k}\|\hsf\epk \p_y\pppk \|^2_{\wt{L}^2_{[t_0,t]}(\cB^{\f12,0})}\\
 &+Cd^2_k 2^{-k}\Bigl(\e^{-1} \|\ttau^\f12 \hsf \epk \Mpk \|^2_{\wt{L}^2_{[t_0,t]}(\cB^{\f12,0})}+ \|\hsf\epk\pppk \|^2_{\wt{L}^2_{[t_0,t];\dthtk}(\cB^{1,0})}\Bigr),
\end{split}\eeq
for any $t_0\in[0,t]$ with $t<T^*_\ka$, which is defined by \eqref{def:T^*kappa}.}
\end{lemma}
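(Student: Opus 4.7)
The plan is to mirror the proof of Lemma~\ref{prop5.1} verbatim, replacing the weights $(\Phi, \Psi)$ throughout by $(\Phi_\ka, \Psi_\ka)$ and substituting the second pair of heat inequalities in Lemma~\ref{lem:heateqs} for the first. First I would apply the Fourier multiplier $e^{\Phi_\ka(t, |D_x|)}$ to the $(\vphi,\psi)$ system \eqref{eqs2} to produce the analogues of \eqref{S5eq2}-\eqref{S5eq3} with all brackets $[\cdot]_\Phi$ replaced by $[\cdot]_{\Phi_\ka}$ and an extra analytic drift $\lam \dthtk(t)|D_x|$. Next I apply $\Dhk$, take the $L^2_+$ inner product of the two resulting equations against $\eppk \Dhk \vpk$ and $\eppk \Dhk \ppk$ respectively, sum, multiply by $\hbar(t)$, and integrate over $[t_0, t]$. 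Exactly as in \eqref{symmetry}, the $\bar{B}_\ka$-cross terms cancel via integration by parts in $x$.

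For the parabolic contributions I would invoke \eqref{S4eq3} on $(\p_t - \p_y^2)\vphi$ and \eqref{S4eq4} on $(\p_t - \ka\p_y^2)\psi$, with dissipation constants $\frac{2\ka-1}{2\ka}$ and $\frac{\ka}{2}$ respectively. The elementary inequality $(\ka-1)^2 \geq 0$ shows $\frac{2\ka-1}{2\ka} \leq \frac{\ka}{2}$ for every $\ka > 1/2$, so bounding both pieces by the smaller coefficient and then doubling the energy identity (to clear the $\tfrac12$ in front of $\frac{d}{dt}$) produces the combined dissipation $\frac{2\ka-1}{\ka} = 4\ka\elk$ on the left of \eqref{S5eq11}. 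The analytic drift term contributes the $2c\lam 2^k\|\hsf \epk \Dhk \pppk\|^2_{L^2_{[t_0,t];\dthtk}(L^2_+)}$ piece by Lemma~\ref{lem:Bern}, while the derivative of $\hbar$ produces the $\|\qhs \epk \Dhk \pppk\|^2$ remainder.

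The nonlinear terms are handled one-to-one with the proof of Lemma~\ref{prop5.1}: the interior quadratics $u\p_x\vphi$, $b\p_x\psi$, $u\p_x\psi$, $b\p_x\vphi$ are bounded by Lemma~\ref{lem:nl1} (with $a = 1/2,\ b = c = d = 1$), and the vertical-integral cross terms $\int_y^\infty \p_y u\,\p_x \vphi\,dy'$ and its partner by Lemma~\ref{lem:nl3} with the same exponents. Since the proofs of Lemmas~\ref{lem:nl1}-\ref{lem:nl3} depend only on the universal integral bounds \eqref{ineq1}-\eqref{ineq2} and on the sub-additivity \eqref{S3eq11} of the phase---all of which transfer verbatim to $\Phi_\ka, \Psi_\ka$---these estimates carry over. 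The resulting time profile $f(t) = \w{t}^{1/4}\|e^{\Psi_\ka/2}\p_y(u,b)_{\Phi_\ka}\|_{\cB^{1/2,0}}$ is then dominated, through Lemma~\ref{lem3.2} applied with $\ga = 1/2$, by $\w{t}^{1/4}\|\epk\p_y \ghpk\|_{\cB^{1/2,0}}$, which is precisely the integrand defining $\dthtk$ in \eqref{def:thetakappa}; this delivers the $\dthtk$-weighted norm on the right of \eqref{S5eq11}. The far-field terms $\chi' U \p_x\vphi$, $\chi' B \p_x\psi$ and their partners (including those with the $\chi''$-cutoff, which is compactly supported in $y\in[1,2]$) are treated by Lemma~\ref{ff1}; the zeroth-order terms $\chi \p_xU\, u$, $\chi'\p_xU\,\vphi$ and their $B$-counterparts are treated by \eqref{s8eq10}, after swapping $y$-growth for $\p_y$-dissipation through the Poincar\'e trade $\|\epk y f\|_{L^2_+} \lesssim \ka \tt\|\epk\p_y f\|_{L^2_+}$ derived from \eqref{S8eq33aq}. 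Combined with the decay assumption \eqref{UBdecay}, this produces the $\fc\ka\e\, d_k^2 2^{-k}\|\hsf \epk \p_y\pppk\|^2$ contribution. Finally, the source terms $(M_U, M_B)_{\Phi_\ka}$ are absorbed using the $\Phi_\ka$-analogue of \eqref{mdecay} and Young's inequality, yielding the $\e^{-1}\|\ttau^{1/2}\hsf \epk\Mpk\|^2$ piece.

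The main obstacle is the careful book-keeping of $\ka$-dependent constants in the zeroth-order far-field terms: the Poincar\'e trade at scale $\Psi_\ka = \Psi/\ka$ carries an extra factor of $\ka$ compared with the trade in Lemma~\ref{prop5.1}, so the coefficient multiplying the absorbable $\|\hsf \epk \p_y\pppk\|^2$ term scales like $\ka\e$ rather than $\e$. This is precisely why the statement records the absorbable coefficient as $\fc\ka\e$ (and why the threshold $\e_0$ in Proposition~\ref{prop3.4} is allowed to depend on $\ka$). Once all the contributions above are collected and summed against $d_k^2 2^{-k}$, \eqref{S5eq11} follows at once.
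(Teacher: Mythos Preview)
Your proposal is correct and follows essentially the same route as the paper's proof. The only cosmetic difference is that the paper applies Lemmas~\ref{lem:nl1} and~\ref{lem:nl3} \emph{as stated} (i.e.\ in terms of the original weight $\Psi$) with the rescaled exponents $a=\tfrac1{2\ka}$, $b=c=d=\tfrac1\ka$, whereas you first transfer those lemmas to the weight $\Psi_\ka$ and then use $a=\tfrac12$, $b=c=d=1$; since $\Psi_\ka=\Psi/\ka$, these are the same choice, and your remark that the proofs of Lemmas~\ref{lem:nl1}--\ref{lem:nl3} go through unchanged for $\Psi_\ka$ is exactly the justification needed.
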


\begin{proof}
Along the same line to the proof of  Lemma \ref{prop5.1}, by applying the dyadic operator $\Dhk$ to
the modified versions of \eqref{S5eq2} and \eqref{S5eq3} (with $\Phi$ being replaced by $\Phi_\ka$), and then taking the $L^2_+$ inner product of the resulting equations with   $\eppk\Dhk\pppk$ respectively, we  find that \eqref{S5eq4} holds
with $\tht, \Psi$ and $\Phi$ there  being replaced respectively by $\thtk, \Psi_\kappa$ and $\Phi_\ka.$
 Next we only present the detailed estimates to the terms in \eqref{S5eq4}, which are different from the proof of Lemma \ref{prop5.1}.

Observing that for $\ka\in ]1/2,\infty[,$ $2\ka-1\leq \ka^2$, we get, by applying \eqref{S4eq3} and \eqref{S4eq4}, that
\beqo\begin{split}
\int_{t_0}^t&\hbar(t')\Bigl(\bigl(\epk\Dhk(\p_t-\p^2_y)\vpk\big|\epk\Dhk\vpk \bigr)_{L^2_+}+\bigl(\epk\Dhk(\p_t-\ka\p^2_y)\ppk\big|\epk\Dhk\ppk \bigr)_{L^2_+}\Bigr)\,dt'\\
&\geq \ \f12\bigl(\|\hsf \epk\Dhk\pppk(t)\|^2_{L^2_+}-\|\hsf \epk\Dhk\pppk(t_0)\|^2_{L^2_+}\bigr) \\
&\qquad-\f12 \|\sqrt{\hbar'}\epk\Dhk\pppk\|^2_{L^2_{[t_0,t]}(L^2_+)}
+2\ka\elk\|\hsf \epk\Dhk\p_y\pppk\|^2_{L^2_{[t_0,t]}(L^2_+)}.
\end{split}\eeqo

While due to $\lim_{y\rto+\oo}u=\lim_{y\rto+\oo}b=0$, we get, by applying Lemma \ref{lem:nl1} with $a=\f1{2\ka}$ and $b=c=d=\f1\ka$, that
\beqo\begin{split}
&\int_{t_0}^t\hbar(t')\Bigl(\bigl|\bigl(\epk\Dhk[u\p_x\vphi-b\p_x\psi]_{\Phi_\ka}\big|\epk\Dhk\vpk \bigr)_{L^2_+}\bigr|\\
&\qquad+\bigl|\bigl(\epk\Dhk[u\p_x\psi-b\p_x\vphi]_{\Phi_\ka}\big|\epk\Dhk\ppk \bigr)_{L^2_+}\bigr|\Bigr)\,dt'\\
&\lesssim d_k^22^{-k}\|\hsf\epk\pppk\|_{\wt{L}^2_{[t_0,t];f}(\cB^{1,0})}^2 \with f(t)=\w{t}^{\f14}\|e^{\f{\Psi_\ka}2}\p_y(u,b)_{\Phi_\ka}(t)\|_{\cB^{\f12,0}}.
\end{split}\eeqo
Yet it follows from Lemma \ref{lem3.2} that
\beq\label{S6eq15}
f(t)\lesssim \w{t}^{\f14}\|\epk\p_y(G,H)_{\Phi_\ka}(t)\|_{\cB^{\f12,0}},
\eeq
hence we get, by a similar derivation of \eqref{S5eq7}, that
\beq\begin{split}\label{S5eq17}
\int_{t_0}^t\hbar(t')&\Bigl(\bigl|\bigl(\epk\Dhk[u\p_x\vphi-b\p_x\psi]_{\Phi_\ka}\big|\epk\Dhk\vpk \bigr)_{L^2_+}\bigr|\\
&+\bigl|\bigl(\epk\Dhk[u\p_x\psi-b\p_x\vphi]_{\Phi_\ka}\big|\epk\Dhk\ppk \bigr)_{L^2_+}\bigr|\Bigr)\,dt'\\
\lesssim & d_k^22^{-k}\|\hsf\epk\pppk\|_{\wt{L}^2_{[t_0,t];\dthtk}(\cB^{1,0})}^2.
\end{split}\eeq
Similarly, we get,  by applying Lemma \ref{lem:nl3} and a similar derivation of \eqref{S5eq17}, that
\beqo\begin{split}\label{S5eq18}
\int_{t_0}^t\hbar(t')\bigl| \bigl(\epk\int_y^\oo \Dhk[\p_yu\p_x\vphi-\p_yb\p_x\psi]_{\Phi_\ka} dy'\big| &\epk\Dhk\vpk \bigr)_{L^2_+}\bigr|\,dt' \\
&\lesssim d_k^22^{-k}\|\hsf\epk\ppp\|_{\wt{L}^2_{[t_0,t];\dthtk}(\cB^{1,0})}^2.
\end{split}\eeqo

On the other hand, we deduce from  \eqref{S8eq33aq}  that
\beno
\|\epk y\Dhk\pppk\|_{L^2_+}\leq 4\ka \w{t}\|\epk\p_y\Dhk\ppp\|_{L^2_+},
\eeno
which implies
\beno
\|\epk y\ppp\|_{\wt{L}^p_t(\cB^{s,0})}\lesssim \ka\|\ttau\epk\p_y\ppp\|_{\wt{L}^p_t(\cB^{s,0})}.
\eeno
So that
 by applying \eqref{s8eq10} and  \eqref{UBdecay}, we find
\beqo \begin{split}
\int_{t_0}^t&\hbar(t')\Bigl(\bigl|\bigl(\epk\Dhk[-\p_xUu+\p_xBb]_{\Phi_\ka}\big|\epk\chi\Dhk\vpk\bigr)_{L^2_+}\bigr|\\
&\qquad+\bigl|\bigl(\epk\Dhk[-\p_xUb+\p_xBu]_{\Phi_\ka}\big|\epk\chi\Dhk\ppk\bigr)_{L^2_+}\bigr|\Bigr)\,dt\\
\lesssim \ & d_k^2 2^{-{k}}\|\ttau\UBpk\|_{\wt{L}^\oo_{[t_0,t]}(\cB^{\f32}_\h)}\|\hsf\epk\ubpk\|_{\wt{L}^2_{[t_0,t]}(\cB^{\f12,0})}\|\ttau^{-1}\hsf\epk y\pppk\|_{\wt{L}^2_{[t_0,t]}(\cB^{\f12,0})}\\
\lesssim \ &\ka\e d^2_k 2^{-k}\|\hsf\epk\p_y\pppk\|^2_{\wt{L}^2_{[t_0,t]}(\cB^{\f12,0})}.
\end{split}\eeqo
Along the same line, we infer
\beqo \begin{split}
\int_{t_0}^t&\hbar(t')\bigl|\bigl(\epk\Dhk[\p_xU\vphi-\p_xB\psi]_{\Phi_\ka}\big|\epk\chi'\Dhk\vpk\bigr)_{L^2_+}\bigr|\,dt'\\
\lesssim \ & d_k^22^{-{k}}\|\ttau\UBpk \|_{\wt{L}^\oo_{[t_0,t]}(\cB^{\f32}_\h)}\|\ttau^{-\f12}\hsf\epk\pppk \|_{\wt{L}^2_{[t_0,t]}(\cB^{\f12,0})}
\|\ttau^{-\f12}\hsf\epk \vpk\|_{\wt{L}^2_{[t_0,t]}(\cB^{\f12,0})}\\
\lesssim \ &\ka\e d^2_k 2^{-k}\|\hsf\epk\p_y\pppk\|^2_{\wt{L}^2_{[t_0,t]}(\cB^{\f12,0})}.
\end{split}\eeqo
Moreover, due to $\mbox{supp}\chi''\subset[1,2]$,, we deduce that the term
$
\int_{t_0}^t\hbar(t')\bigl|\bigl(\epk\int_y^\infty \chi''\Dhk[\p_xU\vphi-\p_xB\psi]_{\Phi_\ka} dy'\big|\epk\Dhk\vpk\bigr)_{L^2_+}\bigr|\,dt'$ shares
the same estimate as above.

With the above estimates, \eqref{S5eq11} follows  by a similar derivation of \eqref{S5eq1}.
 This completes the proof of Lemma \ref{prop5.2}.
\end{proof}

An immediate corollary of  Lemmas  \ref{prop5.1} and \ref{prop5.2} gives rise to

\begin{col}\label{col5.1}
{\sl Under the assumptions of Lemma \ref{prop5.1}, there exist positive constants $\e_0, \lam_0$ so that for any $\lam\geq\lam_0$ and $\e\leq \e_0,$ one has
\beq\begin{split}\label{S5eq19}
&\| \hsf\ep \ppp\|_{\wt{L}^\oo_{[t_0,t]} (\cB^{\f12,0})} +\sqrt{l_\ka}\|\hsf\ep\p_y\ppp\|_{\wt{L}^2_{[t_0,t]} (\cB^{\f12,0})}\\
&\  \leq \|\hsf\ep\ppp(t_0)\|_{\cB^{\f12,0}}+\|\qhs \ep \ppp\|_{\wt{L}^2_{[t_0,t]}(\cB^{\f12,0})} \\
&\qquad\qquad\qquad\qquad+C\e^{-\f12}\|\ttau^\f12\hsf\ep\Mp \|_{\wt{L}^2_{[t_0,t]}(\cB^{\f12,0})}.
\end{split}\eeq}
\end{col}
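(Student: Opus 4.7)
\bigskip

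\noindent\textbf{Proof proposal for Corollary \ref{col5.1}.} The plan is to derive \eqref{S5eq19} by an $\ell^1(\Z)$-summation of the dyadic inequality \eqref{S5eq1}. More precisely, starting from \eqref{S5eq1} applied at frequency level $k$, I would use the elementary inequality $\sqrt{a_1+\cdots+a_m}\le\sqrt{a_1}+\cdots+\sqrt{a_m}$ (valid for nonnegative $a_i$) to split the square root of the left--hand side into three separately controllable pieces and the square root of the right--hand side into a finite sum of one--term bounds. Multiplying the resulting inequality by $2^{k/2}$ and summing in $k\in\Z$, the left--hand side produces exactly
\[
\|\hsf\ep\ppp\|_{\wt L^\infty_{[t_0,t]}(\cB^{1/2,0})}
+2\sqrt{\lk}\,\|\hsf\ep\p_y\ppp\|_{\wt L^2_{[t_0,t]}(\cB^{1/2,0})}
+\sqrt{2c\lam}\,\|\hsf\ep\ppp\|_{\wt L^2_{[t_0,t];\dtht}(\cB^{1,0})},
\]
where the last quantity comes from the extra $2^{k/2}$ generated by the $2^k$ factor in front of the $\dtht$--weighted term on the left of \eqref{S5eq1}.

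On the right--hand side the initial--data contribution and the $\sqrt{\hs'}$ contribution are summed directly to give $\|\hsf\ep\ppp(t_0)\|_{\cB^{1/2,0}}$ and $\|\qhs\ep\ppp\|_{\wt L^2_{[t_0,t]}(\cB^{1/2,0})}$, with no loss. Each remaining nonlinear or source term in \eqref{S5eq1} carries the prefactor $d_k^2 2^{-k}$; after taking a square root and multiplying by $2^{k/2}$, this prefactor collapses to $d_k$, and summing over $k$ kills it since $\sum_k d_k=1$. Thus the nonlinear contribution becomes $\sqrt{\fc\e}\,\|\hsf\ep\p_y\ppp\|_{\wt L^2_{[t_0,t]}(\cB^{1/2,0})}$ plus $C\|\hsf\ep\ppp\|_{\wt L^2_{[t_0,t];\dtht}(\cB^{1,0})}$, while the source term becomes $C\e^{-1/2}\|\ttau^{1/2}\hsf\ep\Mp\|_{\wt L^2_{[t_0,t]}(\cB^{1/2,0})}$.

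The final step is the standard absorption argument: choosing $\e_0$ so small that $\sqrt{\fc\e_0}\le\sqrt{\lk}$, the term $\sqrt{\fc\e}\,\|\hsf\ep\p_y\ppp\|_{\wt L^2_{[t_0,t]}(\cB^{1/2,0})}$ can be absorbed into $2\sqrt{\lk}\,\|\hsf\ep\p_y\ppp\|_{\wt L^2_{[t_0,t]}(\cB^{1/2,0})}$ on the left; and choosing $\lam_0$ so large that $\sqrt{2c\lam_0}\ge 2C$, the $\dtht$--weighted $\cB^{1,0}$ term on the right is absorbed into the corresponding one on the left. After these two absorptions only the three terms on the right of \eqref{S5eq19} survive, which yields the claim.

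The main (mild) obstacle is bookkeeping: one has to check that every square root produced by the Cauchy--Schwarz-in-$k$ step indeed carries a factor $d_k$ (so that the $\ell^1(\Z)$--summation actually closes), and that no residual factor of $\w{t}$ or $\dtht$ is left unbalanced. Once that accounting is done, the absorption mechanism is forced by the explicit coefficients $\lk$ and $c\lam$ on the left of \eqref{S5eq1}, and the corollary follows immediately. The analogous statement based on Lemma \ref{prop5.2} (for $\ka\in\,]1/2,\infty[$) is obtained by exactly the same scheme, replacing $\Phi,\Psi,\lk$ by $\Phi_\ka,\Psi_\ka,\ka\elk$ throughout.
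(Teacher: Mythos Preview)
Your proposal is correct and follows essentially the same route as the paper: take square roots in \eqref{S5eq1}, multiply by $2^{k/2}$, sum over $k\in\Z$ using $\sum_k d_k=1$, and then absorb the $\sqrt{\fc\e}$ and $\dtht$-weighted terms by choosing $\e_0$ small and $\lam_0$ large. The bookkeeping concern you flag is benign---each nonlinear/source term in \eqref{S5eq1} already carries exactly $d_k^2 2^{-k}$, so after the square root and the $2^{k/2}$ multiplication only a summable $d_k$ remains.
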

\begin{proof} Indeed
by taking square root of \eqref{S5eq1} and then multiplying the resulting inequality by $2^\f{k}2$ and finally summing over $k\in\Z$, we arrive at
\beqo\begin{split}
&\| \hsf\ep\ppp\|_{\wt{L}^\oo_{[t_0,t]} (\cB^{\f12,0})} +2\sqrt{l_\ka}\|\hsf\ep\p_y\ppp\|_{\wt{L}^2_{[t_0,t]} (\cB^{\f12,0})}\\
&\qquad+\sqrt{2c\lam}\|\hsf\ep\ppp\|_{\wt{L}^2_{[t_0,t];\dtht} (\cB^{1,0})}\\
&\leq\  \|\hsf\ep\ppp(t_0)\|_{\cB^{\f12,0}}+\|\qhs \ep\ppp\|_{\wt{L}^2_{[t_0,t]}(\cB^{\f12,0})}+C\|\hsf\ep\ppp\|_{\wt{L}^2_{[t_0,t];\dtht} (\cB^{1,0})}\\
&\qquad+\sqrt{\fc\e}\|\hsf\ep\p_y\ppp \|_{\wt{L}^2_{[t_0,t]} (\cB^{\f12,0})}+C\e^{-\f12}\|\ttau^\f12\hsf\ep\Mp\|_{\wt{L}^2_{[t_0,t]} (\cB^{\f12,0})}.
\end{split}\eeqo
Taking $\lam$ big enough and $\e\leq \e_0$ small enough in the above inequality gives rise to \eqref{S5eq19}.
\end{proof}

\begin{col}\label{col5.2}
{\sl Under the assumptions of Lemma \ref{prop5.2}, there exist positive constants $\e_0, \lam_0$ so that for any $\lam\geq\lam_0$ and $\e\leq \e_0,$ one has
\beq\begin{split}\label{S5eq20}
&\| \hsf\epk\Dhk\pppk\|_{\wt{L}^\oo_{[t_0,t]} (\cB^{\f12,0})} +\sqrt{\ka\elk}\|\hsf\epk\Dhk\p_y\pppk\|_{\wt{L}^2_{[t_0,t]} (\cB^{\f12,0})}\\
&\leq \|\hsf\epk\Dhk\pppk(t_0)\|_{\cB^{\f12,0}}+\|\qhs \epk\Dhk \pppk\|_{\wt{L}^2_{[t_0,t]}(\cB^{\f12,0})}\\
&\qquad\qquad\qquad\qquad\qquad\qquad+C\e^{-\f12}\|\ttau^\f12\hsf\epk\Mpk \|_{\wt{L}^2_{[t_0,t]}(\cB^{\f12,0})}.
\end{split}\eeq}
\end{col}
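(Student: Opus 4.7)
The proof mirrors that of Corollary \ref{col5.1}, with $\Phi$ replaced by $\Phi_\ka$ and the dissipation coefficient $l_\ka$ replaced by $\ka\elk$.

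The plan is to start from the dyadic-block estimate \eqref{S5eq11} of Lemma \ref{prop5.2}. Applying $\sqrt{a_1+\cdots+a_n}\leq \sqrt{a_1}+\cdots+\sqrt{a_n}$ to both sides, multiplying by $2^{k/2}$, and summing over $k\in\Z$, the left-hand side of \eqref{S5eq11} is reconstituted (after taking square roots of the three non-negative LHS contributions separately and summing in $k$) as
$$
\| \hsf\epk\pppk\|_{\wt{L}^\oo_{[t_0,t]} (\cB^{\f12,0})} +2\sqrt{\ka\elk}\|\hsf\epk\p_y\pppk\|_{\wt{L}^2_{[t_0,t]} (\cB^{\f12,0})}+\sqrt{2c\lam}\|\hsf\epk\pppk\|_{\wt{L}^2_{[t_0,t];\dthtk} (\cB^{1,0})}.
$$
The first three RHS terms in \eqref{S5eq11} directly produce, after the same operation, the first two desired terms in \eqref{S5eq20} together with $\|\qhs \epk\pppk\|_{\wt{L}^2_{[t_0,t]}(\cB^{\f12,0})}$. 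For the remaining RHS contributions, each is of the form $d_k^2 2^{-k}\|\cdot\|^2$; taking square root yields $d_k 2^{-k/2}\|\cdot\|$, and multiplying by $2^{k/2}$ and summing in $k$ uses $\sum_k d_k=1$ to recover
$$
\sqrt{\fc\ka\e}\,\|\hsf\epk\p_y\pppk\|_{\wt{L}^2_{[t_0,t]}(\cB^{\f12,0})}+C\e^{-1/2}\|\ttau^{\f12}\hsf\epk\Mpk\|_{\wt{L}^2_{[t_0,t]}(\cB^{\f12,0})}+C\|\hsf\epk\pppk\|_{\wt{L}^2_{[t_0,t];\dthtk}(\cB^{1,0})}.
$$

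It remains to absorb two RHS terms into the LHS. First, by taking $\e_0$ so small that $\sqrt{\fc\ka\e_0}\leq \sqrt{\ka\elk}$, the term $\sqrt{\fc\ka\e}\,\|\hsf\epk\p_y\pppk\|_{\wt{L}^2_{[t_0,t]}(\cB^{\f12,0})}$ is absorbed into $2\sqrt{\ka\elk}\|\hsf\epk\p_y\pppk\|_{\wt{L}^2_{[t_0,t]}(\cB^{\f12,0})}$. Second, by taking $\lam_0$ so large that $\sqrt{2c\lam_0}\geq 2C$, the term $C\|\hsf\epk\pppk\|_{\wt{L}^2_{[t_0,t];\dthtk}(\cB^{1,0})}$ is absorbed into $\sqrt{2c\lam}\|\hsf\epk\pppk\|_{\wt{L}^2_{[t_0,t];\dthtk}(\cB^{1,0})}$. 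The resulting inequality is precisely \eqref{S5eq20}.

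There is no real obstacle here: the only point that requires care is ordering the two smallness/largeness requirements so that $\e_0$ and $\lam_0$ may be chosen independently of $t_0,t$, and ensuring the summability in $k$ of the $(d_k)_{k\in\Z}$ sequences produced by the different nonlinear terms in \eqref{S5eq11} (which is guaranteed since each such sequence lies in the unit sphere of $\ell^1(\Z)$ by construction).
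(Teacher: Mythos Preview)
Your proof is correct and follows exactly the approach the paper indicates: the paper's own proof of Corollary \ref{col5.2} simply states that it is the same as that of Corollary \ref{col5.1} and omits the details, and you have faithfully reproduced those details (square root of \eqref{S5eq11}, multiplication by $2^{k/2}$, summation in $k$, then absorption of the $\sqrt{\fc\ka\e}$ and $C$ terms by choosing $\e_0$ small and $\lam_0$ large). Note incidentally that the $\Dhk$ appearing inside the $\cB^{\f12,0}$ norms in the statement of \eqref{S5eq20} is a typographical slip in the paper (compare with \eqref{S5eq19}); your proof correctly targets the summed Besov-norm inequality.
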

\begin{proof}
This proof is the same as that of Corollary \ref{col5.1}, we omit the detail here.
\end{proof}

Now we are in a position to prove Propositions \ref{prop3.1} and \ref{prop3.4}.

\begin{proof}[Proof of Proposition \ref{prop3.1}] Let $\frak{c}$ be the constant given by  Lemma \ref{prop5.1}.
 We observe from Lemma \ref{lem:Poincare} that
\beqo
\int_0^t \ttau^{2\lk-2\frak{c}\e-1}\|\ep\Dhk\ppp\|^2_{L^2_+}d\tau\leq 2\int_0^t\|\ttau^{\lk-\frak{c}\e}\ep\Dhk\p_y\ppp\|^2_{L^2_+}d\tau.
\eeqo
So that by taking $t_0=0$ and $\hs(t)=\w{t}^{2(l_\ka-\frak{c}\e)}$ in \eqref{S5eq1}, we obtain
\beqo\begin{split}
&\|\tauk\ep\Dhk\ppp\|^2_{L^\oo_{t}(L^2_+)}+4\frak{c}\e\|\tauk\ep\p_y\Dhk\ppp\|^2_{L^2_t (L^2_+)}\\
&\quad+2c\lam 2^k\|\tauk\ep\Dhk\ppp\|^2_{L^2_{t;\dtht} (L^2_+)}\\
&\leq\|e^\f{y^2}8 e^{\de|D_x|} \Dhk(\vphi_0,\psi_0)\|^2_{L^2_+}+\fc\e d^2_k 2^{-k}\|\tauk\ep\p_y\ppp\|^2_{\wt{L}^2_{t}(\cB^{\f12,0})}\\
&\quad+Cd^2_k 2^{-k}\Bigl(\|\tauk\ep\ppp\|^2_{\wt{L}^2_{t;\dtht} (\cB^{1,0})}+\e^{-1}\|\ttau^{\f12+\lk-\fc\e} \ep \Mp \|^2_{\wt{L}^2_{t}(\cB^{\f12,0})}\Bigr).
\end{split}\eeqo
By taking square root of the above inequality and then multiplying the resulting one by $2^\f{k}2$ and finally summing over $k\in\Z$, we find
\beqo\begin{split}
&\|\tauk\ep\ppp\|_{\wt{L}^\oo_{t}(\cB^{\f12,0})}+\sqrt{2c\lam} \|\tauk\ep\ppp\|_{\wt{L}^2_{t;\dtht} (\cB^{1,0})}\\
&\leq\|e^\f{y^2}8 e^{\de|D_x|} (\vphi_0,\psi_0)\|_{\cB^{\f12,0}}
+\sqrt{C}\|\ttau^{\lk-\fc\e}\ep\ppp\|_{\wt{L}^2_{t;\dtht} (\cB^{1,0})}\\
&\qquad\qquad\qquad\qquad\qquad\qquad+\sqrt{\f{C}{{\e}}} \|\ttau^{\f12+\lk-\fc\e} \ep \Mp \|_{\wt{L}^2_{t}(\cB^{\f12,0})}.
\end{split}\eeqo
By taking $\lam$ to be so large that $c\lam>C$, and using  \eqref{mdecay}, we obtain
\beq \begin{split} \label{S5eq31}
\|\tauk\ep\ppp\|_{\wt{L}^\oo_{t}(\cB^{\f12,0})}\leq \|e^\f{y^2}8 e^{\de |D_x|}(\vphi_0,\psi_0) \|_{\cB^{\f12,0}}+ C\sqrt{\e}.
\end{split}  \eeq

Thanks to  \eqref{S5eq31}, we get, by taking $t_0=\f{t}2$ and $\hs(t)=\w{t}^{2(l_\ka-\fc\e)}$ in Corollary \ref{col5.1}, that
\beqo\begin{split}
\|\tauk&\ep\p_y\ppp \|_{\wt{L}^2_{[\f{t}2,t]}(\cB^{\f12,0})}\lesssim
 \|\langle{t}/2\rangle^{\lk-\fc\e}\ep\ppp ({t}/2)\|_{\cB^{\f12,0}}\\
&+\|\ttau^{\lk-\fc\e-\f12}\ep\ppp \|_{\wt{L}^2_{[\f{t}2,t]}(\cB^{\f12,0})}
+\e^{-\f12}\| \ttau^{\f12+\lk-\fc\e}\ep\Mp\|_{\wt{L}^2_{[\f{t}2,t]}(\cB^{\f12,0})}\\
\lesssim &\|\ttau^{\lk-\fc\e-\f12}\ep\ppp \|_{\wt{L}^2_{[\f{t}2,t]}(\cB^{\f12,0})}+\|e^\f{y^2}8 e^{\de |D_x|}(\vphi_0,\psi_0) \|_{\cB^{\f12,0}}+\sqrt{\e}.
\end{split}  \eeqo
Yet by virtue of Definition \ref{def2.2}, we deduce from \eqref{S5eq31} that
\beqo \begin{split}
\|\ttau^{\lk-\fc\e-\f12}\ep\ppp \|_{\wt{L}^2_{[\f{t}2,t]}(\cB^{\f12,0})}\lesssim & \|\ttau^{\lk-\fc\e}\ep\ppp \|_{\wt{L}^\oo_{[\f{t}2,t]}(\cB^{\f12,0})}\\
\lesssim &\|e^\f{y^2}8 e^{\de |D_x|}(\vphi_0,\psi_0) \|_{\cB^{\f12,0}}+ \sqrt{\e}.
\end{split}  \eeqo
As a consequence, we arrive at
\beqo \label{S5eq32}\begin{split}
\|\tauk\ep\p_y\ppp \|_{\wt{L}^2_{[\f{t}2,t]}(\cB^{\f12,0})}\lesssim \|e^\f{y^2}8 e^{\de |D_x|}(\vphi_0,\psi_0) \|_{\cB^{\f12,0}}+ \sqrt{\e}.
\end{split}  \eeqo
This together with \eqref{S5eq31} leads to \eqref{S3eq12as}.
\end{proof}

\begin{proof}[Proof of Proposition \ref{prop3.4}]
Similar to the proof of Proposition \ref{prop3.1}, we first observe from Lemma \ref{lem:Poincare} that
\beqo
\int_0^t \ttau^{2\elk-2\fc\e-1}\|\epk\Dhk\pppk\|^2_{L^2_+}\,d\tau\leq 2\ka\int_0^t\|\w{\tau}^{\elk-\fc\e}\epk\Dhk\p_y\pppk\|^2_{L^2_+}\,d\tau.
\eeqo
So that by taking $t_0=0$ and $\hs(t)=\w{t}^{2(\elk-\fc\e)}$ in \eqref{S5eq11}, we obtain
\beqo\begin{split}
&\|\tauek\epk\Dhk\pppk\|^2_{L^\oo_{[t_0,t]}(L^2_+)}+4\ka \fc\e\|\tauek\epk\p_y\Dhk\pppk \|^2_{L^2_{t}(L^2_+)}\\
&\quad+2c\lam 2^k\|\tauek\epk\Dhk\pppk\|^2_{L^2_{t;\dthtk} (L^2_+)}\\
&\leq\|e^\f{y^2}{8\ka} e^{\de|D_x|} \Dhk(\vphi_0,\psi_0)\|^2_{L^2_+}
+\ka \fc\e d^2_k 2^{-k}\|\tauek\epk\p_y\pppk \|^2_{\wt{L}^2_{t}(\cB^{\f12,0})}\\
&\quad+{C}d^2_k 2^{-k}\Bigl(\|\tauek\epk\pppk\|^2_{\wt{L}^2_{t;\dthtk} (\cB^{1,0})}
+\e^{-1} \|\ttau^{\f12+\elk-\fc\e}\epk\Mpk \|^2_{\wt{L}^2_{t}(\cB^{\f12,0})}\Bigr),
\end{split}\eeqo from which, we get, by a similar derivation of \eqref{S5eq19}, that
\beqo\begin{split}
&\|\tauek\epk\pppk\|_{\wt{L}^\oo_{t}(\cB^{\f12,0})}+\sqrt{2c\lam} \|\tauek\epk\pppk\|_{\wt{L}^2_{t;\dthtk} (\cB^{1,0})}\\
&\leq \|e^\f{y^2}{8\ka} e^{\de|D_x|} (\vphi_0,\psi_0)\|_{\cB^{\f12,0}}+\sqrt{C}\|\tauek\epk\pppk\|_{\wt{L}_{t;\dthtk} (\cB^{1,0})}\\ &\qquad\qquad\qquad\qquad\qquad+\sqrt{\f{C}\e}\|\ttau^{\f12+\elk-c\e}\epk\Mpk \|_{\wt{L}^2_{t}(\cB^{\f12,0})}.
\end{split}\eeqo
By taking $\lam$ to be so large that $c\lam>C$ and using \eqref{mdecay}, we obtain
\beq \begin{split} \label{S5eq33}
\|\tauek\epk\ppp\|_{\wt{L}^\oo_{t}(\cB^{\f12,0})}\leq  \|e^\f{y^2}{8\ka} e^{\de|D_x|} (\vphi_0,\psi_0)\|_{\cB^{\f12,0}}+C\sqrt{\e}
\end{split}  \eeq

On the other hand, thanks to \eqref{S5eq33}, we get, by taking $t_0=\f{t}2$ and $\hs(t)=\tt^{2(\elk-\fc\e)}$ in Corollary \ref{col5.2},
that
\beqo\begin{split}
&\|\tauek\epk\p_y\pppk \|_{\wt{L}^2_{[\f{t}2,t]}(\cB^{\f12,0})} \lesssim \|\langle{t}/2\rangle^{\elk-c\e}\epk\pppk ({t}/2)\|_{\cB^{\f12,0}}\\
&\quad+\|\ttau^{\elk-c\e-\f12}\epk\pppk \|_{\wt{L}^2_{[\f{t}2,t]}(\cB^{\f12,0})}
+\| \ttau^{\f12+\elk-\fc\e}\epk\Mpk\|_{\wt{L}^2_{[\f{t}2,t]}(\cB^{\f12,0})}\\
&\lesssim |\ttau^{\elk-\fc\e-\f12}\epk\pppk \|_{\wt{L}^2_{[\f{t}2,t]}(\cB^{\f12,0})}+\|e^\f{y^2}{8\ka} e^{\de|D_x|} (\vphi_0,\psi_0)\|_{\cB^{\f12,0}}+\sqrt{\e}.
\end{split}  \eeqo
Whereas it follows from \eqref{S5eq33} that
\beqo \begin{split}
\|\ttau^{\elk-\fc\e-\f12}\epk\pppk \|_{\wt{L}^2_{[\f{t}2,t]}(\cB^{\f12,0})}\lesssim & \|\ttau^{\elk-\fc\e}\epk\pppk \|_{\wt{L}^\oo_{[\f{t}2,t]}(\cB^{\f12,0})}\\
\lesssim & \|e^\f{y^2}{8\ka} e^{\de|D_x|} (\vphi_0,\psi_0)\|_{\cB^{\f12,0}}+\sqrt{\e}.
\end{split}  \eeqo
As a consequence, we arrive at
\beqo \label{S5eq34}\begin{split}
\|\tauek\epk\p_y\pppk \|_{\wt{L}^2_{[\f{t}2,t]}(\cB^{\f12,0})}\lesssim  \|e^\f{y^2}{8\ka} e^{\de|D_x|} (\vphi_0,\psi_0)\|_{\cB^{\f12,0}}+\sqrt{\e}.
\end{split}  \eeqo
Together with \eqref{S5eq33}, we conclude the proof  of Proposition \ref{prop3.4}.
\end{proof}

\renewcommand{\theequation}{\thesection.\arabic{equation}}
\setcounter{equation}{0}
\section{Analytic energy estimate of $(u,b)$ }\label{sect6}

The goal of this section is to present the {\it a priori} weighted analytic energy estimate to the solution $(u,b)$ of \eqref{eqs1}. That is,
 we are going to present the proof of Propositions \ref{prop3.2} and \ref{prop3.5}. The key ingredients lie in the following lemmas:

\begin{lemma}\label{prop6.1}
{\sl Let $(u,b)$ be a smooth enough solution of \eqref{eqs1}. Let $\Phi(t,\xi)$ and $\Psi(t,y)$ be given by \eqref{def:Phi} and \eqref{def:Psi} respectively. Then if $\ka\in ]0,2[,$ for $\lk\eqdefa \f{\ka(2-\ka)}4,$ and for any non-negative and non-decreasing function $\hs\in C^1(\R_+)$, there exists a positive constant $\fc$ so that
\beq\begin{split}\label{S6eq1}
&\| \hsf \ep\Dhk \ubp\|^2_{L^\oo_{[t_0,t]}(L^2_+)} +4\lk\| \hsf \ep\Dhk\p_y \ubp\|^2_{L^2_{[t_0,t]}(L^2_+)}\\
& +2c\lam 2^k \|\hsf\ep\Dhk\ubp \|^2_{L^2_{[t_0,t];\dtht}(L^2_+)}
\leq \| \hsf \ep\Dhk \ubp(t_0)\|^2_{L^2_+}\\ &+\| \qhs \ep\Dhk \ubp\|^2_{L^2_{[t_0,t]}(L^2_+)}+\fc\e d^2_k 2^{-k}\|\hsf\ep \p_y\ubp \|^2_{\wt{L}^2_{[t_0,t]}(\cB^{\f12,0})}\\
 &+Cd^2_k 2^{-k}\Bigl(\e^{-1} \|\ttau^\f12 \hsf \ep \mp \|^2_{\wt{L}^2_{[t_0,t]}(\cB^{\f12,0})}+ \|\hsf\ep\ubp \|^2_{\wt{L}^2_{[t_0,t];\dtht}(\cB^{1,0})}\Bigr),
\end{split}\eeq
for any $t_0\in[0,t]$ with $t<T^*$, which is defined by \eqref{def:T^*}.}
\end{lemma}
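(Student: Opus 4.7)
The plan is to mirror the scheme of Lemma \ref{prop5.1} at the velocity--magnetic level rather than at the stream-function level. First I would apply $e^{\Phi(t,|D_x|)}$ to each equation of \eqref{eqs1}, localize in horizontal frequency by $\Dhk$, and pair the resulting identities in $L^2_+$ with $\epp\Dhk u_\Phi$ and $\epp\Dhk b_\Phi$ respectively. The diffusion pieces $(\p_t-\p_y^2)u$ and $(\p_t-\ka\p_y^2)b$ produce the main good terms by Lemma~\ref{lem:heateqs}: \eqref{S4eq1} and \eqref{S4eq2} (with $\ka\in\,]0,2[$ so that $\ka(2-\ka)\le 1$) give, after integration against $\hbar(t)$, the $\tfrac12\tfrac{d}{dt}$-type identity together with the dissipation $2\lk\|\hsf\ep\Dhk\p_y\ubp\|_{L^2_+}^2$. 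The magnetic coupling terms $-\bar B_\ka\p_x b$ and $-\bar B_\ka\p_x u$ cancel by the exact same integration-by-parts argument as \eqref{symmetry}, and the radius-relaxation term $\lam\dtht(t)|D_x|$ yields $c\lam 2^k\|\hsf\ep\Dhk\ubp\|_{L^2_{[t_0,t];\dtht}(L^2_+)}^2$ after Lemma~\ref{lem:Bern}.

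Next, I would dispatch the quadratic convection $u\p_xu-b\p_xb$ in the first equation (and the corresponding $u\p_xb-b\p_xu$ in the second) by Lemma~\ref{lem:nl1} with $a=\tfrac12,b=c=d=1$, since $\lim_{y\to\infty}u=\lim_{y\to\infty}b=0$. For the vertical convection $v\p_yu-h\p_yb$ and $v\p_yb-h\p_yu$, I would use the divergence-free conditions together with $(v,h)=-\p_x(\vphi,\psi)$ to rewrite
\[
v\,\p_yu=\p_yu\int_y^\infty\p_xu\,dy',\qquad h\,\p_yb=\p_yb\int_y^\infty\p_xb\,dy',
\]
so that Lemma~\ref{lem:nl2} applies. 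In every case the output weight $f(t)=\tt^{1/4}\|e^{\Psi/2}\p_y(u,b)_\Phi(t)\|_{\cB^{\f12,0}}$ is controlled by $\tt^{1/4}\|\ep\p_y\ghp(t)\|_{\cB^{\f12,0}}$ thanks to Lemma~\ref{lem3.1}, which is exactly $\dtht(t)$ up to the $\UBp$-contribution. Summation of the resulting bounds against $2^{k/2}$ produces the term $\|\hsf\ep\ubp\|_{\wt L^2_{[t_0,t];\dtht}(\cB^{1,0})}^2$ on the right-hand side of \eqref{S6eq1}.

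Third, the tangential far-field transports $\chi'(U\p_xu-B\p_xb)$ and $\chi'(U\p_xb-B\p_xu)$ are handled by Lemma~\ref{ff1}, whereas the stretching $\chi'(\p_xUu-\p_xBb)$ together with the vertical-derivative $\chi(-\p_xU\p_yu+\p_xB\p_yb)$ are treated by \eqref{s8eq10}: in both cases \eqref{UBdecay} produces a factor $\e$ and the Poincaré inequality \eqref{S8eq34aq} allows one to trade one $\p_y$ for a $\ttau^{-1}y$ factor, so the contribution gets absorbed into $\fc\e d_k^2 2^{-k}\|\hsf\ep\p_y\ubp\|_{\wt L^2_{[t_0,t]}(\cB^{\f12,0})}^2$. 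For the source terms $(m_U,m_B)$, Young's inequality gives $\e^{-1}\|\ttau^{1/2}\hsf\ep\mp\|^2$ on one side and a small $\e$-multiple of the dissipation on the other, as in the closing step of Lemma~\ref{prop5.1}.

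The main obstacle I anticipate is the $\chi''(Uv-Bh)$ block, which is genuinely new compared with Lemma~\ref{prop5.1}, since $v$ and $h$ do not enjoy a good weighted estimate on their own. I plan to exploit that $\mathrm{supp}\,\chi''\subset[1,2]$ together with $(v,h)=-\p_x(\vphi,\psi)$, rewriting the pair as $-\chi''(U\p_x\vphi-B\p_x\psi)$ and $-\chi''(B\p_x\vphi-U\p_x\psi)$; then \eqref{s8eq10} applies with $A=\vphi$ or $\psi$, the compact $y$-support of $\chi''$ together with \eqref{S8eq34aq} converts the $(\vphi,\psi)$-norm into an $\ttau$-weighted $\p_y\ppp$-norm, and this in turn is bounded via Lemma~\ref{lem3.1} by the $\p_y\ghp$ quantity, yielding another contribution of the form $\fc\e d_k^2 2^{-k}\|\hsf\ep\p_y\ubp\|_{\wt L^2(\cB^{\f12,0})}^2$. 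Combining all of the above, taking square-roots after localization in $k$, summing $2^{k/2}(\cdot)$, and rearranging the small absorbable pieces produces \eqref{S6eq1}.
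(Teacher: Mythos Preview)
Your proposal is essentially the paper's own proof: the same localization and pairing, the same use of Lemma~\ref{lem:heateqs}, the cancellation \eqref{symmetry}, Lemmas~\ref{lem:nl1}--\ref{lem:nl2} for the quadratic terms (with the rewriting $(v,h)=\int_y^\infty\p_x(u,b)\,dy'$, which is exactly \eqref{S6eq8}), Lemma~\ref{ff1} and \eqref{s8eq10} together with \eqref{S8eq34aq} for the far-field terms, and Young's inequality for the source.

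The only place where your sketch drifts from the paper is the $\chi''(Uv-Bh)$ block. Two small corrections: first, after writing $v=-\p_x\vphi$ the product has the form $F\p_xA$, so the relevant estimate is \eqref{s8eq6} (Lemma~\ref{ff1}), not \eqref{s8eq10}. Second, the compact support of $\chi''$ sits on the \emph{test} side $\chi''\Dhk u_\Phi$, so it does not localize the $\vphi$-factor; you still need the global Poincar\'e bound $\|\ep\vphi_\Phi\|\lesssim\tt^{1/2}\|\ep u_\Phi\|$, and the outcome naturally lands in the $\|\hsf\ep\ubp\|_{\wt L^2_{[t_0,t];\dtht}(\cB^{1,0})}^2$ bucket (with an $\e^{1/2}$ prefactor coming from $\|\UBp\|_{\cB^{1/2}_\h}\le\e^{1/2}\ttau^{-5/4}\dtht$), exactly as in the paper, rather than in the $\fc\e\|\hsf\ep\p_y\ubp\|^2$ bucket. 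Since both buckets are already present on the right of \eqref{S6eq1}, this does not affect the conclusion.
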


\begin{proof}
In view of \eqref{S3eq1}, by applying operator $e^{\Phi(t,|D_x|)}$ to \eqref{eqs1}, we write
\beq\begin{split}\label{S6eq2}
\p_t \up&-\p^2_y\up-\bar{B}_\ka\p_xb_\Phi+\lam\dtht(t)|D_x|\up +[u\p_x u-b\p_x b]_\Phi\\
&+[v\p_yu-h\p_y b]_\Phi+\chi'[U\p_xu-B\p_xb]_\Phi
+\chi'[\p_xUu-\p_xBb]_\Phi\\
&+\chi[-\p_xU\p_yu+\p_xB\p_yb]_\Phi+\chi''[Uv-Bh]_\Phi=(m_U)_\Phi,
\end{split} \eeq
and
\beq\begin{split}\label{S6eq3}
\p_t \bp-&\ka\p^2_y\bp-\bar{B}_\ka\p_xu_\Phi+\lam\dtht(t)|D_x|\bp+[u\p_x b-b\p_x u]_\Phi\\
&+[v\p_y b-h\p_yu]_\Phi+\chi'[U\p_xb-B\p_xu]_\Phi+\chi'[\p_xBu-\p_xUb]_\Phi\\
&+\chi[-\p_xU\p_yb+\p_xB\p_yu]_\Phi+\chi''[Bv-Uh]_\Phi=(m_B)_\Phi.
\end{split}\eeq

By virtue of a similar version of \eqref{symmetry}, we get, by   applying the dyadic operator $\Dhk$ to \eqref{S6eq2}, \eqref{S6eq3} and then taking $L^2_+$ inner product of the resulting equations with   $\epp\Dhk\ubp$, that
\beq\begin{split}\label{S6eq4}
&\bigl(\ep\Dhk(\p_t-\p^2_y)\up\big|\ep\Dhk\up \bigr)_{L^2_+}+\bigl(\ep\Dhk(\p_t-\ka\p^2_y)\bp\big|\ep\Dhk\bp \bigr)_{L^2_+}\\
&
+\lam\dtht(t)\bigl(\ep|D_x|\Dhk\up\big|\ep\Dhk\up \bigr)_{L^2_+}+\lam\dtht(t)\bigl(\ep|D_x|\Dhk\bp\big|\ep\Dhk\bp \bigr)_{L^2_+} \\
&
+\bigl(\ep\Dhk[u\p_xu-b\p_xb]_\Phi\big|\ep\Dhk\up \bigr)_{L^2_+}+\bigl(\ep\Dhk[u\p_xb-b\p_xu]_\Phi\big|\ep\Dhk\bp \bigr)_{L^2_+} \\
&
+\bigl(\ep\Dhk[v\p_yu-h\p_yb]_\Phi\big|\ep\Dhk\up \bigr)_{L^2_+}+\bigl(\ep\Dhk[v\p_yb-h\p_yu]_\Phi\big|\ep\Dhk\bp \bigr)_{L^2_+}\\
&
+\bigl(\ep\Dhk[U\p_xu-B\p_xb]_\Phi\big|\ep\chi'\Dhk\up \bigr)_{L^2_+}+\bigl(\ep\Dhk[U\p_xb-B\p_xu]_\Phi\big|\ep\chi'\Dhk\bp \bigr)_{L^2_+} \\
&
+\bigl(\ep\Dhk[Uv-Bh]_\Phi\big|\ep\chi''\Dhk\up \bigr)_{L^2_+}+\bigl(\ep\Dhk[Bv-Uh]_\Phi\big|\ep\chi''\Dhk\bp \bigr)_{L^2_+}\\
&
+\bigl(\ep\Dhk[-\p_xU\p_yu+\p_xB\p_yb]_\Phi\big|\ep\chi\Dhk\up \bigr)_{L^2_+}\\
&
+\bigl(\ep\Dhk[-\p_xU\p_yb+\p_xB\p_yu]_\Phi\big|\ep\chi\Dhk\bp \bigr)_{L^2_+} \\
&
+\bigl(\ep\Dhk[\p_xUu-\p_xBb]_\Phi\big|\ep\chi'\Dhk\up \bigr)_{L^2_+}+\bigl(\ep\Dhk[\p_xBu-\p_xUb]_\Phi\big|\ep\chi'\Dhk\bp \bigr)_{L^2_+} \\
&
=\bigl(\ep\Dhk(m_U)_\Phi\big|\ep\Dhk\up \bigr)_{L^2_+}+\bigl(\ep\Dhk(m_B)_\Phi\big|\ep\Dhk\bp \bigr)_{L^2_+}.
\end{split}\eeq
In what follows, we shall  use the technical lemmas in Section \ref{sect4} to handle term by term in \eqref{S6eq4}.

Notice that for $\ka\in]0,2[,$ $\ka(2-\ka)\leq1$, we get, by applying \eqref{S4eq1} and \eqref{S4eq2}, that
\beqo\begin{split}
\int_{t_0}^t&\hbar(t')\Bigl(\bigl(\ep\Dhk(\p_t-\p^2_y)\up\big|\ep\Dhk\up \bigr)_{L^2_+}+\bigl(\ep\Dhk(\p_t-\ka\p^2_y)\bp\big|\ep\Dhk\bp \bigr)_{L^2_+}\Bigr)\,dt'\\
&\geq \ \f12\bigl(\|\hsf \ep\Dhk\ubp(t)\|^2_{L^2_+}-\|\hsf \ep\Dhk\ubp(t_0)\|^2_{L^2_+}\bigr) \\
&\qquad-\f12 \|\sqrt{\hbar'}\ep\Dhk\ubp\|^2_{L^2_{[t_0,t]}(L^2_+)}
+2\lk\|\hsf \ep\Dhk\p_y\ubp\|^2_{L^2_{[t_0,t]}(L^2_+)}.
\end{split}\eeqo

We deduce from Lemma \ref{lem:Bern}  that
\beqo\begin{split}
\lam\int_{t_0}^t\hbar(t')\dtht(t')\Bigl(\bigl(\ep|D_x|\Dhk\up\big|\ep\Dhk\up \bigr)_{L^2_+}+&\bigl(\ep|D_x|\Dhk\bp\big|\ep\Dhk\bp \bigr)_{L^2_+}\Bigr)\,dt'\\
&
\geq c\lam 2^k\int_{t_0}^t\dtht(t')\|\hsf\ep\Dhk\ubp \|^2_{L^2_+}\,dt'.
\end{split}\eeqo

Whereas due to $\lim_{y\rto+\oo}u=\lim_{y\rto+\oo}b=0$, we get, by applying Lemma \ref{lem:nl1} with $a=\f12$ and $b=c=d=1$, that
\beqo\begin{split}
&\int_{t_0}^t\hbar(t')\Bigl(\bigl|\bigl(\ep\Dhk[u\p_xu-b\p_xb]_\Phi\big|\ep\Dhk\up \bigr)_{L^2_+}\bigr|+\bigl|\bigl(\ep\Dhk[u\p_xb-b\p_xu]_\Phi\big|\ep\Dhk\bp \bigr)_{L^2_+}\bigr|\Bigr)\,dt'\\
&\qquad\qquad\lesssim d_k^22^{-k}\|\hsf\ep\ubp\|_{\wt{L}^2_{[t_0,t];f}(\cB^{1,0})}^2 \with f(t)=\w{t}^{\f14}\|e^{\f{\Psi}2}\p_y(u,b)_\Phi(t)\|_{\cB^{\f12,0}},
\end{split}\eeqo from which and \eqref{S5eq5}, we deduce from a similar derivation of \eqref{S5eq7} that
\beq\begin{split}\label{S6eq7}
\int_{t_0}^t&\hbar(t')\Bigl(\bigl|\bigl(\ep\Dhk[u\p_xu-b\p_xb]_\Phi\big|\ep\Dhk\up \bigr)_{L^2_+}\bigr|\\
&+\bigl|\bigl(\ep\Dhk[u\p_xb-b\p_xu]_\Phi\big|\ep\Dhk\bp \bigr)_{L^2_+}\bigr|\Bigr)\,dt'\lesssim d_k^22^{-k}\|\hsf\ep\ubp\|_{\wt{L}^2_{[t_0,t];\dtht}(\cB^{1,0})}^2.
\end{split}\eeq

Due to $\p_xu+\p_yv=0=\p_xb+\p_yh,$ we write
\beq \label{S6eq8}
(v,h)=-\int_y^\oo \p_y (v,h)dy'=\int_y^\oo \p_x (u,b)dy'.
\eeq
Then by applying Lemma \ref{lem:nl2}, we get, by a similar derivation of \eqref{S6eq7}, that
\beqo\begin{split}
\int_{t_0}^t&\hbar(t')\Bigl(\bigl(\bigl|\ep\Dhk[v\p_yu-h\p_yb]_\Phi\big|\ep\Dhk\up \bigr)_{L^2_+}\bigr|\\&+\bigl|\bigl(\ep\Dhk[v\p_yb-h\p_yu]_\Phi\big|\ep\Dhk\bp \bigr)_{L^2_+}\bigr|\Bigr)\,dt'
\lesssim d_k^22^{-k}\|\hsf\ep\ubp\|_{\wt{L}^2_{[t_0,t];\dtht}(\cB^{1,0})}^2.
\end{split}\eeqo

While it follows from  Lemma \ref{ff1} that
\beqo \begin{split}
\int_{t_0}^t&\hbar(t')\Bigl(\bigl|\bigl(\ep\Dhk[U\p_xu-B\p_xb]_\Phi\big|\ep\chi'\Dhk\up\bigr)_{L^2_+}\bigr|\\
&
+\bigl|\bigl(\ep\Dhk[U\p_xb-B\p_xu]_\Phi\big|\ep\chi'\Dhk\bp\bigr)_{L^2_+}\bigr|\Bigr)\,dt'
\lesssim \e^{\f12}d_k^22^{-k}\|\hsf\ep\ubp\|_{\wt{L}^2_{[t_0,t];\dtht}(\cB^{1,0})}^2.
\end{split}\eeqo

Notice from the definition of $\chi$ above \eqref{change} that $\mbox{supp}\chi''\subset [1,2]$. Then in view
of  \eqref{S6eq8},  we
get, by a similar derivation of \eqref{s8eq6}, that
\beqo \begin{split}
\int_{t_0}^t&\hbar(t')\Bigl(\bigl|\bigl(\ep\Dhk[Uv-Bh]_\Phi\big|\ep\chi''\Dhk\up\bigr)_{L^2_+}\bigr|\\
&
+\bigl|\bigl(\ep\Dhk[Bv-Uh]_\Phi\big|\ep\chi''\Dhk\bp\bigr)_{L^2_+}\bigr|\Bigr)\,dt'
\lesssim \e^{\f12} d_k^22^{-k}\|\hsf\ep\ubp\|_{\wt{L}^2_{[t_0,t];\dtht}(\cB^{1,0})}^2.
\end{split}\eeqo

On the other hand, we observing from  \eqref{S8eq34aq}  that
\beno
\|\ep y\Dhk\ubp\|_{L^2_+}\lesssim \w{t}\|\ep\p_y\Dhk\ubp\|_{L^2_+},
\eeno
which implies
\beno
\|\ep y\ubp\|_{\wt{L}^p_t(\cB^{s,0})}\lesssim \|\ttau\ep\p_y\ubp\|_{\wt{L}^p_t(\cB^{s,0})}.
\eeno
So that
 by applying \eqref{s8eq10} and  \eqref{UBdecay}, we infer
\beqo \begin{split}
\int_{t_0}^t&\hbar(t')\Bigl(\bigl|\bigl(\ep\Dhk[-\p_xU\p_yu+\p_xB\p_yb]_\Phi\big|\ep\chi\Dhk\up\bigr)_{L^2_+}\bigr|\\
&\qquad+\bigl|\bigl(\ep\Dhk[-\p_xU\p_yb+\p_xB\p_yu]_\Phi\big|\ep\chi\Dhk\bp\bigr)_{L^2_+}\bigr|\Bigr)\,dt\\
\lesssim \ & d_k^2 2^{-{k}}\|\ttau\UBp\|_{\wt{L}^\oo_{[t_0,t]}(\cB^{\f32}_\h)}\|\hsf\ep\p_y\ubp\|_{\wt{L}^2_{[t_0,t]}(\cB^{\f12,0})}\|\ttau^{-1}\hsf\ep y\ubp\|_{\wt{L}^2_{[t_0,t]}(\cB^{\f12,0})}\\
\lesssim \ &\e d^2_k 2^{-k}\|\hsf\ep\p_y\ubp\|^2_{\wt{L}^2_{[t_0,t]}(\cB^{\f12,0})}.
\end{split}\eeqo
Along the same line, we deduce that
\beqo \begin{split}
\int_{t_0}^t&\hbar(t')\bigl(\bigl|\bigl(\ep\Dhk[\p_xUu-\p_xBb]_\Phi\big|\ep\chi'\Dhk\vp\bigr)_{L^2_+}\bigr|\\
&\qquad+\bigl|\bigl(\ep\Dhk[\p_xBu-\p_xUb]_\Phi\big|\ep\chi'\Dhk\vp\bigr)_{L^2_+}\bigr|\bigr)\,dt'\\
\lesssim \ & d_k^22^{-{k}}\|\ttau\UBp \|_{\wt{L}^\oo_{[t_0,t]}(\cB^{\f32}_\h)}\|\ttau^{-\f12}\hsf\ep\ubp \|_{\wt{L}^2_{[t_0,t]}(\cB^{\f12,0})}
\|\ttau^{-\f12}\hsf\ep \ubp\|_{\wt{L}^2_{[t_0,t]}(\cB^{\f12,0})}\\
\lesssim \ &\e d^2_k 2^{-k}\|\hsf\ep\p_y\ubp\|^2_{\wt{L}^2_{[t_0,t]}(\cB^{\f12,0})}.
\end{split}\eeqo

Finally for the remained source term, we get, by using  Young's inequality, that
\beqo \label{S6eq9}\begin{split}
\int_{t_0}^t&\hbar(t')\Bigl(\bigl|\bigl(\ep\Dhk(m_U)_\Phi\big|\ep\Dhk\up\bigr)_{L^2_+}\bigr|
+\bigl|\bigl(\ep\Dhk(m_B)_\Phi\big|\ep\Dhk\bp\bigr)_{L^2_+}\bigr|\Bigr)\,dt'\\
\lesssim \ & d^2_k2^{-k} \|\ttau^{\f12}\hsf\ep\mp\|_{\wt{L}^2_{[t_0,t]}(\cB^{\f12,0})}\|\ttau^{-\f12}\hsf\ep\ubp\|_{\wt{L}^2_{[t_0,t]}(\cB^{\f12,0})}\\
\lesssim \ & d^2_k2^{-k}\bigl( \e \|\hsf\ep\p_y\ubp\|^2_{\wt{L}^2_{[t_0,t]}(\cB^{\f12,0})}+\e^{-1} \|\ttau^\f12\hsf\ep\mp\|^2_{\wt{L}^2_{[t_0,t]}(\cB^{\f12,0})}\bigr)
\end{split}\eeqo

By multiplying \eqref{S6eq4} by $\hbar(t)$ and then integrating the resulting equality over $[t_0,t],$
and finally inserting the above estimates to the equality, we
 obtain \eqref{S6eq1}. This completes the proof of Lemma \ref{prop6.1}.
\end{proof}

\begin{lemma}\label{prop6.2}
{\sl Let $(u,b)$ be a smooth enough solution of \eqref{eqs1}. Let $\Phi_\ka(t,\xi)$ and $\Psi_\ka(t,y)$ be given by \eqref{def:Phikappa} and \eqref{def:Psikappa} respectively. Then if $\ka\in ]1/2,\oo[,$ for $\elk\eqdefa \f{2\ka-1}{4\ka^2},$  and for any non-negative and non-decreasing function $\hs\in C^1(\R_+)$, there exists a positive constant $\fc$ so that
\beq\begin{split}\label{S6eq11}
&\| \hsf \epk\Dhk \ubpk\|^2_{L^\oo_{[t_0,t]}(L^2_+)} +4\ka\elk\| \hsf \epk\Dhk\p_y \ubpk\|^2_{L^2_{[t_0,t]}(L^2_+)}\\
&\ +2c\lam 2^k \|\hsf\epk\Dhk\ubpk \|^2_{L^2_{[t_0,t];\dthtk}(L^2_+)}\leq \| \hsf \epk\Dhk \ubp(t_0)\|^2_{L^2_+}\\
&\ +\| \qhs \epk\Dhk \ubpk\|^2_{L^2_{[t_0,t]}(L^2_+)}
+\ka\fc\e d^2_k 2^{-k}\|\hsf\epk \p_y\ubpk \|^2_{\wt{L}^2_{[t_0,t]}(\cB^{\f12,0})}\\
 &\ +C d^2_k 2^{-k} \Bigl(\|\hsf\epk\ubpk \|^2_{\wt{L}^2_{[t_0,t];\dthtk}(\cB^{1,0})}+\e^{-1} \|\ttau^\f12 \hsf\epk \mpk \|^2_{\wt{L}^2_{[t_0,t]}(\cB^{\f12,0})}\Bigr),
\end{split}\eeq
for any $t_0\in[0,t]$ with $t<T^*_\ka$, which is defined by \eqref{def:T^*kappa}.}
\end{lemma}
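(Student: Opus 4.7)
The plan is to mimic the proof of Lemma \ref{prop6.1} verbatim, replacing the pair $(\Phi,\Psi)$ by $(\Phi_\ka,\Psi_\ka)$, the weighted heat inequalities \eqref{S4eq1}--\eqref{S4eq2} by their $\ka>1/2$ counterparts \eqref{S4eq3}--\eqref{S4eq4}, and the auxiliary Lemma \ref{lem3.1} by Lemma \ref{lem3.2}. First I would apply $e^{\Phi_\ka(t,|D_x|)}$ to \eqref{eqs1} to obtain the analogues of \eqref{S6eq2}--\eqref{S6eq3} (with $\Phi$ replaced by $\Phi_\ka$ and $\dot\tht$ by $\dthtk$), then apply $\Dhk$ and take the $L^2_+$ inner product of the $u$-equation with $\eppk\Dhk\upk$ and of the $b$-equation with $\eppk\Dhk\bpk$. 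The cancellation identity \eqref{symmetry} carries over literally (it only needs $\p_x(e^{2\Phi_\ka}\Dhk\upk\,\Dhk\bpk)$ to integrate to zero), so the $\bar{B}_\ka$ terms again disappear, leaving a master identity of exactly the same shape as \eqref{S6eq4}.

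Next I would estimate the terms one by one. For the diffusion part, applying \eqref{S4eq3} to the $u$-equation (coefficient $1$) and \eqref{S4eq4} to the $b$-equation (coefficient $\ka$), the minimum of the resulting constants $(2\ka-1)/(2\ka)$ and $\ka/2$ equals $(2\ka-1)/(2\ka)=2\ka\elk$ (since $(\ka-1)^2\geq0$), and doubling gives the $4\ka\elk$ factor in \eqref{S6eq11}. The phase term $\lam\dthtk|D_x|$ yields the $2c\lam 2^k\|\cdot\|^2_{L^2_{[t_0,t];\dthtk}(L^2_+)}$ contribution via Lemma \ref{lem:Bern}. For the nonlinear convection terms $[u\p_xu-b\p_xb]_{\Phi_\ka}$ etc., I would apply Lemma \ref{lem:nl1} with $a=\tfrac{1}{2\ka}$, $b=c=d=\tfrac{1}{\ka}$ (so that $a+b\geq c$ is satisfied), and control the resulting $f(t)=\tt^{1/4}\|e^{\Psi_\ka/2}\p_y(u,b)_{\Phi_\ka}\|_{\cB^{1/2,0}}$ by $\tt^{1/4}\|\epk\p_y\ghpk\|_{\cB^{1/2,0}}$ via Lemma \ref{lem3.2} (this is the $\ka>1/2$ analogue of \eqref{S5eq5}), thereby tying $f$ to $\dthtk$ through \eqref{def:thetakappa}. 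The transport terms involving $(v,h)$ are rewritten via $(v,h)=\int_y^\infty\p_x(u,b)\,dy'$ and handled by Lemma \ref{lem:nl2}; the far-field tangential terms $\chi'[U\p_xu-B\p_xb]_{\Phi_\ka}$ etc.\ are handled by Lemma \ref{ff1}; and the compactly supported $\chi''[Uv-Bh]_{\Phi_\ka}$ terms by a variant of \eqref{s8eq6} using $\mbox{supp}\,\chi''\subset[1,2]$. Each of these yields a bound of the form $d_k^2 2^{-k}\sqrt{\e}\,\|\hsf\epk\ubpk\|^2_{\wt{L}^2_{[t_0,t];\dthtk}(\cB^{1,0})}$, which is what feeds into the last term of \eqref{S6eq11}.

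The one place where a genuine $\ka$-dependence creeps in is in the treatment of the boundary-layer terms $\chi[-\p_xU\p_yu+\p_xB\p_yb]_{\Phi_\ka}$ and $\chi'[\p_xUu-\p_xBb]_{\Phi_\ka}$. There I would use the $\ka$-version of the Poincaré inequality \eqref{S8eq33aq}, which gives $\|\epk y\,\Dhk\ubpk\|_{L^2_+}\lesssim\ka\tt\,\|\epk\p_y\Dhk\ubpk\|_{L^2_+}$ and hence a loss of a factor $\ka$. Combined with \eqref{s8eq10} and the far-field bound \eqref{UBdecay}, this produces a contribution of size $\ka\e\,d_k^2 2^{-k}\|\hsf\epk\p_y\ubpk\|^2_{\wt{L}^2_{[t_0,t]}(\cB^{1/2,0})}$, which is exactly the $\ka\fc\e$ term on the right-hand side of \eqref{S6eq11}. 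Finally, the source terms $(m_U)_{\Phi_\ka}$ and $(m_B)_{\Phi_\ka}$ are absorbed by Young's inequality into $\e\|\hsf\epk\p_y\ubpk\|^2+\e^{-1}\|\ttau^{1/2}\hsf\epk\mpk\|^2$.

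Most steps are routine transcriptions from the proof of Lemma \ref{prop6.1}. The only point requiring attention is the careful bookkeeping of $\ka$-factors: getting the diffusion coefficient to come out as $4\ka\elk$ (which hinges on the identity $\min\{(2\ka-1)/(2\ka),\ka/2\}=(2\ka-1)/(2\ka)$ for $\ka>1/2$) and tracking the $\ka$ that the Poincaré inequality \eqref{S8eq33aq} introduces into the $\chi$-supported far-field terms, so that the right-hand side absorbs into $\ka\fc\e$ rather than $\fc\e$. Once these constants are tracked, multiplying \eqref{S6eq4} (with $\Phi,\Psi$ replaced by $\Phi_\ka,\Psi_\ka$) by $\hbar(t)$, integrating over $[t_0,t]$, and collecting the above estimates yields \eqref{S6eq11}.
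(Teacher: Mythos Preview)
Your proposal is correct and follows essentially the same approach as the paper's proof: replace $(\Phi,\Psi,\dtht)$ by $(\Phi_\ka,\Psi_\ka,\dthtk)$ in the master identity \eqref{S6eq4}, use \eqref{S4eq3}--\eqref{S4eq4} (noting $2\ka-1\leq\ka^2$ so the smaller coefficient is $(2\ka-1)/(2\ka)=2\ka\elk$), apply Lemma \ref{lem:nl1} with $a=\tfrac{1}{2\ka}$, $b=c=d=\tfrac{1}{\ka}$ and Lemma \ref{lem3.2} in place of Lemma \ref{lem3.1}, and track the extra factor of $\ka$ coming from the Poincar\'e inequality \eqref{S8eq33aq} in the $\chi$-weighted far-field terms. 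One small imprecision: the bound $d_k^22^{-k}\sqrt{\e}\|\hsf\epk\ubpk\|^2_{\wt{L}^2_{[t_0,t];\dthtk}(\cB^{1,0})}$ you quote applies only to the far-field pieces involving $(U,B)$; the genuine nonlinear convection terms $[u\p_xu-b\p_xb]_{\Phi_\ka}$ etc.\ produce $d_k^22^{-k}\|\hsf\epk\ubpk\|^2_{\wt{L}^2_{[t_0,t];\dthtk}(\cB^{1,0})}$ without the $\sqrt{\e}$, which is why the last line of \eqref{S6eq11} carries a $C$ rather than a $C\sqrt{\e}$.
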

\begin{proof} The proof of Lemma \ref{prop6.2} is similar to that of
Lemma \ref{prop6.1}. We first observe that  \eqref{S6eq4}  holds with $\tht, \Phi$ and $\Psi$
 being replaced  by $\thtk, \Phi_\ka$  and $\Psi_\ka.$ In what follows, we only present
 the detailed estimates for terms which are different from those in the proof of Lemma \ref{prop6.1}.

Notice that for $\ka\in]1/2,\infty[,$ $2\ka-1\leq\ka^2$, we get, by applying \eqref{S4eq3} and \eqref{S4eq4}, that
\beqo\begin{split}
\int_{t_0}^t&\hbar(t')\Bigl(\bigl(\epk\Dhk(\p_t-\p^2_y)\upk\big|\epk\Dhk\upk \bigr)_{L^2_+}+\bigl(\epk\Dhk(\p_t-\ka\p^2_y)\bpk\big|\epk\Dhk\bpk \bigr)_{L^2_+}\Bigr)\,dt'\\
&\geq \ \f12\bigl(\|\hsf \epk\Dhk\ubpk(t)\|^2_{L^2_+}-\|\hsf \epk\Dhk\ubpk(t_0)\|^2_{L^2_+}\bigr) \\
&\qquad-\f12 \|\sqrt{\hbar'}\epk\Dhk\ubp\|^2_{L^2_{[t_0,t]}(L^2_+)}
+2\ka\elk\|\hsf \epk\Dhk\p_y\ubpk\|^2_{L^2_{[t_0,t]}(L^2_+)}.
\end{split}\eeqo

Whereas due to $\lim_{y\rto+\oo}u=\lim_{y\rto+\oo}b=0$, we get, by applying Lemma \ref{lem:nl1} with $a=\f1{2\ka}$ and $b=c=d=\f1\ka$, that
\beqo\begin{split}
&\int_{t_0}^t\hbar(t')\Bigl(\bigl|\bigl(\epk\Dhk[u\p_xu-b\p_xb]_\Phi\big|\epk\Dhk\up \bigr)_{L^2_+}\bigr|+\bigl|\bigl(\epk\Dhk[u\p_xb-b\p_xu]_\Phi\big|\epk\Dhk\bp \bigr)_{L^2_+}\bigr|\Bigr)\,dt'\\
&\qquad\qquad\lesssim d_k^22^{-k}\|\hsf\epk\ubp\|_{\wt{L}^2_{[t_0,t];f}(\cB^{1,0})}^2 \with f(t)=\w{t}^{\f14}\|e^{\f1{2} \Psi_\ka}\p_y(u,b)_\Phi(t)\|_{\cB^{\f12,0}},
\end{split}\eeqo
from which and a similar derivation of \eqref{S5eq17}, we infer
\beq\begin{split}
\int_{t_0}^t&\hbar(t')\Bigl(\bigl|\bigl(\epk\Dhk[u\p_xu-b\p_xb]_\Phi\big|\epk\Dhk\up \bigr)_{L^2_+}\bigr|\\
&+\bigl|\bigl(\epk\Dhk[u\p_xb-b\p_xu]_\Phi\big|\epk\Dhk\bp \bigr)_{L^2_+}\bigr|\Bigr)\,dt'\lesssim d_k^22^{-k}\|\hsf\epk\ubp\|_{\wt{L}^2_{[t_0,t];\dthtk}(\cB^{1,0})}^2.
\end{split}\eeq

On the other hand, we deduce  from  \eqref{S8eq33aq}  that
\beno
\|\epk y\Dhk\ubp\|_{L^2_+}\leq 4\ka\w{t}\|\epk\p_y\Dhk\ubp\|_{L^2_+},
\eeno
which implies
\beno
\|\epk y\ubp\|_{\wt{L}^p_t(\cB^{s,0})}\leq 4\ka \|\ttau\epk\p_y\ubp\|_{\wt{L}^p_t(\cB^{s,0})}.
\eeno
So that
 by applying \eqref{s8eq10} and  \eqref{UBdecay}, we find
\beqo \begin{split}
\int_{t_0}^t&\hbar(t')\Bigl(\bigl|\bigl(\epk\Dhk[-\p_xU\p_yu+\p_xB\p_yb]_\Phi\big|\epk\chi\Dhk\up\bigr)_{L^2_+}\bigr|\\
&\qquad+\bigl|\bigl(\epk\Dhk[-\p_xU\p_yb+\p_xB\p_yu]_\Phi\big|\epk\chi\Dhk\bp\bigr)_{L^2_+}\bigr|\Bigr)\,dt\\
\lesssim \ & d_k^2 2^{-{k}}\|\ttau\UBp\|_{\wt{L}^\oo_{[t_0,t]}(\cB^{\f32}_\h)}\|\hsf\epk\p_y\ubp\|_{\wt{L}^2_{[t_0,t]}(\cB^{\f12,0})}\|\ttau^{-1}\hsf\epk y\ubp\|_{\wt{L}^2_{[t_0,t]}(\cB^{\f12,0})}\\
\lesssim \ &\ka\e d^2_k 2^{-k}\|\hsf\epk\p_y\ubp\|^2_{\wt{L}^2_{[t_0,t]}(\cB^{\f12,0})}.
\end{split}\eeqo
Along the same line, we infer
\beqo \begin{split}
\int_{t_0}^t&\hbar(t')\bigl(\bigl|\bigl(\epk\Dhk[\p_xUu-\p_xBb]_\Phi\big|\epk\chi'\Dhk\vp\bigr)_{L^2_+}\bigr|\\
&\qquad+\bigl|\bigl(\epk\Dhk[\p_xBu-\p_xUb]_\Phi\big|\epk\chi'\Dhk\vp\bigr)_{L^2_+}\bigr|\bigr)\,dt'\\
\lesssim \ & d_k^22^{-{k}}\|\ttau\UBp \|_{\wt{L}^\oo_{[t_0,t]}(\cB^{\f32}_\h)}\|\ttau^{-\f12}\hsf\epk\ubp \|_{\wt{L}^2_{[t_0,t]}(\cB^{\f12,0})}
\|\ttau^{-\f12}\hsf\epk \ubp\|_{\wt{L}^2_{[t_0,t]}(\cB^{\f12,0})}\\
\lesssim \ & \ka\e d^2_k 2^{-k}\|\hsf\epk\p_y\ubp\|^2_{\wt{L}^2_{[t_0,t]}(\cB^{\f12,0})}.
\end{split}\eeqo

With the above estimates, we can repeat the derivation of \eqref{S6eq1} to complete the proof of \eqref{S6eq11}. This completes the proof of Lemma \ref{prop6.2}.
\end{proof}

It is easy to observe the following corollaries from  Lemmas \ref{prop6.1} and \ref{prop6.2}.

\begin{col}\label{col6.1}
{\sl Under the assumptions of Lemma \ref{prop6.1}, there exist positive constants $\e_0, \lam_0$ so that for any $\lam\geq\lam_0$ and $\e\leq \e_0,$ one has
\beq\begin{split}\label{S6eq19}
\| \hsf&\ep \ubp\|_{\wt{L}^\oo_{[t_0,t]} (\cB^{\f12,0})} +{\sqrt{c\lam}}\|\hsf\ep\ubp\|_{\wt{L}^2_{[t_0,t];\dtht} (\cB^{1,0})}\\
&+\sqrt{l_\ka}\|\hsf\ep\p_y\ubp\|_{\wt{L}^2_{[t_0,t]} (\cB^{\f12,0})}
\leq \|\hsf\ep\ubp(t_0)\|_{\cB^{\f12,0}}\\
&+\|\qhs \ep \ubp\|_{\wt{L}^2_{[t_0,t]}(\cB^{\f12,0})}+C\e^{-\f12}\|\ttau^\f12\hsf\ep\mp \|_{\wt{L}^2_{[t_0,t]}(\cB^{\f12,0})}.
\end{split}\eeq}
\end{col}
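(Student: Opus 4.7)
The strategy is to mimic the proof of Corollary~\ref{col5.1} verbatim, since \eqref{S6eq1} has the same structural form as \eqref{S5eq1} with $\ubp$ playing the role of $\ppp$ and $\mp$ the role of $\Mp$. First I would take the square root of \eqref{S6eq1} dyadic block by dyadic block, using $\sqrt{A+B}\leq\sqrt{A}+\sqrt{B}$ to split the right-hand side so that the $k$-th Littlewood--Paley contribution carries a factor $d_k\,2^{-k/2}$ rather than $d_k^2\,2^{-k}$. Then I multiply by $2^{k/2}$ and sum over $k\in\Z$: the factor $2^{k/2}$ combines with $2^{-k/2}$ to produce the summation $\sum_{k\in\Z}d_k=1$, while on the left-hand side the same manipulation reconstructs the three Chemin--Lerner norms appearing in \eqref{S6eq19}.

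At this stage the resulting inequality reads, up to absolute constants,
\begin{equation*}
\begin{aligned}
&\|\hsf\ep\ubp\|_{\wt{L}^\infty_{[t_0,t]}(\cB^{\f12,0})}+2\sqrt{\lk}\,\|\hsf\ep\p_y\ubp\|_{\wt{L}^2_{[t_0,t]}(\cB^{\f12,0})}+\sqrt{2c\lam}\,\|\hsf\ep\ubp\|_{\wt{L}^2_{[t_0,t];\dtht}(\cB^{1,0})}\\
&\quad\leq\|\hsf\ep\ubp(t_0)\|_{\cB^{\f12,0}}+\|\qhs\ep\ubp\|_{\wt{L}^2_{[t_0,t]}(\cB^{\f12,0})}+C\|\hsf\ep\ubp\|_{\wt{L}^2_{[t_0,t];\dtht}(\cB^{1,0})}\\
&\qquad+\sqrt{\fc\e}\,\|\hsf\ep\p_y\ubp\|_{\wt{L}^2_{[t_0,t]}(\cB^{\f12,0})}+C\e^{-\f12}\|\ttau^{\f12}\hsf\ep\mp\|_{\wt{L}^2_{[t_0,t]}(\cB^{\f12,0})}.
\end{aligned}
\end{equation*}

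To close, I would choose $\lam_0$ so large that $\sqrt{2c\lam_0}-C\geq\sqrt{c\lam_0}$ (this absorbs the third term on the right into the third term on the left) and $\e_0$ so small that $\sqrt{\fc\e_0}\leq\sqrt{\lk}$ (this absorbs the fourth term on the right into the second term on the left). What remains is exactly \eqref{S6eq19}. There is no genuine obstacle here: the only mild technicality is confirming that the various $d_k$-sequences arising from the trilinear estimates in Section~\ref{sect4} are indeed $\ell^1$-summable after the square-root-and-sum step, but this is already encoded in the coefficients $d_k^2\,2^{-k}$ of Lemma~\ref{prop6.1}, so the closing argument is purely algebraic.
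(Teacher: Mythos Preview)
Your proposal is correct and follows essentially the same approach as the paper: take square roots of \eqref{S6eq1}, multiply by $2^{k/2}$, sum over $k\in\Z$, and then absorb the $\dtht$-weighted term by choosing $\lam$ large and the $\sqrt{\fc\e}$ term by choosing $\e$ small. The intermediate inequality you wrote out matches the paper's exactly.
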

\begin{proof}
By taking square root of \eqref{S6eq1} and then multiplying the resulting inequality by $2^\f{k}2$ and finally summing over $k\in\Z$, we find
\beqo\begin{split}
&\| \hsf\ep\ubp\|_{\wt{L}^\oo_{[t_0,t]} (\cB^{\f12,0})} +2\sqrt{l_\ka}\|\hsf\ep\p_y\ubp\|_{\wt{L}^2_{[t_0,t]} (\cB^{\f12,0})}\\
&\qquad+\sqrt{2c\lam}\|\hsf\ep\ubp\|_{\wt{L}^2_{[t_0,t];\dtht} (\cB^{1,0})}\leq\  \|\hsf\ep\ubp(t_0)\|_{\cB^{\f12,0}}\\
&\qquad+\|\qhs \ep\ubp\|_{\wt{L}^2_{[t_0,t]}(\cB^{\f12,0})}+\sqrt{\fc\e}\|\hsf\ep\p_y\ubp \|_{\wt{L}^2_{[t_0,t]} (\cB^{\f12,0})}\\
&\qquad+C\Bigl(\|\hsf\ep\ubp\|_{\wt{L}^2_{[t_0,t];\dtht} (\cB^{1,0})}+\e^{-\f12} \|\ttau^\f12\hsf\ep\mp\|_{\wt{L}^2_{[t_0,t]} (\cB^{\f12,0})}\Bigr).
\end{split}\eeqo
Taking $\lam$ big enough and $\e\leq\e_0$ small enough in the above inequality gives rise to \eqref{S6eq19}.
\end{proof}

\begin{col}\label{col6.2}
{\sl Under the assumption of Lemma  \ref{prop6.2}, there exist positive constants $\e_0, \lam_0$ so that for any $\lam\geq\lam_0$ and $\e\leq \e_0,$ one has
\beq\begin{split}\label{S6eq20}
&\| \hsf\epk\ubpk\|_{\wt{L}^\oo_{[t_0,t]} (\cB^{\f12,0})} +{\sqrt{c\lam}}\|\hsf\ep\ubpk\|_{\wt{L}^2_{[t_0,t];\dthtk} (\cB^{1,0})}\\
&+\sqrt{\ka\elk}\|\hsf\epk\p_y\ubpk\|_{\wt{L}^2_{[t_0,t]} (\cB^{\f12,0})}
\leq \|\hsf\epk\ubp(t_0)\|_{\cB^{\f12,0}}\\
&+\|\qhs \epk\ubpk\|_{\wt{L}^2_{[t_0,t]}(\cB^{\f12,0})} +C\e^{-\f12}\|\ttau^\f12\hsf\epk\mpk \|_{\wt{L}^2_{[t_0,t]}(\cB^{\f12,0})}.
\end{split}\eeq}
\end{col}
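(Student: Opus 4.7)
The plan is to mirror the proof of Corollary \ref{col6.1} verbatim, with $\Phi_\ka$, $\Psi_\ka$, $\thtk$ and $\mpk$ replacing $\Phi$, $\Psi$, $\tht$ and $\mp$. The starting point is \eqref{S6eq11} from Lemma \ref{prop6.2}. I take the square root of \eqref{S6eq11}, multiply by $2^{k/2}$, and sum over $k\in\Z$. By Definition \ref{def2.2}, the three terms on the left-hand side of \eqref{S6eq11} respectively produce
\beno
\|\hsf\epk\ubpk\|_{\wt{L}^\oo_{[t_0,t]}(\cB^{\f12,0})}, \quad 2\sqrt{\ka\elk}\,\|\hsf\epk\p_y\ubpk\|_{\wt{L}^2_{[t_0,t]}(\cB^{\f12,0})}, \quad \sqrt{2c\lam}\,\|\hsf\epk\ubpk\|_{\wt{L}^2_{[t_0,t];\dthtk}(\cB^{1,0})},
\eeno
where in the last term the extra factor $2^{k/2}$ combines with the $2^{k/2}$ from the weight to build the $\cB^{1,0}$ scaling.

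On the right-hand side I use $\sqrt{A^2+B^2+\cdots}\leq |A|+|B|+\cdots$, so that the $d_k^2\,2^{-k}$ factors cancel the $2^{k/2}$ coming from multiplication and summation (since $\sum_{k}d_k=1$). This yields the data and forcing contributions
\beno
\|\hsf\epk\ubpk(t_0)\|_{\cB^{\f12,0}}+\|\qhs\,\epk\ubpk\|_{\wt{L}^2_{[t_0,t]}(\cB^{\f12,0})}+C\e^{-\f12}\|\ttau^\f12\hsf\epk\mpk\|_{\wt{L}^2_{[t_0,t]}(\cB^{\f12,0})}
\eeno
already in the desired form, together with the two ``bad'' terms
\beno
\sqrt{\fc\ka\e}\,\|\hsf\epk\p_y\ubpk\|_{\wt{L}^2_{[t_0,t]}(\cB^{\f12,0})}\andf C\,\|\hsf\epk\ubpk\|_{\wt{L}^2_{[t_0,t];\dthtk}(\cB^{1,0})}.
\eeno

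The final step is absorption, which is the main (and only nontrivial) point. Since by Remark \ref{rmk:c} the constant $\fc$ from Lemma \ref{prop6.2} is independent of $\e$ and $\lam$, I fix $\e_0$ so small that $\sqrt{\fc\ka\e_0}<\f12\sqrt{\ka\elk}$, which moves the $\p_y$ term to the left, leaving $\sqrt{\ka\elk}\,\|\hsf\epk\p_y\ubpk\|_{\wt{L}^2_{[t_0,t]}(\cB^{\f12,0})}$ on the LHS. Similarly I fix $\lam_0$ so large that $\sqrt{2c\lam_0}-C\geq\sqrt{c\lam_0}$, which absorbs the second bad term and leaves $\sqrt{c\lam}\,\|\hsf\epk\ubpk\|_{\wt{L}^2_{[t_0,t];\dthtk}(\cB^{1,0})}$ on the LHS. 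This yields exactly \eqref{S6eq20}, and the whole argument is uniform in $t_0$, $t<T^\ast_\ka$ and the auxiliary function $\hs$. Since this is a verbatim adaptation of the argument for Corollary \ref{col6.1}, no new estimate is required.
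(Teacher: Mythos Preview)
Your proof is correct and follows exactly the approach indicated in the paper, which simply states that the argument is the same as that of Corollary \ref{col6.1}. You have spelled out the details of that adaptation accurately: taking square roots in \eqref{S6eq11}, multiplying by $2^{k/2}$, summing in $k$, and absorbing the two bad terms by choosing $\e_0$ small and $\lam_0$ large.
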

\begin{proof}
The proof  is the same as that of Corollary \ref{col6.1}. We omit the details here.
\end{proof}

Now we are in a position to prove Propositions \ref{prop3.2} and \ref{prop3.5}.

\begin{proof}[Proof of Proposition \ref{prop3.2}]
Taking $\hs(t)=1$ and $t_0=0$ in Corollary \ref{col6.1} gives rise to
\beq\label{S6eq21}\begin{split}
\|\ep\ubp\|_{\wt{L}^\oo_t (\cB^{\f12,0})}+{\sqrt{c\lam}}\|\ep\ubp&\|_{\wt{L}^2_{t;\dtht} (\cB^{1,0})} +\sqrt{l_\ka}\|\ep\p_y\ubp\|_{\wt{L}^2_t (\cB^{\f12,0})}\\
&\qquad\leq \|e^{\f{y^2}8}e^{\de|D_x|}(u_0,b_0)\|_{\cB^{\f12,0}}+ C\sqrt{\e}.
\end{split}
\eeq

While by taking $t_0=\f{t}2$ and $\hs(t)=(t-t_0)^{1+2\lk-2c\e}$ in Corollary \ref{col6.1}, we find
\beq\label{S6eq22}\begin{split}
\|t^{\f12+\lk-\fc\e}&\ep\ubp(t)\|_{\cB^{\f12,0}}\lesssim\   \|(\tau-{t}/2)^{\f12+\lk-\fc\e}\ep\ubp\|_{\wt{L}^\oo_{[{t}/2,t]} (\cB^{\f12,0})}\\
 \lesssim\ & \|\tau^{\lk-\fc\e}\ep\ubp\|_{\wt{L}^2_{[{t}/2,t]} (\cB^{\f12,0})}+\e^{-\f12}\|\ttau^{1+\lk-\fc\e}\ep\mp \|_{\wt{L}^2_{t}(\cB^{\f12,0})}\\
\lesssim\ &\|\tau^{\lk-\fc\e}\ep\ubp\|_{\wt{L}^2_{[{t}/2,t]} (\cB^{\f12,0})}+\sqrt{\e}.
\end{split}
\eeq
Due to $(u,b)=\p_y(\vphi,\psi)$, we get, by applying Proposition \ref{prop3.1},  that
\beq\label{S6eq24}
\|t^{\f12+\lk-\fc\e}\ep\ubp(t)\|_{\cB^{\f12,0}}\lesssim \|e^\f{y^2}8 e^{\de |D_x|}(\vphi_0,\psi_0) \|_{\cB^{\f12,0}}+ \sqrt{\e}.
\eeq

Thanks to Proposition \ref{prop3.1} and \eqref{S6eq24}, we get, by taking $t_0=\f{t}2$ and $\hs(t)=t^{1+2\lk-2\fc\e}$ in Corollary \ref{col6.1}, that
\beqo\begin{split}
\|&\tau^{\f12+\lk-c\e}\ep\p_y\ubp\|_{\wt{L}^2_{[{t}/2,t]}(\cB^{\f12,0})}
\lesssim  \|({t}/2)^{\f12+\lk-\fc\e}\ep\ubp({t}/2)\|_{\cB^{\f12,0}}\\
&\quad+\|\tau^{\lk-\fc\e}\ep\ubp\|_{\wt{L}^2_{[{t}/2,t]} (\cB^{\f12,0})}+\e^{-\f12}\|\ttau^{1+\lk-\fc\e}\ep\mp \|_{\wt{L}^2_{t}(\cB^{\f12,0})}\\
&\lesssim \|e^\f{y^2}8 e^{\de |D_x|}(\vphi_0,\psi_0) \|_{\cB^{\f12,0}}+ C\sqrt{\e}.
\end{split}\eeqo
which together with \eqref{S6eq21} and \eqref{S6eq24} ensures Proposition \ref{prop3.2}.
\end{proof}

\begin{proof}[Proof of Proposition \ref{prop3.5}] The proof of this proposition is the same as that of Proposition \ref{prop3.2}.
Indeed taking $\hs(t)=1$ and $t_0=0$ in Corollary \ref{col6.2} gives rise to
\beq\label{S6eq25}\begin{split}
\|\epk\ubpk&\|_{\wt{L}^\oo_t (\cB^{\f12,0})} +\sqrt{\ka\elk}\|\epk\p_y\ubpk\|_{\wt{L}^2_t (\cB^{\f12,0})}\\
&+{\sqrt{c\lam}}\|\hsf\epk\ubpk\|_{\wt{L}^2_{[t_0,t];\dthtk} (\cB^{1,0})}\leq   \|e^{\f{y^2}{8\ka}}e^{\de|D_x|}(u_0,b_0)\|_{\cB^{\f12,0}}+ C\sqrt{\e}..
\end{split}
\eeq
We omit the other details here.
\end{proof}

\renewcommand{\theequation}{\thesection.\arabic{equation}}
\setcounter{equation}{0}
\section{Analytic energy estimates of the quantities $(G,H)$}\label{sect7}

In this section, we shall present the {\it a priori} weighted analytic energy estimate to the quantities $(G,H)$ which are defined by \eqref{defGH}. Those estimates will be crucial for us to globally control the analytic band of solutions to \eqref{eqs1} and \eqref{eqs2}.

We first observe from \eqref{defGH} and the boundary conditions in \eqref{eqs1} and \eqref{eqs2} that
$$
G|_{y=0}=\p_yH|_{y=0}=0.$$
While by multiplying the $\varphi$ equation of \eqref{eqs2} by $\fyt$  (resp. the $\psi$ equation of \eqref{eqs2} by
 $\f{y}{2\ka\w{t}}$) and summing up the resulting equation with the $u$ equation of  \eqref{eqs1} (resp.  the $b$ equation of  \eqref{eqs1}),
  we find that $(G,H)$ verifies
\begin{equation}\label{eqs3}
 \quad\left\{\begin{array}{l}
\p_t G-\p^2_yG+\f{G}\tt-\bar{B}_\ka\p_xH + u\p_xG-\ka b\p_x H-(1-\ka)b\p_xb+v\p_yu-h\p_y b \\ \qquad+\f{y}\tt \int_y^\infty (\p_x\vphi\p_yu-\p_x\psi\p_yb)dy'+\chi'(U\p_xG-\ka B\p_xH-(1-\ka)B\p_xb)\\ \qquad+\chi'(\p_xUu-\p_xBb)+\f{y}\tt \int_y^\oo \chi''(U\p_x\vphi-B\p_x\psi)dy'\\ \qquad+\chi(-\p_xU\p_yu+\p_xB\p_yb)+\chi\fyt(-\p_xUu+\p_xBb)+\chi''(Uv-Bh)\\ \qquad+\chi'\f{y}\tt (\p_xU\vphi-\p_xB\psi)+\f{y}\tt \int_y^\oo \chi''(\p_xU\vphi-\p_xB\psi)=m_U+\fyt M_U,\\
\p_t H-\ka\p^2_yH+\f{H}\tt-\bar{B}_\ka\p_xG +u\p_x H-\f1\ka b\p_x G-(1-\f1\ka)b\p_xu+v\p_y b -h\p_y u\\ \qquad+\chi'(U\p_xH-\f1\ka B\p_xG-(1-\f1\ka)B\p_xu)+\chi'(\p_xB u-\p_xUb)\\ \qquad+\chi(-\p_xU\p_yb+\p_xB\p_yu)+\fykt\chi(-\p_xUb+\p_xBu)\\ \qquad+\chi''(Bv-Uh)=m_B+\fykt M_B,\\
G|_{y=0}=\p_yH|_{y=0}=0 \andf \lim_{y\rto+\infty} G=\lim_{y\rto+\infty} H=0,\\
G|_{t=0}=G_0,\ \ H|_{t=0}=H_0,
\end{array}\right.
\end{equation}

We remark that the restriction for $\bar{B}_\ka=0$ except $\ka=1$ is only used in the derivation of the system \eqref{eqs3}.

The key ingredients to prove Propositions \ref{prop3.3} and \ref{prop3.6} lie in the following two lemmas:

\begin{lemma}\label{prop7.1}
{\sl Let $(u,b)$ and $(\vphi,\psi)$ be smooth enough solutions of \eqref{eqs1} and \eqref{eqs2} respectively, and $(G,H)$ be determined by \eqref{defGH}. Let $\Phi(t,\xi)$ and $\Psi(t,y)$ be given   respectively by \eqref{def:Phi} and \eqref{def:Psi}. Then if $\ka\in ]0,2[,$ for $\lk\eqdefa \f{\ka(2-\ka)}4,$ and for any non-negative and non-decreasing function $\hs\in C^1(\R_+)$, there exists a positive  constant $\fc$ so that
\beq\begin{split}\label{S7eq1}
&\| \hsf \ep\Dhk \ghp\|^2_{L^\oo_{[t_0,t]}(L^2_+)} +4\lk\| \hsf \ep\Dhk\p_y \ghp\|^2_{L^2_{[t_0,t]}(L^2_+)}\\
&+2\|\ttau^{-\f12} \hsf \ep\Dhk  \ghp\|^2_{L^2_{[t_0,t]}(L^2_+)} +2c\lam 2^k \|\hsf\ep\Dhk\ghp \|^2_{L^2_{[t_0,t];\dtht}(L^2_+)}
\\ &\leq \| \hsf \ep\Dhk \ghp(t_0)\|^2_{L^2_+}+\| \qhs \ep\Dhk \ghp\|^2_{L^2_{[t_0,t]}(L^2_+)}\\
 &+\fc\e d^2_k 2^{-k}\|\hsf\ep \p_y\ghp \|^2_{\wt{L}^2_{[t_0,t]}(\cB^{\f12,0})}+C d^2_k 2^{-k} \Bigl(\|\hsf\ep\ghp \|^2_{\wt{L}^2_{[t_0,t];\dtht}(\cB^{1,0})}\\
 &\qquad+ \e\|\ttau^{-\f54}\hsf\ep\ubp \|^2_{\wt{L}^2_{[t_0,t];\dtht}(\cB^{1,0})}
 +\e\|\ttau^{-\f54}\hsf\ep \p_y\ubp \|^2_{\wt{L}^2_{[t_0,t]}(\cB^{\f12,0})}\\
 &\qquad\qquad\qquad\qquad\qquad\qquad+\e^{-1} \|\ttau^\f12 \hsf \ep \mMp \|^2_{\wt{L}^2_{[t_0,t]}(\cB^{\f12,0})}\Bigr),
\end{split}\eeq
for any $t_0\in[0,t]$ with $t<T^*$, which is defined by \eqref{def:T^*}.}
\end{lemma}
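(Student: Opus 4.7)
The plan is to mimic the analytic energy estimate of Lemma \ref{prop6.1}, but applied to the system \eqref{eqs3} for $(G,H)$ rather than to \eqref{eqs1}. First I would apply the operator $e^{\Phi(t,|D_x|)}$ to both equations of \eqref{eqs3}, producing the analyticized system in $(G_\Phi,H_\Phi)$ together with the tangential damping term $\lambda\dtht(t)|D_x|(G,H)_\Phi$. I then apply $\Dhk$, take the $L^2_+$ inner product of the $G$-equation with $\epp\Dhk G_\Phi$ and of the $H$-equation with $\epp\Dhk H_\Phi$, and add. The $\bar{B}_\ka$ linear coupling terms cancel by integration by parts in $x$ exactly as in \eqref{symmetry}. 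The diffusion pieces yield, via \eqref{S4eq1}--\eqref{S4eq2}, the time-derivative $\tfrac12\tfrac{d}{dt}\|\hsf\ep\Dhk(G,H)_\Phi\|_{L^2_+}^2$ together with the lower bound $2\lk\|\hsf\ep\Dhk\p_y(G,H)_\Phi\|_{L^2_+}^2$ and the commutator $\tfrac12\|\qhs\ep\Dhk(G,H)_\Phi\|_{L^2_+}^2$. The new diagonal terms $G/\tt$ and $H/\tt$ in \eqref{eqs3} contribute exactly $\|\ttau^{-1/2}\hsf\ep\Dhk(G,H)_\Phi\|^2_{L^2}$ on the left-hand side, which is where the extra decay factor $\ttau^{-1/2}$ in \eqref{S7eq1} comes from. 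The $\lam\dtht|D_x|$ term yields $c\lam 2^k\|\hsf\ep\Dhk(G,H)_\Phi\|^2_{L^2_{\dtht}}$ through Lemma \ref{lem:Bern}.

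For the convection terms $u\p_x G-\ka b\p_x H$ and $u\p_x H-\ka^{-1} b\p_x G$, since $\lim_{y\to\infty}(u,b)=0$, Lemma \ref{lem:nl1} (applied with $a=\f12,b=c=d=1$) gives an estimate whose weight function is $f(t)=\tt^{1/4}\|e^{\Psi/2}\p_y(u,b)_\Phi\|_{\cB^{1/2,0}}$; by Lemma \ref{lem3.1} this is controlled by $\tt^{1/4}\|\ep\p_y(G,H)_\Phi\|_{\cB^{1/2,0}}\lesssim\dtht(t)$, producing the $\dtht$-weighted $\cB^{1,0}$ term of $\ghp$ on the right-hand side. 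The remaining convection-type terms $v\p_y u-h\p_y b$ and the integral $\tfrac{y}{\tt}\!\int_y^\infty(\p_x\vphi\p_y u-\p_x\psi\p_y b)\,dy'$ are treated through Lemma \ref{lem:nl2} and Lemma \ref{lem:nl3} respectively, after substituting $(v,h)=\int_y^\infty\p_x(u,b)\,dy'$; these likewise absorb into the $\dtht$-weighted $\ghp$ norm.

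The genuinely new features are the cross terms $(1-\ka)b\p_x b$ and $(1-\ka)b\p_x u$, which have no cancellation structure in $(G,H)$. I would estimate them by Lemma \ref{lem:nl1} with $A=b$ and $B\in\{u,b\}$, but now the output norm $\|\ebp\Bp\|_{\wt L^2_{[t_1,t_2];f}(\cB^{1,0})}$ is an $\ubp$-norm rather than a $(G,H)_\Phi$-norm, producing the $\|\ttau^{-5/4}\hsf\ep\ubp\|^2_{\wt L^2_{\dtht}(\cB^{1,0})}$ contribution on the right-hand side. The $\ttau^{-5/4}$ weight is extracted by splitting $f(t)=\dtht(t)=\tt^{1/4}\|\ep\p_y\ghp\|_{\cB^{1/2,0}}$ into $\tt^{-5/4}\cdot \tt^{3/2}$ and absorbing the remaining power into the smallness of $\ubp$ provided by Proposition \ref{prop3.2}; this is where the factor $\fc\e$ appears in front of the $\p_y\ubp$ term.

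Finally, the far-field terms structured around the cutoff $\chi$ and its derivatives are handled as in Lemma \ref{prop6.1}: the terms $\chi'(U\p_x G-\ka B\p_x H)$ and analogues go through Lemma \ref{ff1}, exploiting $\|(U,B)_\Phi\|_{\cB^{1/2}_\h}\lesssim\e\ttau^{-7/4}$ from \eqref{UBdecay}; the terms $\chi'(\p_x U\,u-\p_x B\,b)$, $\chi(-\p_x U\p_y u+\p_x B\p_y b)$ and $\chi''(Uv-Bh)$ use the remark \eqref{s8eq10} after Lemma \ref{ff1}, combined with the Poincar\'e-type inequality \eqref{S8eq34aq} to trade the $y$-weight on $(u,b)$ against a power of $\tt$ and a $\p_y$; the source terms $m_U+\tfrac{y}{2\tt}M_U$ and its $H$-counterpart are absorbed by Young's inequality, yielding the $\e^{-1}\|\ttau^{1/2}\hsf\ep\mMp\|^2$ piece. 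Assembling every contribution, multiplying by $\hbar(t)$ and integrating on $[t_0,t]$ gives \eqref{S7eq1}. The main obstacle I anticipate is the precise bookkeeping of the $y/(2\tt)$ factor built into $G$, which forces the asymmetric weights $\ttau^{-5/4}$ on $\ubp$-type remainders on the right-hand side; the analysis must carefully redistribute powers of $\tt$ between the $\ubp$ factors and the $\e$-smallness so that everything closes without destroying the diffusion gain $4\lk\|\hsf\ep\Dhk\p_y\ghp\|^2$ on the left.
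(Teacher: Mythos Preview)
There is a genuine gap in your handling of the nonlinear cross terms $(1-\ka)b\p_x b$ and $(1-\ka^{-1})b\p_x u$. Applying Lemma~\ref{lem:nl1} with $a=\tfrac12$, $b=c=d=1$ yields
\[
d_k^2 2^{-k}\|\hsf\ep\ubp\|_{\wt L^2_{[t_0,t];\dtht}(\cB^{1,0})}\,\|\hsf\ep\ghp\|_{\wt L^2_{[t_0,t];\dtht}(\cB^{1,0})},
\]
and your proposed extraction of the factor $\ttau^{-5/4}$ by ``splitting $f(t)$ into $\tt^{-5/4}\cdot\tt^{3/2}$'' does not work: in the $\wt L^2_f$ norms both factors carry the \emph{same} time weight, so no such redistribution is possible. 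After Young's inequality you are left with $\|\hsf\ep\ubp\|^2_{\wt L^2_{\dtht}(\cB^{1,0})}$ on the right-hand side, which is not present in \eqref{S7eq1} and cannot be absorbed.

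The mechanism the paper uses is different: apply Lemma~\ref{lem:nl1} with the \emph{lowered} exponent $b=\tfrac34$ on the $(u,b)$ factor (so $a=\tfrac12$, $b=\tfrac34$, $c=d=1$), and then invoke \eqref{S3eq16} of Lemma~\ref{lem3.1}, which gives $\|e^{\gamma\Psi}\Dhk\ubp\|_{L^2_+}\lesssim\|\ep\Dhk\ghp\|_{L^2_+}$ for any $\gamma<1$. This converts the $e^{3\Psi/4}\ubp$ norm back into an $\ep\ghp$ norm, so the whole term is absorbed into $\|\hsf\ep\ghp\|^2_{\wt L^2_{\dtht}(\cB^{1,0})}$. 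The same device (lowered Gaussian weight plus Lemma~\ref{lem3.1}, now via \eqref{S3eq15} or \eqref{S3eq16}) is what makes $v\p_y u-h\p_y b$ and the term $\tfrac{y}{\tt}\int_y^\infty(\p_x\vphi\p_yu-\p_x\psi\p_yb)\,dy'$ absorb into the $\ghp$ norm as you claim; you should make this explicit. The terms $\e\|\ttau^{-5/4}\hsf\ep\ubp\|^2_{\wt L^2_{\dtht}(\cB^{1,0})}$ and $\e\|\ttau^{-5/4}\hsf\ep\p_y\ubp\|^2_{\wt L^2(\cB^{1/2,0})}$ in \eqref{S7eq1} actually originate from the \emph{far-field} cross terms $(1-\ka)\chi'B\p_x b$, $(1-\ka^{-1})\chi'B\p_x u$ (via Lemma~\ref{ff1} together with the second summand in the definition \eqref{def:theta} of $\dtht$) and from the $\chi(-\p_xU\p_y u+\p_xB\p_y b)$ type terms (via \eqref{s8eq10} and the $\ttau^{9/4}$ decay in \eqref{UBdecay}), not from the purely nonlinear pieces.
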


\begin{proof}
 In view of \eqref{S3eq1}, by applying operator $e^{\Phi(t,|D_x|)}$ to \eqref{eqs3}, we write
\beq\begin{split}\label{S7eq2}
&\p_t \gp-\p^2_y\gp+\lam\dtht(t)|D_x|\gp+\f{1}\tt\gp-\bar{B}_\ka\p_xH_\Phi+ [u\p_xG-\ka b\p_x H-(1-\ka)b\p_xb]_\Phi
\\&\qquad+[v\p_yu-h\p_y b]_\Phi+\f{y}\tt \int_y^\infty [\p_x\vphi\p_yu-\p_x\psi\p_yb]_\Phi dy' \\
& \qquad+\chi'[U\p_xG-\ka B\p_xH-(1-\ka)B\p_xb]_\Phi +\chi''[Uv-Bh]_\Phi\\& \qquad+\f{y}\tt \int_y^\oo \chi''[U\p_x\vphi-B\p_x\psi]_\Phi dy'+\chi'[\p_xUu-\p_xBb]_\Phi\\& \qquad+\chi[-\p_xU\p_yu+\p_xB\p_yb]_\Phi+\chi\fyt[-\p_xUu+\p_xBb]_\Phi\\& \qquad+\chi'\f{y}\tt [\p_xU\vphi-\p_xB\psi]_\Phi+\f{y}\tt \int_y^\oo \chi''[\p_xU\vphi-\p_xB\psi]_\Phi dy'=[m_U+\fyt M_U]_\Phi,
\end{split} \eeq
and
\beq\begin{split}\label{S7eq3}
&\p_t \hp-\ka\p^2_y\hp+\lam\dtht(t)|D_x|\hp+\f1\tt \hp-\bar{B}_\ka\p_xG_\Phi \\
&\qquad+[u\p_x H-\f1\ka b\p_x G-(1-\f1\ka)b\p_xu]_\Phi+[v\p_y b-h\p_y u]_\Phi  \\
& \qquad+\chi'[U\p_xH-\f1\ka B\p_xG-(1-\f1\ka)B\p_xu]_\Phi+\chi''[Bv-Uh]_\Phi\\
& \qquad+\chi'[\p_xB u-\p_xUb]_\Phi+\chi[-\p_xU\p_yb+\p_xB\p_yu]_\Phi\\&\qquad+\chi\fykt[-\p_xUb+\p_xB u]_\Phi=[m_B+\fykt M_B]_\Phi
\end{split}\eeq

Again thanks to a similar cancelation equality \eqref{symmetry}, we get, by first applying the dyadic operator $\Dhk$ to \eqref{S7eq2}, \eqref{S7eq3} and then taking the $L^2_+$ inner product of the resulting equations with   $\epp\Dhk\ghp$ respectively, that
\beq\begin{split}\label{S7eq4}
&\bigl(\ep\Dhk(\p_t-\p^2_y)\gp\big|\ep\Dhk\gp \bigr)_{L^2_+}+\bigl(\ep\Dhk(\p_t-\ka\p^2_y)\hp\big|\ep\Dhk\hp \bigr)_{L^2_+}\\
&
+\lam\dtht(t)\bigl(\ep|D_x|\Dhk\gp\big|\ep\Dhk\gp \bigr)_{L^2_+}+\lam\dtht(t)\bigl(\ep|D_x|\Dhk\hp\big|\ep\Dhk\hp \bigr)_{L^2_+} \\
&
+\f{1}{\tt} \|\ep\Dhk\gp\|^2_{L^2_+}+\f{1}{\tt} \|\ep\Dhk\hp\|^2_{L^2_+} \\
&
+\bigl(\ep\Dhk[u\p_xG-\ka b\p_xH]_\Phi\big|\ep\Dhk\gp \bigr)_{L^2_+}+\bigl(\ep\Dhk[u\p_xH-\f1\ka b\p_xG]_\Phi\big|\ep\Dhk\hp \bigr)_{L^2_+} \\
&
-(1-\ka)\bigl(\ep\Dhk[b\p_xb]_\Phi\big|\ep\Dhk\gp \bigr)_{L^2_+}-(1-\f1{\ka}) \bigl(\ep\Dhk[b\p_xu]_\Phi\big|\ep\Dhk\hp \bigr)_{L^2_+} \\
&
+\bigl(\ep\Dhk[v\p_yu-h\p_yb]_\Phi\big|\ep\Dhk\gp \bigr)_{L^2_+}+\bigl(\ep\Dhk[v\p_yb-h\p_yu]_\Phi\big|\ep\Dhk\hp \bigr)_{L^2_+}\\
&
+\bigl(\ep\f{y}\tt \int_y^\oo \Dhk[\p_yu\p_x\vphi-\p_yb\p_x\psi]_\Phi dy'\big|\ep\Dhk\gp \bigr)_{L^2_+}\\
&
+\bigl(\ep\Dhk[U\p_xG-\ka B\p_xH]_\Phi\big|\ep\chi'\Dhk\gp \bigr)_{L^2_+}\\ &
+\bigl(\ep\Dhk[U\p_xH-\f1\ka B\p_xG]_\Phi\big|\ep\chi'\Dhk\hp \bigr)_{L^2_+} \\
&
-(1-\ka)\bigl(\ep\Dhk[B\p_xb]_\Phi\big|\ep\chi'\Dhk\gp \bigr)_{L^2_+}-(1-\f1\ka)\bigl(\ep\Dhk[B\p_xu]_\Phi\big|\ep\chi'\Dhk\hp \bigr)_{L^2_+} \\
&
+\bigl(\ep\Dhk[Uv-Bh]_\Phi\big|\ep\chi''\Dhk\gp \bigr)_{L^2_+}+\bigl(\ep\Dhk[Bv-Uh]_\Phi\big|\ep\chi''\Dhk\hp \bigr)_{L^2_+} \\
&
+\f1\tt\bigl(\ep y\int_y^\oo\chi''\Dhk[U\p_x\vphi-B\p_x\psi]_\Phi dy'\big|\ep\Dhk\gp \bigr)_{L^2_+} \\
&
+\bigl(\ep\Dhk[-\p_xU\p_yu+\p_xB\p_yb]_\Phi\big|\ep\chi\Dhk\gp \bigr)_{L^2_+}\\
&
+\bigl(\ep\Dhk[-\p_xU\p_yb+\p_xB\p_yu]_\Phi\big|\ep\chi\Dhk\hp \bigr)_{L^2_+} \\
&
+\bigl(\ep\Dhk[\p_xUu-\p_xBb]_\Phi\big|\ep\chi'\Dhk\gp \bigr)_{L^2_+}+\bigl(\ep\Dhk[\p_xBu-\p_xUb]_\Phi\big|\ep\chi'\Dhk\hp \bigr)_{L^2_+} \\
&
+\f1{2\tt}\bigl(\ep\Dhk[-\p_xUu+\p_xBb]_\Phi\big|\ep y\chi\Dhk\gp \bigr)_{L^2_+}\\
&
+\f1{2\ka\tt}\bigl(\ep\Dhk[-\p_xUb+\p_xBu]_\Phi\big|\ep y\chi\Dhk\hp \bigr)_{L^2_+} \\
&
+\f1\tt\bigl(\ep\Dhk[\p_xU\vphi-\p_xB\psi]_\Phi\big|\ep y\chi'\Dhk\gp \bigr)_{L^2_+}\\
&
+\f1\tt\bigl(\ep\int_y^\oo \chi''\Dhk[\p_xU\vphi-\p_xB\psi]_\Phi dy'\big|\ep y\Dhk\gp \bigr)_{L^2_+}\\
&
=\bigl(\ep\Dhk[m_U+\fyt M_U]_\Phi\big|\ep \Dhk\gp \bigr)_{L^2_+}+\bigl(\ep\Dhk[m_B+\fykt M_B]_\Phi\big|\ep \Dhk\hp \bigr)_{L^2_+} .
\end{split}\eeq
In what follows, we shall  use the technical lemmas in Section \ref{sect4} to handle term by term in \eqref{S7eq4}.

Notice that for $\ka\in ]0,2[,$ $\ka(2-\ka)\leq1$, we get, by applying \eqref{S4eq1} and \eqref{S4eq2}, that
\beqo\begin{split}
\int_{t_0}^t&\hbar(t')\Bigl(\bigl(\ep\Dhk(\p_t-\p^2_y)\gp\big|\ep\Dhk\gp \bigr)_{L^2_+}+\bigl(\ep\Dhk(\p_t-\ka\p^2_y)\hp\big|\ep\Dhk\hp \bigr)_{L^2_+}\Bigr)\,dt'\\
&\geq \ \f12\bigl(\|\hsf \ep\Dhk\ghp(t)\|^2_{L^2_+}-\|\hsf \ep\Dhk\ghp(t_0)\|^2_{L^2_+}\bigr) \\
&\qquad-\f12 \|\sqrt{\hbar'}\ep\Dhk\ghp\|^2_{L^2_{[t_0,t]}(L^2_+)}
+2\lk\|\hsf \ep\Dhk\p_y\ghp\|^2_{L^2_{[t_0,t]}(L^2_+)}.
\end{split}\eeqo

We deduce from  Lemma \ref{lem:Bern} that
\beqo\begin{split}
\lam\int_{t_0}^t\hbar(t')\dtht(t')\Bigl(\bigl(\ep|D_x|\Dhk\gp\big|\ep\Dhk\gp \bigr)_{L^2_+}
&+\bigl(\ep|D_x|\Dhk\hp\big|\ep\Dhk\hp \bigr)_{L^2_+}\Bigr)\,dt'\\
&
\geq c\lam 2^k\int_{t_0}^t\dtht(t')\|\hsf\ep\Dhk\ghp \|^2_{L^2_+}\,dt'.
\end{split}\eeqo

Whereas due to $\lim_{y\rto+\oo}u=\lim_{y\rto+\oo}b=0$, we get, by applying Lemma \ref{lem:nl1} with $a=\f12$ and $b=c=d=1$, that
\beqo\begin{split}
&\int_{t_0}^t\hbar(t')\Bigl(\bigl|\bigl(\ep\Dhk[u\p_xG-b\p_xH]_\Phi\big|\ep\Dhk\gp \bigr)_{L^2_+}\bigr|+\bigl|\bigl(\ep\Dhk[u\p_xH-b\p_xG]_\Phi\big|\ep\Dhk\hp \bigr)_{L^2_+}\bigr|\Bigr)\,dt'\\
&\qquad\qquad\lesssim d_k^22^{-k}\|\hsf\ep\ghp\|_{\wt{L}^2_{[t_0,t];f}(\cB^{1,0})}^2 \with f(t)=\w{t}^{\f14}\|e^{\f{\Psi}2}\p_y(u,b)_\Phi(t)\|_{\cB^{\f12,0}},
\end{split}\eeqo from which, we get, by a similar derivation of \eqref{S5eq7}, that
\beq\begin{split}\label{S7eq7}
\int_{t_0}^t&\hbar(t')\Bigl(\bigl|\bigl(\ep\Dhk[u\p_xG-b\p_xH]_\Phi\big|\ep\Dhk\gp \bigr)_{L^2_+}\bigr|\\
&+\bigl|\bigl(\ep\Dhk[u\p_xH-b\p_xG]_\Phi\big|\ep\Dhk\hp \bigr)_{L^2_+}\bigr|\Bigr)\,dt'\\
\lesssim & d_k^22^{-k}\|\hsf\ep\ghp\|_{\wt{L}^2_{[t_0,t];\dtht}(\cB^{1,0})}^2.
\end{split}\eeq
Along the same line, by applying Lemma \ref{lem:nl1} with $a=\f12$, $b=\f34$ and $c=d=1$, and then using \eqref{S3eq16},
we obtain
\beqo\begin{split}
&\int_{t_0}^t\hbar(t')\Bigl((1-\ka)\bigl|\bigl(\ep\Dhk[b\p_xb]_\Phi\big|\ep\Dhk\gp \bigr)_{L^2_+}\bigr|+(1-\f1\ka)\bigl|\bigl(\ep\Dhk[b\p_xu]_\Phi\big|\ep\Dhk\hp \bigr)_{L^2_+}\bigr|\Bigr)\,dt'\\
&\lesssim d_k^22^{-k}\|\hsf e^{\f34 \Psi}\ubp\|_{\wt{L}^2_{[t_0,t];\dtht}(\cB^{1,0})}\|\hsf\ep\ghp\|_{\wt{L}^2_{[t_0,t];\dtht}(\cB^{1,0})}\\
&\lesssim d_k^22^{-k}\|\hsf\ep\ghp\|_{\wt{L}^2_{[t_0,t];\dtht}(\cB^{1,0})}^2.
\end{split}\eeqo
Similarly,  applying Lemma \ref{lem:nl2} and \eqref{S3eq16} gives rise to
\beqo\begin{split}
&\int_{t_0}^t\hbar(t')\Bigl(\bigl|\bigl(\ep\Dhk[v\p_yu-h\p_yb]_\Phi\big|\ep\Dhk\up \bigr)_{L^2_+}\bigr|+\bigl|\bigl(\ep\Dhk[v\p_yb-h\p_yu]_\Phi\big|\ep\Dhk\bp \bigr)_{L^2_+}\bigr|\Bigr)\,dt'\\
&\lesssim d_k^22^{-k}\|\hsf e^{\f34 \Psi}\ubp\|_{\wt{L}^2_{[t_0,t];\dtht}(\cB^{1,0})}\|\hsf\ep\ghp\|_{\wt{L}^2_{[t_0,t];\dtht}(\cB^{1,0})}\\
&\lesssim d_k^22^{-k}\|\hsf\ep\ghp\|_{\wt{L}^2_{[t_0,t];\dtht}(\cB^{1,0})}^2.
\end{split}\eeqo

Whereas by applying Lemma \ref{lem:nl3} with $a=\f34$, $b=\f34$, $c=\f54$ and $d=1$, and then using \eqref{S3eq15},
we find
\beqo\begin{split}
&\int_{t_0}^t\hbar(t')\bigl|\bigl(\ep {\langle t' \rangle}^{-1}y \int_y^\oo \Dhk[\p_yu\p_x\vphi-\p_yb\p_x\psi]_\Phi\big|\ep\Dhk\gp \bigr)_{L^2_+}\bigr|\,dt'\\
&\lesssim d_k^22^{-k}\bigl\|e^{-\f\Psi4} \ttau^{-\f12} y\bigr\|_{L^\oo_t(L^\oo_\rmv)}\|\ttau^{-\f12} \hsf e^{\f34 \Psi}\ppp\|_{\wt{L}^2_{[t_0,t];\dtht}(\cB^{1,0})}\\
&\qquad\qquad\qquad\qquad\qquad\qquad\qquad\qquad\qquad\times\|\hsf\ep\ghp\|_{\wt{L}^2_{[t_0,t];\dtht}(\cB^{1,0})}\\
&\lesssim d_k^22^{-k}\|\hsf\ep\ghp\|_{\wt{L}^2_{[t_0,t];\dtht}(\cB^{1,0})}^2.
\end{split}\eeqo

While it follows  Lemma \ref{ff1} that
\beqo \begin{split}
&\int_{t_0}^t\hbar(t')\Bigl(\bigl|\bigl(\ep\Dhk[U\p_xG-\ka B\p_xH]_\Phi\big|\ep\chi'\Dhk\gp\bigr)_{L^2_+}\bigr|\\
&
\ \ +\bigl|\bigl(\ep\Dhk[U\p_xH-\f1\ka B\p_xG]_\Phi\big|\ep\chi'\Dhk\hp\bigr)_{L^2_+}\bigr|\Bigr)\,dt'
\lesssim \e^{\f12}d_k^22^{-k}\|\hsf\ep\ghp\|_{\wt{L}^2_{[t_0,t];\dtht}(\cB^{1,0})}^2.
\end{split}\eeqo
Similarly, we get, by applying Lemma \ref{ff1} and \eqref{def:theta}, that
\beqo \begin{split}
&\int_{t_0}^t\hbar(t')\Bigl((1-\ka)\bigl|\bigl(\ep\Dhk[B\p_xb]_\Phi\big|\ep\chi'\Dhk\gp\bigr)_{L^2_+}\bigr|\\
&\qquad+(1-\f1\ka)\bigl|\bigl(\ep\Dhk[B\p_xu]_\Phi\big|\ep\chi'\Dhk\hp\bigr)_{L^2_+}\bigr|\Bigr)\,dt'\\
&\lesssim \e^{\f12} d_k^22^{-k}\|\ttau^{-\f54}\hsf\ep\ubp\|_{\wt{L}^2_{[t_0,t];\dtht}(\cB^{1,0})}\|\hsf\ep\ghp\|_{\wt{L}^2_{[t_0,t];\dtht}(\cB^{1,0})}\\
&\lesssim d^2_k 2^{-k}\bigl( \e\|\ttau^{-\f54}\hsf\ep\ubp\|^2_{\wt{L}^2_{[t_0,t];\dtht}(\cB^{1,0})}+ \|\hsf\ep\ghp\|^2_{\wt{L}^2_{[t_0,t];\dtht}(\cB^{1,0})} \bigr).
\end{split}\eeqo

 Notice from the definition of $\chi$ above \eqref{change} that $\mbox{supp}\chi''\subset [1,2]$. Then
 in view of  \eqref{S6eq8}, we
get, by a similar derivation of \eqref{s8eq6}, that
\beqo \begin{split}
&\int_{t_0}^t\hbar(t')\Bigl(\bigl|\bigl(\ep\Dhk[Uv-Bh]_\Phi\big|\ep\chi''\Dhk\gp\bigr)_{L^2_+}\bigr|
+\bigl|\bigl(\ep\Dhk[Bv-Uh]_\Phi\big|\ep\chi''\Dhk\hp\bigr)_{L^2_+}\bigr|\Bigr)\,dt'\\
&\lesssim d_k^22^{-k} \|\ep \|_{L^\oo_{0\leq y\leq 2}}\|\hsf\ubp\|_{\wt{L}^2_{[t_0,t];\dtht}(\cB^{1,0})}\|\hsf\ep\ghp\|_{\wt{L}^2_{[t_0,t];\dtht}(\cB^{1,0})}\\
&\lesssim d_k^22^{-k}\|\hsf\ep\ghp\|_{\wt{L}^2_{[t_0,t];\dtht}(\cB^{1,0})}^2.
\end{split}\eeqo
Moreover, again due to $\mbox{supp}\chi''\subset[1,2]$, we deduce that the term
$
\int_{t_0}^t\hbar(t')\bigl|\f1\tt \bigl(\ep y\int_y^\infty \chi''\Dhk[U\p_x\vphi-B\p_x\psi]_\Phi dy'\big|\ep\Dhk\gp\bigr)_{L^2_+}\bigr|\,dt'$ shares
the same estimate as above.

On the other hand, we  observe from  \eqref{S8eq34aq}  that
\beno
\|\ep y\Dhk\ghp\|_{L^2_+}\leq  4\w{t}\|\ep\p_y\Dhk\ghp\|_{L^2_+},
\eeno
which implies
\beno
\|\ep y\ghp\|_{\wt{L}^p_t(\cB^{s,0})}\leq 4\|\ttau\ep\p_y\ghp\|_{\wt{L}^p_t(\cB^{s,0})}.
\eeno
So that
 by applying \eqref{s8eq10} and  \eqref{UBdecay}, we find
\beqo \begin{split}
\int_{t_0}^t&\hbar(t')\Bigl(\bigl|\bigl(\ep\Dhk[-\p_xU\p_yu+\p_xB\p_yb]_\Phi\big|\ep\chi\Dhk\gp\bigr)_{L^2_+}\bigr|\\
&\qquad+\bigl|\bigl(\ep\Dhk[-\p_xU\p_yb+\p_xB\p_yu]_\Phi\big|\ep\chi\Dhk\hp\bigr)_{L^2_+}\bigr|\Bigr)\,dt\\
\lesssim \ & d_k^2 2^{-{k}}\|\ttau^\f94\UBp\|_{\wt{L}^\oo_{[t_0,t]}(\cB^{\f32}_\h)}\|\ttau^{-\f54}\hsf\ep\p_y\ubp\|_{\wt{L}^2_{[t_0,t]}(\cB^{\f12,0})}\\
&\qquad\qquad\qquad\qquad\qquad\qquad\qquad\qquad\times\|\ttau^{-1}\hsf\ep y\ghp\|_{\wt{L}^2_{[t_0,t]}(\cB^{\f12,0})}\\
\lesssim \ &\e d^2_k 2^{-k}\Bigl(\|\ttau^{-\f54}\hsf\ep\p_y\ubp\|_{\wt{L}^2_{[t_0,t]}(\cB^{\f12,0})}^2+\|\hsf\ep\p_y\ghp\|^2_{\wt{L}^2_{[t_0,t]}(\cB^{\f12,0})}\Bigr).
\end{split}\eeqo
Along the same line, we infer
\beqo \begin{split}
\int_{t_0}^t&\hbar(t')\Bigl(\bigl|\bigl(\ep\Dhk[\p_xUu-\p_xBb]_\Phi\big|\ep\chi'\Dhk\gp\bigr)_{L^2_+}\bigr|\\
&\qquad+\bigl|\bigl(\ep\Dhk[\p_xBu-\p_xUb]_\Phi\big|\ep\chi'\Dhk\hp\bigr)_{L^2_+}\bigr|\Bigr)\,dt'\\
\lesssim \ & d_k^22^{-{k}}\|\ttau^\f94 \UBp \|_{\wt{L}^\oo_{[t_0,t]}(\cB^{\f32}_\h)}\|\ttau^{-\f74}\hsf\ep\ubp \|_{\wt{L}^2_{[t_0,t]}(\cB^{\f12,0})}\\
&\qquad\qquad\qquad\qquad\qquad\qquad\qquad\qquad\times
\|\ttau^{-\f12}\hsf\ep \ghp\|_{\wt{L}^2_{[t_0,t]}(\cB^{\f12,0})}\\
\lesssim \ &\e d^2_k 2^{-k}\Bigl(\|\ttau^{-\f54}\hsf\ep\p_y\ubp\|_{\wt{L}^2_{[t_0,t]}(\cB^{\f12,0})}^2+\|\hsf\ep\p_y\ghp\|^2_{\wt{L}^2_{[t_0,t]}(\cB^{\f12,0})}\Bigr).
\end{split}\eeqo
and
\beqo \begin{split}
\int_{t_0}^t&\hbar(t')\Bigl(\f1{2{\langle t'\rangle}}\bigl|\bigl(\ep y\Dhk[-\p_xUu+\p_xBb]_\Phi\big|\ep\chi\Dhk\gp\bigr)_{L^2_+}\bigr|\\
&\qquad+\f1{2\ka\langle t'\rangle}\bigl|\bigl(\ep\Dhk[-\p_xUb+\p_xBu]_\Phi\big|\ep\chi\Dhk\hp\bigr)_{L^2_+}\bigr|\Bigr)\,dt'\\
\lesssim \ & d_k^22^{-{k}}\|\ttau^\f94 \UBp \|_{\wt{L}^\oo_{[t_0,t]}(\cB^{\f32})}\|\ttau^{-\f94}\hsf\ep y\ubp \|_{\wt{L}^2_{[t_0,t]}(\cB^{\f12,0})}\\
&\qquad\qquad\qquad\qquad\qquad\qquad\qquad\qquad\times\|\ttau^{-1}\hsf\ep y\ghp\|_{\wt{L}^2_{[t_0,t]}(\cB^{\f12,0})}\\
\lesssim \ &\e d^2_k 2^{-k}\Bigl(\|\ttau^{-\f54}\hsf\ep\p_y\ubp\|_{\wt{L}^2_{[t_0,t]}(\cB^{\f12,0})}^2+\|\hsf\ep\p_y\ghp\|^2_{\wt{L}^2_{[t_0,t]}(\cB^{\f12,0})}\Bigr).
\end{split}\eeqo
and
\beqo \begin{split}
\int_{t_0}^t&\hbar(t')\f1{\langle t'\rangle}\bigl|\bigl(\ep \Dhk[\p_xU\vphi-\p_xB\psi]_\Phi\big|\ep y\chi'\Dhk\gp\bigr)_{L^2_+}\bigr|\,dt'\\
\lesssim \ & d_k^22^{-{k}}\|\ttau^\f94 \UBp \|_{\wt{L}^\oo_{[t_0,t]}(\cB^{\f32})}\|\ttau^{-\f94}\hsf\ep \ppp \|_{\wt{L}^2_{[t_0,t]}(\cB^{\f12,0})}\\
&\qquad\qquad\qquad\qquad\qquad\qquad\qquad\qquad\times\|\ttau^{-1}\hsf\ep y\ghp\|_{\wt{L}^2_{[t_0,t]}(\cB^{\f12,0})}\\
\lesssim \ &\e d^2_k 2^{-k}\Bigl(\|\ttau^{-\f54}\hsf\ep\p_y\ubp\|_{\wt{L}^2_{[t_0,t]}(\cB^{\f12,0})}^2+\|\hsf\ep\p_y\ghp\|^2_{\wt{L}^2_{[t_0,t]}(\cB^{\f12,0})}\Bigr).
\end{split}\eeqo
Furthermore, due to $\mbox{supp}\chi''\subset[1,2]$,, we deduce that the term
$
\int_{t_0}^t\hbar(t')\bigl|{\langle t'\rangle}^{-1} \bigl(\ep \int_y^\infty \chi''\Dhk[\p_xU\vphi-\p_xB\psi]_\Phi dy'\big|\ep y\Dhk\gp\bigr)_{L^2_+}\bigr|\,dt'$ shares
the same estimate as above.

Finally for the remained source term, by Young's inequality, we achieve
\beqo \label{S7eq9}\begin{split}
\int_{t_0}^t&\hbar(t')\Bigl(\bigl|\bigl(\ep\Dhk(m_U+\f{y}{2\langle t'\rangle} M_U)_\Phi\big|\ep\Dhk\gp\bigr)_{L^2_+}\bigr|\\
&+\bigl|\bigl(\ep\Dhk(m_B+\f{y}{2\ka\langle t'\rangle} M_B)_\Phi\big|\ep\Dhk\hp\bigr)_{L^2_+}\bigr|\Bigr)\,dt'\\
\lesssim \ & d^2_k2^{-k} \|\ttau^{\f12}\hsf\ep\mMp\|_{\wt{L}^2_{[t_0,t]}(\cB^{\f12,0})}\|\ttau^{-\f12}\hsf\ep\ghp\|_{\wt{L}^2_{[t_0,t]}(\cB^{\f12,0})}\\
\lesssim \ & d^2_k2^{-k}\bigl( \e \|\hsf\ep\p_y\ghp\|^2_{\wt{L}^2_{[t_0,t]}(\cB^{\f12,0})}+\e^{-1} \|\ttau^\f12\hsf\ep\mMp\|^2_{\wt{L}^2_{[t_0,t]}(\cB^{\f12,0})}\bigr)
\end{split}\eeqo

By multiplying \eqref{S7eq4} by $\hbar(t)$ and then integrating the resulting equality over $[t_0,t],$
and finally inserting the above estimates to the equality, we
 obtain \eqref{S7eq1}. This completes the proof of Lemma \ref{prop7.1}.
\end{proof}

\begin{lemma}\label{prop7.2}
{\sl Let $(u,b)$ and $(\vphi,\psi)$ be smooth enough solutions of \eqref{eqs1} and \eqref{eqs2} respectively, and $(G,H)$ be determined by \eqref{defGH}. Let $\Phi_\ka(t,\xi)$ and $\Psi_\ka(t,y)$ be given by \eqref{def:Phikappa} and \eqref{def:Psikappa} respectively. Then if $\ka\in ]1/2,\oo[,$ for $\elk\eqdefa \f{2\ka-1}{4\ka^2},$ and for any non-negative and non-decreasing function $\hs\in C^1(\R_+)$, there exists a
positive constant $\fc$ so that
\beq\begin{split}\label{S7eq13}
&\| \hsf \epk\Dhk \ghpk\|^2_{L^\oo_{[t_0,t]}(L^2_+)} +4\ka\elk\| \hsf \epk\Dhk\p_y \ghpk\|^2_{L^2_{[t_0,t]}(L^2_+)}\\
&\ +2\|\ttau^{-\f12} \hsf \epk\Dhk  \ghpk\|^2_{L^2_{[t_0,t]}(L^2_+)} +2c\lam 2^k \|\hsf\epk\Dhk\ghpk \|^2_{L^2_{[t_0,t];\dthtk}(L^2_+)}
\\ &\leq \| \hsf \epk\Dhk \ghpk(t_0)\|^2_{L^2_+}+\| \qhs \epk\Dhk \ghpk\|^2_{L^2_{[t_0,t]}(L^2_+)}\\
 &\ +\fc\e d^2_k 2^{-k}\|\hsf\epk \p_y\ghpk \|^2_{\wt{L}^2_{[t_0,t]}(\cB^{\f12,0})}+C d^2_k 2^{-k}\Bigl( \|\hsf\epk\ghpk \|^2_{\wt{L}^2_{[t_0,t];\dthtk}(\cB^{1,0})}\\
 &\qquad\qquad +\e \|\ttau^{-\f54}\hsf\epk \p_y\ubpk \|^2_{\wt{L}^2_{[t_0,t]}(\cB^{\f12,0})}+\e\|\ttau^{-\f54}\hsf\epk\ubpk \|^2_{\wt{L}_{[t_0,t];\dthtk}(\cB^{1,0})} \\
 &\qquad\qquad\qquad\qquad\qquad\qquad\qquad+\e^{-1} \|\ttau^\f12 \hsf \epk \mMpk \|^2_{\wt{L}^2_{[t_0,t]}(\cB^{\f12,0})}\Bigr),
\end{split}\eeq
for any $t_0\in[0,t]$ with $t<T^*_\ka$, which is defined by \eqref{def:T^*kappa}.}
\end{lemma}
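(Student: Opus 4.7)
The plan is to mirror the proof of Lemma \ref{prop7.1}, substituting $\Phi_\ka, \Psi_\ka, \dthtk$ for $\Phi, \Psi, \dtht$ throughout, and adjusting the dissipation constants to reflect the $\ka>1/2$ regime. First I would apply $e^{\Phi_\ka(t,|D_x|)}$ to system \eqref{eqs3}, then apply $\Dhk$ and take the $L^2_+$ inner product of the resulting $G$- and $H$-equations with $\eppk\Dhk\gpk$ and $\eppk\Dhk\hpk$ respectively. The $\bar B_\ka\p_x$ cross terms still cancel by the integration-by-parts identity in \eqref{symmetry} written with $\Phi_\ka$, and the Fourier multiplier $\lam\dthtk(t)|D_x|$ generates the $c\lam 2^k\|\hsf\epk\Dhk\ghpk\|^2_{L^2_{[t_0,t];\dthtk}}$ contribution via Lemma \ref{lem:Bern}. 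The diagonal $G/\tt$ and $H/\tt$ terms of \eqref{eqs3} directly produce the $\|\ttau^{-\f12}\hsf\epk\Dhk\ghpk\|^2$ lower bound.

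For the heat-part lower bound I would invoke \eqref{S4eq3} applied to $G$ (diffusion coefficient $1$, Gaussian weight $\Psi_\ka$) and \eqref{S4eq4} applied to $H$ (diffusion coefficient $\ka$, same weight), which jointly yield the factor $2\ka\elk=\frac{2\ka-1}{2\ka}$ in front of $\|\hsf\epk\Dhk\p_y\ghpk\|^2$, exactly as in Lemma \ref{prop6.2}. The nonlinear transport/stretching terms $[u\p_xG-\ka b\p_xH]_{\Phi_\ka}$, $[u\p_xH-\f1\ka b\p_xG]_{\Phi_\ka}$, $(1-\ka)[b\p_xb]_{\Phi_\ka}$, $(1-\f1\ka)[b\p_xu]_{\Phi_\ka}$, $[v\p_yu-h\p_yb]_{\Phi_\ka}$, $[v\p_yb-h\p_yu]_{\Phi_\ka}$ are estimated by Lemmas \ref{lem:nl1} and \ref{lem:nl2} with parameters $a=\f1{2\ka}$ and $b=c=d=\f1\ka$ (the choice used in Lemmas \ref{prop5.2} and \ref{prop6.2}), and then Lemma \ref{lem3.2} converts $\|e^{\f1{2\ka}\Psi_\ka}\p_y(u,b)_{\Phi_\ka}\|$-type norms into $\|\epk\p_y\ghpk\|$-type norms, which is precisely the quantity controlled by $\dthtk$ through \eqref{def:thetakappa}. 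The integral term $\frac{y}{\tt}\int_y^\infty[\p_x\vphi\p_yu-\p_x\psi\p_yb]_{\Phi_\ka}\,dy'$ is treated by Lemma \ref{lem:nl3} with $a=b=\f3{4\ka}$, $c=\f5{4\ka}$, $d=\f1\ka$ combined with \eqref{S3eq30}.

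For the far-field-coupled terms involving $(U,B)$ and $\chi,\chi',\chi''$, I would apply Lemma \ref{ff1} (and the remark \eqref{s8eq10}) together with the weighted Poincar\'e inequality \eqref{S8eq33aq}, which supplies $\|\epk y\Dhk\ghpk\|_{L^2_+}\leq 4\ka\tt\|\epk\p_y\Dhk\ghpk\|_{L^2_+}$; the extra factor $\ka$ is absorbed into $\fc\e$ after using \eqref{UBdecay} to gain $\e$. These are the terms that produce the $\e\|\ttau^{-\f54}\hsf\epk\ubpk\|^2_{\wt L^2_{[t_0,t];\dthtk}(\cB^{1,0})}$ and $\e\|\ttau^{-\f54}\hsf\epk\p_y\ubpk\|^2_{\wt L^2_{[t_0,t]}(\cB^{\f12,0})}$ contributions on the right-hand side. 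The source term $[m_U+\fykt M_U, m_B+\fykt M_B]_{\Phi_\ka}$ is controlled by Young's inequality after writing $\frac{y}{2\ka\tt}$ in the form allowed by \eqref{S8eq33aq}, producing the $\e^{-1}\|\ttau^{\f12}\hsf\epk\mMpk\|^2$ piece. The main obstacle will be the careful bookkeeping of time weights $\tt^{-\f12}, \tt^{-\f54}, \tt^{-\f94}$ in the far-field-coupled terms, ensuring they combine with $\|\ttau^{\f94}\UBpk\|_{\wt L^\infty(\cB^{\f32}_\h)}\leq\e$ from \eqref{UBdecay} to leave precisely the decay rates appearing in \eqref{S7eq13}; once this accounting is complete, multiplying by $\hbar(t)$, integrating over $[t_0,t]$, and collecting all pieces yields the claimed inequality.
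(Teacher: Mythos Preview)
Your proposal follows the paper's own proof essentially verbatim: the same substitution $\Phi\to\Phi_\ka$, $\Psi\to\Psi_\ka$, the same use of \eqref{S4eq3}--\eqref{S4eq4} for the dissipative lower bound, the same Poincar\'e inequality \eqref{S8eq33aq} with the extra $\ka$ factor, and the same treatment of the far-field and source terms. One small correction is needed: for the terms $(1-\ka)[b\p_xb]_{\Phi_\ka}$, $(1-\f1\ka)[b\p_xu]_{\Phi_\ka}$ and $[v\p_yu-h\p_yb]_{\Phi_\ka}$, $[v\p_yb-h\p_yu]_{\Phi_\ka}$, the parameter choice $b=\f1\ka$ in Lemmas \ref{lem:nl1}--\ref{lem:nl2} produces a factor $\|\hsf\epk\ubpk\|_{\wt L^2_{[t_0,t];\dthtk}(\cB^{1,0})}$ that cannot be converted to a $(G,H)$ norm via Lemma \ref{lem3.2} (which requires $\ga<1$); the paper instead takes $b=\f3{4\ka}$ for these terms, yielding $\|\hsf e^{\f34\Psi_\ka}\ubpk\|$, which \eqref{S3eq31} with $\ga=\f34$ bounds by $\|\hsf\epk\ghpk\|$. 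With this adjustment your outline is complete.
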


\begin{proof}
The proof of Lemma \ref{prop7.2} is almost the same as that of Lemma \ref{prop7.1}.
 Indeed \eqref{S7eq4} holds with $\tht, \Psi$ and $\Phi$ there being replaced by $\tht_\ka, \Psi_\ka$ and $\Phi_\ka.$
 In what follows, we just present the estimates to the  terms with the estimates of which are different from that of Lemma \ref{prop7.1}.

We first observe that due to $\ka\in ]1/2,\oo[,$
 $2\ka-1\leq\ka^2.$ Then we get, by applying \eqref{S4eq3} and \eqref{S4eq4}, that
\beqo\begin{split}
\int_{t_0}^t&\hbar(t')\Bigl(\bigl(\epk\Dhk(\p_t-\p^2_y)\gpk\big|\epk\Dhk\gpk \bigr)_{L^2_+}
+\bigl(\epk\Dhk(\p_t-\ka\p^2_y)\hpk\big|\epk\Dhk\hpk \bigr)_{L^2_+}\Bigr)\,dt'\\
&\geq \ \f12\bigl(\|\hsf \epk\Dhk\ghpk(t)\|^2_{L^2_+}-\|\hsf \epk\Dhk\ghpk(t_0)\|^2_{L^2_+}\bigr) \\
&\qquad-\f12 \|\sqrt{\hbar'}\epk\Dhk\ghpk\|^2_{L^2_{[t_0,t]}(L^2_+)}
+2\ka\elk\|\hsf \epk\Dhk\p_y\ghpk\|^2_{L^2_{[t_0,t]}(L^2_+)}.
\end{split}\eeqo

Whereas due to $\lim_{y\rto+\oo}u=\lim_{y\rto+\oo}b=0$, we get, by applying Lemma \ref{lem:nl1} with $a=\f1{2\ka}$ and $b=c=d=\f1\ka$, that
\beqo\begin{split}
&\int_{t_0}^t\hbar(t')\Bigl(\bigl|\bigl(\epk\Dhk[u\p_xG-b\p_xH]_{\Phi_\ka} \big|\epk\Dhk\gpk \bigr)_{L^2_+}\bigr|\\
&\qquad\qquad+\bigl|\bigl(\epk\Dhk[u\p_xH-b\p_xG]_{\Phi_\ka} \big| \epk\Dhk\hpk \bigr)_{L^2_+}\bigr|\Bigr)\,dt'\\
&\lesssim d_k^22^{-k}\|\hsf\epk\ghpk\|_{\wt{L}^2_{[t_0,t];f}(\cB^{1,0})}^2 \with f(t)=\w{t}^{\f14}\|e^{\f12{\Psi_\ka}}\p_y(u,b)_{\Phi_\ka}(t)\|_{\cB^{\f12,0}},
\end{split}\eeqo
from which, we get, by a similar derivation of \eqref{S5eq17}, that
\beq\begin{split}\label{S7eq17}
&\int_{t_0}^t\hbar(t')\Bigl(\bigl|\bigl(\epk\Dhk[u\p_xG-b\p_xH]_{\Phi_\ka}\big|\epk\Dhk\gpk \bigr)_{L^2_+}\bigr|\\
&\qquad\qquad+\bigl|\bigl(\epk\Dhk[u\p_xH-b\p_xG]_{\Phi_\ka}\big|\epk\Dhk\hpk \bigr)_{L^2_+}\bigr|\Bigr)\,dt'\\
&\lesssim d_k^22^{-k}\|\hsf\epk\ghpk\|_{\wt{L}^2_{[t_0,t];\dthtk}(\cB^{1,0})}^2.
\end{split}\eeq
Along the same line, by applying Lemma \ref{lem:nl1} with $a=\f1{2\ka}$, $b=\f3{4\ka}$ and $c=d=\f1\ka$, we get, by a similar derivation of \eqref{S7eq17}, that
\beqo\begin{split}
&\int_{t_0}^t\hbar(t')\Bigl((1-\ka)\bigl|\bigl(\epk\Dhk[b\p_xb]_{\Phi_\ka}\big|\epk\Dhk\gpk \bigr)_{L^2_+}\bigr|\\
&\qquad\qquad+(1-\f1\ka)\bigl|\bigl(\epk\Dhk[b\p_xu]_{\Phi_\ka}\big|\epk\Dhk\hpk \bigr)_{L^2_+}\bigr|\Bigr)\,dt'\\
&\lesssim d_k^22^{-k}\|\hsf e^{\f34 \Psi_\ka}\ubpk\|_{\wt{L}^2_{[t_0,t];\dthtk}(\cB^{1,0})}\|\hsf\epk\ghpk\|_{\wt{L}^2_{[t_0,t];\dthtk}(\cB^{1,0})}\\
&\lesssim d_k^22^{-k}\|\hsf\epk\ghpk\|_{\wt{L}^2_{[t_0,t];\dthtk}(\cB^{1,0})}^2.
\end{split}\eeqo
Similarly, by applying Lemma \ref{lem:nl3} with $a=\f3{4\ka}$, $b=\f3{4\ka}$, $c=\f5{4\ka}$ and $d=\f1\ka$, we achieve
\beqo\begin{split}
&\int_{t_0}^t\hbar(t')\bigl|\bigl(\epk{\langle t'\rangle}^{-1}y \int_y^\oo \Dhk[\p_yu\p_x\vphi-\p_yb\p_x\psi]_{\Phi_\ka}
\big| \epk\Dhk\gpk \bigr)_{L^2_+}\bigr|\,dt'\\
&\lesssim d_k^22^{-k}\|e^{-\f{\Psi_\ka}4} \ttau^{-\f12}y \|_{L^\oo_t(L^\oo_\rmv)}\|\ttau^{-\f12} \hsf e^{\f34 \Psi_\ka}\pppk\|_{\wt{L}^2_{[t_0,t];\dthtk(t)}(\cB^{1,0})}\\
&\qquad\qquad\qquad\qquad\qquad\qquad\qquad\qquad\times\|\hsf\epk\ghpk\|_{\wt{L}^2_{[t_0,t];\dthtk}(\cB^{1,0})}\\
&\lesssim d_k^22^{-k}\|\hsf\epk\ghpk\|_{\wt{L}^2_{[t_0,t];\dthtk}(\cB^{1,0})}^2.
\end{split}\eeqo

On the other hand, we deduce from  \eqref{S8eq33aq}  that
\beno
\|\epk y\Dhk\ghpk\|_{L^2_+}\lesssim \ka\w{t}\|\epk\p_y\Dhk\ghpk\|_{L^2_+},
\eeno
which implies
\beno
\|\epk y\ghpk\|_{\wt{L}^p_t(\cB^{s,0})}\lesssim \ka\|\ttau\epk\p_y\ghpk\|_{\wt{L}^p_t(\cB^{s,0})}.
\eeno
So that
 by applying \eqref{s8eq10} and  \eqref{UBdecay}, we find
\beqo \begin{split}
\int_{t_0}^t&\hbar(t')\Bigl(\bigl|\bigl(\epk\Dhk[-\p_xU\p_yu+\p_xB\p_yb]_{\Phi_\ka} \big|\epk\chi\Dhk\gpk\bigr)_{L^2_+}\bigr|\\
&\qquad+\bigl|\bigl(\epk\Dhk[-\p_xU\p_yb+\p_xB\p_yu]_{\Phi_\ka} \big| \epk\chi\Dhk\hpk\bigr)_{L^2_+}\bigr|\Bigr)\,dt\\
\lesssim \ & d_k^2 2^{-{k}}\|\ttau^\f94\UBpk|_{\wt{L}^\oo_{[t_0,t]}(\cB^{\f32}_\h)}\|\ttau^{-\f54}\hsf\epk\p_y\ubpk\|_{\wt{L}^2_{[t_0,t]}(\cB^{\f12,0})}\\
&\qquad\qquad\qquad\qquad\qquad\qquad\qquad\qquad\times\|\ttau^{-1}\hsf\epk y\ghpk\|_{\wt{L}^2_{[t_0,t]}(\cB^{\f12,0})}\\
\lesssim \ &\ka\e d^2_k 2^{-k}\Bigl(\|\ttau^{-\f54}\hsf\epk\p_y\ubpk\|_{\wt{L}^2_{[t_0,t]}(\cB^{\f12,0})}^2+\|\hsf\epk\p_y\ghpk\|^2_{\wt{L}^2_{[t_0,t]}(\cB^{\f12,0})}\Bigr).
\end{split}\eeqo

With the above estimates, we can repeat the proof of \eqref{S7eq1} to complete the proof of
\eqref{S7eq13}.
  This completes the proof of Lemma \ref{prop7.2}.
\end{proof}

An immediate application of Lemmas \ref{prop7.1} and \ref{prop7.2} leads to

\begin{col}\label{col7.1}
{\sl Under the assumption of Lemma \ref{prop7.1}, there exist positive constants $\e_0, \lam_0$ so that for any $\lam\geq\lam_0$ and $\e\leq \e_0,$ one has
\beq\begin{split}\label{S7eq23}
&\| \hsf\ep \ghp\|_{\wt{L}^\oo_{[t_0,t]} (\cB^{\f12,0})} +\sqrt{l_\ka}\|\hsf\ep\p_y\ghp\|_{\wt{L}^2_{[t_0,t]} (\cB^{\f12,0})}\\
&\leq\  \|\hsf\ep\ghp(t_0)\|_{\cB^{\f12,0}}+\|\qhs \ep \ghp\|_{\wt{L}^2_{[t_0,t]}(\cB^{\f12,0})}\\ &\quad+C\Bigl(\e^{\f12}\|\ttau^{-\f54}\hsf\ep\p_y\ubp\|_{\wt{L}^2_{[t_0,t]}(\cB^{\f12,0})}+\e^{\f12}\|\ttau^{-\f54}\hsf\ep\ubp\|_{\wt{L}^2_{[t_0,t];\dtht}(\cB^{1,0})}
\\
&\quad\qquad\qquad\qquad\qquad\qquad\qquad+\e^{-\f12}\|\ttau^\f12\hsf\ep\mMp \|_{\wt{L}^2_{[t_0,t]}(\cB^{\f12,0})}\Bigr).
\end{split}\eeq}
\end{col}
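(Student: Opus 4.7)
The plan is to derive \eqref{S7eq23} from the dyadic inequality \eqref{S7eq1} of Lemma \ref{prop7.1} by the same square-root/sum procedure that produced Corollaries \ref{col5.1} and \ref{col6.1}. More precisely, I would first take the square root of \eqref{S7eq1}, use the elementary fact that $\sqrt{a_1^2+\cdots+a_N^2}\leq a_1+\cdots+a_N$ for non-negative $a_i$, then multiply the resulting inequality by $2^{k/2}$ and finally sum over $k\in\Z$. In view of Definitions \ref{def2.2} and \ref{def2.3}, this yields
\beno
\begin{split}
&\|\hsf\ep\ghp\|_{\wt{L}^\oo_{[t_0,t]}(\cB^{\f12,0})}+2\sqrt{\lk}\,\|\hsf\ep\p_y\ghp\|_{\wt{L}^2_{[t_0,t]}(\cB^{\f12,0})}\\
&\qquad+\sqrt{2c\lam}\,\|\hsf\ep\ghp\|_{\wt{L}^2_{[t_0,t];\dtht}(\cB^{1,0})}\leq\|\hsf\ep\ghp(t_0)\|_{\cB^{\f12,0}}+\|\qhs\ep\ghp\|_{\wt{L}^2_{[t_0,t]}(\cB^{\f12,0})}\\
&\qquad+\sqrt{\fc\e}\,\|\hsf\ep\p_y\ghp\|_{\wt{L}^2_{[t_0,t]}(\cB^{\f12,0})}+C\|\hsf\ep\ghp\|_{\wt{L}^2_{[t_0,t];\dtht}(\cB^{1,0})}\\
&\qquad+C\sqrt{\e}\,\|\ttau^{-\f54}\hsf\ep\ubp\|_{\wt{L}^2_{[t_0,t];\dtht}(\cB^{1,0})}+C\sqrt{\e}\,\|\ttau^{-\f54}\hsf\ep\p_y\ubp\|_{\wt{L}^2_{[t_0,t]}(\cB^{\f12,0})}\\
&\qquad+C\e^{-\f12}\|\ttau^\f12\hsf\ep\mMp\|_{\wt{L}^2_{[t_0,t]}(\cB^{\f12,0})}.
\end{split}
\eeno

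The key observation is that the two terms on the right-hand side that are homogeneous of the same order as the left-hand side, namely $\sqrt{\fc\e}\,\|\hsf\ep\p_y\ghp\|_{\wt{L}^2_{[t_0,t]}(\cB^{\f12,0})}$ and $C\|\hsf\ep\ghp\|_{\wt{L}^2_{[t_0,t];\dtht}(\cB^{1,0})}$, can be absorbed by their counterparts on the left. Indeed, choosing $\lam_0$ so large that $\sqrt{2c\lam_0}\geq 2C$ and then $\e_0$ so small that $\sqrt{\fc\e_0}\leq\sqrt{\lk}$ allows one to move half of each offending quantity from the right to the left.

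After this absorption, the surviving terms on the right-hand side are precisely those appearing in \eqref{S7eq23}, which yields the desired inequality. There is no substantive difficulty here beyond the absorption step: the analytic radius is controlled through the weight $\dtht(t)$ exactly as in Corollaries \ref{col5.1} and \ref{col6.1}, and the constants $\fc$, $C$ produced by Lemma \ref{prop7.1} are independent of $\e$, so the smallness choice of $\e_0$ can be made once and for all in terms of $\lk$ and $\fc$.
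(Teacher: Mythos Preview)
Your proposal is correct and follows exactly the paper's own proof: take the square root of \eqref{S7eq1}, multiply by $2^{k/2}$, sum over $k\in\Z$, and then absorb the two right-hand terms $\sqrt{\fc\e}\,\|\hsf\ep\p_y\ghp\|_{\wt{L}^2_{[t_0,t]}(\cB^{\f12,0})}$ and $C\|\hsf\ep\ghp\|_{\wt{L}^2_{[t_0,t];\dtht}(\cB^{1,0})}$ by choosing $\lam$ large and $\e$ small. The intermediate inequality you wrote matches the paper's verbatim.
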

\begin{proof}
By taking square root of \eqref{S7eq1} and then multiplying the resulting inequality by $2^\f{k}2$ and finally summing over $k\in\Z$, we find
\beqo\begin{split}
&\|\hsf\ep\ghp\|_{\wt{L}^\oo_{[t_0,t]} (\cB^{\f12,0})} +2\sqrt{l_\ka}\|\hsf\ep\p_y\ghp\|_{\wt{L}^2_{[t_0,t]} (\cB^{\f12,0})}\\
&+\sqrt{2c\lam}\|\hsf\ep\ghp\|_{\wt{L}^2_{[t_0,t];\dtht} (\cB^{1,0})}\leq \|\hsf\ep\ghp(t_0)\|_{\cB^{\f12,0}}\\
&+\|\qhs \ep\ghp\|_{\wt{L}^2_{[t_0,t]}(\cB^{\f12,0})}+\sqrt{\fc\e}\|\hsf\ep\p_y\ghp\|_{\wt{L}^2_{[t_0,t]}(\cB^{\f12,0})}\\
&+C\Bigl(\e^{\f12}\|\ttau^{-\f54}\hsf\ep\p_y\ubp\|_{\wt{L}^2_{[t_0,t]}(\cB^{\f12,0})}+\e^{\f12}\|\ttau^{-\f54}\hsf\ep\ubp\|_{\wt{L}^2_{[t_0,t];\dtht}(\cB^{1,0})}\\
&
+\|\hsf\ep\ghp\|_{\wt{L}^2_{[t_0,t];\dtht} (\cB^{1,0})}
+\e^{-\f12}\|\ttau^\f12\hsf\ep\mMp \|_{\wt{L}^2_{[t_0,t]}(\cB^{\f12,0})}\Bigr).
\end{split}\eeqo
Then by taking $\lam$ big enough and $\e\leq \e_0$ small enough,  we arrive at \eqref{S7eq23}.
\end{proof}

\begin{col}\label{col7.2}
{\sl Under the assumption of Lemma \ref{prop6.2}, there exist positive constants $\e_0, \lam_0$ so that for any $\lam\geq\lam_0$ and $\e\leq \e_0,$ one has
\beq\begin{split}\label{S7eq24}
&\| \hsf\epk \ghpk\|_{\wt{L}^\oo_{[t_0,t]} (\cB^{\f12,0})} +\sqrt{\ka\elk}\|\hsf\epk\p_y\ghpk\|_{\wt{L}^2_{[t_0,t]} (\cB^{\f12,0})}\\
&\leq\ \|\hsf\epk\ghpk(t_0)\|_{\cB^{\f12,0}}+\|\qhs \epk \ghpk\|_{\wt{L}^2_{[t_0,t]}(\cB^{\f12,0})}\\ &+C\Bigl({\e}^{\f12}\|\ttau^{-\f54}\hsf\epk\p_y\ubpk\|_{\wt{L}^2_{[t_0,t]}(\cB^{\f12,0})}+\e^{\f12}\|\ttau^{-\f54}\hsf\epk\ubpk\|_{\wt{L}^2_{[t_0,t];\dthtk}(\cB^{1,0})}
\\
&\qquad\qquad\qquad\qquad\qquad\qquad\quad+\e^{-\f12}\|\ttau^\f12\hsf\epk\mMpk \|_{\wt{L}^2_{[t_0,t]}(\cB^{\f12,0})}\Bigr).
\end{split}\eeq}
\end{col}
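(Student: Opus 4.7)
The plan is to mirror exactly the proof of Corollary \ref{col7.1}, now starting from the $\kappa$-weighted energy inequality \eqref{S7eq13} of Lemma \ref{prop7.2} in place of \eqref{S7eq1}. Concretely, first I would take the square root of \eqref{S7eq13}, multiply the resulting bound by $2^{k/2}$, and sum over $k\in\Z$. Using the elementary fact that $\bigl(\sum_k 2^{k/2}(a_k^2+b_k^2)^{1/2}\bigr) \le \sum_k 2^{k/2}a_k + \sum_k 2^{k/2}b_k$ and that $\sum_k d_k = 1$, this converts the squared $L^2_+(\cdot)$ norms on the left into $\widetilde{L}^\infty_{[t_0,t]}(\cB^{\frac12,0})$ and $\widetilde{L}^2_{[t_0,t]}(\cB^{\frac12,0})$ norms of the appropriate weighted quantities, and the $d_k^2 2^{-k}$ factors on the right collapse (after the square root) to give clean $\widetilde{L}^2$-type norms with no loss in the dyadic sum.

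After this manipulation I will have an inequality of the form
\begin{equation*}
\begin{split}
&\| \hsf\epk \ghpk\|_{\wt{L}^\oo_{[t_0,t]} (\cB^{\f12,0})} +2\sqrt{\ka\elk}\|\hsf\epk\p_y\ghpk\|_{\wt{L}^2_{[t_0,t]} (\cB^{\f12,0})} \\
&\quad+ \sqrt{2c\lam}\|\hsf\epk\ghpk\|_{\wt{L}^2_{[t_0,t];\dthtk}(\cB^{1,0})} \\
&\leq \|\hsf\epk\ghpk(t_0)\|_{\cB^{\f12,0}}+\|\qhs \epk \ghpk\|_{\wt{L}^2_{[t_0,t]}(\cB^{\f12,0})} +\sqrt{\fc\e}\,\|\hsf\epk\p_y\ghpk\|_{\wt{L}^2_{[t_0,t]} (\cB^{\f12,0})} \\
&\quad+ C\|\hsf\epk\ghpk\|_{\wt{L}^2_{[t_0,t];\dthtk}(\cB^{1,0})} + C\e^{\f12}\|\ttau^{-\f54}\hsf\epk\p_y\ubpk\|_{\wt{L}^2_{[t_0,t]}(\cB^{\f12,0})} \\
&\quad+ C\e^{\f12}\|\ttau^{-\f54}\hsf\epk\ubpk\|_{\wt{L}^2_{[t_0,t];\dthtk}(\cB^{1,0})} + C\e^{-\f12}\|\ttau^{\f12}\hsf\epk\mMpk\|_{\wt{L}^2_{[t_0,t]}(\cB^{\f12,0})}.
\end{split}
\end{equation*}
The last step is an absorption argument: choose $\lambda_0$ so large that $\sqrt{2c\lam_0}>C$, which lets the $C\|\hsf\epk\ghpk\|_{\wt{L}^2_{[t_0,t];\dthtk}(\cB^{1,0})}$ term be absorbed by the corresponding term on the left, and choose $\e_0$ so small that $\sqrt{\fc\e_0} < \sqrt{\ka\elk}$, which allows the $\sqrt{\fc\e}\,\|\hsf\epk\p_y\ghpk\|_{\wt{L}^2_{[t_0,t]}(\cB^{\f12,0})}$ term to be absorbed by the diffusion term on the left. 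What remains is precisely \eqref{S7eq24}.

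No step is really an obstacle here since Lemma \ref{prop7.2} has already done the hard analytic work; the only thing to be slightly careful about is the use of the positivity $2\ka-1 > 0$ (i.e.\ $\ka>1/2$) that guarantees $\ka\elk > 0$, so that the diffusion term on the left is genuinely available for the absorption of $\sqrt{\fc\e}\,\|\hsf\epk\p_y\ghpk\|_{\wt{L}^2_{[t_0,t]}(\cB^{\f12,0})}$. This is the same mechanism as in Corollary \ref{col7.1} and Corollary \ref{col6.2}, so we omit further details.
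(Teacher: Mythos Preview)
Your proposal is correct and follows exactly the approach the paper takes: the paper's own proof of Corollary \ref{col7.2} simply states that it is the same as that of Corollary \ref{col7.1} and omits the details, while Corollary \ref{col7.1} is proved by taking the square root of the dyadic inequality, multiplying by $2^{k/2}$, summing over $k\in\Z$, and absorbing via large $\lambda$ and small $\e$. Your write-up fills in precisely these steps with the $\kappa$-weighted quantities from \eqref{S7eq13}, so there is nothing to add.
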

\begin{proof}
The proof of this corollary is the same as that of Corollary \ref{col7.1}, we omit the details here.
\end{proof}

Now we are in a position to prove Proposition \ref{prop3.3} and \ref{prop3.6}.

\begin{proof}[Proof of Proposition \ref{prop3.3}]
We first observe from \eqref{S6eq21} that for $\hs(t)\leq C\tt^{\f52}$,
\beq \label{S7eq43}
\begin{split}
\|\ttau^{-\f54}\hsf \ep\p_y\ubp\|_{\wt{L}^2_t (\cB^{\f12,0})}&+\sqrt{c\lam}\|\ttau^{-\f54}\hsf \ep\ubp\|_{\wt{L}^2_{t;\dtht} (\cB^{1,0})}\\
&\qquad\leq \|e^{\f{y^2}8}e^{\de|D_x|}(u_0,b_0)\|_{\cB^{\f12,0}}+ C\sqrt{\e}.
\end{split}
\eeq
Whereas it  follows from Lemma \ref{lem:Poincare} that
\beqo
\|\ep\p_y\Dhk\ghp\|^2_{L^2_+}\geq\f1{2\tt}\|\ep\Dhk\ghp\|^2_{L^2_+}.
\eeqo
Then by taking $\hs(t)=\tt^{2+2\lk-2c\e}$ and $t_0=0$ in Lemma \ref{prop7.1}, we find
\beqo\begin{split}
&\|\taullk\ep\Dhk\ghp\|^2_{L^\oo_t (L^2_+)}+4\fc\e\|\taullk\ep\p_y\Dhk\ghp \|_{L^2_t (L^2_+)}^2\\
&+2c\lam 2^k\|\taullk\ep\Dhk\ghp\|^2_{L^2_{t;\dtht}(L^2_+)}\leq \|e^\f{y^2}8 e^{\de|D_x|}\Dhk(G_0,H_0)\|^2_{L^2_+}\\
&+\fc\e d^2_k 2^{-k}\|\taullk\ep\p_y\ghp \|_{\wt{L}^2_t (\cB^{\f12,0})}^2
+ Cd^2_k 2^{-k}\Bigl(\e\|\ep\p_y\ubp \|_{\wt{L}^2_t (\cB^{\f12,0})}^2\\
&\qquad\qquad+ \e\|\ep\ubp\|^2_{\wt{L}^2_{t;\dtht}(\cB^{1,0})}+ \|\taullk\ep\ghp\|^2_{\wt{L}^2_{t;\dtht}(\cB^{1,0})}\\
&\qquad\qquad\qquad\qquad\qquad+\e^{-1}\|\ttau^{\f32+\lk-c\e}\ep\mMp\|_{\wt{L}^2_{t}(\cB^{\f12,0})}^2\Bigr).
\end{split}\eeqo
Taking square root of the above inequality and then multiplying it by $2^\f{k}2$ and finally summing up the resulting one over $k\in\Z$ gives rise to
\beqo\begin{split}
&\|\taullk\ep\ghp\|_{\wt{L}^\oo_t (\cB^{\f12,0})}+\sqrt{2c\lam} \|\taullk\ep\ghp\|_{\wt{L}^2_{t;\dtht}(\cB^{1,0})}\\
&\leq  \|e^\f{y^2}8 e^{\de|D_x|}(G_0,H_0)\|_{\cB^{\f12,0}}+C\Bigl( \|\taullk\ep\ghp\|_{\wt{L}^2_{t;\dtht}(\cB^{1,0})}\\
&\quad+\e^{\f12}\|\ep\ubp\|_{\wt{L}^2_{t;\dtht}(\cB^{1,0})}\quad+\e^{\f12}\|\ep\p_y\ubp \|_{\wt{L}^2_t (\cB^{\f12,0})}\\
&\qquad\qquad\qquad\quad+\e^{-\f12}\|\ttau^{\f32+\lk-c\e}\ep\mMp\|_{\wt{L}^2_{t}(\cB^{\f12,0})}\Bigr).
\end{split}\eeqo
Then thanks to \eqref{smalldata2}, \eqref{mdecay} and \eqref{S7eq43}, we get,  by taking $\lam$ large enough, that
\beq \label{S7eq25}
\begin{split}
\|\taullk\ep\ghp\|_{\wt{L}^\oo_t (\cB^{\f12,0})}
\lesssim &\sqrt{\e}\bigl(\|e^{\f{y^2}8}e^{\de|D_x|}(u_0,b_0)\|_{\cB^{\f12,0}}+1\bigr).
\end{split}
\eeq

Along the same line, thanks to \eqref{mdecay} and \eqref{S7eq43}, we get, by taking $\hs(t)=1$ and $t_0=0$ in Corollary \ref{col7.1}, that
\beq\label{S7eq26}
\|\ep\p_y\ghp\|_{\wt{L}^2_t (\cB^{\f12,0})}\lesssim \sqrt{\e}\bigl(\|e^{\f{y^2}8}e^{\de|D_x|}(u_0,b_0)\|_{\cB^{\f12,0}}+1\bigr).
\eeq
Similarly, by taking $\hs(t)=\tt^{2+2\lk-2c\e}$ and $t_0=\f{t}2$ in Corollary \ref{col7.1},  we achieve
\beqo \begin{split}
\|\taullk\ep\p_y&\ghp\|_{\wt{L}^2_{[\f{t}2,t]}(\cB^{\f12,0})}\lesssim\e^{\f12}\|e^{\f{y^2}8}e^{\de|D_x|}(u_0,b_0)\|_{\cB^{\f12,0}} + \sqrt{\e}\\
&+ \bigl\|\langle {t}/2\rangle^{1+\lk}\ep\ghp({t}/2)\bigr\|_{\cB^{\f12,0}}+ \|\taulk\ep\ghp\|_{\wt{L}^2_{[\f{t}2,t]}(\cB^{\f12,0})}.
\end{split}
\eeqo
Notice that
\beqo
\|\taulk\ep\ghp\|_{\wt{L}^2_{[\f{t}2,t]}(\cB^{\f12,0})}\lesssim \|\taullk\ep\ghp\|_{\wt{L}^\oo_{[\f{t}2,t]}(\cB^{\f12,0})}.
\eeqo
As a consequence, we deduce from \eqref{S7eq25} that
\beq \label{S7eq27}
\|\taullk\ep\p_y\ghp\|_{\wt{L}^2_{[\f{t}2,t]} (\cB^{\f12,0})}\lesssim \sqrt{\e}\bigl(\|e^{\f{y^2}8}e^{\de|D_x|}(u_0,b_0)\|_{\cB^{\f12,0}}+1\bigr).
\eeq

With \eqref{S7eq25} and \eqref{S7eq27}, to finish the proof of Proposition \ref{prop3.3}, it remains to show that for any $t<T^*$,
\beq \label{S7eq28}
\int_0^t \ttau^\f14\|\ep\p_y\ghp\|_{\cB^{\f12,0}}d\tau\lesssim \sqrt{\e}\bigl(\|e^{\f{y^2}8}e^{\de|D_x|}(u_0,b_0)\|_{\cB^{\f12,0}}+1\bigr).
\eeq
Indeed taking $\e_0$ to be so small that $\fc\e_0<\lk.$ Then for $\e\leq\e_0,$ we deduce from
 \eqref{S7eq27} that for $1\leq t<T^*$
\beq \begin{split}\label{S7eq29}
\int_{\f{t}2}^t &\ttau^\f14\|\ep\p_y\ghp\|_{\cB^{\f12,0}}d\tau\\ &\leq \|\ttau^{-\f34-\lk+\fc\e}\|_{L^2_{[\f{t}2,t]}}\|\taullk\ep\p_y\ghp\|_{\wt{L}^2_{[\f{t}2,t]}( \cB^{\f12,0})}\\
&\lesssim t^{-\f14-\lk+\fc\e}\sqrt{\e}\bigl(\|e^{\f{y^2}8}e^{\de|D_x|}(u_0,b_0)\|_{\cB^{\f12,0}}+1\bigr).
\end{split}\eeq
By dividing the time interval $[0,t]$ into $[0,1],[1,2],[2,4],\cdots,[2^N,t]$, and using \eqref{S7eq26} and \eqref{S7eq29},
 we arrive at
\beqo \begin{split}
\int_0^t \ttau^\f14\|\ep\p_y\ghp\|_{\cB^{\f12,0}}d\tau
&\leq\bigl(\int_0^1+\sum_{j=1}^N \int_{2^{j-1}}^{2^j}+\int_{2^j}^t\bigr) \ttau^\f14\|\ep\p_y\ghp\|_{\cB^{\f12,0}}d\tau \\
&\lesssim\Bigl( 1+\sum_{j=1}^N 2^{-\f{j}4}+t^{-\f14}\Bigr)\sqrt{\e}\bigl(\|e^{\f{y^2}8}e^{\de|D_x|}(u_0,b_0)\|_{\cB^{\f12,0}}+1\bigr)\\
&\lesssim \sqrt{\e}\bigl(\|e^{\f{y^2}8}e^{\de|D_x|}(u_0,b_0)\|_{\cB^{\f12,0}}+1\bigr).
\end{split}\eeqo
This leads to \eqref{S7eq28}. We thus complete the proof of Proposition \ref{prop3.3}.
\end{proof}

\begin{proof}[Proof of Proposition \ref{prop3.6}] The proof of Proposition \ref{prop3.6} is the same as that of Proposition \ref{prop3.3}.
We omit the details here.
\end{proof}

\ \par
\noindent
{\bf Acknowledgements.}
P. Zhang is partially supported by NSF of China under Grants    11688101 and 11371347,  innovation grant from National Center for Mathematics and Interdisciplinary Sciences.

\end{document}